\theoremstyle{plain}
\newtheorem{theorem}{Theorem}[section]
\newtheorem{lemma}[theorem]{Lemma} 
\theoremstyle{definition}
\newtheorem{defn}[theorem]{Definition} 
\newtheorem{prop}[theorem]{Proposition}
\newtheorem{eg}[theorem]{Example}
\newtheorem{rmk}[theorem]{Remark} 
\newtheorem{cor}[theorem]{Corollary}
\def\R{\mathbb{R}}
\def\C{\mathbb{C}}
\def\cc{\mathcal{C}}
\def\ff{\mathcal{F}}
\def\yy{\mathcal{Y}}
\def\pp{\mathcal{P}}
\def\zz{\mathcal{Z}}
\tikzset{
  symbol/.style={
    draw=none,
    every to/.append style={
      edge node={node [sloped, allow upside down, auto=false]{$#1$}}}
  }
}
\def\ss{\mathcal{S}}
\title{Idempotent Completion of Persistence Categories}
\author{John Miller}
\date{\today}
\begin{document}

\maketitle
\begin{abstract}
    This paper studies how persistence categories and triangulated persistence categories behave with respect to taking idempotent completions. In particular we study whether the idempotent completion (i.e. Karoubi envelope) of categories admitting persistence refinement also admits such a refinement. In doing so, we introduce notions of persistence semi-categories and persistent presheaves and explore their properties.
\end{abstract}

\section{Introduction}

\text{Obj}ects in geometry are often studied via algebraic machinery, for example through the construction of categories or homology theories. These methods give a rich source of information allowing us to differentiate geometric objects by differences in these algebraic constructions. However, we end up forgetting a large amount of geometric information in these methods. The theory of persistence is a way to incorporate more of this geometric information into our algebraic machinery. The type of extra information we include is the notion of `size' or `weight'. Persistence homology was introduced by H. Edelsbrunner, D. Letscher and A. Zomorodian in 2000 \cite{ELZ} as a tool in topological data analysis. The notion was generalised to a more abstract mathematical object called a persistence module by  G. Carlsson and A. Zamorodian \cite{ZC} a few years later. Since then, the use of persistence modules has seen numerous applications in various areas of mathematics, in geometry and topology in particular. A persistence module can be viewed as a functor $\pp: (\R,<) \to R-\text{\text{Mod}}$, from the `poset category' of real numbers to the category of $R$-modules. Usually $R$ is chosen to be a field as to work with vector spaces. This allows one to view a persistence module as a `barcode'. Recently, P. Biran, O. Cornea and J. Zhang \cite{BCZ} introduced the idea of a persistence category, and furthermore a triangulated persistence category. A persistence category $\cc$ is a category enriched over persistence modules, that is, for every $A,B\in \text{Obj}(\cc)$ there is a functor $\text{Hom}(A,B)(-):(\R,<) \to R\text{-\text{Mod}}$. Composition in persistence categories respects this structure, that is if $f\in \text{Hom}(A,B)(r)$ and $g\in \text{Hom}(B,C)(s)$ then $g \circ f\in \text{Hom}(A,C)(r+s)$. The main sources of examples of persistence categories are filtered $A_\infty$ categories or filtered dg (differential graded) categories. There are two natural categories associated to a given persistence category, namely, the zero level category $\cc_0$, whose objects  remain the same but \text{Hom}-sets are given by $\text{Hom}_{\cc_0}(A,B)=\text{Hom}_\cc(A,B)(0)$. The other is the limit category $\cc_\infty$, this is a category where we `forget' the persistence structure and that is given by the localisation of $\cc_0$ with respect to a certain subcategory. A category $\cc$ is said to admit a persistence refinement if it can be realised as the limit category of some persistence category. 

This paper explores how categories admitting persistence refinement behave with respect to idempotent completion, this is the completion of a category to a category where every idempotent splits. Idempotent completion of general categories appears in various areas of mathematics. In algebraic geometry, in particular the study of Weil cohomology theories of smooth projective varieties, it is known (an idea conjectured and proved by Grothendieck, though he never published any written work on this) that any such theory factors through a certain category, namely the category of Chow motives. Roughly this is given by the idempotent completion of the category of correspondences of schemes. For an exposition, see \cite{Mi}. The main motivation for considering idempotent completion from the point of view of this paper comes from symplectic geometry, in particular homological mirror symmetry. In 1994 Maxim Kontsevich (\cite{Ko}) conjectured that relations seen between algebraic and symplectic geometry exhibited by `mirror' manifolds can be expressed in terms of an equivalence of triangulated categories. Explicitly, the conjecture states that the split-closed (idempotent completion of the) derived Fukaya category of one manifold is equivalent to the bounded derived category of coherent sheaves of the other, and vice versa. The conjecture has been proved for certain mirror manifolds. For example, Polishchuk and
Zaslow (\cite{PZ}) in 2000 proved it for certain elliptic curves and tori. There are examples of symplectic manifolds whose Fukaya categories are known to already be idempotent complete. In \cite{AS} Auroux and Smith show that the (wrapped) Fukaya categories of a collection of punctured surfaces with at least three punctures are all idempotent complete. Other related works that study idempotents in the Fukaya category include a celebrated result of Abouzaid( \cite{Ab}) who gave a criteria for when the Fukaya category is split-generated (generated as a triangulated category up to splitting of an idempotent) by an object. Biran, Cornea and Zhang's work on defining fragmentation metrics via persistence categories was motivated from phenomena exhibited in the case of the Fukaya category for certain symplectic manifold. Distances between Lagrangians had been studied via the notion of `shadows' of Lagrangian cobordisms by Biran, Cornea and Shelukhin (\cite{BCS}). Their paper in some sense gives a more algebraic formalism of this. In general the Fukaya category need not be already idempotent complete, thus the aim of this paper is to study if the persistence category structure and fragmentation distances survive the passing to idempotent completion.

We will study the following questions; if $\cc$ admits a persistence refinement, does it admit an idempotent completion which has a persistence refinement extending that of $\cc$? Furthermore does its Karoubi envelope, which is the universal idempotent completion, admit a persistence refinement? i.e., given a persistence category $\cc$ we wish to find a persistence category $\hat{\cc}$ whose limit category is idempotent complete and functors $i$ and $\hat{i}$ such that the following commutes (up to some equivalence).

\begin{equation}
     \begin{tikzcd}
         \cc_\infty \ar[r,"i"]&\hat{ \cc}_\infty\\
         \cc\ar[u,"\lim"]\ar[r,"\hat{i}"] & \hat{\cc}\ar[u,"\lim"]
     \end{tikzcd}
 \end{equation}
Moreover, ideally we would construct $\hat{\cc}$ such that its limit category is equivalent to the Karoubi envelope of $\cc_\infty$. The answer to the first question is positive, in fact we show that the Karoubi envelope of $\cc_\infty$ is equivalent to a category admitting persistence refinement.

\begin{theorem}\label{mainth1}
    Given a persistence category $\cc$, there exists an equivalence $\tilde{\zz}:\text{Split}(\cc_\infty) \rightarrow \hat{\cc}_\infty $ to the limit category of a persistence category $\hat{\cc}$ where $\hat{\cc}_\infty$ is idempotent complete.
\end{theorem}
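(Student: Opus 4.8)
The plan is to perform the idempotent-completion not inside $\cc$ itself --- where it genuinely fails, since an idempotent of $\cc_\infty$ need not be represented by an honest idempotent at any finite filtration level --- but one categorical level up, inside the persistence category $\widehat{\pp}(\cc)$ of persistent presheaves on $\cc$, where idempotents split. First I would fix the persistence-enriched Yoneda embedding $\yy\colon\cc\to\widehat{\pp}(\cc)$ sending $A$ to the representable $\text{Hom}_\cc(-,A)$, and establish a persistence Yoneda lemma identifying $\text{Hom}_{\widehat{\pp}(\cc)}(\yy A,F)(r)$ with $F(A)(r)$, naturally in $A$, $F$ and $r$, so that $\yy$ is fully faithful. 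This is exactly where the earlier material on persistence semi-categories and persistent presheaves is used: the hom-objects of $\widehat{\pp}(\cc)$ must first be assembled as persistence modules (degree-$r$ transformations $F(\cdot)(s)\to G(\cdot)(s+r)$, with structure maps induced by those of $G$), one checks composition respects the grading so that $\widehat{\pp}(\cc)$ is genuinely a persistence category, and one records that $\widehat{\pp}(\cc)_0$ --- a functor category into persistence $R$-modules, hence abelian --- and, crucially, $\widehat{\pp}(\cc)_\infty$ are idempotent complete.

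Next I would \emph{define} $\hat{\cc}$ to be the full persistence subcategory of $\widehat{\pp}(\cc)$ on those presheaves that, after passing to $\widehat{\pp}(\cc)_\infty$, are retracts of representables $\yy A$; equivalently, objects are pairs $(A,\bar e)$ with $\bar e\in\text{Hom}_{\cc_\infty}(A,A)$ an idempotent (realized via $\yy$ as an idempotent endomorphism of $\yy A$), and the hom persistence module from $(A,\bar e)$ to $(B,\bar f)$ is the submodule of $\text{Hom}_\cc(A,B)$ cut out by the intertwining relation with $\bar e$ and $\bar f$. With $\hat{i}\colon\cc\to\hat{\cc}$ the corestriction of $\yy$, the square above commutes at the level of $\cc$, and hence, after localizing, with $i$ the induced functor on limit categories. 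The universal property of $\text{Split}(\cc_\infty)$ applied to $i\colon\cc_\infty\to\hat{\cc}_\infty$ then produces the comparison functor $\tilde{\zz}\colon\text{Split}(\cc_\infty)\to\hat{\cc}_\infty$; it is essentially surjective by the very definition of $\hat{\cc}$, and full faithfulness is a direct computation --- using the colimit description of hom-sets in a limit category together with the persistence Yoneda lemma, a morphism $(A,\bar e)\to(B,\bar f)$ in $\hat{\cc}_\infty$ is an element of $\varinjlim_r\text{Hom}_\cc(A,B)(r)$ satisfying $\bar f\circ g\circ\bar e=g$, which is precisely $\text{Hom}_{\text{Split}(\cc_\infty)}((A,\bar e),(B,\bar f))$.

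The step I expect to be the main obstacle is showing that $\hat{\cc}_\infty$ is genuinely idempotent complete, which reduces to idempotent completeness of $\widehat{\pp}(\cc)_\infty$ together with the assertion that any object splitting an idempotent between retracts of representables is again a retract of a representable in $\widehat{\pp}(\cc)_\infty$. The delicate point is that $\widehat{\pp}(\cc)_\infty$ is a localization of the idempotent-complete category $\widehat{\pp}(\cc)_0$, and localizations do not in general preserve idempotent completeness; one must instead exploit that $\widehat{\pp}(\cc)$ has level-wise countable colimits that survive to $\widehat{\pp}(\cc)_\infty$, so that a homotopy idempotent is split by its mapping telescope, and then check that this telescope is (equivalent to) a retract of a representable when the idempotent is. A secondary technical nuisance, handled by the explicit colimit formula for hom-sets in $\cc_\infty$ and the Yoneda lemma, is verifying that forming the limit category commutes with passing to the subcategory $\hat{\cc}\subseteq\widehat{\pp}(\cc)$, i.e.\ that ``the limit of $\hat{\cc}$'' computes the same homs as ``the subcategory of $\widehat{\pp}(\cc)_\infty$''. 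Once these are in place, the pieces assemble into the equivalence $\tilde{\zz}$.
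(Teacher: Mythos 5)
Your overall architecture is the paper's: pass to persistent presheaves $\text{PSh}_\pp(\cc)$ via a persistence Yoneda embedding, take the full subcategory $\hat{\cc}$ of (weighted) retracts of representables, and extend $\zz\colon\cc_\infty\to\hat{\cc}_\infty$ to $\tilde{\zz}$ on $\text{Split}(\cc_\infty)$ by the universal property of the Karoubi envelope; that assembly matches Theorem \ref{inftytri}. The genuine gap is at the step you yourself flag as the main obstacle, idempotent completeness of $\hat{\cc}_\infty$. The mapping-telescope argument splits idempotents only in the presence of countable (co)products, and even granting level-wise colimits in $\text{PSh}_\pp(\cc)$, the telescope of an idempotent on a retract of $\yy(A)$ is a retract of a countable coproduct of copies of $\yy(A)$, not of a single representable, so there is no reason it lies in $\hat{\cc}$; nor is it clear that the colimits you need ``survive'' to the limit category, which is itself a localisation of hom-modules (your own caveat that localisation need not preserve idempotent completeness applies equally here). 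The paper's mechanism is different and avoids all of this: using the calculus-of-fractions description $\cc_\infty\simeq W^{-1}\cc_0$ with respect to strongly weighted isomorphisms, every idempotent of $\cc_\infty$ is shown to be represented by a weighted $r$-idempotent $e\colon A\to\ss^{-r}A$ in $\cc_0$; such an $e$ splits on the nose in $\text{PSh}_\pp(\cc)$ as the equaliser (level-wise kernel in $\text{Mod}_k^\pp$) of $e$ and $\eta_r^A$; the splitting object is an $r$-retract of its source, hence by transitivity of weighted retracts again a weighted retract of a representable, so it stays in $\text{Split}_\pp(\cc)$; and the finite-level splitting descends to a splitting of the original idempotent in $\text{Split}_\pp(\cc)_\infty$. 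This weighted-idempotent step is the missing idea in your proposal, and without it (or a worked-out substitute) the essential-surjectivity and completeness claims do not go through.

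A secondary but real misstep: your ``equivalently, objects are pairs $(A,\bar e)$ with hom modules cut out by the intertwining relation'' is not equivalent as a persistence category. When $\bar e$ is not represented by a weight-zero idempotent, the pair $(A,\bar e)$ has no identity at weight zero: the only candidate is a representative of $\bar e$, which lives at some weight $r>0$ and acts as the identity only up to the persistence shift $i_{s,s+r}$, while $1_A$ fails the intertwining condition. This is exactly the phenomenon the paper isolates in Section \ref{sectpcatpair}, where the pairs model is developed only as a persistence \emph{semi}-category $\widehat{\text{Split}}_\pp(\cc)$ and treated separately in Theorem \ref{mainth2}; the presheaf model is used for Theorem \ref{mainth1} precisely because its splitting objects are genuine presheaves (kernels) with honest identities. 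Relatedly, the subcategory should be defined by weighted retracts of representables at the finite level (as in the paper), not merely by being a retract in the limit category, since it is the finite-level weighted-retract structure that makes the closure-under-splitting argument work.
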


The Karoubi completion of a general category can be realised in two equivalent ways: the first is by a canonical enlargement of $\cc$ to a category of pairs $(A,e)$ where $A\in \text{Obj}(\cc)$ and $e:A \to A$ is an idempotent in $\cc$. A morphism in $\text{Split}(\cc)$ $f:(A,e_A) \to (B,e_B)$ is a morphism $f:A \to B$ in $\cc$ such that $e_B \circ f = f = f \circ e_A$. The identity on $(A,e_A)$ is given by $e_A$.  The canonical inclusion $i: \cc \to \text{Split}(\cc)$ given by $i(A)=(A,id_A)$ is fully faithful. An idempotent $e_A: A \to A$ now splits in $\text{Split}(\cc)$ as 
 $(A,id_A) \xrightarrow{e_A}(A,e_A)\xrightarrow{e_A}(A,id_A)$. Section \ref{sectpcatpair} discusses how to try to construct a persistence category with limit category the Karoubi envelope of $\cc_\infty$ viewed as the category of pairs. It becomes apparent that we run into an issue, as such a persistence refinement cannot have identities. We therefore define the notion of \textbf{persistence semi-categories}, and show that there exists a persistence semi-category $\widehat{Split}_\pp(\cc)$ whose limit category recovers the Karoubi envelope of $\cc_\infty$ viewed as the persistence category of pairs.
 \begin{theorem}\label{mainth2}
     Given a persistence category $\cc$, the associated semi-persistence category $\widehat{\text{Split}}_\pp(\cc)$ has limit category equivalent to the Karoubi envelope of $\cc_\infty$.
 \end{theorem}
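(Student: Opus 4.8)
The plan is to construct the comparison functor
\[
\Phi \colon \big(\widehat{\text{Split}}_\pp(\cc)\big)_\infty \longrightarrow \text{Split}(\cc_\infty)
\]
induced by the localisation $\cc_0 \to \cc_\infty$, and to show it is an equivalence by checking essential surjectivity and full faithfulness separately. Recall from Section~\ref{sectpcatpair} that an object of $\widehat{\text{Split}}_\pp(\cc)$ is a pair $(A,e)$ with $e\in\text{Hom}_\cc(A,A)(r)$ an $r$-idempotent, i.e.\ $e\circ e = \iota_{r,2r}(e)$, where $\iota_{r,s}$ denotes the structure maps of the persistence enrichment, and that morphisms $(A,e_A)\to(B,e_B)$ are represented by morphisms of $\cc$ at some level $t$ obeying the absorption relations $e_B\circ f = \iota(f) = f\circ e_A$ up to the appropriate shifts, two representatives being identified when they agree after a further shift. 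On objects I set $\Phi(A,e)=(A,\bar e)$ with $\bar e$ the image of $e$ in $\cc_\infty$; since in $\cc_\infty$ a morphism and its shifts $\iota_{r,s}(f)$ have the same image, the relation $e\circ e=\iota_{r,2r}(e)$ becomes $\bar e\circ\bar e=\bar e$, so $(A,\bar e)$ is a genuine object of the Karoubi envelope and $\bar e$ is its identity. This is precisely the point at which the semi-category acquires identities in the limit, so $\big(\widehat{\text{Split}}_\pp(\cc)\big)_\infty$ is an honest category. On morphisms $\Phi$ sends the class of $f$ to $\bar f$, and the absorption relations pass to $\bar e_B\bar f=\bar f=\bar f\bar e_A$; functoriality is inherited from $\cc_0\to\cc_\infty$.

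For essential surjectivity, take $(A,e)\in\text{Split}(\cc_\infty)$, so $e\colon A\to A$ with $e^2=e$. Using that every morphism of $\cc_\infty$ lifts to a morphism of $\cc$ at some finite persistence level (a standard feature of the localisation defining $\cc_\infty$; see \cite{BCZ}), pick $e'\in\text{Hom}_\cc(A,A)(r)$ representing $e$. Then $e'\circ e'\in\text{Hom}_\cc(A,A)(2r)$ and $e'$ both represent $e$, so there is $s\ge 2r$ with $\iota_{2r,s}(e'\circ e')=\iota_{r,s}(e')$; a short diagram chase using the compatibility of composition with the structure maps then shows $\tilde e:=\iota_{r,s}(e')$ satisfies $\tilde e\circ\tilde e=\iota_{s,2s}(\tilde e)$, i.e.\ $\tilde e$ is an $s$-idempotent, while $\overline{\tilde e}=e$. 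Hence $(A,\tilde e)$ is an object of $\widehat{\text{Split}}_\pp(\cc)$ with $\Phi(A,\tilde e)=(A,e)$.

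For full faithfulness I use the description of $\text{Hom}$ in the limit semi-category as equivalence classes of level-$t$ morphisms satisfying the absorption relations, two being equivalent when they agree after a shift. Faithfulness then amounts to the statement that two such representatives which become equal in $\cc_\infty$ already agree after some shift, which is exactly the identification built into the limit construction. For fullness, given $h\colon(A,\bar e_A)\to(B,\bar e_B)$ in $\text{Split}(\cc_\infty)$, lift $h$ to $h'\in\text{Hom}_\cc(A,B)(t)$ and replace it by $k:=e_B\circ h'\circ e_A$; the $r$-idempotent relations for $e_A$ and $e_B$, together with compatibility of composition with structure maps, make $k$ satisfy the absorption relations (after a shift), so $k$ represents a morphism $(A,e_A)\to(B,e_B)$ in $\big(\widehat{\text{Split}}_\pp(\cc)\big)_\infty$, and its image in $\cc_\infty$ is $\bar e_B\, h\, \bar e_A = h$ by the Karoubi relations on $h$. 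Thus $\Phi$ is full and faithful, and with essential surjectivity it is an equivalence.

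The step I expect to be the main obstacle is the level bookkeeping: making the definitions of Section~\ref{sectpcatpair} precise enough that the $r$-idempotent condition, the absorption relations, and their behaviour under the $\iota_{r,s}$ are all coherent, so that $\Phi$ is manifestly well defined and independent of the chosen representatives, and so that the localisation of a semi-category — which a priori has no identities — is genuinely computed by the expected colimit of $\text{Hom}$-modules and genuinely produces a category. Once the definitions are arranged so these shifts compose correctly, the remaining category-theoretic content is routine.
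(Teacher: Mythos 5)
Your proposal is correct, but it takes a genuinely different route from the paper. The paper does not argue at the level of the colimit Hom-modules at all: it first develops a calculus of fractions for semi-categories, passes to the flattened category $\widehat{\text{Split}}_\pp(\cc^\ss)$, localises at weighted isomorphisms, and proves the key lemma $W^{-1}\widehat{\text{Split}}_\pp(\cc^\ss)\simeq \text{Split}(W^{-1}\cc^\ss)$ by constructing a functor $\Gamma$, $(A,e_A)\mapsto (A,[e_A]_\infty)$, and verifying well-definedness, essential surjectivity and full faithfulness through explicit roof-diagram manipulations; the theorem then follows from the previously established equivalences $W^{-1}\cc^\ss\simeq \cc_\infty$ and $W^{-1}\widehat{\text{Split}}_\pp(\cc^\ss)\simeq (\widehat{\text{Split}}_\pp(\cc))_\infty$. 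You instead work directly with the definition of the limit category as a filtered colimit of the Hom persistence modules, which makes every verification a short computation with the structure maps $\iota_{r,s}$: your representability argument ($e'$ and $e'\circ e'$ both represent $e$, hence agree after a shift, hence $\tilde e=\iota_{r,s}(e')$ is an $s$-idempotent) is a clean substitute for the paper's roof-diagram derivation of weighted idempotents, and your fullness step $k=e_B\circ h'\circ e_A$ satisfies the absorption relations on the nose by the $r$-idempotent identities, exactly as needed. What your route buys is brevity and independence from the localisation machinery; what the paper's route buys is the roof-diagram description of $(\widehat{\text{Split}}_\pp(\cc))_\infty$ itself, which the paper reuses elsewhere (representing idempotents, comparing with $\text{Split}_\pp(\cc)$, and the weight-zero and TPC refinements). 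Two small points you should make explicit if you write this up: (i) that the structure maps of $\text{Hom}_\cc(A,B)$ restrict to the absorption-constrained sub-persistence-modules defining $\text{Hom}_{\widehat{\text{Split}}_\pp(\cc)}((A,e_A),(B,e_B))$, so faithfulness is literally exactness of filtered colimits applied to these inclusions; and (ii) that $[e_A]$ is the identity of $(A,e_A)$ in the limit (it is a morphism by the idempotent relation, and absorbs any $[f]$ up to a shift), so that $\Phi$ is a genuine functor between genuine categories and $\Phi([e_A])=\bar e_A$ is the identity of $(A,\bar e_A)$.
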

 
The other realisation of the Karoubi envelope of a category is via presheaves. One considers the full subcategory of presheaves on $\cc$, $\text{PSh}(\cc)$, spanned objects that are retracts of representable presheaves, i.e., objects of the form $\yy(A)=\text{Hom}_\cc(-,A)$ (see for example \cite{Lu}). In section \ref{sectppsheav} we construct a persistence category of \textbf{persistence presheaves} $\text{PSh}_\pp(\cc)$ associated to a persistence category $\cc$. Then consider a full subcategory $\text{PSh}_\pp(\cc)$ spanned by \text{weighted retracts}, which we denote by $\text{Split}_\pp(\cc)$. This will play the role of $\hat{\cc}$ in Theorem \ref{mainth1}. The main issue with constructing these persistence refinements is due to idempotents in $\cc_\infty$ not in the image of an idempotent in $\cc_0$, but rather the image of a \textbf{weighted idempotent} of non-zero weight. We show that in the case that all idempotents in $\cc_\infty$ are represented by weight zero idempotents in $\cc_0$, then we can find a persistence refinement of the Karoubi completion of $\cc_\infty$ in the usual sense.

\begin{theorem}\label{mainth3}
    Let $\cc$ be a persistence category, if every idempotent of $\cc_\infty$ is represented by an idempotent of weight zero in $\cc_0$ then $\text{Split}(\cc_\infty)$ is equivalent to $\text{Split}_\pp^0(\cc)_\infty$.
\end{theorem}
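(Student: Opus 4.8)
The plan is to deduce this from Theorem~\ref{mainth1} rather than rebuilding the equivalence from scratch. Recall that $\text{Split}_\pp^0(\cc)$ is the full sub-persistence-category of $\text{Split}_\pp(\cc)$ obtained by retaining only the weight-zero weighted retracts of representables; since Theorem~\ref{mainth1} already identifies $\text{Split}_\pp(\cc)_\infty$ with $\text{Split}(\cc_\infty)$, it suffices to show that, under the stated hypothesis, the inclusion $\iota\colon\text{Split}_\pp^0(\cc)\hookrightarrow\text{Split}_\pp(\cc)$ induces an equivalence $\iota_\infty\colon\text{Split}_\pp^0(\cc)_\infty\xrightarrow{\ \sim\ }\text{Split}_\pp(\cc)_\infty$ on limit categories. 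Full faithfulness of $\iota_\infty$ is automatic: $\iota$ is a fully faithful persistence functor (a full subcategory inclusion), and since morphisms in a limit category are computed as the colimit of the ambient persistence module along its structure maps (left calculus of fractions), an isomorphism on each weight-$r$ level induces an isomorphism after localisation. So the whole content is the essential surjectivity of $\iota_\infty$, and this is exactly where the hypothesis is used.

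For essential surjectivity I would start from an arbitrary object $P$ of $\text{Split}_\pp(\cc)$, realised as a weighted retract of a representable $\yy(A)$ via a section--retraction pair; the associated weighted idempotent $e$ on $\yy(A)$ becomes, in the limit category $(\text{PSh}_\pp(\cc))_\infty$, an honest idempotent whose splitting is $P$ (the weight shifts appearing in the weighted-retract identities become identities after localisation). Via the persistence Yoneda lemma of Section~\ref{sectppsheav} and its compatibility with passage to limit categories, $\text{Hom}_{(\text{PSh}_\pp(\cc))_\infty}(\yy(A),\yy(A))\cong\text{Hom}_{\cc_\infty}(A,A)$, so this idempotent corresponds to an idempotent $\bar e$ of $\cc_\infty$ on $A$. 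By hypothesis $\bar e$ is represented by a weight-zero idempotent $e_0\in\text{Hom}_{\cc_0}(A,A)$; its Yoneda image $\yy(e_0)$ is a weight-zero idempotent of persistence presheaves, so its image $\text{im}(\yy(e_0))$ is an object of $\text{Split}_\pp^0(\cc)$, and by functoriality of $(-)_\infty$ the splitting of $\yy(e_0)$ localises to a splitting of $\bar e$. Since $\text{Split}_\pp(\cc)_\infty$ is idempotent complete (Theorem~\ref{mainth1}), splittings of $\bar e$ are unique up to canonical isomorphism, so $\iota_\infty(\text{im}(\yy(e_0)))\cong P$ in $\text{Split}_\pp(\cc)_\infty$. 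Hence $\iota_\infty$ is essentially surjective, and therefore an equivalence; composing with the equivalence of Theorem~\ref{mainth1} yields $\text{Split}_\pp^0(\cc)_\infty\simeq\text{Split}(\cc_\infty)$.

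I expect the main obstacle to be the bookkeeping that turns a weighted retract into an honest splitting after localisation, together with the precise form of the Yoneda dictionary: one must check that the isomorphism $\text{Hom}_{(\text{PSh}_\pp(\cc))_\infty}(\yy(A),\yy(A))\cong\text{Hom}_{\cc_\infty}(A,A)$ carries the localised weighted idempotent $e$ to $\bar e$, carries the weight-zero lift $e_0$ of $\bar e$ to $\yy(e_0)$, and does so compatibly enough that the two objects in question split the \emph{same} idempotent of $\text{Split}_\pp(\cc)_\infty$, so that idempotent-completeness can be invoked. It is precisely at the step ``$\bar e$ has a weight-zero representative'' that the hypothesis enters; without it one can only say that $\bar e$ is the localisation of a weighted idempotent of positive weight, whose image lies in $\text{Split}_\pp(\cc)$ but need not lie in $\text{Split}_\pp^0(\cc)$, so $\iota_\infty$ would fail to be essentially surjective — which is why Theorem~\ref{mainth1} must be stated with the larger category $\text{Split}_\pp(\cc)$.
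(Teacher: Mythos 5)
Your proposal is correct, and it reaches the theorem by a route that is close in substance to the paper's but organized in the opposite direction. The paper argues that the equivalence $\tilde{\zz}$ of Theorem \ref{mainth1} (Theorem \ref{inftytri}) \emph{restricts} to an equivalence onto $\text{Split}_\pp^0(\cc)_\infty$: the containment it verifies in detail is the one complementary to yours, namely that every weight-zero retract $F<_0\yy(A)$ is isomorphic to $\tilde{\zz}(F')$ where $F'$ splits the corresponding $0$-idempotent in $\text{Split}(\cc_\infty)$ — an argument that does not use the hypothesis — while the fact that $\tilde{\zz}$ takes values, up to isomorphism, among weight-zero retracts (which is exactly where the hypothesis must act) is absorbed into the phrase ``restricts to''. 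Your decomposition makes that step explicit: full faithfulness of the inclusion $\iota_\infty$ is indeed free, since $\text{Split}_\pp^0(\cc)$ is a full sub-persistence-category and limit homs are colimits of the same persistence modules, and the hypothesis is consumed precisely in essential surjectivity of $\iota_\infty$, by transporting the localised weighted idempotent of an arbitrary $r$-retract through the fully faithful $\zz$ to an idempotent of $\cc_\infty$, replacing it by a weight-zero representative $e_0$, splitting $\yy(e_0)$ inside $\text{Split}_\pp^0(\cc)$ (Lemmas \ref{pshidemcomp} and \ref{splitidemcomp}), and comparing splittings of the same idempotent. Since $\tilde{\zz}$ is already essentially surjective onto $\text{Split}_\pp(\cc)_\infty$, your statement that $\iota_\infty$ is essentially surjective is logically equivalent to the paper's implicit step, so the two proofs carry the same content; yours has the advantage of isolating cleanly where the hypothesis enters, the paper's of producing the equivalence as a restriction of the already-constructed $\tilde{\zz}$. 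Two minor points: uniqueness of splittings of a \emph{fixed} idempotent holds in any category, so your appeal to idempotent completeness of $\text{Split}_\pp(\cc)_\infty$ there is unnecessary (completeness governs existence, not uniqueness), though harmless; and the bookkeeping you flag — identifying $[e]$ with an honest idempotent split by $P$ after inverting $\eta_r$ and the shift identifications — is genuine but is the same roof-diagram manipulation the paper carries out in its corollary that $\text{Split}_\pp(\cc)_\infty$ is idempotent complete, so it poses no obstruction.
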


 The persistence category $\text{Split}_\pp^0(\cc)_\infty$ is the full subcategory of $\text{Split}_\pp(\cc)_\infty$ consisting of weight zero retracts of representable objects. With the assumption of idempotents being represented by weight zero idempotents, we find that $\widehat{\text{Split}}_\pp(\cc)$ is a persistence category in the usual sense, and moreover is equivalent, as a persistence category, to $\text{Split}_\pp^0(\cc)_\infty$.

A triangulated persistence category (TPC) $\cc$ is a persistence category such that $\cc_0$ is a triangulated category in the usual sense, along with some extra criteria discussed in \cite{BCZ} and that we will recall in more detail shortly. If $\cc$ is a TPC then $\cc_\infty$ is naturally triangulated and furthermore to each triangle we can associate a `weight', that is, a map $w:\Delta(\cc_\infty) \to \R_+$ where $\Delta(\cc_\infty)$ on the class of exact triangles. These weights allow one to define a family of (pseudo) metrics, named fragmentation pseudo-metrics on the objects of $\cc$. TPCs were introduced to formalise metrics/weights that appear when considering a filtered version of the Fukaya category of a symplectic manifold.

In section \ref{secttri} we discuss the question of when the limit category of a TPC has and idempotent completion which admits a TPC refinement, i.e., a persistence refinement that is also a TPC. The reason for one to consider this is as follows, the fragmentation metrics associated to a TPC depend on a choice of family of objects of the category. One can think of the distance between two objects to be the best (i.e., most efficient) way of building one object using the other via attachings of extra objects belonging to this family. Often the metrics are degenerate and/or give infinite distances. Ideally the family taken should generate the category (as a triangulated category) as to have a non-infinite distance between objects. A collection of objects generating a triangulated category often needs to be quite large, however, if one allows for `split-generation', the number of objects needed can be much smaller. By split-generation, we mean that every object of the category can be reached by taking mapping cones, up to the splitting of an idempotent, i.e., the object is a retract of an object that can be reached by usual triangulated generation. By considering split-generation, we can work with the fragmentation metrics induced by the TPC structure more tangibly. Similarly to the results of Theorem \ref{mainth3} we show 
\begin{theorem}\label{mainth4}
     Let $\cc$ be a triangulated persistence category, if every idempotent of $\cc_\infty$ is represented by an idempotent of weight zero in $\cc_0$ then $\text{Split}_\pp^0(\cc)$ is a TPC refinement of $\text{Split}(\cc_\infty)$ and is equivalent as a TPC to $\widehat{\text{Split}}_\pp(\cc)$ .
\end{theorem}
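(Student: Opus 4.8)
The plan is to bootstrap this from Theorem \ref{mainth3} and Theorem \ref{mainth2}, since the hypothesis ("every idempotent of $\cc_\infty$ is represented by an idempotent of weight zero in $\cc_0$") is exactly the one under which those results already identify $\text{Split}_\pp^0(\cc)_\infty$ with $\text{Split}(\cc_\infty)$ and show $\widehat{\text{Split}}_\pp(\cc)$ is an honest persistence category equivalent to $\text{Split}_\pp^0(\cc)$. So the substantive new content is the \emph{triangulated} part: verifying that $\text{Split}_\pp^0(\cc)$ satisfies the axioms of a TPC (in the sense of \cite{BCZ}), and that the equivalence with $\text{Split}(\cc_\infty)$ is compatible with triangulations, i.e. is a TPC refinement. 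First I would recall the TPC axioms for $\text{Split}_\pp^0(\cc)$: its zero-level category $\text{Split}_\pp^0(\cc)_0$ must be triangulated, it must be closed under the shift functors $\Sigma^r$ coming from the persistence structure, and the exact triangles must interact correctly with the weight filtration (the "$r$-exactness" / persistence-compatible cone axioms).

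Second, I would construct the triangulated structure on $\text{Split}_\pp^0(\cc)_0$. The natural route is to realize $\text{Split}_\pp^0(\cc)$ as a full subcategory of the persistence presheaf category $\text{PSh}_\pp(\cc)$ closed under weighted retracts, then show that weighted retracts of cones of representables are again weighted retracts of representables — this is the persistence analogue of the classical fact that the idempotent completion of a triangulated category is triangulated (Balmer–Schlichting). Concretely, given a morphism in $\text{Split}_\pp^0(\cc)_0$ between two weight-zero retracts, one embeds it into a morphism of representables (using that $\cc$ itself, or rather $\cc_0$, sits inside as a TPC), takes the cone there, and then splits off the appropriate summand; the weight-zero hypothesis guarantees the splitting idempotent lives at weight zero so the cone stays inside $\text{Split}_\pp^0(\cc)_0$ rather than only in $\text{Split}_\pp(\cc)_0$. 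The shift functor $\Sigma^r$ extends to retracts functorially, and the octahedral axiom is inherited by a standard diagram chase once cones exist.

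Third, I would check TPC-compatibility of the weight structure: that the persistence filtration on $\text{Hom}$-modules of $\text{Split}_\pp^0(\cc)$ is exhaustive and that exact triangles admit the weighted refinements required in \cite{BCZ} (e.g. every morphism of weight $r$ fits into an $r$-exact triangle). This should follow because these refinements already exist in $\cc$, hence in $\text{PSh}_\pp(\cc)$, and are preserved under splitting off a weight-zero summand. Then the equivalence $\tilde\zz$ (or its $\text{Split}_\pp^0$ version from Theorem \ref{mainth3}) is seen to send exact triangles to exact triangles because on the zero level it restricts to the identification of $\text{Split}(\cc_\infty)$'s triangles, which by construction are images of cones computed in $\text{Split}_\pp^0(\cc)$; compatibility with weights of triangles, $w:\Delta(\cc_\infty)\to\R_+$, is then automatic since weights are computed from the same persistence filtration on both sides. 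Finally the equivalence with $\widehat{\text{Split}}_\pp(\cc)$ as a TPC is immediate from Theorem \ref{mainth2} together with the observation (already noted in the excerpt) that under the weight-zero hypothesis $\widehat{\text{Split}}_\pp(\cc)$ is an honest persistence category equivalent as such to $\text{Split}_\pp^0(\cc)$ — one just transports the triangulated structure across that equivalence.

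I expect the main obstacle to be the second step: showing cones of morphisms between weighted retracts can be taken \emph{within} the weight-zero subcategory $\text{Split}_\pp^0(\cc)$ rather than drifting into positive weight. The classical Balmer–Schlichting argument is clean because idempotents there are honest (weight-zero) idempotents; here one must be careful that although a general idempotent in $\cc_\infty$ could a priori be a weighted idempotent, the hypothesis rules this out, and one must check that this genuinely suffices to keep all the auxiliary idempotents used in the cone construction at weight zero. A secondary subtlety is bookkeeping the shift functors $\Sigma^r$ across the retract construction and verifying the interaction axiom between $\Sigma^r$ and exact triangles (the "persistence" part of the TPC axioms) survives — this is routine but needs the exhaustiveness of the filtration, which should be checked explicitly for $\text{Split}_\pp^0(\cc)$.
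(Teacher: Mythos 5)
Your proposal is correct and follows essentially the same route as the paper: identify $\text{Split}_\pp^0(\cc)_0$ with the classical Karoubi completion $\text{Split}(\cc_0)$ and invoke the Balmer--Schlichting triangulation, verify the TPC axiom that $\eta_r^A$ embeds in a triangle with $r$-acyclic cone by extending the weight-zero idempotent from the retraction to an idempotent on the cone of the representable triangle (the weight-zero hypothesis keeping this splitting inside $\text{Split}_\pp^0(\cc)_0$), and then obtain the refinement statement from Theorem \ref{mainth3} and the comparison with $\widehat{\text{Split}}_\pp(\cc)$ from the equivalence of zero-level categories. The obstacle you single out (controlling the auxiliary idempotent on the cone) is exactly the step the paper carries out explicitly, including the check $\eta_r^K = \ss^{-r}r_K \circ \eta_r^{\yy(K')} \circ s_K = 0$.
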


In the case of not every idempotent of the limit category admitting a weight zero idempotent representation, then the two candidate persistence refinements of $\text{Split}(\cc_\infty)$ are not TPCs. In particular, both do not have zero level category triangulated.

\subsection{Review of Persistence}
We begin by recalling a few definitions and ideas. We use \cite{PRSZ} as the main reference for the basic definitions of persistence modules. For persistence categories the material follows from \cite{BCZ} A \textbf{persistence module} is an $\R$-indexed collection of $k$-modules, $\{V_r\in \text{Mod}_k\}_{r\in \R}$ along with a family of morphisms $i^V_{r,s}:V_r \to V_s$, which we call the \textbf{persistence structure maps}, for all $s\geq r$ and such that $i^V_{r,r}=1_{V_r}$, $i^V_{r,s}\circ i^V_{t,r}=i^V_{t,s}$. Here $k$ is a commutative ring, usually assumed to be a field. This is the most basic definition of a persistence module, often further conditions are imposed and so we work with a very general form of persistence module. 

We can define persistence modules in the language of category theory, which we will largely be working with. One does so as follows: let $(\R,<)$ be the category whose objects are real numbers and morphisms are given by
\begin{equation}\text{Hom}_{(\R,<)}(r,s)=\begin{cases}
    i_{r,s} & r \leq s\\
    \emptyset & s<r.
\end{cases}\end{equation}
A persistence module can then be viewed as a functor, $V:(\R,<) \to \text{Mod}_k$, via: 
\begin{equation}V_r:=V(r), \hspace{0.3cm} i^V_{r,s}:=V(i_{r,s})\end{equation}
We will view persistence modules through either description interchangeably. A \textbf{persistence category} will be a category $\cc$ such that the morphism sets form persistence modules, and composition respects this structure in the additive sense. Explicitly, if $A,B\in \cc$ then
\begin{equation}\text{Hom}_{\cc}(A,B):(\R,<) \to \text{Mod}_k\end{equation}
and composition satisfies 
\begin{equation}\circ: \text{Hom}_\cc( A,B)(r) \otimes \text{Hom}_\cc(B,C)(s) \to \text{Hom}_\cc(A,C)(r+s)\end{equation}
and the following diagram commutes;
\begin{equation}\begin{tikzcd}
    \text{Hom}_\cc(A,B)(r)\otimes \text{Hom}_\cc(B,C)(s) \ar[r,"\circ"]& \text{Hom}_\cc(A,C)(r+s)\\
     \text{Hom}_\cc(A,B)(r')\otimes \text{Hom}_\cc(B,C)(s')\ar[u,"i^{AB}_{r',r}\otimes i^{BC}_{s',s}"] \ar[r,"\circ"]& \text{Hom}_\cc(A,C)(r'+s')\ar[u,"i^{AC}_{r'+s',r+s}"]
\end{tikzcd}\end{equation}

We will often denote by $\text{Mor}^r_\cc(A,B):=\text{Hom}_\cc(A,B)(r)$, to match the notation used in \cite{BCZ}. Finally, we will assume that all persistence categories carry a (family of) shift functor(s), $\{\ss^a:\cc \to \cc \}_{a\in \R}$ that satisfy the following conditions:
\begin{enumerate}
    \item $\ss^0=1_\cc.$
    \item  $\ss^a \circ \ss^b = \ss^b \circ \ss^a = \ss^{a+b}$.
    \item $\ss^a(A) \cong \ss^b(A)$ via an isomorphism $(\eta_{a,b})_A\in \text{Mor}_\cc^{b-a}(A,B)$.
\end{enumerate}

\begin{rmk}
    We can view the shift functors as a functor $\ss:(\R,+) \to \text{End}(\cc)$ where $(\R,+)$ is the category of real numbers with a unique morphism between any two objects. 
\end{rmk}

\begin{eg}
    Consider the category of filtered chain complexes, $\ff Ch_k$. An object $(C,d)\in \ff Ch_k$ is a collection of chain complexes $(C^{\leq r},d^{\leq r})$ where $d^{\leq r}:C^{\leq r}\to C^{\leq r}$ is a differential and for every $r\leq s$ we have a canonical inclusion chain morphism 
    \begin{equation}i_{r,s}:(C^{\leq r}, d^{\leq r}) \to (C^{\leq s},d^{\leq s})\end{equation}
    which satisfies
    \begin{equation}i_{r,s}\circ d^{\leq r} = d^{\leq s}\circ i_{r,s}.\end{equation}
    A morphism of filtered chain complexes $f\in \text{Mor}^s((C,d_C), (D,d_D))$ is a family of chain maps $f^{\leq r}: (C^{\leq r},d_C^{\leq r}) \to (D^{\leq r+s},d_D^{\leq r+s})$ satisfying 

    \begin{equation}i_{r,s}^D \circ f^{\leq r} = f^{\leq s}\circ i_{r,s}^C.\end{equation}
The inclusion maps give maps
    \begin{equation}\iota_{r,s}:\text{Mor}^r((C,d_C),(D,d_D)) \to \text{Mor}^s((C,d_C),(D,d_D))\end{equation}
    \begin{equation}\iota_{r,s}(f)^{\leq t}=i_{t+r, t+s} \circ f^{\leq t} .\end{equation}
We have functors $\ss^a: \ff Ch_k \to \ff Ch_k$, where $(\ss^aC ,\ss^ad):=\ss^a(C,d)$ is given by  
    \begin{equation}(\ss^{a}C)^{\leq r}= C^{\leq r- a}\end{equation}
    \begin{equation}(\ss^a d)^{\leq r}= d^{\leq r -a}\end{equation}
Passing to homology we have a persistence category $H\ff Ch_k$ with structure maps induced by the inclusions and shift functors induced by $\ss^a$.
    
\end{eg}

Given two persistence categories $\cc$ and $\cc'$, one can define a persistence functor $F:\cc \to \cc'$ to be a functor in the usual sense, but which respects the persistence structures. Explicitly, if $f\in \text{Mor}^r_\cc(A,B)$ then $F(f)\in \text{Mor}^r(F(A),F(B))$ and moreover we want $i_{r,s}^{\cc'}\circ F = F \circ i_{r,s}^\cc$. We can view this as commutativity of the following
\begin{equation}
    \begin{tikzcd}
        \text{Mor}^s_\cc(A,B)\ar[r,"F_{AB}^s"] & \text{Mor}^s_{\cc'}(F(A),F(B))\\
        \text{Mor}^r_\cc(A,B)\ar[u,"i^\cc_{r,s}"]\ar[r,"F_{AB}^r"] & \text{Mor}_{\cc'}^r(F(A),F(B))\ar[u,"i^{\cc'}_{r,s}"]
    \end{tikzcd}
\end{equation}
where $F_{AB}^{(-)}$ is the module map induced by the functor $F$. We can also define the opposite category of a persistence category. If $\cc$ is a persistence category, then $\cc^{op}$ is the category with the same objects, but with morphisms running the wrong way. We set

\begin{equation}
    \text{Mor}^{r}_{\cc^{op}}(B,A)= \text{Mor}^{r}_{\cc}(A,B)
\end{equation}

so that $i_{r,s}:\text{Mor}^{r}_{\cc^{op}}(B,A) \to \text{Mor}^{s}_{\cc^{op}}(B,A)$ is induced by $i_{r,s}:\text{Mor}^{r}_{\cc}(A,B) \to \text{Mor}^{s}_{\cc}(A,B)$. A contravariant functor $F:\cc \to \cc'$ is a functor $F:\cc^{op} \to \cc'$. I.e., if $f\in \text{Mor}^r_\cc(A,B)$ and $F$ is contravariant then $F(f)\in \text{Mor}^r_{\cc'}(F(B),F(A))$, and 
\begin{equation}
    \begin{tikzcd}
        \text{Mor}^s_\cc(A,B)\ar[r,"F_{AB}^s"] & \text{Mor}^s_{\cc'}(F(B),F(A))\\
        \text{Mor}^r_\cc(A,B)\ar[u,"i^\cc_{r,s}"]\ar[r,"F_{AB}^r"] & \text{Mor}_{\cc'}^r(F(B),F(A))\ar[u,"i^{\cc'}_{r,s}"]
    \end{tikzcd}
\end{equation}

Given a persistence category $\cc$, there are two natural $\text{Mod}_k$-enriched categories we denote $\cc_0$ and $\cc_\infty$. $\cc_0$ consists of all objects of $\cc$ but we restrict morphisms to $\text{Hom}_{\cc_0}(A,B)=\text{Mor}^0_\cc(A,B)$. $\cc_\infty$ again consists of all objects, but with morphisms now given by $\text{Hom}_{\cc_\infty}(A,B)= \text{Hom}_\cc(A,B)/\sim$ where if $f\in \text{Mor}^r_\cc(A,B)$ and $g \in \text{Mor}^{r'}_\cc(A,B)$ then $f \sim g$ if there exists $s,s'>0$ with $i_{r,r+s}(f) = i_{r',r'+s'}(g)$. This category can be thought of as the category where we forget the weights of morphisms. Denote by $\eta_r^A\in \text{Mor}^0(A,\ss^{-r}A)$ the morphism $i_{-r,0}\circ (\eta_{0,-r})_A$. Note that $\eta_0^A=1_A$. An object $A\in \cc$ will be called \textbf{$r$-acyclic} if $\eta_r^A=0$. The morphisms $\eta_r^A$ can be seen to satisfy the following properties;

\begin{enumerate}
    \item $\eta_r^{\ss^{-s}A} \circ \eta_s^A = \eta_{r+s}^A$.
    \item If $f:A \to B$ then $\eta_r^B \circ f = \ss^{-r}f \circ \eta_r^A$.
\end{enumerate}
We say two morphisms $f,f'\in \text{Mor}^a_\cc(A,B)$ are \text{$r$-equivalent} and write $f\simeq_r f'$ iff $\eta_r^B \circ f = \eta_r^B \circ f' \in \text{Mor}^a(A,\ss^{-r}B)$. Note that this implies $[f]=[f']\in \text{Hom}_{\cc_\infty}(A,B)$.
A \textbf{triangulated persistence category} (TPC) is a persistence category $\cc$ endowed with an endofunctor $T:\cc_0 \to \cc_0$ with the following properties:
\begin{enumerate}
    \item $(\cc_0,T)$ is a triangulated category in the usual sense, and commutes with the shift functors $\ss^r \circ T = T \circ \ss^r $ for all $r\in \R$.
    \item The morphisms $\eta_r^A\in \text{Hom}_{\cc_0}(A,\ss^{-r}A)$ embed into exact triangles 
    \begin{equation}\begin{tikzcd}
        A \ar[r,"\eta_r^A"]& \ss^{-r}A\ar[r] & C\ar[r] & TA
    \end{tikzcd}\end{equation}
    such that $C$ is $r$-acyclic. 
    \item We have that for all $r\leq 0$: 
    \begin{equation}\text{Mor}^r_\cc(A\oplus B, C) = \text{Mor}^r_\cc(A,C) \oplus \text{Mor}^r_\cc(B,C)\end{equation}
    \begin{equation}\text{Mor}^r_\cc(A, B \oplus C) = \text{Mor}^r_\cc(A,B) \oplus \text{Mor}^r_\cc(A,C)\end{equation}
\end{enumerate}

These axioms allow one to consider a larger class of triangles in $\cc_0$ that carry a `weighting'. A triangle in $\cc_0$ 

\begin{equation}\begin{tikzcd}
    A\ar[r,"u"] & B\ar[r,"v"] & C\ar[r,"w"] & \ss^{-r}TA
\end{tikzcd}\end{equation}
is called a \textbf{strict exact triangle} of weight $r$ if it embeds into a diagram 

\begin{equation}\begin{tikzcd}
   & &\ss^r C\ar[d,"\phi"]\ar[dr,"\ss^rw"]&  \\
    A\ar[r,"u"] & B\ar[dr,"v"]\ar[r,"v'"] & C'\ar[r,"w'"]\ar[d,"f"] & TA\\& & C\ar[r,"w"] & \ss^{-r}TA
\end{tikzcd}\end{equation}

where $f\circ \phi=\eta_r^{\ss^r C}$, and the cone of $f$ is $r$-acyclic. Notice that a strict exact weight zero triangle is an exact triangle in $\cc_0$. Biran, Cornea and Zhang showed that the limit category $\cc_\infty$ of a TPC $\cc$ is canonically triangulated and is in fact the Verdier localisation of $\cc_0$ by the subcategory of $r$-acyclic objects (for all $r\geq 0$). This along with the weighting attached to the larger class of triangles allows one to attach weights to exact triangles in $\cc_\infty$ and define pseudo-metrics on the objects of $\cc$. These weights and metrics are defined as follows: The \textbf{unstable weight} of an exact triangle in the limit category of a TPC,
\begin{equation}\Delta = A \xrightarrow{u} B \xrightarrow{v} C \xrightarrow{w} TA\end{equation} is defined to be
\begin{equation}w_\infty(\Delta) = \inf\{r : A \xrightarrow{u'} \ss^{-r_1}B \xrightarrow{ v'} \ss^{-r_2}C \xrightarrow{w'} \ss^{-r}TA \text{ is strict exact} \} \end{equation}
with $0\leq r_1 \leq r_2 \leq r$ and such that 

\begin{align*}
    u &= [\eta_{-r_1,0} \circ u']\\
    v&= [\eta_{-r_2,0} \circ v' \circ \eta_{0,-r_1}]\\
     w &= [\eta_{-r,0} \circ w' \circ \eta_{0,-r_2}].
\end{align*}
Here $[-]$ denotes the image of the morphism $(-)$ in the limit category $\cc_\infty$. The \textbf{stable weight} of $\Delta$ is then given by 

\begin{equation}w(\Delta) = \inf\{w_\infty(\ss^{s,0,0,s}\Delta)\}\end{equation}

It was shown that this weighting forms a \textbf{triangular weight}. This allows the construction of the pseudo-metrics called \textbf{fragmentation metrics} on the objects of $\cc$, which are defined to be the (pseudo) metric induced by symmetrising the following distance. We first choose a family $\ff\subset \text{\text{Obj}}(\cc)$ and then define the distance from $A$ to $B$ $\delta^\ff(A,B)$ to be

\begin{equation}
\delta^\ff(A,B)= \inf\{\sum_{i=1}^n w_\infty(\Delta_i)\}  
\end{equation}
where $\{\Delta_i\}_{i=1,\hdots,n}$ are exact triangles in $\cc_\infty$ that give an iterated cone decomposition of $A$ from $B$ using $\ff$. That is, we have a sequence of exact triangles
 \begin{equation}\begin{tikzcd}[ampersand replacement =\&]
    \Delta_1 : \& X_1\ar[r] \& 0 \ar[r] \& Y_1\ar[r]  \& TX_1\\
    \Delta_2 : \& X_2 \ar[r] \& Y_1 \ar[r] \& Y_2 \ar[r] \& TX_2\\
    \vdots \& \& \vdots \& \vdots\\
    \Delta_n: \& X_n\ar[r]  \& Y_{n-1} \ar[r] \& A \ar[r] \& TX_n                    
\end{tikzcd}\end{equation} 
where $X_i\in \ff$  $\forall i\neq j$ and $X_j=T^{-1}B$. The symmetrisation gives a pseudo metric

\begin{equation}
    d^\ff(A,B)=\max\{\delta^\ff(A,B),\delta^\ff(B,A)\}
\end{equation}
Clearly this metric can be degenerate, if $A$ and $B$ both belong to $\ff$ then $d^\ff(A,B)=0$. It can also be seen to be infinite often, for example if $\ff=\empty$ there is no hope of building $A$ from $B$ unless $A=B$. These issues are discussed in detail in \cite{BCZ}. The goal of the following sections is to enlarge the metrics $d^\ff$ to metrics on $\text{Split}(\cc_\infty)$.

\section{The persistence category of pairs}\label{sectpcatpair}
The Karoubi completion $\text{Split}(\cc)$ of a category $\cc$, can also be realised as the category of pairs $(A,e)$ where $A\in \C$ and $e:A \to A$ is an idempotent. A morphism $f:(A,e_A) \to (B,e_B)$ is a morphism $f\in \text{Hom}_\cc(A,B)$, with the added requirement that it intertwines the respective idempotents, i.e., $e_B \circ f = f = f \circ e_A$, or equivalently $f = e_B \circ f \circ e_A$. There is an obvious inclusion $\iota: \cc \to \text{Split}(\cc)$, given by $\iota(A)= (A,1_A)$ and $\iota (f) = f$. It is clear that this constructive definition of $\text{Split}(\cc)$ is equivalent to the definition via retracts of representable presheaves. The equivalence sends $(A,e_A)$ to (a choice of) the splitting of $\yy(e_A)$ in $\text{PSh}(\cc)$. The aim of this section is to construct a persistence `category' $\widehat{\text{Split}}_\pp(\cc)$ which is a persistence refinement of $\text{Split}(\cc_\infty)$ (viewed as the category of pairs). Objects of this category will be pairs $(A,e_A)$ where $e_A: A \to \ss^{-r}A$ is a `weighted idempotent'. Similarly to the usual setting, a morphism $f: (A,e_A) \to (B,e_B)$ should intertwine the weighted idempotents $e_B$ and $e_A$ in some way. It quickly becomes clear that there is an issue with this construction, as there is a lack of identity morphisms in the category. This lack of identity morphism may not be a huge complication, as we often need to work with `categories' without identities, often referred to as semi-categories (see \cite{Mit}). Thus before we construct our persistence `category' we first describe how we can generalise persistence categories in the sense of \cite{BCZ} to persistence categories without identities.
\subsection{Persistence semi-categories}
\begin{defn}
    A \textbf{persistence semi-category} $\cc$ consists of a collection of objects, $\text{Obj}(\cc)$, and for every $A,B\in \text{Obj}(\cc)$ there exists a persistence module $\text{Hom}_\cc(A,B): (\R,<) \to \text{Mod}_k$. For every $A,B,C\in \text{Obj}(\cc)$ there exists a family of $k$-module morphisms
    \begin{equation}\circ^{A,B,C}_{r,s}: \text{Hom}_\cc(A,B)(r) \oplus \text{Hom}_\cc(B,C)(s) \to \text{Hom}_\cc(A,C)(r+s) \end{equation}
    such that the following commutes: 

    \begin{equation}\begin{tikzcd}
        \text{Hom}_\cc(A,B)(r) \oplus \text{Hom}_\cc(B,C)(s) \ar[r,"\circ^{A,B,C}_{r,s}"]& \text{Hom}_\cc(A,C)(r+s)\\
        \text{Hom}_\cc(A,B)(r') \oplus \text{Hom}_\cc(B,C)(s') \ar[u,"i_{r',r}^{A,B}\oplus i_{s',s}^{B,C}"]\ar[r,"\circ^{A,B,C}_{r',s'}"]& \text{Hom}_\cc(A,C)(r'+s')\ar[u,"i_{r'+s',r+s}^{A,C}"]
    \end{tikzcd}\end{equation}
    where $i^{j,k}_{t,t'}= \text{Hom}_\cc(j,k)(\iota_{t,t'})$ are the corresponding persistence module structure morphims. We also require the following:
    
    \begin{itemize}
        \item Associativity of compositions, i.e., $\circ^{A,C,D}_{r+s,t} \circ_{\text{Mod}_k} (\circ_{r,s}^{A,B,C} \oplus 1))= \circ_{r,s+t}^{A,B,D} \circ_{\text{Mod}_k} (1\oplus \circ^{B,C,D}_{s,t})$ and this commutes with the persistence module structure morphisms.

        \item For every object $A\in \cc$, there exists an \textbf{$r$-identity} for some large enough $r\geq 0\in \R$. This is a map $\zeta_r^A\in \text{Hom}_\cc(A,A)(r)$ such that for all $f \in \text{Hom}_\cc(A,B)(s)$ 
        \begin{equation}f \circ \zeta_r^A = i_{s,s+r}(f) \end{equation}
        similarly for any $g \in \text{Hom}_\cc(C,A)(s)$
        \begin{equation}\zeta_r^A \circ g = i_{s,s+r}(g)\end{equation} 
        
        \end{itemize}
\end{defn}
\begin{rmk}
    If there exists an $r$-identity on $A$, then there naturally exists an $s$-identity for any $s\geq r$, it is given by $\zeta_s^A = i_{r,s}\circ \zeta_r^A$. With this we define a map $\zeta_\cc:\text{Obj}(\cc) \to \R_{\geq 0}$,
    
    \begin{equation}\zeta_\cc:A \mapsto \inf \{r: \exists \text{ $r$-identity }\zeta_r^A:  A \to A \} \end{equation} 
    We will often denote $\lfloor A \rfloor := \zeta_\cc(A)$.
\end{rmk}

We do not require objects to have an identity morphism. The $r$-identities are a generalisation that act as an `identity with weight $r$'.

\begin{rmk}
    $A$ has an identity if and only if $\zeta(A)= 0$.
\end{rmk}

\begin{defn}
    The \textbf{flattened category} $\cc^\ss$ associated to $\cc$, is the semi-category (without identities in general) whose objects are given by formal objects of the form $\ss^a A$ where $A\in \cc$ and $a\in \R$. The morphisms are defined by 
    \begin{equation}\text{Hom}_{\cc^\ss}(\ss^a A, \ss^{b}B):=\text{Hom}_\cc(A,B)(a-b).\end{equation}
    Composition is defined using composition in $\cc$, i.e., if $f\in \text{Hom}_{\cc^\ss}(\ss^a A, \ss^b B)$ and $g\in \text{Hom}_{\cc^\ss}(\ss^b B, \ss^c C)$ then $g \circ f\in  \text{Hom}_\cc(A,B)((a-b) + (b-c))=\text{Hom}_\cc(A,B)(a-c)=\text{Hom}_{\cc^\ss}(\ss^a A, \ss^c C)$.
\end{defn}
\begin{rmk}
    There is a canonical functor $\ss^r:\cc^\ss \to \cc^\ss$ for every $r\in \R$, given by $\ss^r (\ss^a A)= \ss^{a+r}A$ on objects and by the `identity' on morphisms. By this we mean $\ss^r: \text{Hom}_{\cc^\ss}(\ss^aA , \ss^b B) \to \text{Hom}_{\cc^\ss}(\ss^{a+r}A , \ss^{b+r} B) $ exactly given by $\text{Id}:\text{Hom}_\cc(A,B)(a-b) \to \text{Hom}_\cc (A,B)((a+r)-(b+r))=\text{Hom}_\cc(A,B)(a-b)$.
\end{rmk}

Assume $A\in \cc$ is such that $\lfloor A\rfloor=\hat{a}$. Then for all $r\geq \hat{a}$ there is a corresponding map $\zeta^A_r\in \text{Hom}_{\cc^\ss}(A,\ss^{-r}A)$. There are also maps $\zeta_r^{\ss^{b}A}\in \text{Hom}(\ss^{b}A, \ss^{b-r}A)$, given by $\ss^b\zeta_r^A$ for any $b\in \R$. Assuming that $\zeta_r^A$ and $\zeta_r^B$ exist, then we find the following commutes for any $f\in \text{Hom}_{\cc}(A,B)$
\begin{equation}\begin{tikzcd}
    A \ar[r,"f"]\ar[d,"\zeta_r^A"]& B\ar[d,"\zeta_r^B"]\\
    \ss^{-r} A \ar[r,"\ss^{-r} f"]& \ss^{-r} B
\end{tikzcd}\end{equation}
This follows directly from the commutativity of morphisms and persistence structure morphisms in $\cc$. Finally, we note that composition of $\zeta$'s is additive, i.e., $\zeta_{s}^{\ss^{-r}A} \circ \zeta_r^A = \zeta_{r+s}^A$ (with the obvious requirement that $s,r\geq \zeta_{\cc}(A)$).

\subsection{Calculus of fractions in semi-categories}

We would like to describe the limit category $\cc_\infty$ as a localisation $[W^{-1}]\cc_0$ with respect to a class of morphisms. To do so we need to ensure that this makes sense in a category without identities. In order to do so we need to add an extra criterion for the class $W$ to satisfy. The axioms for $W$ to be a (left) calculus of fractions in a semi-category are as follows:
\begin{enumerate}
    \item $W$ contains any identity morphisms.
    \item $W$ is closed with respect to composition.
    \item Assume $f\in W$ and $x$ is any morphism, then the pull back square $\begin{tikzcd}
                  D\ar[r,"f'",dotted]\ar[d,dotted,"x'"]&C\ar[d,"x"]\\
                 A \ar[r,"f"]& B
            \end{tikzcd}$ exists with $f'\in W$.
            \item If there exists a commutative  diagram $\begin{tikzcd}
            A\ar[r,shift left = 0.75ex, "f"]\ar[r,shift right = 0.75ex, swap,"g"] &  B \ar[r,"t"]&  D
        \end{tikzcd}$ with $t\in W$, then there exists a second commutative diagram $\begin{tikzcd}
             C \ar[r,"s"]&  A\ar[r,shift left = 0.75ex, "f"]\ar[r,shift right = 0.75ex, swap,"g"] &  B 
        \end{tikzcd}$ with $s\in W$. 
        \item For any object $A$, there exists an $A'$ and $w:A' \to A$ in $W$. 
\end{enumerate}

\begin{rmk}
    The only real difference is axiom 5, we will require this for reflectivity of the equivalence relation defined on morphisms. Axiom 1 is a slight restatement of the original axiom 1 for CLFs where we require any identities that exist to be contained in $W$.
\end{rmk}

 The localisation $[W^{-1}]\cc$, will have the same objects as $\cc$, and $\text{Hom}_{[W^{-1}]\cc}(A,B)$ will consist of roof diagrams
\begin{equation}\begin{tikzcd}A &\ar[l,swap,"w"] X \ar[r,"f"] & B\end{tikzcd} \end{equation}
with $w\in W$ and $f\in \text{Hom}_\cc(X,B)$ modulo an equivalence $\sim$. We say $
    \begin{tikzcd}
   A & \ar[l,swap,"w"]\ar[r,"f"]X & B
\end{tikzcd}
$ and $\begin{tikzcd}
   A & \ar[l,swap,"w'"]\ar[r,"f'"]X' & B
\end{tikzcd}$ are equivalent iff there exists a a third roof diagram $\begin{tikzcd}A &\ar[l,swap,"w''"] X''\ar[r,"f''"] & B\end{tikzcd}$, morphisms $\phi$ and $\psi$ and a commutative diagram in $\cc$

\begin{equation}\begin{tikzcd}
    & \ar[dl,swap,"w"]X\ar[dr,"f"] &\\
    A &\ar[l,swap,"w''"]\ar[u,"\phi"] X''\ar[r,"f''"]\ar[d,"\psi"] & B\\
    & \ar[ul,"w'"]X'\ar[ur,swap,"f'"] &
\end{tikzcd}\end{equation}

\begin{prop}
If $W$ is a (left) calculus of fractions in a semi-category $W$ then $\sim$ is an equivalence relation.
\begin{proof}
Note that the proof is essentially the same as in the usual case:

\begin{itemize}
    \item  $\sim $ is reflexive.
    \begin{proof}
    By axiom 5 and axiom 2 we can construct a diagram of the following form
\begin{equation}\begin{tikzcd}
    & \ar[dl,swap,"w"]X\ar[dr,"f"] &\\
    A &\ar[l,swap,"w \circ \phi"]\ar[u,"\phi"] X'\ar[r,"f\circ \phi"]\ar[d,"\phi"] & B\\
    & \ar[ul,"w"]X\ar[ur,swap,"f"] &
\end{tikzcd}\end{equation}
    \end{proof}
    \item $\sim$ is symmetric.
    \begin{proof}
        This is clear by symmetry of the relation diagram.
    \end{proof}
    \item $\sim$ is transitive.
    \begin{proof}
        Assume that we have  $\begin{tikzcd}
           ( A &\ar[l,swap,"u"] X \ar[r,"f"]& B)  \sim  (A & \ar[l,swap,"v"]Y\ar[r,"g"] & B)
        \end{tikzcd}$  and 
        
        $\begin{tikzcd}
             (A & \ar[l,swap,"v"]Y\ar[r,"g"] & B) \sim (A &\ar[l,swap,"w"] Z\ar[r,"h"] & B)
        \end{tikzcd}$.
       In particular there exist diagrams
        
\begin{equation}\begin{tikzcd}
    & \ar[dl,swap,"u"]X\ar[dr,"f"] & &  &\ar[dl,swap,"v"]Y\ar[dr,"g"] &\\
    A &\ar[l,swap,"x"]\ar[u,"\alpha"] P\ar[r,"k"]\ar[d,"\beta"] & B &  A &\ar[l,swap,"y"]\ar[u,"\gamma"] Q\ar[r,"l"]\ar[d,"\delta"] & B\\
    & \ar[ul,"v"]Y\ar[ur,swap,"g"] & & &\ar[ul,"t"]Z\ar[ur,swap,"h"] &
\end{tikzcd}\end{equation}

We `glue' these together along $Y$ to obtain
\begin{equation}\begin{tikzcd}
   & &\ar[ddll,bend right = 30,swap,"u"] X\ar[ddrr,bend left = 30, "f"] & &\\
   & & P\ar[u,"\alpha"]\ar[d,"\beta"]\ar[drr,"k"]\ar[dll,swap,"x"] & &\\
   A & & Y \ar[ll,swap,"v"]\ar[rr,"g"]& & B\\
   & & Q\ar[d,"\delta"]\ar[u,"\gamma"]\ar[urr,"l"]\ar[ull,swap,"y"] & & \\
   & & Z \ar[uull,bend left = 30,"t"]\ar[rruu,bend right= 30, swap, "h"]& &
\end{tikzcd}\end{equation}

Using axiom 3 we can pull back $\begin{tikzcd}
     & P\ar[d,"v \circ \beta"]\\
     Q \ar[r,"v \circ \gamma"]& A
\end{tikzcd}$ to $\begin{tikzcd}
   R \ar[r,"\rho"]\ar[d,"\sigma"]  & P\ar[d,"v \circ \beta"]\\
     Q \ar[r,"v \circ \gamma"]& A
\end{tikzcd}$. It follows that $v \circ \beta \circ \rho = v \circ \beta \circ \sigma $, i.e., we have a diagram $\begin{tikzcd}
            R \ar[r,shift left = 0.75ex, "\beta \circ \rho"]\ar[r,shift right = 0.75ex, swap,"\gamma \circ \sigma"] &  Y \ar[r,"v"]&  A
        \end{tikzcd}$. Using axiom 4 we obtain a diagram $\begin{tikzcd}
            T\ar[r,"w"] & R \ar[r,shift left = 0.75ex, "\beta \circ \rho"]\ar[r,shift right = 0.75ex, swap,"\gamma \circ \sigma"] &  Y \ar[r,"v"]&  A
        \end{tikzcd}$. Finally, putting all of this together we obtain a commutative diagram 

\begin{equation}\begin{tikzcd}
    & & X\ar[ddll,swap,"u"] \ar[ddrr,"f"]& &\\
    & & & & \\
    A& &\ar[uu,"\alpha \circ \rho \circ w"] \ar[ll,swap,"v  \circ \beta \circ \rho \circ w"] \ar[dd,"\delta \circ \sigma \circ w"]T\ar[rr,"g\circ \beta \circ \rho \circ w"] & & B\\
    & & & & \\
    & & \ar[uull,"t"]Z\ar[uurr,swap,"h"] & & 
\end{tikzcd}\end{equation}
Checking this commutes: 
\begin{itemize}
    \item[Top Left:] \begin{align*}
    u \circ \alpha \circ \rho \circ w = & x \circ \rho \circ w\\
    =&  v \circ \beta \circ \rho \circ w
\end{align*}
\item[Bottom Left:]
\begin{align*}
    t \circ \delta \circ \sigma \circ w = & y \circ \sigma \circ w\\
    =& v \circ \gamma \circ \sigma \circ w\\
    =& v \circ \beta \circ \rho \circ w
\end{align*}
\item[Top Right:]
\begin{align*}
    f \circ \alpha \circ \rho \circ w = & k \circ \rho \circ w \\
    =& g \circ \beta \circ \rho \circ w
\end{align*}
\item[Bottom Right:]
\begin{align*}
    h \circ \delta \circ \sigma \circ w =& l \circ \sigma \circ w \\
    =& g \circ \gamma \circ \sigma \circ w\\
    =& g \circ \beta \circ \rho \circ w
\end{align*}
\end{itemize}
Finally we notice that $v \circ \beta \circ \rho \circ w \in W$. This follows from the following; $v \circ \beta = x \in W$ by definition, by axiom 3 we have $\rho\in W$ and by axiom 4 we have $w\in W$, then by axiom 2 the composition is in $W$.
    \end{proof}
\end{itemize}

\end{proof}
\end{prop}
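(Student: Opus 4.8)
The plan is to check, in turn, that $\sim$ is reflexive, symmetric, and transitive on the class of roof diagrams $A \xleftarrow{w} X \xrightarrow{f} B$ with $w \in W$, keeping track of exactly where the absence of identities obliges us to use axiom 5 and to argue more carefully than in the classical calculus of fractions.

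Symmetry costs nothing: the mediating diagram witnessing $(A \xleftarrow{w} X \xrightarrow{f} B) \sim (A \xleftarrow{w'} X' \xrightarrow{f'} B)$ is symmetric under exchanging the two outer roofs together with the two mediating maps $\phi$ and $\psi$, so the same diagram witnesses the reverse equivalence. Reflexivity is the first place the semi-category setting matters: classically one takes the middle roof to be the given roof itself with $\phi = \psi = \mathrm{id}_X$, but an identity on $X$ need not exist. Instead I would invoke axiom 5 to pick some $\xi \colon X_0 \to X$ in $W$, take the middle roof to be $A \xleftarrow{w\xi} X_0 \xrightarrow{f\xi} B$ — a legitimate roof because $w\xi \in W$ by axiom 2 — and set $\phi = \psi = \xi$; the two required triangles then commute on the nose.

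Transitivity is the substantive part. Starting from $(A \xleftarrow{u} X \xrightarrow{f} B) \sim (A \xleftarrow{v} Y \xrightarrow{g} B)$ and $(A \xleftarrow{v} Y \xrightarrow{g} B) \sim (A \xleftarrow{w} Z \xrightarrow{h} B)$, I would unpack the two witnesses: an object $P$ with maps $\alpha \colon P \to X$, $\beta \colon P \to Y$, a structure map $x = u\alpha = v\beta \colon P \to A$ in $W$, and $f\alpha = g\beta \colon P \to B$; and symmetrically an object $Q$ with $\gamma \colon Q \to Y$, $\delta \colon Q \to Z$, $y = v\gamma = w\delta \colon Q \to A$ in $W$, and $g\gamma = h\delta$. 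One glues these two diagrams along $Y$ and then constructs a common refinement in two moves: first apply axiom 3 to pull $v\gamma \colon Q \to A$ (which lies in $W$) back along $v\beta \colon P \to A$, obtaining $R$ with $\rho \colon R \to P$ in $W$ and $\sigma \colon R \to Q$ satisfying $v\beta\rho = v\gamma\sigma$; then, since this only equalises $\beta\rho$ and $\gamma\sigma$ after post-composition with $v \in W$, apply axiom 4 to produce $s \colon T \to R$ in $W$ with $\beta\rho s = \gamma\sigma s$ as morphisms $T \to Y$. The claim is then that the roof $A \xleftarrow{v\beta\rho s} T \xrightarrow{g\beta\rho s} B$, together with the maps $\alpha\rho s \colon T \to X$ and $\delta\sigma s \colon T \to Z$, mediates the equivalence of the two outer roofs. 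The four triangle identities each collapse to one witness identity plus the equality $\beta\rho s = \gamma\sigma s$ (for instance $u\,\alpha\rho s = x\rho s = v\beta\rho s$ on the $X$ side, and $w\,\delta\sigma s = y\sigma s = v\gamma\sigma s = v\beta\rho s$ on the $Z$ side), and the leg $v\beta\rho s$ lies in $W$ because $v\beta = x \in W$, $\rho \in W$ (axiom 3), $s \in W$ (axiom 4), and $W$ is closed under composition (axiom 2).

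The step I expect to be the main obstacle is the passage from an equality that holds only after post-composing with $v$ to an honest equality of maps into $Y$: this is the sole use of axiom 4, and it is also what makes the gluing bookkeeping delicate — one has to choose the right pair of morphisms to pull back and then verify that every composite appearing in the common refinement actually lands in $W$. Each such verification is a one-line diagram chase, but there are several of them and they must all be lined up correctly.
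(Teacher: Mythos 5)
Your proposal is correct and follows essentially the same route as the paper: reflexivity via axiom 5 plus closure under composition, symmetry by symmetry of the witnessing diagram, and transitivity by gluing the two witnesses along $Y$, pulling back with axiom 3, equalising with axiom 4, and checking that the composite leg $v \circ \beta \circ \rho \circ s$ lies in $W$ by axioms 2, 3 and 4. The diagram chases you outline match the paper's verification step for step.
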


Composition of two morphisms $\begin{tikzcd}
           ( A &\ar[l,swap,"u"] X \ar[r,"f"]& B)\end{tikzcd}$   and $\begin{tikzcd}  (B & \ar[l,swap,"v"]Y\ar[r,"g"] & C)
        \end{tikzcd}$

is defined via axiom 3, we pull back $\begin{tikzcd}
    & Y\ar[d,"v"]\\
    X \ar[r,"f"]& B
\end{tikzcd}$ to obtain a diagram
\begin{equation}\begin{tikzcd}
    & & Z \ar[dl,swap,"v'"]\ar[dll,swap,bend right=30,"u \circ v'" ]\ar[drr,bend left = 30, "g \circ f'"]\ar[dr,"f'"]& &\\
     A &\ar[l,swap,"u"] X \ar[r,"f"]& B& \ar[l,swap,"v"]Y\ar[r,"g"] & C
\end{tikzcd}\end{equation}
and set  $\begin{tikzcd}
             (B & \ar[l,swap,"v"]Y\ar[r,"g"] & C)\circ ( A &\ar[l,swap,"u"] X \ar[r,"f"]& B) := (A & \ar[l,swap,"u \circ v'"]Z\ar[r,"g \circ f'"] & C)
        \end{tikzcd}$.

\begin{prop}
Composition in $[W^{-1}]\cc$is well defined. 
\begin{proof}
Assume $\begin{tikzcd}
   Z'\ar[r,"f''"]\ar[d,"v''"] & Y\ar[d,"v"]\\
    X \ar[r,"f"]& B
\end{tikzcd}$ is another pull back diagram, such that composition gives $\begin{tikzcd}
    (A & Z'\ar[l,swap,"u \circ v''"] \ar[r,"g \circ f''"]& C)
\end{tikzcd}$.

Pull back the diagram $\begin{tikzcd}
    & Z\ar[d," v'"]\\
    Z'\ar[r," v''"] & X
\end{tikzcd}$ to $\begin{tikzcd}
   Z''\ar[r,"x"]\ar[d,"y"] & Z\ar[d,"v'"]\\
    Z'\ar[r," v''"] & X
\end{tikzcd}$. Then $f \circ (v' \circ x) = f \circ (v'' \circ y)$ and hence $v \circ (f' \circ x)= v \circ (f'' \circ y)$. Since $v\in W$ we can construct $\begin{tikzcd}
    K \ar[r,"w"]& Z''\ar[r,shift left = 0.75ex,"f' \circ x "] \ar[r,swap,shift right = 0.75ex,"f'' \circ y"]& X
\end{tikzcd}$ and finally one can check the following diagram commutes and defines the equivalence

\begin{equation}\begin{tikzcd}
   & & Z\ar[ddll,swap,"u \circ v'"]\ar[ddrr,"g \circ f'"] & & \\
    \\
    A & & K \ar[uu,"x \circ w"]\ar[dd,"y \circ w"]\ar[rr,"g \circ f' \circ x\circ w"]\ar[ll,"u \circ v' \circ x \circ w"]& & C\\
    \\
    & & Z'\ar[uull,"u \circ v''"] \ar[uurr,swap,"g \circ f''"]&&
\end{tikzcd}\end{equation}

Finally, we check that if we have  $\begin{tikzcd}
           ( A &\ar[l,swap,"u"] X \ar[r,"f"]& B)  \sim  (A & \ar[l,swap,"v"]X'\ar[r,"g"] & B)
        \end{tikzcd}$  and $\begin{tikzcd}
            B & \ar[l,swap,"w"]Y\ar[r,"h"] & C
        \end{tikzcd}$ then 
        
        $\begin{tikzcd}
           ( A &\ar[l,swap,"u \circ w'"] Z\ar[r," h\circ f' "] & C )\sim ( A &\ar[l,swap,"v \circ t'"] Z'\ar[r,"h \circ g'"] & C )
        \end{tikzcd}$.

\begin{equation}\begin{tikzcd}
     & &  & && & & \\
     \\
     &&\ar[ddll,swap,"u"] X \ar[ddrr,"f"]& &  \ar[ll,swap,color= red,"w'"]Z\ar[ddrr,color= red,"f'"]\ar[ddllll,bend right= 60,swap,"u \circ w'"]\ar[ddrrrr,bend left = 60,"h \circ f'"]& & & &\\
     \\
     A & &\ar[ll,swap,"u \circ \phi"]X''\ar[uu,"\phi"]\ar[dd,"\psi"]\ar[rr,"f \circ \phi"]& & B &&\ar[ll,swap,"w"]Y\ar[rr,"h "] && C\\
     \\
     & & \ar[uull,"v"]X'\ar[uurr,swap,"g"] & &  \ar[ll,color=red,"t'"]Z'\ar[uurr,color = red,"g'"]\ar[uullll,bend left = 60,"v \circ t'"] \ar[uurrrr,bend right = 60,"h \circ g'"] & && \\
     \\
    & & & &  & &  & &
\end{tikzcd}\end{equation}

Now we construct the two pull back squares $\begin{tikzcd}
    W \ar[r,"\alpha"]\ar[d,"\beta"]& X''\ar[d,"\phi"]\\
    Z \ar[r,"w'"]& X
\end{tikzcd}$ and $\begin{tikzcd}
    W' \ar[r,"\gamma"]\ar[d,"\delta"]& X''\ar[d,"\psi"]\\
    Z' \ar[r,"t'"]& X'
\end{tikzcd}$ 
and then further pull back to $\begin{tikzcd}
    W''\ar[r,"\alpha'"]\ar[d,"\gamma'"] & W\ar[d,"\alpha"] \\
    W' \ar[r,"\gamma"]& X''
\end{tikzcd}$. So far we have constructed the following

\begin{equation}\begin{tikzcd}
    X & & \ar[ll,"w'"]Z & & & & \\
    & & W \ar[u,"\beta"]\ar[dll,"\alpha"]& & & &\\
    X''\ar[uu,"\phi"]\ar[dd,"\psi"] & & & & W''\ar[ull,"\alpha'"]\ar[dll,"\gamma'"] \\
    & & W' \ar[d,"\delta"]\ar[ull,"\gamma"]& & & & \\
    X' & & \ar[ll,"t'"]Z' & & & 
\end{tikzcd}\end{equation}
There are two canonical maps into $Y$, and they are coequalised by $w:Y \to B$,

\begin{equation}\begin{tikzcd}
    X & & \ar[ll,"w'"]Z\ar[rrrrdd,bend left = 30,"f'"] & & & & \\
    & & W \ar[u,"\beta"]\ar[dll,"\alpha"]& & & &\\
    X''\ar[uu,"\phi"]\ar[dd,"\psi"] & & & & W''\ar[ull,"\alpha'"]\ar[dll,"\gamma'"] \ar[rr,yshift = 0.75ex,"f' \circ \beta \circ \alpha'"]\ar[rr,yshift= -0.75ex,swap,"g'\circ \delta \circ \gamma'"]& & Y\ar[rr,"w"] & & B\\
    & & W' \ar[d,"\delta"]\ar[ull,"\gamma"]& & & & \\
    X' & & \ar[ll,"t'"]Z' \ar[rrrruu,bend right = 30,"g'"]& & & 
\end{tikzcd}\end{equation}
Indeed we have:
\begin{align*}
    (w \circ f') \circ \beta \circ \alpha' =& f \circ (w' \circ \beta) \circ \alpha'\\
    =& (f \circ \phi) \circ \alpha \circ \alpha' \\
    =& g \circ \psi \circ (\alpha \circ \alpha')\\
    =& g \circ (\psi \circ \gamma) \circ \gamma'\\
    =& (g \circ t') \circ \delta \circ \gamma'\\
    =& w \circ g' \circ \delta \circ \gamma'
\end{align*}
By axiom 4 we construct $\begin{tikzcd}
    K\ar[r,"s"] & W'' \ar[rr,yshift = 0.75ex,"f' \circ \beta \circ \alpha'"]\ar[rr,yshift= -0.75ex,swap,"g'\circ \delta \circ \gamma'"]& &Y \ar[r,"w"]& B
\end{tikzcd}$. Finally one finds the following commutes 

\begin{equation}\begin{tikzcd}
    && Z \ar[ddll,swap,"u \circ w'"]\ar[ddrr,"h \circ f'"]&& \\
    \\
   A  && K\ar[uu," \beta \circ \alpha' \circ s"] \ar[dd,"\delta \circ \gamma' \circ s"]\ar[ll,"u \circ \phi \circ \alpha \circ \alpha' \circ s"]\ar[rr,"h \circ f' \circ \beta \circ \alpha' \circ s"]&& C \\
   \\
    && \ar[uull,"v \circ t'"]Z'\ar[rruu,swap,"h \circ g'"] && 
\end{tikzcd}\end{equation}

Composition by two equivalent roof diagrams on the right follows by a similar argument. 

\end{proof}
\end{prop}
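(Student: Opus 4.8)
The plan is to check that the pullback construction of the composite of two roofs descends to equivalence classes, which separates into two independent verifications: (a) the composite roof is independent, up to $\sim$, of the choice of pullback used to define it; and (b) replacing either input roof by a $\sim$-equivalent one yields a $\sim$-equivalent composite. The only tools needed are closure of $W$ under composition (axiom 2), stability of $W$ under base change (axiom 3), and the left-cancellation axiom (axiom 4); in particular axiom 5 and the existence of any identities play no role in this proposition.

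For (a), fix roofs $A \xleftarrow{u} X \xrightarrow{f} B$ and $B \xleftarrow{v} Y \xrightarrow{g} C$ and let $Z$, $Z'$ be two pullbacks of $f$ along $v$, with legs $p : Z \to X$, $q : Z \to Y$ and $p' : Z' \to X$, $q' : Z' \to Y$, where $p, p' \in W$ and $fp = vq$, $fp' = vq'$. I would form $N$, the pullback of $p$ along $p'$, with projections $x : N \to Z$ and $y : N \to Z'$; both lie in $W$ by axiom 3, and $px = p'y$. Then $fpx = fp'y$, and substituting $fp = vq$, $fp' = vq'$ gives $v(qx) = v(q'y)$; since $v \in W$, axiom 4 supplies $s : K \to N$ in $W$ with $qxs = q'ys$. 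The roof $A \xleftarrow{upxs} K \xrightarrow{gqxs} C$ is then a common refinement of the two candidate composites $A \xleftarrow{up} Z \xrightarrow{gq} C$ and $A \xleftarrow{up'} Z' \xrightarrow{gq'} C$, with comparison maps $xs$ into $Z$ and $ys$ into $Z'$; commutativity of the refinement hexagon reduces to $px = p'y$ and $qxs = q'ys$, and its left leg lies in $W$ since $u, p, x, s \in W$ and $W$ is composition-closed.

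For (b) it suffices, by an essentially identical (though not literally symmetric) argument, to treat a change of representative in the left roof. Suppose $A \xleftarrow{u} X \xrightarrow{f} B$ and $A \xleftarrow{u'} X' \xrightarrow{f'} B$ are equivalent, witnessed by a third roof with comparison maps $\phi : E \to X$ and $\psi : E \to X'$ satisfying $u\phi = u'\psi$ (this common value being the left leg of the witnessing roof, hence in $W$) and $f\phi = f'\psi$, and fix $B \xleftarrow{v} Y \xrightarrow{g} C$. Composing gives $Z$ (the pullback of $f$ along $v$, legs $p \in W$ and $q$) and $Z'$ (the pullback of $f'$ along $v$, legs $p' \in W$ and $q'$), with composites $A \xleftarrow{up} Z \xrightarrow{gq} C$ and $A \xleftarrow{u'p'} Z' \xrightarrow{gq'} C$. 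I would then build, in turn, the pullback $P_1$ of $p$ along $\phi$ (legs $\alpha \in W$, $\beta$), the pullback $P_2$ of $p'$ along $\psi$ (legs $\gamma \in W$, $\delta$), and the pullback $P_3$ of $\alpha$ along $\gamma$ (legs $\alpha', \gamma' \in W$). A diagram chase through these squares together with $f\phi = f'\psi$, $fp = vq$, $fp' = vq'$ yields $v(q\beta\alpha') = fp\beta\alpha' = f\phi\alpha\alpha' = f'\psi\alpha\alpha' = f'\psi\gamma\gamma' = f'p'\delta\gamma' = v(q'\delta\gamma')$, so axiom 4 gives $t : K \to P_3$ in $W$ with $q\beta\alpha't = q'\delta\gamma't$. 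The roof $A \xleftarrow{u\phi\alpha\alpha't} K \xrightarrow{gq\beta\alpha't} C$ is then the desired common refinement, via the comparison maps $\beta\alpha't$ into $Z$ and $\delta\gamma't$ into $Z'$; its left leg lies in $W$ because $u\phi \in W$ and $\alpha, \alpha', t \in W$, and commutativity follows from $u\phi = u'\psi$, $p\beta = \phi\alpha$, $p'\delta = \psi\gamma$, $\alpha\alpha' = \gamma\gamma'$, and the equalizing identity for $t$.

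I expect the only real obstacle to be the organization of the tower of pullbacks in part (b) and the accounting of which legs land in $W$. The subtle point is that the comparison maps $\phi, \psi$ of an equivalence need not themselves be in $W$, so the final left leg cannot be built directly from $\phi$; what saves the argument is that the \emph{composite} $u\phi$ equals the left leg of the witnessing roof and is therefore in $W$. Beyond this the proof is the familiar calculus-of-fractions bookkeeping, and the semi-category setting introduces nothing new here, since at no step is an identity morphism or axiom 5 invoked.
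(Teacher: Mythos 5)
Your proposal is correct and follows essentially the same route as the paper: part (a) is the paper's pullback-of-the-two-pullbacks argument with axiom 4 applied to the parallel pair coequalised by $v$, and part (b) is exactly the paper's three-pullback tower ($P_1$, $P_2$, $P_3$ corresponding to the paper's $W$, $W'$, $W''$) followed by axiom 4, with the remaining case left as a symmetric argument just as in the paper. Your explicit bookkeeping that the final left legs lie in $W$ (in particular that one uses $u\phi\in W$ rather than $\phi$ itself) is only implicit in the paper's proof, but it is the same argument.
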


\begin{rmk}
In general, the localisation category will also not have identities.
\end{rmk}

    Returning to $\cc^\ss$, the `flattened' category associated to the persistence semi-category $\cc$. We define a class of morphisms $W$ consisting of weighted isomorphisms.

    \begin{defn}
    We say a morphism $f\in \text{Hom}_{\cc^\ss}(\ss^aA,\ss^bB)$ is a \textbf{weighted isomorphism} of weight $r$, if there exists a $g\in \text{Hom}_{\cc^\ss}(\ss^b B, \ss^{a-r}A)$ such that $g \circ f = \zeta_r^A$ and $f \circ \ss^r g= \zeta_r^{\ss^r B}$. 
      \end{defn}
    \begin{prop}
        The collection of weighted isomorphisms in $\cc^\ss$ forms a calculus of fractions.
    \begin{proof}
    We prove each requirement in turn:
    \begin{enumerate}
        \item Every identity belongs to $W$.
        \begin{proof}
            Recall that $A$ has identity iff $\zeta(A)=0$. In particular $1_A=\zeta_0^A$. Naturally $1_A$ is therefore a weighted isomorphism of weight $0$ since $\zeta_0^A \circ \zeta_0^A=\zeta_0^A$.
        \end{proof}
        \item Composition in $W$ is closed.
        \begin{proof}
            Assume $f\in \text{Hom}_{\cc^\ss}(\ss^a A,\ss^b B)$ is an $r$-isomorphism, and $u\in \text{Hom}_{\cc^\ss}(\ss^b B,\ss^c C)$ is an $s$-isomorphims. There therefore exists $g\in \text{Hom}_{\cc^\ss}(\ss^b B, \ss^{a-r}A)$ such that $g \circ f = \zeta_r^{\ss^aA}$ and $f \circ \ss^r g= \zeta_r^{\ss^{b+r} B}$. As well as a morphism $v\in \text{Hom}_{\cc^\ss}(\ss^{c}C,\ss^{b-s}B)$ satisfying $v \circ u = \zeta_{s}^{\ss^b B}$ and $u \circ \ss^s v = \zeta_s^{\ss^{b+s} C}$. Consider the composition $ (\ss^{-s}g \circ v )\circ (u \circ f)$. By definition, this is given by the composition in $\cc$, $g \circ v \circ u \circ f$. We have
            \begin{align*}
                g \circ v \circ u \circ f=& g \circ \zeta_s^A \circ f\\
                =&g \circ i_{(a-b),(a-b)+s}(f)\\
                =&i_{r,r+s}(g \circ f)\\
                =& i_{r,r+s}(\zeta_r^A)\\
                =&\zeta_{r+s}^A
            \end{align*}
            Hence $ (\ss^{-s}g \circ v )\circ (u \circ f)= \zeta_{r+s}^{\ss^{a}A}\in \text{Hom}_{\cc^\ss}(\ss^a A,\ss^{a-(r+s)}A)$. A similar argument shows that $(u \circ f) \circ \ss^{r+s}( \ss^{-s}g \circ v)= \zeta_{r+s}^{\ss^{c+(r+s)}C}$.
        \end{proof}
        \item Assume $f\in W$ and $x$ is any morphism, then the pull back square $\begin{tikzcd}
                 \ss^d D\ar[r,"f'",dotted]\ar[d,dotted,"x'"]& \ss^cC\ar[d,"x"]\\
                 \ss^aA \ar[r,"f"]& \ss^b B
            \end{tikzcd}$ exists with $f'\in W$.
        \begin{proof}
            Assume $f$ is an $r$-isomorphism with $g\in \text{Hom}_{\cc^\ss}(\ss^b B, \ss^{a-r}A)$ such that $g \circ f = \zeta_r^{\ss^aA}$ and $f \circ \ss^r g= \zeta_r^{\ss^{b+r} B}$. Set $s:=\zeta_\cc(C)$, then one can check the following commutes

            \begin{equation} \begin{tikzcd}\ss^{c+r+s}C\ar[d,swap," \ss^{r} g \circ \ss^{r}x \circ \zeta_{s} "]\ar[r,"\zeta_{r+s}"] & \ss^c C\ar[d,"x"]\\
                 \ss^aA \ar[r,"f"]& \ss^b B
            \end{tikzcd}\end{equation}
        \end{proof}
        \item If there exists a diagram $\begin{tikzcd}
            \ss^a A\ar[r,shift left = 0.75ex, "f"]\ar[r,shift right = 0.75ex, swap,"g"] & \ss^b B \ar[r,"t"]& \ss^d D
        \end{tikzcd}$ with $t\in W$ then there exists another diagram  $\begin{tikzcd}
            \ss^c C \ar[r,"s"]& \ss^a A\ar[r,shift left = 0.75ex, "f"]\ar[r,shift right = 0.75ex, swap,"g"] & \ss^b B 
        \end{tikzcd}$ with $s\in W$.
        \begin{proof}
            Assume $t$ is an $r$-isomorphism and $r\in \text{Hom}_{\cc^\ss}(\ss^d D,\ss^{b-r} B)$ is such that $r \circ t = \zeta_r$ and $t \circ \ss^r r= \zeta_r$.  Take $m= \max \{r, \zeta_\cc(A)\}$ then we have
            \begin{align*}
           & t \circ f = t \circ g\\
            \implies & r \circ t \circ f = r \circ t \circ g\\
               \implies &\zeta_r \circ f = \zeta_r \circ g\\
               \implies & \zeta_m \circ f = \zeta_m \circ g \\
               \implies &\ss^m f \circ \zeta_m = \ss^m g\circ \zeta_m\\
               \implies & f \circ \zeta_m = g \circ \zeta_m
            \end{align*}

Hence we have the following $\begin{tikzcd}
            \ss^{a+m}A \ar[r,"\zeta_m"]&\ss^a A\ar[r,shift left = 0.75ex, "f"]\ar[r,shift right = 0.75ex, swap,"g"] & \ss^bB 
        \end{tikzcd}$.
           
        \end{proof}
        \item For every $\ss^a A\in \cc^\ss$ there exists a weighted isomorphism $u:\ss^{a'}A' \to \ss^a A$. 
        \begin{proof}
            Simply take $\zeta_{a'}:\ss^{a+a'}A \to \ss^aA$ with $a'\geq \zeta_\cc(A)$.
        \end{proof}
    \end{enumerate}

\end{proof}
\end{prop}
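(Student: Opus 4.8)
The plan is to check the five axioms of a left calculus of fractions one at a time, and the single device that makes all of them go through is the observation that the weight-identity morphisms are themselves weighted isomorphisms. So the first thing I would record is a sub-lemma: for any $a\in\R$ and any $n\geq\lfloor A\rfloor$, the morphism $\zeta_n^{\ss^a A}\in\text{Hom}_{\cc^\ss}(\ss^a A,\ss^{a-n}A)$ is a weighted isomorphism of weight $2n$, with partial inverse $\zeta_n^{\ss^{a-n}A}$. Indeed $\zeta_n^{\ss^{a-n}A}\circ\zeta_n^{\ss^a A}=\zeta_{2n}^{\ss^a A}$ and $\zeta_n^{\ss^a A}\circ\ss^{2n}\zeta_n^{\ss^{a-n}A}=\zeta_n^{\ss^a A}\circ\zeta_n^{\ss^{a+n}A}=\zeta_{2n}^{\ss^{a+n}A}$, using only the additivity of the $\zeta$'s recorded above. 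This sub-lemma immediately settles axiom 1 (any identity that exists is $\zeta_0^{\ss^a A}$, a weight-zero weighted isomorphism inverse to itself) and axiom 5 (given $\ss^a A$, take $w=\zeta_{a'}^{\ss^{a+a'}A}\colon\ss^{a+a'}A\to\ss^a A$ for any $a'\geq\lfloor A\rfloor$).

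For axiom 2 (closure of $W$ under composition) I would take $f\in\text{Hom}_{\cc^\ss}(\ss^a A,\ss^b B)$ a weighted isomorphism of weight $r$ with partial inverse $g$, and $u\in\text{Hom}_{\cc^\ss}(\ss^b B,\ss^c C)$ of weight $s$ with partial inverse $v$, and claim that $u\circ f$ is a weighted isomorphism of weight $r+s$ with partial inverse $\ss^{-s}g\circ v$. The one computation to check is
\[
(\ss^{-s}g\circ v)\circ(u\circ f)=g\circ(v\circ u)\circ f=g\circ\zeta_s^{\ss^b B}\circ f=i_{r,r+s}(g\circ f)=i_{r,r+s}\!\left(\zeta_r^{\ss^a A}\right)=\zeta_{r+s}^{\ss^a A},
\]
where I use that $\ss$ acts as the identity on morphisms of $\cc^\ss$, that post-composition with $\zeta_s$ realizes the persistence structure map, and the compatibility of structure maps with composition; the mirror identity for $(u\circ f)\circ\ss^{r+s}(\ss^{-s}g\circ v)$ is identical. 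The only care needed is in tracking the shift exponents, which is mechanical.

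The main obstacle is axiom 3: the existence of a ``pullback'' square with the pulled-back leg in $W$. In the semi-category $\cc^\ss$ there are no honest pullbacks, so the point is to produce a commutative square that plays the role a pullback plays in the roof calculus. Given $f\colon\ss^a A\to\ss^b B$ in $W$ of weight $r$ with partial inverse $g\colon\ss^b B\to\ss^{a-r}A$ and an arbitrary $x\colon\ss^c C\to\ss^b B$, I would set $s:=\lfloor C\rfloor$ and propose the square with upper-left object $\ss^{c+r+s}C$, top edge $\zeta_{r+s}^{\ss^{c+r+s}C}\colon\ss^{c+r+s}C\to\ss^c C$, and left edge $\ss^r g\circ\ss^r x\circ\zeta_s^{\ss^{c+r+s}C}\colon\ss^{c+r+s}C\to\ss^a A$. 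Commutativity, namely $f\circ(\ss^r g\circ\ss^r x\circ\zeta_s)=x\circ\zeta_{r+s}$, drops out of $f\circ\ss^r g=\zeta_r$, the naturality square relating $\zeta_r$ to $x$, and $\zeta_r\circ\zeta_s=\zeta_{r+s}$; and the top edge lies in $W$ by the sub-lemma (with $n=r+s\geq s=\lfloor C\rfloor$, since $r\geq 0$). I expect the friction here to be purely in (i) convincing oneself that a weak square suffices for the roof calculus and (ii) getting the shift exponents to line up — there is no deeper difficulty once the sub-lemma is available.

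Finally, axiom 4 (right-cancellation up to $W$): given $f,g\colon\ss^a A\to\ss^b B$ and $t\colon\ss^b B\to\ss^d D$ in $W$ of weight $r$ with partial inverse $t^{-1}$, and $t\circ f=t\circ g$, I would compose with $t^{-1}$ to get $\zeta_r^{\ss^b B}\circ f=\zeta_r^{\ss^b B}\circ g$, apply persistence structure maps to raise this to $\zeta_m^{\ss^b B}\circ f=\zeta_m^{\ss^b B}\circ g$ for $m:=\max\{r,\lfloor A\rfloor\}$, then slide $\zeta_m$ across $f$ and $g$ using the naturality square (and the fact that $\ss^m f$ and $f$ coincide in $\cc^\ss$) to obtain $f\circ\zeta_m^{\ss^{a+m}A}=g\circ\zeta_m^{\ss^{a+m}A}$, with $s:=\zeta_m^{\ss^{a+m}A}\in W$ by the sub-lemma ($m\geq\lfloor A\rfloor$). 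Assembling the five verifications gives the proposition; apart from the choice of weak pullback in axiom 3, everything reduces to bookkeeping with shifts and the additivity of the weight-identities.
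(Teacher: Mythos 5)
Your proposal is correct and takes essentially the same route as the paper's proof: the same verification of the five axioms, with the identical square $\zeta_{r+s}\colon\ss^{c+r+s}C\to\ss^c C$, left edge $\ss^r g\circ\ss^r x\circ\zeta_s$ for axiom 3, and the identical cancellation argument with $m=\max\{r,\lfloor A\rfloor\}$ for axiom 4. The only difference is cosmetic: your explicit sub-lemma that $\zeta_n$ is itself a weighted isomorphism (of weight $2n$, with partial inverse $\zeta_n$) spells out what the paper uses implicitly in axioms 1 and 5.
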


Recall that given a persistence category $\cc$, one can consider the limit category $\cc_\infty$, the objects of this are the same as that of $\cc$ but hom-sets are defined as
\begin{equation*}
    \text{Hom}_{\cc_\infty}(A,B):= \lim_{r\to \infty} \text{Hom}_\cc(A,B)(r).
\end{equation*}
It is clear that this definition extends to persistence semi-categories. Moreover one can see that the limit category of a persistence semi-category is a category in the usual sense.
    \begin{lemma}
        The localisation $[W^{-1}]\cc^\ss$ of $\cc^\ss$ with respect to weighted isomorphisms is a category with identities and moreover, is equivalent to the limit category $\cc_\infty$.
        \begin{proof}
        The identity morphism on $A\in [W^{-1}]\cc$ is given by the equivalence class of the roof diagram $\begin{tikzcd}
            \ss^aA & \ar[l,swap,"\zeta_r"]\ss^{a+r}A \ar[r,"\zeta_r"]& \ss^a A
        \end{tikzcd}$ where $r\geq \zeta_{\cc}(A)$. To see this, consider the composition with $\begin{tikzcd}
            \ss^a A & \ar[l,swap, "w"]\ss^xX \ar[r,"f"]& \ss^b B
        \end{tikzcd}$ where $w$ is a $s$-isomorphism. We can take composition to be given by

        \begin{equation}\begin{tikzcd}
           & & \ss^{x+r+s} X\ar[ddll,bend right = 30,swap,"
w \circ \zeta_{r+s}"]\ar[ddrr,bend left = 30,"f \circ \zeta_{r+s}"]\ar[ddl,dotted, "w \circ \zeta_s"]\ar[ddr,dotted,swap,"\zeta_{r+s}"]& & \\
            \\
            \ss^aA & \ar[l,swap,"\zeta_r"]\ss^{a+r}A \ar[r,"\zeta_r"]& \ss^a A& \ar[l,swap, "w"]\ss^xX \ar[r,"f"]& \ss^b B
        \end{tikzcd}\end{equation}
        Then consider 
        \begin{equation}\begin{tikzcd}
        & & \ss^{x+r+s}X\ar[ddll,swap,"w \circ \zeta_{r+s}"]\ar[ddrr,"f \circ \zeta_{r+s}"] && \\
         \\
         \ss^a A & &\ss^{x+r+s+t}\ar[uu,"\zeta_{t}"]\ar[dd,"\zeta_{r+s+t}"] X\ar[rr,"f \circ \zeta_{r+s+t}"]\ar[ll,"w \circ \zeta_{r+s+t}"] & &\ss^b B\\
         \\
        & & \ss^{x}X\ar[uull,"w"]\ar[uurr,swap,"f"] & &
        \end{tikzcd}\end{equation}
        where $t\geq \zeta_\cc(X)$. A similar argument shows $\begin{tikzcd}
            \ss^aA & \ar[l,swap,"\zeta_r"]\ss^{a+r}A \ar[r,"\zeta_r"]& \ss^a A
        \end{tikzcd}$ is also the left inverse. Note in particular that we can take the identity on $\ss^aA$ to be represented by $\begin{tikzcd}
            \ss^aA & \ar[l,swap,"\zeta_{\zeta_\cc(A)}"]\ss^{a+\zeta_\cc(A)}A \ar[r,"\zeta_{\zeta_{\cc}(A)}"]& \ss^a A
        \end{tikzcd}$. We now show the equivalence. We construct a functor $\Xi:\cc_\infty \to W^{-1}\cc^\ss$, which on objects is given by $\Xi(A)=\ss^0 A= A$. On morphisms it is defined as follows; let $\bar{f}\in \text{Hom}_{\cc_\infty}(A,B)$, then assume $f\in \text{Hom}_\cc(A,B)(r)$ is a representative of $\bar{f}$, i.e., $[f]_\infty = \bar{f}$. $f$ defines a map $f\in \text{Hom}_{\cc^\ss}(\ss^r A,B)$. Set $\Xi(f):= (\begin{tikzcd}
            A &\ar[l,swap,"\zeta_{\hat{a}+r}"] \ss^{\hat{a}+r}A \ar[r," f \circ \zeta_{\hat{a}}"]& B
        \end{tikzcd})$
        where $\hat{a}:=\zeta_\cc(A)$. Firstly we check this is well defined. Assume $[f']=[f]\in \text{Hom}_{\cc_\infty}(A,B)$ with $f'\in \text{Hom}_\cc(A,B)(r')$. Then there exist $s,s'\in \R$ with $\zeta_{s}\circ f = \zeta_{s'}\circ f'$ (with $t:=s+r=s'+r'$ large enough). We have $f'\in \text{Hom}_{\cc^\ss}(\ss^{r'}A,B)$ and $\Xi(f')=(\begin{tikzcd}
            A & \ss^{\hat{a}+r'}A \ar[l,swap,"\zeta_{\hat{a}+r'}"]\ar[r,"f' \circ \zeta_{\hat{a}}"]& B
        \end{tikzcd})$.  The following diagram shows the equivalence $\Xi(f)\sim \Xi(f')$

        \begin{equation}\begin{tikzcd}
            & &\ss^{\hat{a}+r}A \ar[ddll,swap,"\zeta_{\hat{a}+r}"]\ar[ddrr,"f\circ \zeta_{\hat{a}}"]& &\\
            \\
           A & &\ss^{\hat{a}+t}A\ar[dd,"\zeta_{s'}"]\ar[uu,"\zeta_s"]\ar[rr,"f \circ \zeta_{\hat{a}+s'}"] \ar[ll,"\zeta_{\hat{a}+t}"]& & B \\
           \\
            & & \ss^{\hat{a}+r'}A\ar[uull,"\zeta_{\hat{a}+r'}"]\ar[uurr,swap,"f' \circ \zeta_{\hat{a}}"]& & 
        \end{tikzcd}\end{equation}
        Next, we show that the functor $\Xi$ is dense; i.e., every object in $W^{-1}\cc^\ss$ is isomorphic to the image of an object in $\cc_\infty$ under $\Xi$. It is clear that this is true iff in $W^{-1}\cc^\ss$ we have $\ss^aA \cong A$ for all $A$ and for all $a\in \R$. To show this, choose $a'$ large enough and consider the composition 

        \begin{equation}\begin{tikzcd}
           & & \ss^{a+2a'}A \ar[dr,dotted,"\zeta_{a'}"]\ar[dl,swap,dotted,"\zeta_{a'}"]\ar[dll,swap,bend right= 30, "\zeta_{a+2a'}"]\ar[drr,bend left= 30, "\zeta_{a+2a'}"]& & \\
           A & \ar[l,swap,"\zeta_{a'+a}"]\ss^{a'+a}\ar[r,"\zeta_{a'}"] A& \ss^{a} A & \ss^{a+a'}A\ar[l,swap,"\zeta_{a'}"] \ar[r,"\zeta_{a'+a}"]& A
        \end{tikzcd}\end{equation}
        This is in equivalent to the identity morphism on $A$. A similar argument shows composition the other way also gives the identity. To complete the proof of the equivalence, we must show that the functor $\Xi$ is fully faithful. First, injectivity. Assume $\Xi(f) = \Xi(f')$, then there exists a commutative diagram in $\cc^\ss$, 

       \begin{equation}\begin{tikzcd}
            & &\ss^{\hat{a}+r}A \ar[ddll,swap,"\zeta_{\hat{a}+r}"]\ar[ddrr,"f\circ \zeta_{\hat{a}}"]& &\\
            \\
           A & &\ar[ll,"w"] \ss^x X\ar[uu,"\phi"]\ar[dd,"\psi"]\ar[rr,"g"] & & B \\
           \\
            & & \ss^{\hat{a}+r'}A\ar[uull,"\zeta_{\hat{a}+r'}"]\ar[uurr,swap,"f' \circ \zeta_{\hat{a}}"]& & 
        \end{tikzcd}\end{equation}
        with $w$ a weighted isomorphism. Assume $w$ is an $s$-isomorphism such that there exists $v\in \text{Hom}_{\cc^\ss}(A,\ss^{x-s}X)$ with $v \circ w = \zeta_s$ and $w\circ \ss^s v= \zeta_s$. We can then replace the diagram with

         \begin{equation}\begin{tikzcd}
            & &\ss^{\hat{a}+r}A \ar[ddll,swap,"\zeta_{\hat{a}+r}"]\ar[ddrr,"f\circ \zeta_{\hat{a}}"]& &\\
            \\
           A & &\ar[ll,"\zeta_{s}"]  \ss^s A\ar[uu,"\phi\circ \ss^s v"]\ar[dd,"\psi\circ \ss^s v"]\ar[rr,"g\circ \ss^s v"] & & B \\
           \\
            & & \ss^{\hat{a}+r'}A\ar[uull,"\zeta_{\hat{a}+r'}"]\ar[uurr,swap,"f' \circ \zeta_{\hat{a}}"]& & 
        \end{tikzcd}\end{equation}
          We can then take a $t$ suitably large enough such that
        \begin{align*}
            &f \circ \zeta_{\hat{a}}\circ \phi \circ \ss^s v = g \circ \ss^s v\\
            \implies & \zeta_t \circ f \circ \zeta_{\hat{a}}\circ \phi \circ \ss^s v = \zeta_t \circ g \circ \ss^s v\\
            \implies & \zeta_{t-r} \circ \ss^r f \circ \zeta_{\hat{a}+r}\circ \phi \circ \ss^s v=  \zeta_{t} \circ g \circ \ss^s v\\
            \implies& \zeta_{t-r}\circ \ss^r f \circ \zeta_s = \zeta_{t}\circ g \circ \ss^s v\\
            \implies & \zeta_{t-r} \circ \ss^r f \circ \zeta_s = \zeta_{t} \circ f' \circ \zeta_{\hat{a}} \circ \psi \circ \ss^s v\\
            \implies &\zeta_{t-r} \circ \ss^r f \circ \zeta_s =\zeta_{t-r'}\circ  \ss^{r'}f' \circ \zeta_{\hat{a}+r'}\circ \psi \circ \ss^s v\\
            \implies & \zeta_{t-r} \circ \ss^r f \circ \zeta_s = \zeta_{t-r'} \circ \ss^{r'} f' \circ \zeta_s
        \end{align*}
        i.e., we have $\zeta_{t-r+s}\circ f = \zeta_{t-r'+s}\circ f'$, giving $[f]_\infty= [f']_\infty$. For surjectivity, let $(\begin{tikzcd}
            A & \ar[l,swap,"w"]\ss^x X \ar[r,"f"]& B
        \end{tikzcd})$ be a morphism in $\text{Hom}_{W^{-1}\cc^\ss}(A,B)$ (note we only need to work with objects of the form $A=\ss^0A$ by the isomorphisms $\ss^a A \cong \ss^0 A$). Assume $w$ is an $s$-isomorphism, such that there exists $v\in \text{Hom}_{\cc^\ss}(A,\ss^{x-s}X)$ with $v \circ w = \zeta_s$ and $w\circ \ss^s v= \zeta_s$. We can construct the following 
\begin{equation}\begin{tikzcd}
    & & \ss^x X\ar[ddll,swap,"w"]\ar[ddrr,"f"]& & \\
    \\
    A& &\ss^{s+\hat{a}}A\ar[uu,"\ss^s v\circ \zeta_{\hat{a}}"]\ar[ll,"\zeta_{\hat{a}+s}"] \ar[dd,"\zeta_{\hat{a}}"]\ar[rr,"f \circ \ss^s v \circ \zeta_{\hat{a}}"]& & B \\
    \\
    & & \ss^{s}A\ar[uull,"\zeta_{s}"]\ar[uurr,swap,"f \circ \ss^sv"]& & 
\end{tikzcd}\end{equation}
        which shows that $(\begin{tikzcd}
            A & \ar[l,swap,"w"]\ss^x X \ar[r,"f"]& B
        \end{tikzcd}) = \Xi(f \circ \ss^s v)$. 
        \end{proof}
      
    \end{lemma}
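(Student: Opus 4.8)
The plan is to argue in three stages: first exhibit explicit identity morphisms in $[W^{-1}]\cc^\ss$, then build a comparison functor $\Xi\colon\cc_\infty\to[W^{-1}]\cc^\ss$, and finally check that $\Xi$ is dense and fully faithful.

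For the identities I would take, for $A\in\cc$ with $\lfloor A\rfloor=\hat a$ and any $r\geq\hat a$, the roof $\ss^aA\xleftarrow{\zeta_r}\ss^{a+r}A\xrightarrow{\zeta_r}\ss^aA$ as a candidate identity on $\ss^aA$. To see it is a two-sided unit, I would compose it (using axiom 3 to form the defining pullback) with an arbitrary roof $\ss^aA\xleftarrow{w}\ss^xX\xrightarrow{f}\ss^bB$, $w$ an $s$-isomorphism, and use that $\zeta$-maps are absorbed under composition: the $r$-identity relations $f\circ\zeta_t=i_{\cdot,\cdot}(f)$, $\zeta_t\circ g=i_{\cdot,\cdot}(g)$ together with additivity $\zeta_s^{\ss^{-r}A}\circ\zeta_r^A=\zeta_{r+s}^A$ let me choose the pullback of $\zeta_r$ against $w$ to be a $\zeta$-map again, so the composite roof becomes equivalent to the original after one further $\zeta$-refinement of the apex. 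This is exactly where axiom 5 for a calculus of fractions in a semi-category is needed: without some $\zeta_r^A\in W$ there would be no candidate unit, consistent with the remark that localisations of semi-categories are not unital in general.

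Next I would define $\Xi$ on objects by $\Xi(A)=\ss^0A=A$ and on a class $\bar f\in\text{Hom}_{\cc_\infty}(A,B)$ by choosing a representative $f\in\text{Mor}^r_\cc(A,B)$, viewing it as $f\in\text{Hom}_{\cc^\ss}(\ss^rA,B)$, and declaring $\Xi(\bar f)$ to be the class of $A\xleftarrow{\zeta_{\hat a+r}}\ss^{\hat a+r}A\xrightarrow{f\circ\zeta_{\hat a}}B$. Well-definedness would follow by gluing two representatives $f,f'$ (which agree after applying structure maps) into a common apex $\ss^{\hat a+t}A$ with $t$ large; functoriality on units is immediate from the identity roof above, and functoriality on composition follows by pushing $\Xi(\bar g\circ\bar f)$ through the pullback defining composition in $[W^{-1}]\cc^\ss$ and again absorbing the $\zeta$'s.

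Finally I would establish density and full faithfulness. Density reduces to $\ss^aA\cong A$ in $[W^{-1}]\cc^\ss$ for all $a$, which the mutually inverse $\zeta$-roofs $A\xleftarrow{\zeta_{a'+a}}\ss^{a'+a}A\xrightarrow{\zeta_{a'}}\ss^aA$ and its reverse supply via additivity of $\zeta$'s and the identity computation. For faithfulness, an equality $\Xi(\bar f)=\Xi(\bar f')$ produces an equivalence diagram whose apex leg is a weighted isomorphism $w$ with weighted inverse $v$; precomposing the apex by $\ss^sv$ replaces that leg by an honest $\zeta_s$, and then multiplying the two commuting triangles by $\zeta_t$ for $t$ large collapses everything to a relation $\zeta_{t-r+s}\circ f=\zeta_{t-r'+s}\circ f'$ in $\cc^\ss$, i.e.\ $[f]_\infty=[f']_\infty$. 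For fullness, a general roof $A\xleftarrow{w}\ss^xX\xrightarrow{f}B$ with $w$ an $s$-isomorphism with inverse $v$ is exhibited as equivalent to $\Xi(f\circ\ss^sv)$ via the diagram with new apex $\ss^{s+\hat a}A$ whose legs are built from $\ss^sv$ and appropriate $\zeta$'s. The hard part will be faithfulness: the weight bookkeeping there — reconciling $v$, the $\zeta$-maps, and the structure maps $i_{\cdot,\cdot}$ so that the chain of implications actually terminates in a genuine witness of $[f]_\infty=[f']_\infty$ — is delicate, and it hinges on the ``replace a weighted isomorphism by a $\zeta$-map'' move, which in turn relies on the precise definition of weighted isomorphism (namely that $v\circ w$ and $w\circ\ss^sv$ are literally $\zeta$'s, not merely identities in the limit category).
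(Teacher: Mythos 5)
Your proposal matches the paper's proof essentially step for step: the same $\zeta$-roof identities, the same functor $\Xi$ with apex $\ss^{\hat a+r}A$, the same well-definedness gluing, the same density argument via $\ss^aA\cong A$, and the same faithfulness/fullness moves replacing the weighted isomorphism leg by $\zeta_s$ through its weighted inverse $\ss^s v$. The approach is correct and there is nothing to add.
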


\newpage

\subsection{Weighted Idempotents and Karoubi Completion}
We now look to understand what the idempotent completion of our persistence semi-category should look like. We will see that every idempotent in the limit category is represented by some weighted idempotent in the `flattened' category. Thus in order to split all idempotents in the limit category we need to define what it meant to split weighted idempotents in the flattened category.

Consider an idempotent morphism $e:A \to A$ in the localisation $W^{-1}\cc^\ss$, i.e., the limit category $\cc_\infty$. Assume $e$ is of the form $\begin{tikzcd}(A & \ar[l,swap,"w"]\ss^xX \ar[r,"f"]& A)\end{tikzcd}$. Assuming $w$ is an $r$-isomorphism, we can also represent $e$ by $\begin{tikzcd}(A & \ar[l,swap,"\zeta_r"]\ss^r A \ar[r,"f'"]& A)\end{tikzcd}$, note $r\geq \zeta_\cc(A)$. This morphism is an idempotent so we have 
\begin{equation}\begin{tikzcd}(A & \ar[l,swap,"\zeta_r"]\ss^r A \ar[r,"f'"]& A) \circ (A & \ar[l,swap,"\zeta_r"]\ss^r A \ar[r,"f'"]& A) \sim (A & \ar[l,swap,"\zeta_r"]\ss^r A \ar[r,"f'"]& A)\end{tikzcd}\end{equation}
We can compose as follows
\begin{equation}\begin{tikzcd}
    &  &\ss^{2r} A\ar[dl,swap,dotted,"\zeta_r"]\ar[dr,dotted,"\ss^r f'"]\ar[dll,swap,bend right = 30,"\zeta_{2r}"]\ar[drr,bend left = 30,"f' \circ \ss^r f' "]  &  & \\
    A  & \ar[l,swap,"\zeta_r"]\ss^r A \ar[r,"f'"] & A &  \ar[l,swap,"\zeta_r"]\ss^r A \ar[r,"f'"]&  A
\end{tikzcd}\end{equation}

and so we have that $\begin{tikzcd}(A & \ar[l,swap,"\zeta_{2r}"]\ss^{2r} A \ar[r,"f' \circ \ss^r f'"]& A) \sim (A & \ar[l,swap,"\zeta_r"]\ss^r A \ar[r,"f'"]& A)\end{tikzcd}$. In particular there exists a diagram in $\cc^\ss$,
\begin{equation}\begin{tikzcd}
            & &\ss^{2r}A \ar[ddll,swap,"\zeta_{2r}"]\ar[ddrr,"f' \circ \ss^r f'"]& &\\
            \\
           A & &\ar[ll,"u"] \ss^y Y\ar[uu,"\phi"]\ar[dd,"\psi"]\ar[rr,"g"] & & A \\
           \\
            & & \ss^r A\ar[uull,"\zeta_{r}"]\ar[uurr,swap,"f'"]& & 
        \end{tikzcd}\end{equation}

Assuming $u$ is an $s$-isomorphism we can replace this with a diagram of the form
\begin{equation}\begin{tikzcd}
            & &\ss^{2r}A \ar[ddll,swap,"\zeta_{2r}"]\ar[ddrr,"f' \circ \ss^r f'"]& &\\
            \\
           A & &\ar[ll,"\zeta_s"] \ss^s A\ar[uu,"\phi'"]\ar[dd,"\psi'"]\ar[rr,"g'"] & & A \\
           \\
            & & \ss^r A\ar[uull,"\zeta_{r}"]\ar[uurr,swap,"f'"]& & 
        \end{tikzcd}\end{equation}

i.e., we have 
\begin{align*}
   & f' \circ \ss^r f' \circ \phi' = f' \circ \psi\\
   \implies & f' \circ \ss^r f' \circ \phi' \circ \zeta_{2r} = f' \circ \psi \circ \zeta_{2r}\\
   \implies & f' \circ \ss^r f' \circ \zeta_s = f' \circ \zeta_r \circ \zeta_s
\end{align*}
We can take $s$ large enough so that we have $(f' \circ \zeta_{s/2}) \circ (\ss^{r+s/2} f' \circ \zeta_{s/2}) = (f' \circ \zeta_{s/2}) \circ \zeta_{r+s/2}$. Set $e_A:= f' \circ \zeta_{s/2}$ then we have 
\begin{equation}e_A \circ \ss^t e_A =  e_A \circ \zeta_t\end{equation}
with $t=r+s/2$. This means that an idempotent $e\in \text{Hom}_{\cc_\infty}(A,A)$ in the limit category can be represented by a morphism $e_A:A \to \ss^{-t}A$ with $e_A \circ \ss^t e_A = \zeta_t \circ e_A$. We call such a morphism a \textbf{weighted idempotent of weight $t$} or simply a $t$-idempotent. 

\begin{rmk}
    A weight zero idempotent, $e_A\in \text{Hom}_{\cc^\ss}(A,A)$ is exactly an idempotent in the usual sense. This would require $\zeta_\cc(A)=0$. Also note that $\ss^ae_A\in \text{Hom}_{\cc^\ss}(\ss^a A,\ss^{a-r} A)$ would also represent the same idempotent as $e_A:A \to \ss^{-r}A$ in $\cc_\infty$ by the canonical identification $\text{Hom}_{\cc^\ss}(A,\ss^{-r}A) \simeq \text{Hom}_{\cc^\ss}(\ss^a A,\ss^{a-r}A)$.
\end{rmk}
\begin{rmk}
    The map $\zeta_r:A \to \ss^{-r}A$ is naturally an $r$-idempotent.
\end{rmk}

\begin{defn}
We say an $r$-idempotent $e:A \to \ss^{-r}A$ \textbf{splits} if there exists a triple $(B,s:B \to A, r:A \to \ss^{-r}B)$ with 
\begin{align}r \circ s =&  \zeta_r \\
\ss^{-r}s \circ r =& e \notag
\end{align}

\begin{equation}\begin{tikzcd}
    A \ar[r,"e"]\ar[dr,"r"]& \ss^{-r}A\\
    B \ar[u,"s"]\ar[r,swap,"\zeta_r"]& \ss^{-r}B\ar[u,swap,"\ss^{-r}s"]
\end{tikzcd}\end{equation}
We define a \textbf{weak splitting} of an $r$-idempotent $e$ to be a splitting of $\zeta_r \circ e$.
 \end{defn}

\begin{defn}
    We will refer to such a $(B,s,r)$ as a \textbf{weighted retract of weight $r$} or simply an $r$-retract. If $B$ is an $r$-retract of $A$ we will write $B<_r A$, i.e., $B<_r A$ if there exists a diagram
    \begin{equation}\begin{tikzcd}
    A \ar[dr,"r"]& \\
    B \ar[u,"s"]\ar[r,swap,"\zeta_r"]& \ss^{-r}B
\end{tikzcd}\end{equation}
\end{defn}

Given a semi-persistence category $\cc$ we constructed $\cc^\ss$, its `flattened' category. We can then localise to form a true category $W^{-1}\cc^\ss$. We have shown that this is equivalent to the limit category $\cc_\infty$. Now consider the Karoubi Envelope $\text{Split}(\cc_\infty) \simeq \text{Split}(W^{-1}\cc^\ss)$. We wish to construct a completion of $\cc^\ss$, which we will denote by $\widehat{\text{Split}}(\cc^\ss)$, such that the corresponding localisation with respect to weighted isomorphisms is (up to equivalence) the Karoubi envelope of the original limit category, i.e., we would like $W^{-1}\widehat{\text{Split}}(\cc^\ss)\simeq \text{Split}(W^{-1}\cc^\ss)$.  

\begin{defn}

We define $\widehat{\text{Split}}(\cc^\ss)$ to be the category with objects $(\ss^aA,\ss^a e_A)$, where $\ss^aA\in \text{Obj}(\cc^\ss)$ and $e_A: A \to \ss^{-r}A$ an $r$-idempotent for some $r\geq 0$. A morphism $f:(\ss^a A,\ss^a e_A) \to (\ss^b B,\ss^b e_B)$ (with $e_A$ an $r$-idempotent and $e_B$ and $s$-idempotent) is a morphism $f\in \text{Hom}_{\cc^\ss}(\ss^aA,\ss^bB)$ such that
\begin{equation}f \circ \ss^{a+r}e_A= f \circ \zeta_r\end{equation}
\begin{equation}\ss^b e_B \circ f = \zeta_s \circ f\end{equation}
in particular we have the following commutative diagram 
\begin{equation}\begin{tikzcd}
    \ss^{a+r}A\ar[r,yshift= 0.75ex,"\ss^{a+r}e_A"]\ar[r,yshift = -0.75ex, swap, "\zeta_r"] & \ss^a A\ar[r,"f"] & \ss^b B \ar[r,yshift= 0.75ex,"\ss^b e_B"]\ar[r,yshift= -0.75ex,swap,"\zeta_s"]& \ss^{b-s}B
\end{tikzcd}\end{equation}

\end{defn}

\begin{rmk}
    Two maps $f,g\in \text{Hom}_{\cc^\ss}(\ss^a A,\ss^b B)$ that induce maps in
    \newline $\text{Hom}_{\widehat{\text{Split}}_\pp(\cc ^\ss)}((\ss^a A,\ss^ae_A),(\ss^b B,\ss^b e_B))$, induce equal morphism if $f\circ \zeta_r = g\circ \zeta_r$ and $\zeta_s \circ f = \zeta_s \circ g$.
\end{rmk}
\begin{rmk}
 We can realise $\widehat{\text{Split}}_\pp(\cc^\ss)$ as the flattened category of a semi-persistence category $\widehat{\text{Split}}_\pp(\cc)$. This has objects that are pairs $(A,e_A)$ with $A\in \cc$ and $e_A\in \text{Hom}_\cc(A,A)(r)$ such that $e_A \circ e_A= i_{r,2r}\circ e_A= \zeta_r \circ e_A$. A morphism $f\in \text{Hom}_{\widehat{\text{Split}}_\pp(\cc)}((A,e_A),(B,e_B))(t)$ is a morphism $f \in \text{Hom}_\cc(A,B)(t)$ such that 
    \begin{align}e_B \circ f =&i_{t,s} \circ f\\
    f \circ e_A =& i_{t,r} \circ f \notag
    \end{align}
    where $e_B\in \text{Hom}_\cc(B,B)(s)$. Then there is a natural identification $\widehat{\text{Split}}_\pp(\cc^\ss) = \widehat{\text{Split}}_\pp(\cc)^\ss$, where $\ss^a(A,e_A)\mapsto (\ss^a A,\ss^ae_A)$.
    \end{rmk}
There is a Yoneda type embedding $\hat{\yy}:\cc^\ss \hookrightarrow \widehat{\text{Split}}_\pp(\cc^\ss)$, given by $\ss^a A \mapsto (\ss^a A,\ss^a \zeta_{\lfloor A \rfloor})$ on objects. On morphisms, if $f\in \text{Hom}_{\cc^\ss}(\ss^a A,\ss^b B)$ then $\hat{\yy}(f):= f:(\ss^a A,\ss^a\zeta_{\lfloor A \rfloor}) \to (\ss^b B,\ss^b \zeta_{\lfloor B \rfloor})$. The embedding can be seen to be full, however it is only faithful up to an equivalence. That is, if $\zeta_{\min\{\lfloor A \rfloor , \lfloor B \rfloor\}}\circ f = \zeta_{\min\{\lfloor A \rfloor , \lfloor B \rfloor\}} \circ g$
then we cannot distinguish $f$ and $g$ as we have $ f \circ \zeta_{\lfloor A \rfloor} = g \circ \zeta_{\lfloor A \rfloor}$ and $\zeta_{\lfloor B \rfloor} \circ f = \zeta_{\lfloor B \rfloor} \circ g$.
\begin{prop}
If $(\ss^aA,\ss^ae_A)$ is an object with $e_A$ an $r$-idempotent then  $\lfloor (\ss^a A,\ss^a e_A) \rfloor\leq  r$. Moreover $\zeta_r^{(\ss^a A,\ss^a e_A)}=e_A$.

\begin{proof}
Assume $f\in \text{Hom}_{\widehat{\text{Split}}_\pp(\cc^\ss)}((\ss^a A, \ss^a e_A),(\ss^b B,\ss^b e_B))$, composition then gives 
\begin{align*}
(\ss^{-r}f \circ \ss^a e_A)\circ \ss^{a+r}e_A =& \ss^{-r}f \circ \ss^a e_A\circ \zeta_r\\ 
=&(\ss^{-r}f \circ \zeta_r) \circ \zeta_r
\end{align*}
and
\begin{align*}
    \ss^{b-r}e_B \circ (\ss^{-r}f \circ \ss^a e_A)=& \zeta_s \circ (\ss^{-r}f \circ \ss^ae_A)\\
    =& \zeta_s \circ (\ss^{-r}f \circ \zeta_r).
\end{align*}
A similar argument shows $\ss^a e_A \circ g = \zeta_r \circ g$ in $\widehat{\text{Split}}_\pp(\cc^\ss)$ for any $g\in \text{Hom}_{\widehat{\text{Split}}_\pp(\cc^\ss)}((\ss^c C,\ss^c e_C),(\ss^a A,\ss^a e_A))$. Therefore if $e_A$ is an $r$-idempotent, we find $\lfloor (\ss^a A,\ss^a e_A)\rfloor\leq r$.
\end{proof}
\end{prop}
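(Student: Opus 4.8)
The plan is to verify directly that $e_A$ serves as an $r$-identity on the object $(\ss^a A,\ss^a e_A)$, which immediately yields $\lfloor(\ss^a A,\ss^a e_A)\rfloor\le r$ and $\zeta_r^{(\ss^a A,\ss^a e_A)}=e_A$. So I need to check the two defining properties of an $r$-identity: that for every morphism $f\colon(\ss^a A,\ss^a e_A)\to(\ss^b B,\ss^b e_B)$ we have $f\circ\ss^a e_A = i_{t,t+r}(f)$ (postcomposition absorbed), and for every $g\colon(\ss^c C,\ss^c e_C)\to(\ss^a A,\ss^a e_A)$ we have $\ss^a e_A\circ g = i_{t',t'+r}(g)$. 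Here the subtlety is that equality of morphisms in $\widehat{\text{Split}}_\pp(\cc^\ss)$ is coarser than equality in $\cc^\ss$: by the remark preceding the proposition, two parallel morphisms $\varphi,\psi$ into/out of these objects agree in $\widehat{\text{Split}}_\pp$ as soon as they agree after composing with the relevant $\zeta$'s. So the computation in the excerpt, which shows $\ss^{-r}f\circ\ss^a e_A$ and $\ss^{-r}f\circ\zeta_r$ become equal after postcomposing with $\ss^{b-r}e_B$ (resp.\ after this already being a composite with $\zeta_r$ on the source), is exactly what certifies $f\circ\ss^a e_A = f\circ\zeta_r$ as morphisms in $\widehat{\text{Split}}_\pp(\cc^\ss)$; and $f\circ\zeta_r$ is precisely $i_{t,t+r}(f)$ by the persistence-structure identity $h\circ\zeta_r = i_{s,s+r}(h)$ from the definition of an $r$-identity applied in $\cc^\ss$ (equivalently, $\zeta_r$ acting on the underlying object of $\cc^\ss$ behaves as a weighted identity there).

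So the key steps, in order, are: (1) recall the coarsened equality relation on $\text{Hom}_{\widehat{\text{Split}}_\pp(\cc^\ss)}$ from the remark; (2) take an arbitrary $f$ out of $(\ss^a A,\ss^a e_A)$ and run the displayed computation to conclude $f\circ\ss^a e_A = f\circ\zeta_r$ in $\widehat{\text{Split}}_\pp(\cc^\ss)$, then identify $f\circ\zeta_r = i_{t,t+r}(f)$; (3) symmetrically, take an arbitrary $g$ into $(\ss^a A,\ss^a e_A)$ and run the dual computation to get $\ss^a e_A\circ g = \zeta_r\circ g = i_{t',t'+r}(g)$; (4) conclude that $e_A = \ss^a e_A$ viewed on $(\ss^a A,\ss^a e_A)$ is a valid $r$-identity, hence $\lfloor(\ss^a A,\ss^a e_A)\rfloor\le r$, and since this $r$-identity is literally $e_A$, we also get $\zeta_r^{(\ss^a A,\ss^a e_A)}=e_A$. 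One should also double-check that $e_A$ genuinely is a morphism $(\ss^a A,\ss^a e_A)\to(\ss^a A,\ss^a e_A)$ in the category, i.e.\ that it satisfies $e_A\circ\ss^{a+r}e_A = e_A\circ\zeta_r$ and $\ss^a e_A\circ e_A = \zeta_r\circ e_A$ — but both follow at once from the defining weighted-idempotent relation $e_A\circ\ss^r e_A = \zeta_r\circ e_A$ (and its shifted form), so this is immediate.

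The main obstacle, such as it is, is bookkeeping rather than conceptual: one must be careful about where the shift functors $\ss^{(-)}$ sit and which shifted copy of $e_A$ or $\zeta$ is meant at each stage (the excerpt's computation already navigates this for the $\ss^{-r}f\circ(-)$ composite, and the same care is needed for the $g$ side), and one must invoke the remark on coarsened equality at exactly the right moment — the equalities do \emph{not} hold in $\cc^\ss$, only in $\widehat{\text{Split}}_\pp(\cc^\ss)$. Apart from that, the proof is a short direct verification; the inequality $\lfloor(\ss^a A,\ss^a e_A)\rfloor\le r$ is an inequality rather than an equality precisely because a smaller idempotent representative (e.g.\ if $e_A$ happens to factor through a lower weight) could give a tighter bound, and no claim of sharpness is needed here.
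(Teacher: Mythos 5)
Your proposal is correct and follows essentially the same route as the paper: you verify that $e_A$ acts as an $r$-identity on $(\ss^a A,\ss^a e_A)$ by checking the two absorption identities for arbitrary $f$ out of and $g$ into the object, using precisely the coarsened equality of morphisms in $\widehat{\text{Split}}_\pp(\cc^\ss)$ (agreement after composing with the relevant $\zeta$'s and idempotents) together with the weighted-idempotent relation, which is exactly the computation the paper carries out.
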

\begin{rmk}
    The bound above is not in general an equality. For example $\lfloor (A,\zeta_{r}) \rfloor = \lfloor A \rfloor$ for any $r$, including $r> \lfloor A \rfloor$.  
\end{rmk}

\begin{prop}
If $e_A:A \to \ss^{-r}A$ is an $r$-idempotent, then there is a canonical weak splitting in $\widehat{\text{Split}}_\pp(\cc)$, given by

    \begin{equation}(A,\zeta_{\lfloor A \rfloor}) \xrightarrow{e_A} \ss^{-r}(A,e_A) \xrightarrow{\ss^{-r}e_A} \ss^{-2r}(A,\zeta_{\lfloor A \rfloor})\end{equation}

\begin{proof}
We have $\ss^{-r}e_A \circ e_A = \zeta_r \circ e_A $. $e_A=\zeta_{r}^{(A,e_A)}\in \text{Hom}_{\widehat{\text{Split}}_\pp(\cc^\ss)}((A,e_A),\ss^{-r}(A,e_A))$ giving $\ss^{-r}e_A \circ e_A =\zeta_{2r}^{(A,e_A)}$. Hence we have a commutative diagram in $\widehat{\text{Split}}_\pp(\cc^\ss)$
\begin{equation}\begin{tikzcd}
    (A,\zeta_{\lfloor A \rfloor}) \ar[r,"\zeta_r \circ e_A"]\ar[dr,"e_A"]& \ss^{-2r}(A,\zeta_{\lfloor A \rfloor})\\
    \ss^{r}(A,e_A)\ar[u,"\ss^{r}e_A"]\ar[r,"\zeta_{2r}"] & \ss^{-r}(A,e_A)\ar[u,swap,"\ss^{-r}e_A"]
\end{tikzcd}\end{equation}

\end{proof}
\end{prop}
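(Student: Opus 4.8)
The plan is to verify that the proposed triple
$\big(\ss^{r}(A,e_A),\ \ss^{r}e_A : \ss^{r}(A,e_A) \to (A,\zeta_{\lfloor A\rfloor}),\ \ss^{r}e_A : (A,\zeta_{\lfloor A\rfloor}) \to \ss^{-r}(A,e_A)\big)$
— which I will think of as the maps labelled in the square — constitutes a splitting in $\widehat{\text{Split}}_\pp(\cc^\ss)$ of the morphism $\zeta_r \circ e_A$, i.e. a weak splitting of $e_A$ in the sense of the definition. So there are really only three things to check: (i) the two arrows $\ss^{r}e_A$ actually define legitimate morphisms in $\widehat{\text{Split}}_\pp(\cc^\ss)$ between the indicated objects, i.e. they intertwine the relevant $r$-identities/idempotents; (ii) the ``outer'' composite $(A,\zeta_{\lfloor A\rfloor}) \to \ss^{-2r}(A,\zeta_{\lfloor A\rfloor})$ equals $\zeta_r \circ e_A$; and (iii) the ``inner'' composite $\ss^{r}(A,e_A) \to \ss^{-r}(A,e_A)$ equals the appropriate $r$-identity $\zeta_{2r}^{(A,e_A)}$ on the object $\ss^r(A,e_A)$, so that it plays the role of ``$\zeta_r$'' in the splitting diagram for the weak-splitting target.

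First I would record the key algebraic identity available to us: since $e_A$ is an $r$-idempotent, $\ss^{-r}e_A \circ e_A = \zeta_r \circ e_A$ in $\cc^\ss$ (this is the defining relation, using additivity of the $\zeta$'s and the commuting square relating $f$ and $\ss^{-r}f$ through the $\zeta$'s). Then for (i): to see $e_A : \ss^r(A,e_A) \to (A,\zeta_{\lfloor A\rfloor})$ is a morphism, I must check $e_A \circ \ss^r e_A = e_A \circ \zeta_r$ on the source side (which is precisely the $r$-idempotent relation, shifted) and $\zeta_{\lfloor A\rfloor} \circ e_A = \zeta_{\lfloor A\rfloor} \circ e_A$ on the target side (trivial); symmetrically for $e_A : (A,\zeta_{\lfloor A\rfloor}) \to \ss^{-r}(A,e_A)$ one needs $\ss^{-r}e_A \circ e_A = \zeta_r \circ e_A$, again the defining relation. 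Also I should note, via the preceding Proposition, that $\zeta_r^{(A,e_A)} = e_A$ and $\lfloor(A,e_A)\rfloor \le r$, so that ``$\zeta_{2r}$ on $(A,e_A)$'' legitimately means $\ss^{-r}e_A \circ e_A$. For (ii), the outer composite is $\ss^{-r}e_A \circ e_A = \zeta_r \circ e_A$, which is exactly $\zeta_r \circ e_A$, the morphism being split. For (iii), the inner composite is $e_A \circ \ss^r e_A$; by the Proposition this equals $\zeta_{2r}^{\ss^r(A,e_A)}$, i.e. the $2r$-identity on the object $\ss^r(A,e_A)$, which is the $\zeta_{2r}$ appearing in the splitting square — and one checks $2r \ge \lfloor \ss^r(A,e_A)\rfloor$ so this $2r$-identity exists.

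The only mild subtlety — and the step I would be most careful about — is bookkeeping of the shift exponents and making sure the two composites land in the claimed hom-sets $\text{Hom}_{\widehat{\text{Split}}_\pp(\cc^\ss)}$ rather than merely in $\text{Hom}_{\cc^\ss}$; concretely, that $e_A \circ \ss^r e_A$ is genuinely the identity morphism of the object $\ss^r(A,e_A)$ in $\widehat{\text{Split}}_\pp(\cc^\ss)$ (up to the equivalence noted in the remark after the definition, identifying morphisms that agree after pre/post-composition with the relevant $\zeta$'s), and that $\zeta_r \circ e_A$ is genuinely the $2r$-identity-twisted-by-$e_A$ morphism on $(A,\zeta_{\lfloor A\rfloor})$ and not something weaker. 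Everything else is a direct substitution using $\ss^{-r}e_A \circ e_A = \zeta_r \circ e_A$ and additivity $\zeta_s^{\ss^{-t}} \circ \zeta_t = \zeta_{s+t}$, so once the diagram is seen to commute in $\cc^\ss$ and all arrows are verified to be morphisms of pairs, the statement follows; I would simply exhibit the commutative square as in the Proposition's conclusion and remark that it is precisely a splitting of $\zeta_r \circ e_A$.
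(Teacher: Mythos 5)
Your proposal is correct and follows essentially the same route as the paper: both use the defining relation $\ss^{-r}e_A \circ e_A = \zeta_r \circ e_A$ together with the preceding proposition's identification $e_A = \zeta_r^{(A,e_A)}$ (so the inner composite is $\zeta_{2r}$) to exhibit the splitting square for the $2r$-idempotent $\zeta_r \circ e_A$. Your extra checks that the arrows are genuine morphisms of pairs and that the shifts match are just a more explicit spelling-out of what the paper leaves implicit.
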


\subsection{The limit category}

We now show that the limit category of the idempotent completion of our constructed persistence semi-category recovers the usual idempotent completion of the limit of the original persistence semi-category. That is, we show the following result:

\begin{lemma}
    If $\cc$ is a persistence semi-category, then $W^{-1}(\widehat{\text{Split}}_\pp(\cc^{\ss})) $ is equivalent (as a category) to $\text{Split}(W^{-1}\cc^\ss)$.
    \begin{proof}
      To fix notation, let $f\in \text{Hom}_{\cc}(A,B)(r)= \text{Hom}_{\cc^\ss}(\ss^r A,B)$ we will denote $[f]_\infty\in \text{Hom}_{W^{-1}\cc^\ss}(A,B)$ the equivalence class of $\begin{tikzcd}
        A & \ss^{\lfloor A \rfloor +r}\ar[l,swap,"\zeta_{\lfloor A \rfloor +r}"]\ar[r,"f \circ \zeta_{\lfloor A \rfloor}"] & B
      \end{tikzcd}$. We define a functor $\Gamma : W^{-1}\widehat{\text{Split}}_\pp(\cc^\ss) \to \text{Split}(W^{-1}\cc^\ss)$ as follows; on objects it is given by $\Gamma((A,e_A))= (A,[e_A]_\infty)$. On morphisms: if $\begin{tikzcd}
                (A,e_A) & \ar[l,swap,"u"](C,e_C)\ar[r,"f"] & (B,e_B)
            \end{tikzcd}\in \text{Hom}_{W^{-1}\widehat{\text{Split}}_\pp(\cc^\ss)}((A,e_A),(B,e_B))$, then we can find an equivalent morphism of the form $\begin{tikzcd}
                (A,e_A) & \ar[l,swap,"\zeta_t"]\ss^{t}(A,e_A)\ar[r,"f'"] & (B,e_B)
            \end{tikzcd}$
            for some $t$ large enough. $f'$ is a morphism in $\text{Hom}_{\cc^\ss}(\ss^t A,B)$ such that $f' \circ \ss^t e_A= f'\circ \zeta_r$ and $e_B \circ f' = \zeta_s \circ f'$. Hence in $W^{-1}\cc^\ss$ we find $[e_B]_\infty \circ [f']_\infty=[e_B \circ f']_{\infty}= [\zeta_s \circ f']_\infty = [f']_\infty$ and $[f']_\infty \circ [e_A]_\infty=[f' \circ e_A]_\infty = [f' \circ \zeta_r]_\infty = [f']_\infty$. We can therefore define $\Gamma(f):= [f']_\infty$. Note that this does not depend on the choice of replacement $\begin{tikzcd}
                (A,e_A) & \ar[l,swap,"\zeta_t"]\ss^{t}(A,e_A)\ar[r,"f'"] & (B,e_B)
            \end{tikzcd}$. If we replaced the morphism with $\begin{tikzcd}
                (A,e_A) & \ar[l,swap,"\zeta_{k}"]\ss^{k}(A,e_A)\ar[r,"f''"] & (B,e_B)
            \end{tikzcd}$ then $[f'']_\infty = [f']_\infty$. As $\begin{tikzcd}
                ((A,e_A) & \ar[l,swap,"\zeta_t"]\ss^{t}(A,e_A)\ar[r,"f'"] & (B,e_B)) \sim ((A,e_A) & \ar[l,swap,"\zeta_{k}"]\ss^{k}(A,e_A)\ar[r,"f''"] & (B,e_B))
            \end{tikzcd}$
           
        \begin{enumerate}
            \item $\Gamma$ is well defined. If $(A,e_A)\in W^{-1}\widehat{\text{Split}}_\pp(\cc^\ss)$ then $e_A$ is an $r$-idempotent $e_A\in \text{Hom}_{\cc^\ss}(A,\ss^{-r}A)$ for some $r$. $e_A$ therefore represents some idempotent $[e_A]_\infty\in \text{Hom}_{W^{-1}\cc^\ss}(A,A)$. Giving the object $(A,e_A)\in \text{Split}(W^{-1}\cc^\ss)$.
            
            Let $\begin{tikzcd}
                (A,e_A) & \ar[l,swap,"u"](C,e_C)\ar[r,"f"] & (B,e_B)
            \end{tikzcd}\in \text{Hom}_{W^{-1}\widehat{\text{Split}}_\pp(\cc^\ss)}((A,e_A),(B,e_B))$, be a morphism and assume $\begin{tikzcd}
                (A,e_A) & \ar[l,swap,"v"](D,e_{D})\ar[r,"g"] & (B,e_B)
            \end{tikzcd}$ is an equivalent morphism. We can find morphisms $\begin{tikzcd}
                (A,e_A) & \ar[l,swap,"\zeta_t"]\ss^{t}(A,e_A)\ar[r,"f'"] & (B,e_B)
            \end{tikzcd}$ and
             $\begin{tikzcd}
                (A,e_A) & \ar[l,swap,"\zeta_k"]\ss^{k}(A,e_A)\ar[r,"g'"] & (B,e_B)
            \end{tikzcd}$ giving  $\Gamma(\begin{tikzcd}
                (A,e_A) & \ar[l,swap,"u"](C,e_C)\ar[r,"f"] & (B,e_B)
            \end{tikzcd})=[f']_\infty$ and $\Gamma(\begin{tikzcd}
                (A,e_A) & \ar[l,swap,"v"](D,e_{D})\ar[r,"g"] & (B,e_B)
            \end{tikzcd})=[g']_\infty$. Then there is a diagram in $\widehat{\text{Split}}_\pp(\cc^\ss)$,

            \begin{equation}\begin{tikzcd}
                 & \ar[dl,swap,"\zeta_t"]\ss^t(A,e_A)\ar[rd,"f'"] & \\
                (A,e_A) & \ar[l,"\zeta_l"]\ss^{l}(A,e_A)\ar[u,"\phi"]\ar[d,"\psi"] \ar[r,"h"]& (B,e_B)\\
                & \ar[lu,"\zeta_k"]\ss^k(A,e_A)\ar[ru,swap,"g'"] & 
            \end{tikzcd}\end{equation}
            giving $[f']_\infty = [f' \circ \phi]_\infty = [h]_\infty = [g' \circ \psi]_\infty = [g']_\infty$.
            \item $\Gamma$ is essentially surjective. Let $(A,\bar{e}_A)\in \text{Split}(W^{-1}\cc^\ss)$. $\bar{e}_A= (\begin{tikzcd}
                A &\ar[l] C \ar[r]& A
            \end{tikzcd})$ is an idempotent in $W^{-1}\cc^\ss$ and is represented by some $r$-idempotent $e_A\in \text{Hom}_{\cc^\ss}(A,\ss^{-r}A)$. It follows that $(A,\bar{e}_A)\cong (A,[e_A]_\infty)=\Gamma((A,e_A))$.
            \item $\Gamma$ is fully faithful. Let $(A,\bar{e}_A)$ and $(B,\bar{e}_B)$ be objects in $\text{Split}(W^{-1}\cc^\ss)$, we can choose representative idempotent roof diagrams of the form $\bar{e}_A=(\begin{tikzcd}
                A & \ss^pA \ar[l,swap,"\zeta_p"]\ar[r,"\tilde{e}_A"]& A
            \end{tikzcd})$ and $\bar{e}_B= (\begin{tikzcd}
                B & \ss^qB\ar[l,swap,"\zeta_q"]\ar[r,"\tilde{e}_B"]& B)
            \end{tikzcd}$, these are idempotent morphisms in $W^{-1}\cc^\ss$. Let $\begin{tikzcd}
                A & K \ar[l,swap,"u"]\ar[r,"f"]& B
            \end{tikzcd}\in \text{Hom}_{\text{Split}(W^{-1}\cc^\ss)}((A,\bar{e}_A),(B,\bar{e}_B))$  be a well defined morphism, i.e., there are equivalences of roof diagrams
            \begin{equation}\begin{tikzcd}
                ( B & \ss^qB \ar[l,swap,"\zeta_q"]\ar[r,"\tilde{e}_B"]& B) \circ ( A & K \ar[l,swap,"u"]\ar[r,"f"]& B) \sim  ( A & K\ar[l,swap,"u"]\ar[r,"f"]& B)\end{tikzcd}\end{equation}
                and 
                \begin{equation} \begin{tikzcd}( A & K \ar[l,swap,"u"]\ar[r,"f"]& B)\circ(A & \ss^pA \ar[l,swap,"\zeta_p"]\ar[r,"\tilde{e}_A"]& A) \sim ( A & K\ar[l,swap,"u"]\ar[r,"f"]& B)\end{tikzcd}\end{equation}

            We can replace $\begin{tikzcd}
                 A & K \ar[l,swap,"u"]\ar[r,"f"]& B
            \end{tikzcd}$  with$\begin{tikzcd}
                A & \ss^tA \ar[l,swap,"\zeta_t"]\ar[r,"f'"]& B
            \end{tikzcd}$ where $f'$ belongs to $ \text{Hom}_{\cc^\ss}(\ss^tA,B)$. We can therefore find equivalences 
        \begin{equation}\begin{tikzcd}
                ( B & \ss^qB \ar[l,swap,"\zeta_q"]\ar[r,"\tilde{e}_B"]& B) \circ ( A & \ss^tA \ar[l,swap,"\zeta_t"]\ar[r,"f'"]& B) \sim  (  A & \ss^tA \ar[l,swap,"\zeta_t"]\ar[r,"f'"]& B)\end{tikzcd}\end{equation}
                and 
                \begin{equation} \begin{tikzcd}(  A & \ss^tA \ar[l,swap,"\zeta_t"]\ar[r,"f'"]& B)\circ(A & \ss^pA \ar[l,swap,"\zeta_p"]\ar[r,"\tilde{e}_A"]& A) \sim (  A & \ss^tA \ar[l,swap,"\zeta_t"]\ar[r,"f'"]& B)\end{tikzcd}\end{equation}

                Explicitly, the first composition can be defined via the following diagram (taking $p,t$ suitably large)
                \begin{equation}\begin{tikzcd}
                 & & \ss^{t+p}A\ar[dll,swap,bend right = 30,"\zeta_{t+p}"]\ar[drr,bend left = 30,"f' \circ \ss^t\tilde{e}_A"]\ar[dl,swap,dotted,"\zeta_t"]\ar[dr,dotted,"\ss^t\tilde{e}_A"] & &   \\
                   A & \ss^pA \ar[l,swap,"\zeta_p"]\ar[r,"\tilde{e}_A"]& A& \ss^tA \ar[l,swap,"\zeta_t"]\ar[r,"f'"]& B  
                \end{tikzcd}\end{equation}
                The equivalence implies the existence of a diagram 

                \begin{equation}\begin{tikzcd}      
 & &\ss^{t+p}A\ar[dll,swap,"\zeta_{t+p}"]\ar[drr,"f' \circ \ss^t \tilde{e}_A"]&& \\
                    A& & \ss^{t+p+l}A\ar[u,"\phi"]\ar[d,"\psi"]\ar[ll,swap,"\zeta_{t+p+l}"] \ar[rr,"f''"]& &B\\
                   & & \ss^{t}A\ar[urr,swap,"f'"]\ar[ull,"\zeta_t"]& &
                \end{tikzcd}\end{equation}
                $\phi$ and $\psi$ can be seen to belong to $W$. We can then consider 
                \begin{align*}
&\hspace{0.75cm}f' \circ \ss^t\tilde{e}_A \circ \phi \circ \zeta_{t+p} =f' \circ \psi \circ \zeta_{t+p}\\
&\implies f' \circ \ss^{t}\tilde{e}_A \circ \zeta_{t+p+l}=f' \circ \zeta_{t+p+l}\circ \zeta_p 
                \end{align*}
Similarly, the other composition can be given by

                \begin{equation}\begin{tikzcd}
                  & & \ss^{t+q}A\ar[dll,swap,bend right = 30,"\zeta_{t+q}"]\ar[dr,dotted,"\ss^q f'"]\ar[drr,bend left = 30,"\tilde{e}_B \circ \ss^q f'"]\ar[dl,dotted,swap,"\zeta_q"]\\
                  A & \ss^tA \ar[l,swap,"\zeta_t"]\ar[r,"f'"]&  B & \ss^qB \ar[l,swap,"\zeta_q"]\ar[r,"\tilde{e}_B"]& B
                \end{tikzcd}\end{equation}
                and there exists a diagram 
                \begin{equation}\begin{tikzcd}      
 & &\ss^{t+q}A\ar[dll,swap,"\zeta_{t+q}"]\ar[drr,"\tilde{e}_B \circ \ss^q f'"]&& \\
                    A& & \ss^{t+q+l'}A\ar[u,"\phi'"]\ar[d,"\psi'"]\ar[ll,swap,"\zeta_{t+q+l'}"] \ar[rr,"f'''"]& &B\\
                   & & \ss^{t}A\ar[urr,swap,"f'"]\ar[ull,"\zeta_t"]& &
                \end{tikzcd}\end{equation}
                and so $f' \circ \zeta_{t+q+l'}\circ \zeta_q = \tilde{e}_B \circ \ss^qf' \circ \zeta_{t+q+l'}$. Let $m=\max\{t+q+l',t+p+l\}$ and set $\hat{f}=f' \circ \zeta_m$, then $\hat{f}\in \text{Hom}_{\widehat{\text{Split}}_\pp(\cc^\ss)}((\ss^{t+m}A,\ss^{t+m-p}\tilde{e}_A),(B,\ss^{-q}\tilde{e}_B))$. This gives a roof diagram in $W^{-1}\widehat{\text{Split}}_\pp(\cc^\ss),$ $\begin{tikzcd}
                    \ss^{t+m}(A,\ss^{-p}\tilde{e}_A) &\ar[l,swap,"\zeta_p"] \ss^{t+m+p}(A,\ss^{-p}\tilde{e}_A) \ar[r,"\hat{f}\circ \zeta_p "]& (B,\ss^{-q}\tilde{e}_B)
                \end{tikzcd}$. The roof diagram $\begin{tikzcd}(A,\ss^{-p}\tilde{e}_A) &\ss^{n}(A,\ss^{-p}\tilde{e}_A)\ar[l,swap,"\zeta_n"]\ar[r,"\zeta_{n-(t+m)}"] & \ss^{t+m}(A,\ss^{-p}\tilde{e}_A)\end{tikzcd}$ can be seen to be an isomorphism $\ss^{t+m}(A,\ss^{-p}\tilde{e}_A) \cong (A,\ss^{-p}\tilde{e}_A)$. Finally we obtain a roof diagram given by the composition of the two previous diagrams 
                \begin{equation}\begin{tikzcd}
                    & &\ss^{n+p}(A,\ss^{-p}\tilde{e}_A)\ar[dr,dotted,"\zeta_{n-(t+m)}"]\ar[dl,dotted,swap,"\zeta_p"]\ar[drr,bend left = 30, "\hat{f} \circ \zeta_{n+p-(t+m)}"]\ar[dll,swap,bend right =30, "\zeta_{p+n}"] & & \\
                    (A,\ss^{-p}\tilde{e}_A) &\ss^{n}(A,\ss^{-p}\tilde{e}_A)\ar[l,swap,"\zeta_n"]\ar[r,"\zeta_{n-(t+m)}"] & \ss^{t+m}(A,\tilde{e}_A)&\ar[l,swap,"\zeta_p"] \ss^{t+m+p}(A,\ss^{-p}\tilde{e}_A) \ar[r,"\hat{f}\circ \zeta_p "]& (B,\ss^{-q}\tilde{e}_B)
                \end{tikzcd}\end{equation}
                for some large enough $n$. We therefore find $\Gamma((A,\ss^{-p}\tilde{e}_A))=(A,[\ss^{-p}\tilde{e}_A]_\infty)= (A,\bar{e}_A)$, likewise $\Gamma((B,\ss^{-q}\tilde{e}_B))= (B,\bar{e}_B)$, and $\Gamma( \begin{tikzcd}
                  (A,\ss^{-p}\tilde{e}_A) &&\ss^{n+p}(A,\ss^{-p}\tilde{e}_A)\ar[rr, "\hat{f} \circ \zeta_{n+p-(t+m)}"]\ar[ll,swap ,"\zeta_{p+n}"] && (B,\ss^{-q}\tilde{e}_B)
                \end{tikzcd})$ is $\begin{tikzcd}
                    (A,\bar{e}_A) & \ar[l,swap,"\zeta_{p+n}"]\ss^{n+p}(A,\bar{e}_A)\ar[r,"\hat{f}\circ \zeta_k"] & (B,\bar{e}_B)
                \end{tikzcd}$ with $k=n+p-(t+m)$, which is  equivalent to $\begin{tikzcd}(A,\bar{e}_A) & \ss^t (A,e_A)\ar[r,"f'"]\ar[l,swap,"\zeta_t"] & (B,\bar{e}_B)\end{tikzcd}$ as $\hat{f}=f' \circ \zeta_m$.
    Finally we show that $\Gamma$ is faithful. Assume $\Gamma(\begin{tikzcd}(A,e_A)& \ss^{t}\ar[l,swap,"\zeta_t"](A,e_A) \ar[r,"f"]& (B,e_B)\end{tikzcd})= \Gamma(\begin{tikzcd}(A,e_A)& \ar[l,swap,"\zeta_t'"]\ss^{t'}(A,e_A)\ar[r,"f'"] & (B,e_B)\end{tikzcd}) $, in particular 
    \begin{equation}\begin{tikzcd}((A,[e_A]_\infty)& \ss^{t+r}\ar[l,swap,"\zeta_{t+r}"](A,[e_A]_\infty) \ar[r,"f\circ \zeta_r"]& (B,[e_B]_\infty))\sim (A,[e_A]_\infty)& \ar[l,swap,"\zeta_{t'+r}"]\ss^{t'+r}(A,[e_A]_\infty)\ar[r,"f'\circ \zeta_r"] & (B,[e_B]_\infty) \end{tikzcd}\end{equation}
    where $e_A$ is an $r$-idempotent, (note $r\geq \lfloor(A,e_A) \rfloor$). This implies the existence of the following diagram 
    \begin{equation}\begin{tikzcd}
        & & \ss^{t+r}(A,[e_A]_\infty)\ar[dll,swap,"\zeta_{t+r}"]\ar[drr,"f \circ \zeta_{r}"]& & \\
        (A,[e_A]_\infty) & & \ss^{k}(A,[e_A]_\infty)\ar[ll,swap,"\zeta_k"]\ar[rr,"f''"]\ar[u,"\phi"]\ar[d,"\psi"]& & (B,[e_B]_\infty)\\
        & &\ss^{t'+r}(A,[e_A]_\infty)\ar[urr,swap,"f' \circ \zeta_r"]\ar[ull,"\zeta_{t'+r}"] & &
    \end{tikzcd}\end{equation}
    Take $m\geq\max\{t+r,t+r'\}$ to be suitably large, then 
    \begin{align*}
&f \circ \zeta_r \circ \phi \circ \zeta_m =  f' \circ \zeta_r \circ \psi \circ \zeta_m\\
\implies &f \circ \zeta_r \circ \zeta_k \circ \zeta_{m-(t+r)}= f' \circ \zeta_r \circ \zeta_{k} \circ \zeta_{m-(t'+r)}
    \end{align*}
    hence we can construct an equivalence diagram
\begin{equation}\begin{tikzcd}
    & & \ss^t(A,e_A)\ar[dll,swap,"\zeta_t"]\ar[drr,"f"]& & \\
   (A,e_A) & & \ss^{n}(A,e_A)\ar[ll,swap,"\zeta_n"]\ar[u,"\zeta_{n-t}"]\ar[d,"\zeta_{n-t'}"]\ar[rr,"f \circ \zeta_{n-t}"]& & (B,e_B)\\
    & &\ss^{t'}(A,e_A)\ar[ull,"\zeta_{t'}"]\ar[urr,swap,"f'"] & & 
\end{tikzcd}\end{equation}
    for $n$ large enough. 
        \end{enumerate}
    \end{proof}
\end{lemma}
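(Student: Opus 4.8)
The plan is to build an explicit functor
$\Gamma\colon W^{-1}\widehat{\text{Split}}_\pp(\cc^\ss)\to\text{Split}(W^{-1}\cc^\ss)$
and show it is essentially surjective and fully faithful. On objects I would send a pair $(A,e_A)$, with $e_A$ an $r$-idempotent, to $(A,[e_A]_\infty)$, where $[e_A]_\infty$ is the image of the weighted idempotent $e_A$ in $W^{-1}\cc^\ss$; the computation preceding the definition of weighted idempotents shows precisely that $[e_A]_\infty$ is an honest idempotent in $W^{-1}\cc^\ss$, so this is a legitimate object of $\text{Split}(W^{-1}\cc^\ss)$. On morphisms, given a roof $(A,e_A)\xleftarrow{u}(C,e_C)\xrightarrow{f}(B,e_B)$ with $u$ a weighted isomorphism, I would first compose with an inverse of $u$ (exactly as in the proof that $W^{-1}\cc^\ss\simeq\cc_\infty$) to replace it by an equivalent roof in the normal form $(A,e_A)\xleftarrow{\zeta_t}\ss^t(A,e_A)\xrightarrow{f'}(B,e_B)$. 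The defining relations $f'\circ\ss^t e_A=f'\circ\zeta_r$ and $e_B\circ f'=\zeta_s\circ f'$ of a morphism in $\widehat{\text{Split}}_\pp(\cc^\ss)$ become, after passing to $W^{-1}\cc^\ss$, the intertwining identities $[e_B]_\infty\circ[f']_\infty=[f']_\infty=[f']_\infty\circ[e_A]_\infty$, so $[f']_\infty$ is a morphism of $\text{Split}(W^{-1}\cc^\ss)$, and I set $\Gamma(f):=[f']_\infty$.

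The first round of checks is that $\Gamma$ is well defined: independent of the chosen normal-form representative, compatible with the equivalence relation on roofs, and functorial. All of these follow the same pattern — any equivalence diagram (respectively any composition pullback from axiom~3) of roofs in $\widehat{\text{Split}}_\pp(\cc^\ss)$ pushes forward to the corresponding diagram of roofs in $W^{-1}\cc^\ss$, since the legs of that diagram are themselves weighted isomorphisms and the $\zeta$'s compose additively via $\zeta_s^{\ss^{-r}A}\circ\zeta_r^A=\zeta_{r+s}^A$.

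For essential surjectivity I would invoke the analysis of Section~\ref{sectpcatpair}: any idempotent $\bar e_A$ of $W^{-1}\cc^\ss$ is representable by a roof $A\xleftarrow{\zeta_r}\ss^rA\xrightarrow{\tilde e_A}A$ whose upper leg, after absorbing a further $\zeta$, is an honest $r$-idempotent $e_A\colon A\to\ss^{-r}A$ in $\cc^\ss$, so $(A,e_A)$ is an object of $\widehat{\text{Split}}_\pp(\cc^\ss)$ with $\Gamma((A,e_A))=(A,[e_A]_\infty)\cong(A,\bar e_A)$. Fullness is the substantive step: starting from a morphism $(A,\bar e_A)\to(B,\bar e_B)$ in $\text{Split}(W^{-1}\cc^\ss)$, represented by a roof $A\xleftarrow{\zeta_t}\ss^tA\xrightarrow{f'}B$ that intertwines the chosen weighted idempotents $\tilde e_A,\tilde e_B$ only up to $W$, I would unwind the two intertwining equivalences as commutative diagrams in $\cc^\ss$, extract from each a large weight past which $f'$ precomposed with a single $\zeta_m$ satisfies the intertwining relations \emph{strictly}, and conclude that $\hat f:=f'\circ\zeta_m$ is a genuine morphism of $\widehat{\text{Split}}_\pp(\cc^\ss)$ between the pairs; the roof built from $\hat f$ then maps under $\Gamma$ to a roof equivalent to the original. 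Faithfulness runs in the reverse direction: if two normal-form roofs have $\Gamma$-equal images, the equivalence of those images in $W^{-1}\cc^\ss$ yields, after composing with a large enough $\zeta$, an equivalence diagram whose legs already live in $\widehat{\text{Split}}_\pp(\cc^\ss)$.

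The hard part will be exactly this lifting of intertwining relations through the localization: the relations defining a morphism of $\text{Split}(W^{-1}\cc^\ss)$ hold only after applying structure maps, so one must track enough shift and weight bookkeeping — using additivity of the $\zeta$'s together with the fact that the weighted-idempotent identity $e_A\circ\ss^r e_A=\zeta_r\circ e_A$ holds on the nose in $\cc^\ss$ — to guarantee that one sufficiently large weight $m$ simultaneously witnesses both intertwining conditions and the equivalence of roofs. Everything else reduces to a routine, if lengthy, diagram chase.
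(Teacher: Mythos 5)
Your proposal is correct and follows essentially the same route as the paper: the same functor $\Gamma((A,e_A))=(A,[e_A]_\infty)$, the same reduction of roofs to the normal form $(A,e_A)\xleftarrow{\zeta_t}\ss^t(A,e_A)\xrightarrow{f'}(B,e_B)$, and the same key step for fullness, namely absorbing a sufficiently large $\zeta_m$ into $f'$ so that the intertwining relations hold strictly in $\widehat{\text{Split}}_\pp(\cc^\ss)$, with faithfulness obtained by running the equivalence diagrams back through a large shift. No substantive differences from the paper's argument.
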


\begin{lemma}
    If every idempotent in $\cc_\infty$ is represented by a weight zero idempotent in $\cc_0$. Then the category $\widehat{\text{Split}}_\pp^0(\cc)$ given by the full subcategory of $\widehat{\text{Split}}_\pp(\cc)$ with objects $(A,e)$ with $e$ a $0$-idempotent, is a persistence category in the usual sense, and moreover its limit category is equivalent to $\text{Split}(\cc_\infty)$.
    \begin{proof}
        We realise that the flattened category $(\widehat{\text{Split}}_\pp^0(\cc))^\ss$ is exactly the full subcategory of $\widehat{\text{Split}}_\pp(\cc^\ss)$ consisting of objects $(\ss^aA,\ss^ae)$ where $e$ is a zero idempotent. Since every idempotent in $\cc_\infty \simeq W^{-1}\cc^\ss$ is represented by a weight zero idempotent, it is clear that $\Gamma$ restricted to $W^{-1}\widehat{\text{Split}}_\pp^0(\cc^\ss)$ is also essentially surjective.
    \end{proof}
\end{lemma}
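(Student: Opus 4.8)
\emph{Proof proposal.} The plan is to reduce everything to the two lemmas just proved, so that only two new points remain: that a weight-zero idempotent $e$ already functions as the identity of the object $(A,e)$, and that the standing hypothesis is exactly what makes the restriction of the equivalence $\Gamma$ stay essentially surjective.

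\emph{Step 1: $\widehat{\text{Split}}_\pp^0(\cc)$ is a persistence category in the usual sense.} By construction it is a full sub-(semi)category of $\widehat{\text{Split}}_\pp(\cc)$, so its $\text{Hom}$'s inherit the persistence-module structure and the weighted composition; the shift functors $\ss^a(A,e):=(\ss^aA,\ss^a e)$ make sense, since $\ss^a e$ is again an honest idempotent because $\ss^a$ is a functor, and they satisfy axioms (1)--(3) because they do in $\cc$, the isomorphism in (3) being the shift isomorphism of $\cc$, which intertwines $\ss^a e$ with $\ss^b e$ by naturality. The one thing to verify is the existence of identities: for an object $(A,e)$ with $e$ a $0$-idempotent (which forces $\lfloor A\rfloor=0$), the $r=s=0$ cases of the morphism conditions in $\widehat{\text{Split}}_\pp(\cc)$ read $e_B\circ f=f$ and $f\circ e=f$, so $e$ is a two-sided unit at $(A,e)$; equivalently, this is the Proposition computing $\zeta_r^{(\ss^aA,\ss^ae_A)}=e_A$ specialised to $r=a=0$. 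Hence $\lfloor(A,e)\rfloor=0$ for every object and $\widehat{\text{Split}}_\pp^0(\cc)$ is a genuine persistence category.

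\emph{Step 2: the limit category.} Since $\widehat{\text{Split}}_\pp^0(\cc)$ is a \emph{full} sub-persistence-(semi)category of $\widehat{\text{Split}}_\pp(\cc)$, the direct limits defining the limit-category $\text{Hom}$'s are taken over the same diagrams, so passing to limit categories yields a full and faithful functor $J\colon (\widehat{\text{Split}}_\pp^0(\cc))_\infty \hookrightarrow (\widehat{\text{Split}}_\pp(\cc))_\infty$. By the preceding lemma, together with the identifications $(\widehat{\text{Split}}_\pp(\cc))_\infty\simeq W^{-1}(\widehat{\text{Split}}_\pp(\cc^\ss))$ and $W^{-1}\cc^\ss\simeq\cc_\infty$, the functor $\Gamma$ is an equivalence $(\widehat{\text{Split}}_\pp(\cc))_\infty\xrightarrow{\sim}\text{Split}(\cc_\infty)$ acting on objects by $(A,e)\mapsto(A,[e]_\infty)$. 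Consequently $\Phi:=\Gamma\circ J$ is fully faithful, and its essential image consists exactly of those $(A,\bar e)$ for which $\bar e$ admits a weight-zero representative. This is where the hypothesis enters: every idempotent $\bar e$ of $\cc_\infty$ is represented by some $0$-idempotent $e$ of $\cc_0$, so $(A,e)\in\widehat{\text{Split}}_\pp^0(\cc)$ and $\Phi((A,e))=(A,[e]_\infty)=(A,\bar e)$; thus $\Phi$ is essentially surjective, hence an equivalence, and $(\widehat{\text{Split}}_\pp^0(\cc))_\infty\simeq\text{Split}(\cc_\infty)$.

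\emph{Main obstacle.} There is no deep obstacle once the previous two lemmas are available; the care is all in Step 1, in checking that the $0$-idempotents genuinely supply the identities and that the shift-functor axioms descend, so that $\widehat{\text{Split}}_\pp^0(\cc)$ is a bona fide persistence category rather than merely a persistence semi-category. In Step 2 the only point worth double-checking is that $J$ is automatically full and faithful, which holds because the limit category of a full sub-persistence-category is a full subcategory; after that the conclusion is a purely formal consequence of $\Gamma$ being an equivalence.
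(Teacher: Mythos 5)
Your proposal is correct and follows essentially the same route as the paper: restrict the equivalence $\Gamma$ (equivalently, work with the flattened subcategory of weight-zero pairs), note full faithfulness is inherited, and use the hypothesis that every idempotent of $\cc_\infty$ has a weight-zero representative to get essential surjectivity. The only difference is cosmetic — you spell out the verification that a $0$-idempotent serves as the identity of $(A,e)$ (which the paper leaves implicit, relying on its earlier proposition that $\zeta_r^{(A,e_A)}=e_A$), and you phrase the limit-category step via the fully faithful inclusion $J$ rather than via $W^{-1}$ of the flattened categories, which the paper has already shown to be the same thing.
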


\section{Persistent presheaves}\label{sectppsheav}\label{sectionpp}
One can identify the Karoubi completion of a category $\cc$ with a full subcategory of the category of presheaves $\text{PSh}(\cc)$ consisting of objects that are retracts of representable presheaves. Thus one can ask if this identification extends more naturally to the persistence setting.

Before we begin we remark that since the limit category of a persistence category $\cc$ is a $\text{Mod}_k$ enriched category, we should be considering presheaves in $\text{Mod}_k$. I.e., we can realise $\text{Split}(\cc_\infty)$ as a subcategory of $\text{PSh}(\cc_\infty):=[\cc^{\text{op}},\text{Mod}_k]$. One would therefore hope to define a persistence refinement of $\text{PSh}(\cc_\infty)$ by considering a subcategory of $[\cc^{\text{op}},\text{Mod}_k]$. However it is not clear how to realise this as a persistence category. Thus instead we should consider presheaves in $\text{Mod}_k^\pp$ the persistence category of persistence modules. 
\begin{rmk}
    The category $\text{Mod}_k^\pp$ corresponds to the category $\cc^{\pp \text{Mod}_k}$ in \cite{BCZ} and $\underline{\text{Mod}}^{\text{P}}_\text{k}$ in \cite{BS}.
\end{rmk}

The limit category of a persitence category has the same objects, hence the limit category of any subcategory of $[\cc^{\text{op}},\text{Mod}_k^\pp]$ will have objects being presheaves in $\text{Mod}_k^\pp$. Thus we should view $\text{Mod}_k$ as a subcategory of $\text{Mod}_k^\pp$.

\begin{prop}
    Given a $\text{Mod}_k$-enriched category $\cc$, its Karoubi completion can be identified with (i.e., is equivalent to) the full subcategory of retracts, of the image of $\hat{\yy}:\cc \to [\cc^{\text{op}},\text{Mod}_k^\pp]$. Where $\hat{\yy}(A)(r)=\text{Hom}_\cc(-,A)$ for all $r\in \R$ and similarly $\hat{\yy}(f:A \to B)(r):=\yy(f):\text{Hom}_\cc(-,A) \to \text{Hom}_\cc(-,B)$.

    \begin{proof}
        There is a fully faithful functor $\Phi:\text{Mod}_k \to \text{Mod}_k^\pp$, defined by setting
        \begin{align*}
            \Phi(V)(r):=&V\\
            i_{r,s}^{\Phi(V)}:=&1_V:\Phi(V)(r) \to \Phi(V)(s)\\
            \Phi(f:V \to W)(r):=&(f:V \to W)
        \end{align*}
        for all $r,s\in \R$. Indeed, faithfulness of $\Phi$ is clear, and fullness follows from the persistence module morphisms being required to commute with the persistence structure maps $i_{r,s}^{\Phi(-)}$ (which are the identity in this case). This functor induces a functor $\Phi_*:[\cc^{\text{op}},\text{Mod}_k] \to [\cc^{\text{op}},\text{Mod}_k^\pp]$ given by
        
        \begin{align*}
            [\Phi_*(F)](A)(r):=&F(A)\\
            [\Phi_*(F)](f:A \to B)(r):=&F(f).
        \end{align*}
        The composition $\hat{\yy}:=\Phi_* \circ \yy:\cc \to [\cc^{\text{op}},\text{Mod}_k^\pp]$ is thus given by 
        \begin{align*}
            \hat{\yy}(A)(r)= \yy(A)\\
            \hat{\yy}(f)(r)=\yy(f)
        \end{align*}
        and is fully faithful. Let $F$ be a retract of $\yy(A)$ in $[\cc^{\text{op}},\text{Mod}_k]$, then $\Phi_*(F)$ is a retract of $\hat{\yy}(A)$ in $[\cc^{\text{op}},\text{Mod}_k^\pp]$. Furthermore any retract of $\hat{\yy}(A)$ must be isomorphic to the image of a retract of $\yy(A)$ under $\Phi_*$. Assume that $\hat{F}$ is a retract of $\hat{\yy}(A)$, given by $s:\hat{F} \to \hat{\yy}(A)$  and $r:\hat{\yy}(A) \to \hat{F}$, such that $r \circ s=1_{\hat{F}}$. The maps $s$ and $r$ consist of a family of persistence module maps $s_B:\hat{F}(B) \to \hat{\yy}(A)(B)$ and $r_B:\hat{\yy}(A) \to \hat{F}$ with $r_B \circ s_B=1_{\hat{F}(B)}$. These maps in turn consist of families of module maps $s_B(r):\hat{F}(B)(r) \to \hat{\yy}(A)(B)(r)$ and $r_B(r):\hat{\yy}(A)(B)(r) \to \hat{F}(B)(r)$ satisfying 
        \begin{align}
            i_{r,s}^{\hat{\yy}(A)(B)}\circ s_B(r) =& s_B(s) \circ i_{r,s}^{\hat{F}(B)}\\
            i_{r,s}^{\hat{F}(B)}\circ r_B(r)=& r_B(s) \circ i_{r,s}^{\hat{\yy}(A)(B)}\notag
        \end{align}
        However by definition $ i_{r,s}^{\hat{\yy}(A)(B)}=1_{\text{Hom}_\cc(B,A)}$, hence the following commutes
        \begin{equation}
            \begin{tikzcd}
                \hat{F}(B)(t)\ar[dr,"s_B(t)"]& &\hat{F}(B)(t)\\
                 \hat{F}(B)(s)\ar[u,"i_{s,t}^{\hat{F}(B)}"]\ar[r,"s_B(s)"]& \yy(A)(B)\ar[ur,"r_B(t)"]\ar[r,"r_B(s)"]\ar[dr,"r_B(r)"] &\hat{F}(B)(s)\ar[u,swap,"i_{s,t}^{\hat{F}(B)}"]\\
                  \hat{F}(B)(r)\ar[ru,"s_B(r)"]\ar[u,"i_{r,s}^{\hat{F}(B)}"]& &\hat{F}(B)(r)\ar[u,swap,"i_{r,s}^{\hat{F}(B)}"]
            \end{tikzcd}
        \end{equation}
        One then sees that 
        \begin{align*}
            r_B(s) \circ s_B(t) \circ i^{\hat{F}(B)}_{s,t}=& r_B(s) \circ s_B(s) \\
            =&1_{\hat{F}(B)(s)}
        \end{align*}
        and 
        \begin{align*}
            i^{\hat{F}(B)}_{s,t} \circ r_B(s) \circ s_B(t)=& r_B(t) \circ s_B(t)\\
            =&1_{\hat{F}(B)(t)}
        \end{align*}
        In particular we see that all of the persistence module structure maps of $\hat{F}(B)$ are isomorphisms. Hence $\hat{F}(B)$ is isomorphic to an object in the image of $\Phi_*$.
    \end{proof}
\end{prop}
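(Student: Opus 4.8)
The plan is to bootstrap from the classical identification of the Karoubi completion of a $\text{Mod}_k$-enriched category $\cc$ with the full subcategory $\mathcal{R}(\cc)\subseteq[\cc^{\text{op}},\text{Mod}_k]$ of retracts of representable presheaves $\yy(A)=\text{Hom}_\cc(-,A)$ (see \cite{Lu}), which is recalled in the introduction and in Section \ref{sectpcatpair}. Granting this, the content to be supplied is a comparison between $\mathcal{R}(\cc)$ and the full subcategory $\hat{\mathcal{R}}(\cc)\subseteq[\cc^{\text{op}},\text{Mod}_k^\pp]$ of retracts of objects in the image of $\hat{\yy}$.

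The first step is to introduce the ``constant persistence module'' functor $\Phi:\text{Mod}_k\to\text{Mod}_k^\pp$, with $\Phi(V)(r)=V$ for all $r$, all persistence structure maps equal to $1_V$, and $\Phi(f)(r)=f$. Faithfulness is immediate; fullness follows because any morphism $\Phi(V)\to\Phi(W)$ of persistence modules must intertwine the (identity) structure maps and is therefore constant in $r$. Post-composition gives a functor $\Phi_*:[\cc^{\text{op}},\text{Mod}_k]\to[\cc^{\text{op}},\text{Mod}_k^\pp]$, again fully faithful, and one checks directly that $\hat{\yy}=\Phi_*\circ\yy$. Hence $\Phi_*$ sends a retract of $\yy(A)$ to a retract of $\hat{\yy}(A)$, so it restricts to a fully faithful functor $\mathcal{R}(\cc)\to\hat{\mathcal{R}}(\cc)$; it remains to prove this restriction is essentially surjective.

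For essential surjectivity, take $\hat F$ a retract of $\hat{\yy}(A)$, with $s:\hat F\to\hat{\yy}(A)$, $\rho:\hat{\yy}(A)\to\hat F$, $\rho\circ s=1_{\hat F}$. Evaluating at an object $B\in\cc$ produces persistence-module maps $s_B,\rho_B$ splitting $\hat{\yy}(A)(B)=\Phi(\text{Hom}_\cc(B,A))$, whose structure maps are identities. Naturality of $s_B$ and $\rho_B$ with respect to the structure maps, together with $\rho_B(r)\circ s_B(r)=1$ in each weight $r$, shows that the structure map $i^{\hat F(B)}_{r,t}$ has two-sided inverse $\rho_B(r)\circ s_B(t)$; thus every persistence structure map of $\hat F$ is an isomorphism. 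A persistence module all of whose structure maps are isomorphisms is canonically isomorphic to $\Phi$ of its value at $0$, and I would verify that these degreewise isomorphisms are natural both in the weight and in $B\in\cc^{\text{op}}$, so that $\hat F\cong\Phi_*(F)$ for some presheaf $F:\cc^{\text{op}}\to\text{Mod}_k$; since $\Phi_*$ is fully faithful, the retraction data for $\hat F$ descend to retraction data exhibiting $F$ as a retract of $\yy(A)$, i.e. $F\in\mathcal{R}(\cc)$. Composing the equivalences $\text{Split}(\cc)\simeq\mathcal{R}(\cc)\xrightarrow{\ \Phi_*\ }\hat{\mathcal{R}}(\cc)$ then completes the argument.

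I expect the main obstacle to be organisational rather than conceptual: checking that the degreewise isomorphisms $\hat F(B)\cong\Phi(\hat F(B)(0))$ assemble into a single isomorphism of $\text{Mod}_k^\pp$-valued presheaves --- simultaneous naturality in the weight and in the $\cc$-variable --- and being careful throughout that ``retract'' is taken in the $\text{Mod}_k^\pp$-enriched sense, so that the splitting data $(s,\rho)$ are genuine morphisms of persistence presheaves and descend correctly along $\Phi_*$. Once $\Phi$ and its fully faithful extension $\Phi_*$ are in place, every remaining step is a short diagram chase.
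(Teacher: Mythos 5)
Your proposal follows essentially the same route as the paper: the constant-module functor $\Phi$, its fully faithful post-composition $\Phi_*$ with $\hat{\yy}=\Phi_*\circ\yy$, and the observation that any retract of $\hat{\yy}(A)$ has invertible persistence structure maps (with inverse $\rho_B(r)\circ s_B(t)$) and hence lies, up to isomorphism, in the image of $\Phi_*$. The extra naturality checks you flag are exactly the details the paper leaves implicit, so the argument is correct and matches the paper's proof.
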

From now on we will write $\text{Split}(\cc)$  (for $\cc$ a $\text{Mod}_k$-enriched category) to denote the full subcategory of $[\cc^{\text{op}},\text{Mod}_k^\pp]$ consisting of retracts of the image of $\hat{\yy}$.

\begin{rmk}
    The above proposition allows us to view $\text{Split}(\cc)$ as the idempotent completion of $\cc$.
\end{rmk}

\subsection{The persistence category of persistent presheaves}
In order to attach a persistence structure to our category of presheaves we should not look to presheaves in sets or modules, but to presheaves in persistence modules. Thus, we begin by defining a category of presheaves in persistence modules and explore its natural persistence category structure. 
\begin{defn}
    A \textbf{persistent presheaf} on a persistence category $\cc$ is a persistence functor $F: \cc^{op}\to \text{Mod}^\pp_k$ that respects the persistence structure of $\cc$. Explicitly this means that for all $A\in \cc$ there is a persistence module $F(A):(\R,<) \to \text{Mod}_k$, and for any morphism $f\in \text{Mor}^a(A,B)$ we have a map $F(f)\in \text{Mor}^a(F(B),F(A))$, i.e., a natural transformation of functors $f: F(B) \implies F(A)$ with `shift' $a$. In particular the following diagram commutes
    \begin{equation}\begin{tikzcd}
        F(A)(s+a) &\ar[l,"F(f)(s)"] F(B)(s)\\
        F(A)(r+a) \ar[u,"i_{r+a,s+a}^{F(A)}"]& \ar[l,"F(f)(r)"] F(B)(r)\ar[u,"i_{r,s}^{F(B)}"]
    \end{tikzcd}\end{equation}
where $i_{r,s}^{F(-)} := F(-)(i_{r,s}) $ are the persistence module stucture maps on the persistence module $F(-)$. Furthermore, as $F$ is a persistence functor we have 
\begin{equation}
    \begin{tikzcd}
        \text{Mor}^s_\cc(A,B)\ar[r,"F_{AB}^s"] & \text{Mor}^s_{\text{Mod}^\pp_k}(F(B),F(A))\\
        \text{Mor}^r_\cc(A,B)\ar[u,"i^\cc_{r,s}"]\ar[r,"F_{AB}^r"] & \text{Mor}_{\text{Mod}_k^\pp}^r(F(B),F(A))\ar[u,"i^{\text{Mod}_k^\pp}_{r,s}"]
    \end{tikzcd}
\end{equation}
commutes.
\end{defn}
\begin{defn}
Let $\cc$ be a persistence category, we define the category of \textbf{Persistent Presheaves} on $\cc$, $\text{PSh}_\pp (\cc)$ to be the persistence functor category:
\begin{equation}\text{PSh}_\pp(\cc):=[\cc^{op},\text{Mod}_k^\pp]_\pp.\end{equation} 
 The objects of $\text{PSh}_\pp (\cc)$ are persistent presheaves. A morphism $\nu \in \text{Mor}^s_{\text{PSh}_\ff(\cc)}(F,G)$
is a natural transformation $\nu: F \implies G$ such that for all $A\in \cc$, $\lceil \nu_A \rceil = s$. For any $f\in \text{Mor}^r_{\cc}(A,B)$ and $\nu\in \text{Mor}^s_{\text{PSh}_\ff(\cc)}(F,G)$, we have the following commutative diagrams:

\begin{equation}\begin{tikzcd}
    F(A)\arrow[r,"\nu_A"] & G(A)\\
    F(B)\arrow[u,"F(f)"] \arrow[r,"\nu_B"]& G(B)\arrow[u,swap,"G(f)"]
\end{tikzcd}\end{equation}

\begin{equation}\begin{tikzcd}
     F(A)(t)\arrow[r,"\nu_A(t)"] & G(A)(t+s)\\
    F(B)(t+r)\arrow[u,"F(f)(t+r)"] \arrow[r,"\nu_B(t+r)"]& G(B)(t+s+r)\arrow[u,swap,"G(f)(t+s)"]
\end{tikzcd}\end{equation}

\end{defn}

\begin{prop}
    The category $\text{PSh}_\pp(\cc)$ is a Persistence Category. 
    \begin{proof}
        We need to define the $k$-module structure on $\text{Mor}^r_{\text{PSh}_\pp(\cc)}(F,G)$. We do so by using the persistence category structure of $\text{Mod}^\pp_k$. Let $a,b\in k$ and $\nu,\zeta \in \text{Mor}_{\text{PSh}_\pp(\cc)}^s(F,G)$, we define $a \cdot \nu - b \cdot \zeta \in \text{Mor}^s_{\text{PSh}_\pp(\cc)}(F,G)$ as follows:

        \begin{equation}[a \cdot \nu_A - b \cdot \zeta_A ](t)= a \cdot (\nu_A(t))-b \cdot (\zeta_A (t)) \in \text{Hom}_{\text{Mod}_k}(F(A)(t),G(A)(t+s))\end{equation}
        using the fact $\nu_A(t)$ and $\zeta_A(t)$ are $k$-module morphisms. The persistence structure maps $i_{r,s}:\text{Mor}^r(F,G) \to \text{Mor}^s(F,G)$ for $r\leq s$ are given by 
        \begin{equation}(i_{r,s}\circ \nu )_A = \iota_{r,s}\circ \nu_A \end{equation}
        where $\iota_{r,s}$ is the persistence structure map in $\text{Mod}^\pp_k$, $\iota_{r,s}:\text{Mor}^r_{\text{Mod}_k^\pp}(F(A),G(A)) \to \text{Mor}^s_{\text{Mod}^\pp_k}(F(A),G(A))$.
    \end{proof}
\end{prop}
There exist canonical shift functors on $\text{PSh}_\pp(\cc)$, given by
    \begin{equation}\ss^a F (A) = F ( \ss^{-a}A)\end{equation}
The flipping of the sign will become apparent shortly when we look at a persistent Yoneda embedding. They are chosen so that $\yy$ commutes with $\ss^r$s. The maps $(\eta_{a,b})_F: \ss^aF \to \ss^bF$ are defined by

\begin{align}\label{etaandfunctors}
    ((\eta_{a,b})_F)_A= F((\eta_{-a,-b})_A).
\end{align}

There is a natural fully faithful embedding of any persistence category $\cc$ into $\text{PSh}_\pp(\cc)$, it is given by a persistent enriched version of the Yoneda embedding which follows from the enriched Yoneda Lemma using a monoidal structure on $\text{Mod}^\pp_k$ (see \cite{BM} for discussions on this monoidal structure and \cite{Ke} for the enriched Yoneda lemma) 
\begin{equation}\mathcal{Y}:\cc \hookrightarrow \text{PSh}_\pp(\cc)\end{equation}
\begin{equation}\mathcal{Y}(A) = \text{Hom}_{\cc}(-,A). \end{equation}

Given a morphism $f\in \text{Mor}^r_\cc(A,B)$ we have $\yy(f)\in \text{Mor}^r_{\text{PSh}_\pp(\cc)}(\text{Hom}_\cc(-,A),\text{Hom}_\cc(-,B))$ and is given by
\begin{equation}\yy(f)(C):g\mapsto f \circ g\end{equation}
for any $g\in \text{Hom}_\cc(C,A)$. Note that composition is in $\cc$ and hence respects the persistence structures.
\begin{rmk}
    The persistence Yoneda map is a persistence functor. We easily verify that for any $f\in \text{Mor}^r_\cc(A,B)$ we have that $\yy(i_{r,s}\circ f)= i_{r,s} \circ \yy(f)$. Indeed, for $f\in \text{Mor}^r_\cc(A,B)$ and $X\in \cc$ we have $\yy( i_{r,s}\circ f)(X)$ is the map

    \begin{align*}
        \text{Hom}_\cc(X,A) &\xrightarrow{\yy(i_{r,s}\circ f)(X)} \text{Hom}_\cc(X,B)\\
        (-) &\mapsto i_{r,s} \circ f \circ (-) 
    \end{align*}
which is exactly $i_{r,s}\circ \yy(f)(X)$, hence $i_{r,s}\circ \yy(f) = \yy(i_{r,s}\circ f)$.
\end{rmk}
\begin{prop}
    The persistent Yoneda embedding $\yy: \cc \to \text{PSh}_\pp (\cc)$ commutes with shift functors i.e., $\ss^{-r}\circ \yy \cong \yy \circ \ss^{-r}$. Moreover we have that $\yy(\eta_r^A)=\eta_r^{\yy(A)}$.
\begin{proof}
Recall that (see \cite{BCZ} remark 2.16) there exist isomorphisms
\begin{align}
    \text{Mor}^\alpha(X,Y) &\xrightarrow{(-)\circ (\eta_{0,r})_{Y}} \text{Mor}^{\alpha+r}(X,\ss^r Y)\\
        \text{Mor}^\alpha(X,Y) &\xrightarrow{(\eta_{0,s})_X\circ (-)} \text{Mor}^{\alpha-s}(\ss^sX,Y)
\end{align}
Taking $r=s$ and composing we get a canonical isomorphism 

\begin{equation}
(\eta_{0,r})_Y\circ  (-)\circ (\eta_{0,r})_{X}: \text{Mor}^\alpha(X,Y)    \xrightarrow{(-)\circ (\eta_{0,r})_{Y}} \text{Mor}^{\alpha+r}(X,\ss^r Y) \xrightarrow{(\eta_{0,r})_X\circ (-)} \text{Mor}^{\alpha}(\ss^rX,\ss^rY)
\end{equation}

Now consider $\yy(\ss^{-r}A)=\text{Hom}_\cc(-,\ss^{-r}A)$, the above isomorphism gives that for any $X\in \cc$ and $\alpha\in \R$ we have a canonical isomorphism 
\begin{equation}
    \yy(\ss^{-r}A)(X)(\alpha) \to \yy(A)(\ss^rX)(\alpha)
\end{equation}
furthermore these isomorphisms respect the persistence module structures. By definition of the shift functors $\ss^r$ on $\text{PSh}_\pp(\cc)$ the above can be viewed as a canonical isomorphism 
\begin{equation}
      \yy(\ss^{-r}A)(X)(\alpha) \to \ss^{-r}\yy(A)(X)(\alpha)
\end{equation}
for every $\alpha$ and $X$, hence we have $\yy\circ \ss^{a}= \ss^{a}\circ \yy$. Now consider $\yy(\eta_r^A):\yy(A) \to \yy(\ss^{-r}A)$, we have
\begin{align*}
    \yy(\eta_r^A)&=\yy(i_{-r,0} \circ (\eta_{0,-r})_A)\\
    &=i_{-r,0}\circ \yy((\eta_{0,-r})_A)
\end{align*}
The map $\yy((\eta_{0,-r})_A)(X)$ is given by
\begin{align*}
    \text{Hom}_\cc(X,A) &\xrightarrow{\yy((\eta_{0,-r})_A)} \text{Hom}_\cc(X,\ss^{-r}A)\\
    (-)&\mapsto (\eta_{0,-r})_A \circ (-)
\end{align*}
We are realising $\text{Hom}_\cc(X,\ss^{-r}A)$ as $\ss^{-r}\text{Hom}_\cc(X,A)$ via the isomorphism (as described above)

\begin{equation}
    (\eta_{0,r})_{\ss^{-r}A} \circ (-) \circ (\eta_{0,r})_{X}:\text{Hom}_\cc(X,\ss^{-r}A) \to \text{Hom}_\cc(\ss^{r}X,A).
\end{equation}
But $(\eta_{0,r})_{\ss^{-r}A}=(\eta_{-r,0})_A$, hence the composition
\begin{equation}
      \text{Hom}_\cc(X,A) \xrightarrow{\yy((\eta_{0,-r})_A)} \text{Hom}_\cc(X,\ss^{-r}A) \xrightarrow{} \text{Hom}_\cc(\ss^rX,A)
\end{equation}
gives 
\begin{align*}
     (\eta_{0,r})_{\ss^{-r}A} \circ ((\eta_{0,-r})_A\circ (-)) \circ (\eta_{0,r})_{X}&= (\eta_{-r,0})_{A} \circ (\eta_{0,-r})_A\circ (-) \circ (\eta_{0,r})_{X}\\
     &=(-) \circ (\eta_{0,r})_{X}
\end{align*}

Finally recall equation \ref{etaandfunctors} and the definition of $\ss^a$ on $\text{PSh}_\pp(\cc)$, we see
\begin{equation}
    (\eta_{0,-r})_{\yy(A)}(X):\yy(A)(X) \to \ss^{-r}\yy(A)(X) 
\end{equation}
is given by 
\begin{equation}
    \yy(A)((\eta_{0,r})_X:X \to \ss^rX)
\end{equation}
which is precisely our map $(-) \mapsto (-)\circ (\eta_{0,r})_X$. After applying $i_{-r,0}$ we find 
\begin{equation}
\yy(\eta_r^A)=\eta_r^{\yy(A)}:\yy(A) \to \ss^{-r}\yy(A)
\end{equation}
\end{proof}

\end{prop}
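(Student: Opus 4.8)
The plan is to establish the two assertions separately, in both cases reducing to the elementary shift–adjunction isomorphisms for $\text{Mor}$-groups (recorded in \cite{BCZ}, and already used implicitly in the definition of the shift functors on $\text{PSh}_\pp(\cc)$).

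For the isomorphism $\ss^{-r}\circ\yy\cong\yy\circ\ss^{-r}$ I would evaluate both persistent presheaves on an arbitrary object $X$ and weight $\alpha$. On one side, the definition $\ss^{a}F(A)=F(\ss^{-a}A)$ of the shift on $\text{PSh}_\pp(\cc)$ gives $(\ss^{-r}\yy(A))(X)(\alpha)=\yy(A)(\ss^{r}X)(\alpha)=\text{Mor}^{\alpha}_\cc(\ss^{r}X,A)$; on the other side $(\yy(\ss^{-r}A))(X)(\alpha)=\text{Mor}^{\alpha}_\cc(X,\ss^{-r}A)$. I would then define $\Psi_{A,X}$ as the composite of post-composition with the weight-$r$ shift equivalence $(\eta_{-r,0})_A\colon\ss^{-r}A\to A$, namely $\text{Mor}^{\alpha}_\cc(X,\ss^{-r}A)\xrightarrow{\sim}\text{Mor}^{\alpha+r}_\cc(X,A)$, followed by pre-composition with the weight-$(-r)$ shift equivalence $(\eta_{r,0})_X\colon\ss^{r}X\to X$, namely $\text{Mor}^{\alpha+r}_\cc(X,A)\xrightarrow{\sim}\text{Mor}^{\alpha}_\cc(\ss^{r}X,A)$. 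Both factors are isomorphisms, so $\Psi_{A,X}$ is; it then remains to check the two bookkeeping compatibilities, namely naturality in $X$ (immediate from associativity of composition and naturality of the $(\eta_{a,b})$) and commutation with the persistence structure maps $i_{\alpha,\alpha'}$ (immediate from bilinearity of composition and its compatibility with structure maps in a persistence category). These verifications promote $\Psi_{A,\bullet}$ to an isomorphism of persistent presheaves, and naturality of the construction in $A$ gives the desired natural isomorphism of functors $\cc\to\text{PSh}_\pp(\cc)$.

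For $\yy(\eta^A_r)=\eta^{\yy(A)}_r$ I would write $\eta^A_r=i_{-r,0}\circ(\eta_{0,-r})_A$ and invoke the remark preceding the statement (that $\yy$ intertwines the persistence structure maps) to get $\yy(\eta^A_r)=i_{-r,0}\circ\yy((\eta_{0,-r})_A)$; likewise $\eta^{\yy(A)}_r=i_{-r,0}\circ(\eta_{0,-r})_{\yy(A)}$. So it suffices to show that, transported along the isomorphism $\Psi$ of the first part, $\yy((\eta_{0,-r})_A)$ agrees with $(\eta_{0,-r})_{\yy(A)}$. Evaluated at $X$, the former is post-composition by $(\eta_{0,-r})_A$; composing with $\Psi_{A,X}$ and unwinding the defining formula $(\eta_{0,-r})_{\yy(A)}=\yy(A)((\eta_{0,r})_\bullet)$ from \ref{etaandfunctors}, the required equality collapses to the coherence identity $(\eta_{-r,0})_A\circ(\eta_{0,-r})_A=\mathrm{id}_A$ between shift equivalences — a special case of property (1) of the $\eta$-maps — which holds by the shift-functor axioms. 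Applying $i_{-r,0}$ then finishes the argument.

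I expect no conceptual obstacle: everything is forced by the shift-functor axioms together with the enriched Yoneda picture. The only genuine work is the index bookkeeping in the last paragraph — keeping straight the directions and weights of the canonical maps $(\eta_{a,b})$ at $A$, $\ss^{-r}A$, $X$ and $\ss^{r}X$, the sign flip in $\ss^{a}F(A)=F(\ss^{-a}A)$, and the contravariance of $\yy$ — together with the simultaneous checking of naturality and persistence-structure compatibility for $\Psi$ in the middle step. That is where I would be most careful.
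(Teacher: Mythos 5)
Your proposal is correct and takes essentially the same route as the paper: the isomorphism $\ss^{-r}\circ\yy\cong\yy\circ\ss^{-r}$ is built objectwise from the same canonical identification $\text{Mor}^\alpha_\cc(X,\ss^{-r}A)\cong\text{Mor}^\alpha_\cc(\ss^rX,A)$ (your $\Psi_{A,X}$ is the paper's composite $(\eta_{0,r})_{\ss^{-r}A}\circ(-)\circ(\eta_{0,r})_X$), with the same naturality and persistence-structure checks. Likewise your second part mirrors the paper's: write $\eta_r^A=i_{-r,0}\circ(\eta_{0,-r})_A$, transport $\yy((\eta_{0,-r})_A)$ along the identification, and reduce to the coherence identity $(\eta_{-r,0})_A\circ(\eta_{0,-r})_A=\mathrm{id}_A$ before applying $i_{-r,0}$.
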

\begin{rmk}
In the above proposition and throughout this paper we identify $\yy \circ \ss^{a} $ with $\ss^{a} \circ \yy$. The are not identically equal but can be identified by canonical isomorphism. One can view this as including this isomorphism into the definition of the Yoneda map.
\end{rmk}

\subsection{Weighted retracts}
Completion of a category to an idempotent complete category can be thought of as simply adding enough retracts of objects to the original category. Now consider a persistence category $\cc$, we are trying to construct a new persistence category $\cc'$ such that $\cc'_\infty \cong \text{Split}(\cc_\infty)$. Assume we have a retract in $\cc_\infty$, then there is no need for it to be the image of a retract in $\cc_0$. This leads to the following definition. 
\begin{defn}
    We define a \textbf{weighted retract} of weight $r$ or simply an \textbf{$r$-retract} of an object $A$, to be an object $B$, such that there exist morphisms $s\in \text{Mor}^0_\cc(B,A)$, $r\in \text{Mor}^0_\cc(A,\ss^{-r}B)$  with $r \circ s = \eta_r^B$
    \begin{equation}\begin{tikzcd}
        A\arrow[dr,"r"]\\
         B \ar[r,swap,"\eta_r^B"]\arrow[u,"s"]& \ss^{-r}B\\
    \end{tikzcd}\end{equation}
    If $B$ is an $r$-retract of $A$ we will write $B<_r A$.
\end{defn}

\begin{rmk}
This definition gives an extension of the usual definition of a retract to account for the extra data in persistence categories. In particular $0$-retracts are usual retracts and a weighted retract will become a retract in the usual sense once we pass to the limit category $\cc_\infty$.
\end{rmk}

\begin{prop}
    If $B<_rA $ and $C<_s B$ then $C<_{r+s}A$.

    \begin{proof}
       Assume the respective retracts give rise to the following diagram
        \begin{equation}\begin{tikzcd} 
        A \ar[dr,"r"]& \\
B\ar[r,"\eta_r^B"]\ar[u,"s"]\arrow[dr,"r'"] & \ss^{-r}B\\
         C \ar[r,swap,"\eta_s^C"]\arrow[u,"s'"]& \ss^{-s}C \\
    \end{tikzcd}\end{equation}
We can then complete this to 
    \begin{equation}\begin{tikzcd} 
        A \ar[dr,"r"]& \\
    B\ar[r,"\eta_r^B"]\ar[u,"s"]\arrow[dr,"r'"] & \ss^{-r}B\ar[dr,"\ss^{-r}r'"]\\
         C \ar[r,swap,"\eta_s^C"]\arrow[u,"s'"]& \ss^{-s}C \ar[r,swap,"\eta_r^{\ss^{-s}C}"]& \ss^{-(r+s)}C\\
    \end{tikzcd}\end{equation}
noting that $\ss^{-r}r' \circ \eta_r^B = \eta_r^{\ss^{-s}C}\circ r'$, we have that 
diagram 
        \begin{equation}\begin{tikzcd}
        A\arrow[dr,"\ss^{-r}r' \circ r"]\\
         C\ar[r,swap,"\eta_{r+s}^C"]\arrow[u,"s\circ s'"]& \ss^{-(r+s)}C\\
         \end{tikzcd}\end{equation}
         is an $(r+s)$-retract diagram.
    \end{proof}
    
\end{prop}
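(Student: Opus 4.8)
The plan is to directly chase the retract diagrams, transferring the transitivity proof for usual retracts to the weighted setting by keeping careful track of the shift functors and the $\eta$-maps. First I would unpack the hypotheses: $B <_r A$ gives $s \in \text{Mor}^0_\cc(B,A)$ and $\rho \in \text{Mor}^0_\cc(A,\ss^{-r}B)$ with $\rho \circ s = \eta_r^B$, while $C <_s B$ gives $s' \in \text{Mor}^0_\cc(C,B)$ and $\rho' \in \text{Mor}^0_\cc(B,\ss^{-s}C)$ with $\rho' \circ s' = \eta_s^C$ (I will rename the morphisms so as not to clash with the weights $r,s$). The candidate maps witnessing $C <_{r+s} A$ are the composite inclusion $s \circ s' \in \text{Mor}^0_\cc(C,A)$ and the composite retraction $\ss^{-r}\rho' \circ \rho \in \text{Mor}^0_\cc(A, \ss^{-r}(\ss^{-s}C)) = \text{Mor}^0_\cc(A, \ss^{-(r+s)}C)$, using property 2 of the shift functors to identify $\ss^{-r}\ss^{-s}$ with $\ss^{-(r+s)}$.

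The main computation is to verify $(\ss^{-r}\rho' \circ \rho) \circ (s \circ s') = \eta_{r+s}^C$. The key intermediate step is the naturality relation $\ss^{-r}\rho' \circ \eta_r^B = \eta_r^{\ss^{-s}C} \circ \rho'$, which is exactly property 2 of the $\eta$-maps (``if $f:A\to B$ then $\eta_r^B \circ f = \ss^{-r}f \circ \eta_r^A$'') applied to $f = \rho' : B \to \ss^{-s}C$. Given this, I would compute
\begin{align*}
(\ss^{-r}\rho' \circ \rho) \circ (s \circ s') &= \ss^{-r}\rho' \circ (\rho \circ s) \circ s' \\
&= \ss^{-r}\rho' \circ \eta_r^B \circ s' \\
&= \eta_r^{\ss^{-s}C} \circ \rho' \circ s' \\
&= \eta_r^{\ss^{-s}C} \circ \eta_s^C \\
&= \eta_{r+s}^C,
\end{align*}
where the last equality is property 1 of the $\eta$-maps, $\eta_r^{\ss^{-s}C} \circ \eta_s^C = \eta_{r+s}^C$. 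This is precisely the diagram chase displayed in the statement, just made explicit.

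The genuinely delicate point — the only real obstacle — is bookkeeping with the shift functors: one must check that $\ss^{-r}\rho' \circ \rho$ is a well-defined weight-zero morphism into $\ss^{-(r+s)}C$ (it is, since $\ss^{-r}$ sends weight-zero morphisms to weight-zero morphisms and $\rho$ has weight zero), and that the identification $\ss^{-r}\ss^{-s}C = \ss^{-(r+s)}C$ used in the target is the canonical one from the shift-functor axioms, so that $\eta_{r+s}^C$ really lands where it should. Strictly speaking one should also confirm that the isomorphism $(\eta_{a,b})_C$ relating $\ss^{-r}\ss^{-s}C$ to $\ss^{-(r+s)}C$ is compatible with everything, but since the paper systematically suppresses these canonical identifications (as flagged in the earlier remark about identifying $\yy \circ \ss^a$ with $\ss^a \circ \yy$), I would simply note that they are canonical and carry the computation through without further comment. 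No step requires anything beyond the two listed properties of the $\eta$-maps plus the shift-functor axioms, so the proof is short once the diagram in the statement is annotated with these identities.
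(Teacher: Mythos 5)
Your proof is correct and is essentially the paper's own argument: you take $s\circ s'$ and $\ss^{-r}\rho'\circ\rho$ as the witnessing maps, use the naturality relation $\ss^{-r}\rho'\circ\eta_r^B=\eta_r^{\ss^{-s}C}\circ\rho'$ (property 2 of the $\eta$-maps), and conclude with additivity $\eta_r^{\ss^{-s}C}\circ\eta_s^C=\eta_{r+s}^C$, which is exactly the diagram chase in the paper. The remarks on the canonical identification $\ss^{-r}\ss^{-s}C=\ss^{-(r+s)}C$ are fine and consistent with how the paper suppresses such identifications.
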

\begin{rmk}
    Any object can be viewed as an $r$-retract of itself via
     \begin{equation}\begin{tikzcd}
        A\arrow[dr,"\eta_r^A"]\\
         A \ar[r,swap,"\eta_r^A"]\arrow[u,equals]& \ss^{-r}A\\
    \end{tikzcd}\end{equation}
   
\end{rmk}

   Recall that if $\cc$ is a TPC and if $f:A \to B$ is an $r$-isomorphism, i.e., there exists an exact triangle in $\cc_0$, $A \xrightarrow{f} B \to K \to TA$, with $K\simeq_r 0$. Then there exists a left and right inverse of $f$; $\varphi$ and $ \phi$ respectively. These are defined by commutative diagrams 
    \begin{equation}\begin{tikzcd}
        B \ar[dr,"\phi"]& &  &A\ar[dr,"f"]&  \\
        A\ar[u,"f"]\ar[r,"\eta_r"] & \ss^{-r}A & & \ss^rB\ar[r,"\eta_r"]\ar[u,"\varphi"]&B
    \end{tikzcd}\end{equation}
    In particular we see that $A<_rB$ and $\ss^rB <_r A$.

\begin{prop}
    Let $s:B \to A$ and $r:A \to \ss^{-r}A$ be morphisms in $\cc_0$ such that $r \circ s = \eta_r^B$ and $\ss^{-r}s \circ r =\eta_r^A$, then $s$ and $r$ are $2r$-isomorphisms.
\begin{proof}
    The proof is a slight modification of the proof of Lemma 2.83 given in \cite{BCZ}. Consider the commutative diagram
    \begin{equation}\begin{tikzcd}
        B \ar[r,"s"]\ar[d,"\eta_r^B"]& A\ar[r]\ar[d,"r"] &\ar[d] K\ar[r] & TB\ar[d,"\eta_r^{TB}"]\\
        \ss^{-r}B \ar[r,equals]\ar[d,equals]& \ss^{-r}B\ar[d,"\ss^{-r}s"] \ar[r]& 0\ar[r]\ar[d] & T\ss^{-r}B\ar[d,equals]\\
        \ss^{-r}B\ar[r,"\ss^{-r}s"] & \ss^{-r}A\ar[r] & \ss^{-r}K\ar[r] & T\ss^{-r}B
    \end{tikzcd}\end{equation}
    where each row is an exact triangle. The first and second columns are $r$-isomorphisms, and hence there exists a third column such that the third column is a $2r$-isomorphism. In particular the map $0:K \to \ss^{-r}K$ is a $2r$-isomorphism. We therefore have that $\eta_{2r}^K=0$, giving the result that $s$ is a $2r$-isomorphism.
 The diagram 
    \begin{equation}\begin{tikzcd}
        A \ar[r,"r"]\ar[d,"\eta_r^A"]& \ss^{-r}B\ar[r]\ar[d,"\ss^{-r}s"] &\ar[d] K'\ar[r] & TA\ar[d,"\eta_r^{TA}"]\\
        \ss^{-r}A \ar[r,equals]\ar[d,equals]& \ss^{-r}A\ar[d,"\ss^{-r}r"] \ar[r]& 0\ar[r]\ar[d] & T\ss^{-r}A\ar[d,equals]\\
        \ss^{-r}A\ar[r,"\ss^{-r}r"] & \ss^{-2r}B\ar[r] & \ss^{-r}K'\ar[r] & T\ss^{-r}A
    \end{tikzcd}\end{equation}
    then shows that $r$ is a $2r$-isomorphism. 
\end{proof} 
\end{prop}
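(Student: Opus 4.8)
The plan is to run the standard ``$3\times 3$'' argument for triangulated categories inside $\cc_0$, in the same spirit as the cited Lemma~2.83 of \cite{BCZ}. Recall that a morphism of $\cc_0$ is a $w$-isomorphism iff its cone is $w$-acyclic, and that by the second TPC axiom every $\eta_w^X$ is a $w$-isomorphism. The one substantial input I would invoke is the TPC analogue of \cite{BCZ} Lemma~2.83: if $(a,b,c)$ is a morphism of exact triangles in $\cc_0$ with $a$ an $\alpha$-isomorphism and $b$ a $\beta$-isomorphism, then $c$ is an $(\alpha+\beta)$-isomorphism.

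First I would treat $s$. Complete it to an exact triangle $B\xrightarrow{s}A\to K\to TB$ and compare this triangle with its shift $\ss^{-r}B\xrightarrow{\ss^{-r}s}\ss^{-r}A\to\ss^{-r}K\to T\ss^{-r}B$, routing the comparison through the trivial triangle $\ss^{-r}B\xrightarrow{=}\ss^{-r}B\to 0\to T\ss^{-r}B$. In the first layer the vertical maps are $\eta_r^B$, $r$, and the forced map $K\to 0$: the only nontrivial square commutes exactly because $r\circ s=\eta_r^B$, and since the middle triangle has cone object $0$ the morphism axiom for triangulated categories forces the induced map on cones to be $0\colon K\to 0$ (the remaining squares then commute automatically, using that consecutive maps in the triangle on $s$ compose to zero). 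In the second layer the vertical maps are $\mathrm{id}$, $\ss^{-r}s$, $0\to\ss^{-r}K$, and all squares are trivial. The composite is a morphism of exact triangles whose first vertical map is $\eta_r^B$, whose second is $\ss^{-r}s\circ r=\eta_r^A$ --- both $r$-isomorphisms --- and whose third is $0\colon K\to\ss^{-r}K$. By the generalized Lemma~2.83 this zero map is a $2r$-isomorphism; its cone is $\ss^{-r}K\oplus TK$, so $2r$-acyclicity of the cone gives $\eta_{2r}^{TK}=0$, hence $\eta_{2r}^K=0$ since $T$ commutes with $\ss$ and with $\eta$ and is faithful. Therefore $K$ is $2r$-acyclic and $s$ is a $2r$-isomorphism.

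For $r$ I would argue symmetrically: complete $r$ to an exact triangle $A\xrightarrow{r}\ss^{-r}B\to K'\to TA$ and compare it with its shift $\ss^{-r}A\xrightarrow{\ss^{-r}r}\ss^{-2r}B\to\ss^{-r}K'\to T\ss^{-r}A$ through the trivial triangle $\ss^{-r}A\xrightarrow{=}\ss^{-r}A\to 0\to T\ss^{-r}A$. Here the key square commutes because $\ss^{-r}s\circ r=\eta_r^A$, the composite first vertical map is $\eta_r^A$, and the composite second is $\ss^{-r}r\circ\ss^{-r}s=\ss^{-r}\eta_r^B=\eta_r^{\ss^{-r}B}$ --- again both $r$-isomorphisms --- so the induced zero map $K'\to\ss^{-r}K'$ is a $2r$-isomorphism, $\eta_{2r}^{K'}=0$, and $r$ is a $2r$-isomorphism. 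The only real subtlety, and the reason for routing through the trivial triangle rather than using the obvious morphism of triangles with third vertical map $\eta_r^K$, is that one needs the induced cone map to be the \emph{zero} map: the assertion ``$\eta_r^K$ is a $2r$-isomorphism'' is vacuous (it is always an $r$-isomorphism), whereas ``$0\colon K\to\ss^{-r}K$ is a $2r$-isomorphism'' unwinds directly to $2r$-acyclicity of $K$. The remaining point --- checking that \cite{BCZ} Lemma~2.83 goes through with unequal weights $\alpha\neq\beta$, the ``slight modification'' of its proof --- is then just routine weight-tracking through the underlying octahedral diagram.
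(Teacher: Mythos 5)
Your proposal is correct and follows essentially the same route as the paper: completing $s$ (respectively $r$) to an exact triangle, comparing it with its $\ss^{-r}$-shift through the trivial triangle on $\ss^{-r}B$ (respectively $\ss^{-r}A$) so that the induced map on cones is the zero map, and invoking the weighted analogue of Lemma~2.83 of \cite{BCZ} to conclude the cone is $2r$-acyclic. Your remark explaining why one must route through the trivial triangle (to force the cone map to be $0$ rather than $\eta_r^K$) is exactly the point implicit in the paper's diagrams.
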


We would like to work within the most general setting of persistence categories, in particular not assume that $\cc$ is not a TPC. Hence we can no longer use the definition of $r$-isomorphism given by \cite{BCZ}. From the above proposition is makes sense to say a morphism $f:A \to B$ in $\cc_0$ is a \textbf{strong $r$-isomorphism} iff there exists a $g:B \to \ss^{-r}A$ such that 
\begin{align*}
    g \circ f &= \eta_{r}^{A}\\
\ss^{-r}f \circ g &= \eta_{r}^B.
\end{align*}

\begin{cor}
    If $A<_r B$ and $\ss^rB<_r A$ then $A$ is strongly $r$-isomorphic to $B$.
\end{cor}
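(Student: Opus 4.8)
The plan is to unwind the two weighted-retract diagrams and assemble a strong $r$-isomorphism by hand, then invoke the preceding proposition if convenient. Suppose $A <_r B$ is witnessed by $s\colon A \to B$, $r_1\colon B \to \ss^{-r}A$ in $\cc_0$ with $r_1 \circ s = \eta_r^A$, and suppose $\ss^rB <_r A$ is witnessed by $s'\colon \ss^r B \to A$, $r_2\colon A \to \ss^{-r}(\ss^r B) = B$ with $r_2 \circ s' = \eta_r^{\ss^r B}$. The first step is to normalise the shifts: using the canonical isomorphisms $\eta$ between shifted objects (and the identities $\eta_a^{\ss^{-b}A}\circ \eta_b^A = \eta_{a+b}^A$, $\eta_a^B \circ f = \ss^{-a}f \circ \eta_a^A$ listed in the review section), I would rewrite the second retract as morphisms $\tilde s\colon B \to \ss^{-r}A$ and $\tilde r\colon A \to B$ with the appropriate composite equal to an $\eta$. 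Concretely, $s'\colon \ss^r B \to A$ corresponds under the shift isomorphism to a map $B \to \ss^{-r}A$, and $r_2\colon A \to B$ stays as is.

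Next I would produce the candidate inverse. Set $f := s\colon A \to B$ (the map going the ``right'' way) and build $g\colon B \to \ss^{-r}A$ as a composite of the available maps, e.g. $g := \ss^{-r}(r_2) \circ (\text{the shift-normalised } s')$ or simply $g := r_1$ itself, depending on which composite telescopes correctly. The heart of the argument is the two verifications $g \circ f = \eta_r^A$ and $\ss^{-r}f \circ g = \eta_r^B$. For the first, $r_1 \circ s = \eta_r^A$ is immediate from the hypothesis $A <_r B$, so taking $g = r_1$ already gives one of the two identities for free. The second identity, $\ss^{-r}s \circ r_1 = \eta_r^B$, is exactly the condition that would make $(s, r_1)$ a genuine splitting pair rather than just a one-sided retract, so it is \emph{not} automatic and is where the hypothesis $\ss^r B <_r A$ must enter. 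I would use the second retract to show $\ss^{-r}s \circ r_1$ and $\eta_r^B$ become equal after composing with a further $\eta$, and then — since $r_1\circ s = \eta_r^A$ already exhibits enough invertibility — cancel to get equality on the nose (or, failing on-the-nose equality, pass to a large enough shift, which still yields a strong $r'$-isomorphism for some $r' \ge r$; but I expect on-the-nose works after bookkeeping).

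The main obstacle I anticipate is precisely this bookkeeping of shifts and weights: making sure that ``$\ss^r B <_r A$'' contributes a map with the right source, target, and weight so that the composite $\ss^{-r}s \circ r_1$ is forced to equal $\eta_r^B$, using only the naturality relations for $\eta$ and the definition of weighted retract, without accidentally needing weight $2r$. A clean alternative, which I would fall back on if the direct cancellation is fiddly, is to first apply the preceding proposition: from $A <_r B$ and $\ss^r B <_r A$ extract a splitting pair $s\colon B \to A$, $r\colon A \to \ss^{-r}A$ with $r\circ s = \eta_r^B$ \emph{and} $\ss^{-r}s \circ r = \eta_r^A$ (the two retract diagrams, read together, should furnish exactly the missing second relation), and then the proposition immediately gives that $s$ and $r$ are $2r$-isomorphisms — but since we want the sharper ``strong $r$-isomorphism'' conclusion, the hands-on verification above is preferable, with the proposition's diagram-chase (the octahedral/$3\times 3$ argument) available as a template. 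I would present the hands-on version as the proof.
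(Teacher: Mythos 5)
The decisive step in your argument --- upgrading ``$\ss^{-r}s\circ r_1$ and $\eta_r^B$ agree after composing with a further $\eta$'' to the on-the-nose identity $\ss^{-r}s\circ r_1=\eta_r^B$ --- is asserted rather than carried out, and it cannot be carried out. There is no cancellation available in a persistence category: the maps $\eta_r$ are in general neither monic nor epic (for an $r$-acyclic object $\eta_r=0$), and the identity $r_1\circ s=\eta_r^A$ gives no invertibility that would let you strip off an extra $\eta$. Moreover, if the two retractions are allowed genuinely independent witnesses, as in your set-up, the conclusion at weight $r$ is simply not true in general: at $r=0$ it would assert that two objects which are retracts of each other are isomorphic, which already fails in ordinary $\text{Mod}_k$-enriched categories; and even when the two retractions share the morphism $f$ but have unrelated one-sided inverses (the situation of the TPC paragraph preceding the corollary, with $\phi\circ f=\eta_r^A$ and $f\circ\varphi=\eta_r^{\ss^rB}$), the computation $(\ss^{-r}f\circ\phi)\circ\eta_r^{\ss^rB}=\eta_r^B\circ\eta_r^{\ss^rB}=\eta_{2r}^{\ss^rB}$ only produces a strong $2r$-isomorphism --- exactly what the remark immediately following the corollary records. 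So no amount of shift bookkeeping closes this gap, and your fallback via the preceding proposition likewise returns only $2r$-isomorphisms and presupposes the compatible splitting pair you have yet to produce.

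The corollary is intended (and is immediate) under the reading in which one and the same pair of morphisms witnesses both retractions: $s\in\text{Mor}^0_\cc(A,B)$ is the section for $A<_rB$ and the retraction for $\ss^rB<_rA$, while $p\in\text{Mor}^0_\cc(B,\ss^{-r}A)$, equivalently $\ss^rp\colon\ss^rB\to A$, is the retraction for the first and the section for the second --- exactly the data furnished by the left and right inverses of a weighted isomorphism, or by a retract splitting $\eta_r^A$ as in the later corollary of the weighted-idempotent subsection. Under that reading the two retract identities $p\circ s=\eta_r^A$ and $s\circ\ss^rp=\eta_r^{\ss^rB}$ (the latter being $\ss^{-r}s\circ p=\eta_r^B$ after shifting) are verbatim the two conditions in the definition of a strong $r$-isomorphism, so there is nothing left to verify; this is why the paper offers no argument. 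Your proposal, by treating the two retractions as unrelated and promising to reconcile them afterwards, replaces a definitional observation with a claim that is false in that generality.
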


\begin{rmk}
    One can show that if $\cc$ is a TPC, then any strong weighted isomorphism is also a weighted isomorphism, and vice versa. However the weights are not equal. In fact, the best one can show is that a strong weighted isomorphism of weight $r$ is a $2r$-weighted isomorphism, and an $r$-weighted isomorphism is a strong $2r$-weighted isomorphism.
\end{rmk}

\subsection{Weighted idempotents}
Similarly to retracts, there is no need for an idempotent in the limit category $\cc_\infty$ of a persistence category $\cc$, to be the image of an idempotent in $\cc_0$. This means we similarly have to consider a larger class of `weighted idempotents'.
\begin{defn}
    Let $A\in \cc$ be an object of a persistence category. We call a morphism $e\in \text{Mor}^0_\cc(A, \ss^{-r}A)$ a \textbf{weighted idempotent of weight $r$} or simply an \textbf{r-idempotent} if
    \begin{equation}\ss^{-r}e \circ e= \eta_r^{\ss^{-r}A} \circ e.\end{equation}
    Equivalently $e: A \to \ss^{-r}A$ is an $r$-idempotent if
    \begin{equation}\ss^{-r}e \circ e = \ss^{-r}e \circ \eta_r^A.\end{equation}
\end{defn}
This definition extends that of an idempotent in the usual sense, i.e., a $0$-idempotent is a morphism $e\in \text{Mor}^0(A,A)$ such that $\ss^{0}e \circ e = \eta_0^A \circ e$, recalling that $\eta_0^A=1_A$ and $\ss^{0}A=A$ we have $e \circ e = e$. A weighted idempotent will produce a usual idempotent in the limit category $\cc_\infty$.
\begin{rmk}
    The maps $\eta_r^A$ are $r$-idempotents.
\end{rmk}
\begin{prop}
    If $e:A \to \ss^{-r}A$ is an $r$-idempotent then $\eta_r^A-e: A \to \ss^{-r}A$ is also an $r$-idempotent. 
    \begin{proof}
    By explicit calculation we find
        \begin{align*}
            \ss^{-r}(\eta_r^A-e) \circ (\eta_r^A \circ e)=& \eta_r^{\ss^{-r}A} \circ \eta_r^A - \eta_r^{\ss^{-r}A}\circ e - \ss^{-r}e \circ \eta_r^A + \ss^{-r}e \circ e\\
            =&\eta_r^{\ss^{-r}A} \circ \eta_r^A -\eta_r^{\ss^{-r}A} \circ e -\eta_r^{\ss^{-r}A} \circ e + \eta_r^{\ss^{-r}A} \circ e\\
            =&\eta_r^{\ss^{-r}A} \circ \eta_r^A - \eta_r^{\ss^{-r}A} \circ e\\
            =& \eta_r^{\ss^{-r}A}\circ (\eta_r^A-e)
        \end{align*}
    \end{proof}
\end{prop}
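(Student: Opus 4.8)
The plan is a direct computation with the morphism $f := \eta_r^A - e \in \text{Mor}^0_\cc(A,\ss^{-r}A)$, verifying the $r$-idempotent identity in the form $\ss^{-r}f \circ f = \eta_r^{\ss^{-r}A}\circ f$. Since $\ss^{-r}$ acts $k$-linearly on morphism modules and composition in $\cc$ is bilinear, and since $\ss^{-r}\eta_r^A = \eta_r^{\ss^{-r}A}$ (a compatibility built into the shift structure together with the definition $\eta_r^A = i_{-r,0}\circ(\eta_{0,-r})_A$), I would first expand
\[
\ss^{-r}f \circ f \;=\; \big(\eta_r^{\ss^{-r}A} - \ss^{-r}e\big)\circ\big(\eta_r^A - e\big)
\;=\; \eta_r^{\ss^{-r}A}\circ\eta_r^A \;-\; \eta_r^{\ss^{-r}A}\circ e \;-\; \ss^{-r}e\circ\eta_r^A \;+\; \ss^{-r}e\circ e .
\]

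The next step is to invoke two identities to collapse the last three of these four terms. Property 2 of the maps $\eta_r^{(-)}$, applied to $e : A \to \ss^{-r}A$, gives $\ss^{-r}e\circ\eta_r^A = \eta_r^{\ss^{-r}A}\circ e$; and the hypothesis that $e$ is an $r$-idempotent gives $\ss^{-r}e\circ e = \eta_r^{\ss^{-r}A}\circ e$. Substituting both, the sum $-\eta_r^{\ss^{-r}A}\circ e - \ss^{-r}e\circ\eta_r^A + \ss^{-r}e\circ e$ becomes $-\eta_r^{\ss^{-r}A}\circ e$, so that
\[
\ss^{-r}f \circ f \;=\; \eta_r^{\ss^{-r}A}\circ\eta_r^A - \eta_r^{\ss^{-r}A}\circ e \;=\; \eta_r^{\ss^{-r}A}\circ(\eta_r^A - e) \;=\; \eta_r^{\ss^{-r}A}\circ f ,
\]
which is exactly the assertion that $\eta_r^A - e$ is an $r$-idempotent.

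There is no genuine obstacle here; it is bilinear bookkeeping once the naturality of $\eta$ and the shift-compatibility $\ss^{-r}\eta_r^A = \eta_r^{\ss^{-r}A}$ are granted. The only point deserving care is that the $r$-idempotent condition appears in the two equivalent guises $\ss^{-r}e\circ e = \eta_r^{\ss^{-r}A}\circ e$ and $\ss^{-r}e\circ e = \ss^{-r}e\circ\eta_r^A$, which are interchanged by Property 2; so one should commit to one form of the condition (and of the conclusion) at the outset and apply Property 2 uniformly whenever a term of the shape $\ss^{-r}(-)\circ\eta_r^A$ is encountered, to avoid spurious sign or bracket mismatches in the four-term expansion.
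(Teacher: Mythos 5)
Your proposal is correct and follows essentially the same route as the paper: expand $\ss^{-r}(\eta_r^A-e)\circ(\eta_r^A-e)$ bilinearly into four terms, then use naturality of $\eta_r$ (to rewrite $\ss^{-r}e\circ\eta_r^A$ as $\eta_r^{\ss^{-r}A}\circ e$) together with the $r$-idempotent identity for $e$ to collapse three of the terms, yielding $\eta_r^{\ss^{-r}A}\circ(\eta_r^A-e)$. Your remark on committing to one form of the idempotent condition is sensible but adds nothing beyond the paper's computation.
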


\begin{defn}
    We say an $r$-idempotent, $e:A \to \ss^{-r}A$ \textbf{splits} if there exists a triple $(B,s,r)$ with $B\in \text{Obj}(\cc)$, $s\in \text{Mor}^0(B,A)$, and $r\in \text{Mor}^0(A,\ss^{-r}B)$ , such that
    \begin{align}
    r\circ s = \eta_r^B\\
    \ss^{-r}s \circ r = e \notag
    \end{align}
Diagrammatically;
    \begin{equation}\begin{tikzcd}
        A \ar[r,"e"]\ar[rd,"r"]& \ss^{-r}A\\
        B\ar[r,"\eta_r^B"]\ar[u,"s"] & \ss^{-r}B\ar[u,swap,"\ss^{-r}s"]
    \end{tikzcd}\end{equation}
\end{defn}
\begin{rmk}
    The map $\eta_r^A$ is a split $r$-idempotent.
\end{rmk}
This definition again extends the notion of an idempotent splitting to allow for non-zero weighted idempotents. We have that a splitting of a $0$-idempotent is a triple $(B,s,r)$ with $r \circ s = 1_B$ and $s \circ r = e$. We find that splittings of weighted idempotents are not unique up to isomorphism (for non-zero weight), but are only unique up to some weighted isomorphism.

\begin{prop}
    Given an $r$-retract $B<_r A$ defined by 
    \begin{equation}\begin{tikzcd}
        A\arrow[dr,"r"]\\
         B \ar[r,swap,"\eta_r^B"]\arrow[u,"s"]& \ss^{-r}B\\
    \end{tikzcd}\end{equation}
     it induces a split $r$-idempotent $ (\ss^{-r}s \circ r):A \to \ss^{-r}$.
    \begin{proof}
        Set $e= \ss^{-r}s \circ r$, we have a diagram 
        \begin{equation}\begin{tikzcd}
        A\arrow[dr,"r"] \ar[r,dotted,"e"] & \ss^{-r}A\\
         B \ar[r,swap,"\eta_r^B"]\arrow[u,"s"]& \ss^{-r}B\ar[u,swap,"\ss^{-r}s"]\\
    \end{tikzcd}\end{equation}
    and can check that $e$ is in fact an $r$-idempotent morphism 
    \begin{align*}
            \ss^{-r}e \circ e &= \ss^{-r}(\ss^{-r}s \circ r) \circ (\ss^{-r}s \circ r)\\
            &=\ss^{-2r}s \circ \ss^{-r}r \circ \ss^{-r}s \circ r\\
            &=\ss^{-2r}s \circ \ss^{-r}(r \circ s) \circ r\\
            &=\ss^{-2r}s \circ \ss^{-r}\eta^A_r \circ r\\
            &=\ss^{-2r}s \circ \ss^{-r}r \circ \eta_r^B\\
            &=\ss^{-r}(\ss^{-r}s \circ r) \circ \eta_r^B\\
            &=\ss^{-r}e \circ \eta_r^B.
        \end{align*}

    \end{proof}
    \end{prop}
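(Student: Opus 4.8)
The plan is to verify the two clauses in the definition of a split $r$-idempotent for the candidate $e := \ss^{-r}s \circ r \in \text{Mor}^0_\cc(A,\ss^{-r}A)$, the splitting being witnessed by the very triple $(B,s,r)$ that the retract datum already provides. The splitting equations come for free: $r \circ s = \eta_r^B$ is the defining relation of $B <_r A$, and $\ss^{-r}s \circ r = e$ holds by definition of $e$. So the only real content of the proposition is that $e$ is an $r$-idempotent, i.e. that $\ss^{-r}e \circ e = \ss^{-r}e \circ \eta_r^A$ (equivalently $\ss^{-r}e \circ e = \eta_r^{\ss^{-r}A}\circ e$), which is a short diagram chase in $\cc_0$.

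The way I would run it is to first extract from the retract relation the identity $e \circ s = \eta_r^A \circ s$: this follows from $e \circ s = \ss^{-r}s \circ (r \circ s) = \ss^{-r}s \circ \eta_r^B = \eta_r^A \circ s$, the last step being the naturality identity $\eta_r^A \circ f = \ss^{-r}f \circ \eta_r^B$ applied to $f = s\colon B \to A$ (property~(2) of the morphisms $\eta_r^\bullet$ recalled in the review section). Then, writing $\ss^{-r}e \circ e = \ss^{-r}e \circ \ss^{-r}s \circ r$ and collapsing the left factor with functoriality of the shift, $\ss^{-r}e \circ \ss^{-r}s = \ss^{-r}(e\circ s) = \ss^{-r}(\eta_r^A \circ s) = \ss^{-r}\eta_r^A \circ \ss^{-r}s$, one obtains $\ss^{-r}e \circ e = \ss^{-r}\eta_r^A \circ \ss^{-r}s \circ r = \ss^{-r}\eta_r^A \circ e$. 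Using the (routine) compatibility $\ss^{-r}\eta_r^A = \eta_r^{\ss^{-r}A}$ of the shift functors with the structure morphisms, the right-hand side is $\eta_r^{\ss^{-r}A}\circ e$, and applying naturality once more to $e \colon A \to \ss^{-r}A$ it equals $\ss^{-r}e \circ \eta_r^A$. So both stated forms of the $r$-idempotent identity hold, and the defining square of a splitting commutes by construction, so $(B,s,r)$ splits $e$.

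I do not anticipate a genuine obstacle: the argument uses no triangulated structure, only functoriality of $\ss$ and the two structural identities for the $\eta_r^\bullet$. The one thing requiring a modicum of care is the index bookkeeping — keeping the shift amounts ($\ss^{-r}$ versus $\ss^{-2r}$) straight and attaching each $\eta_r^{(-)}$ to the correct object among $B$, $\ss^{-r}B$, $A$, $\ss^{-r}A$ — and it is worth flagging that this proposition is the weighted counterpart of the classical fact that a retract is the image of a split idempotent.
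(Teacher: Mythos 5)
Your proposal is correct and follows essentially the same route as the paper: unwind $e=\ss^{-r}s\circ r$, collapse $r\circ s=\eta_r^B$, and push the resulting $\eta$ past $r$ (resp.\ $s$) using naturality and functoriality of the shifts, arriving at the $r$-idempotent identity in one of its two equivalent forms. Your intermediate step $e\circ s=\eta_r^A\circ s$ merely reorganises the same computation, and in fact your index bookkeeping is cleaner than the paper's, whose displayed chain contains superscript slips ($\eta_r^A$ versus $\eta_r^B$ in the middle and final lines).
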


\begin{cor}
    Assume $B<_r A$ is a weighted retract that splits the idempotent $\eta_r^A$, then we have $B \simeq_{r} A$.
    \begin{proof}
        Assume the retraction is given by 
        \begin{equation}\begin{tikzcd}
        A\arrow[dr,"r"] \ar[r,dotted,"\eta_r^A"] & \ss^{-r}A\\
         B \ar[r,swap,"\eta_r^B"]\arrow[u,"s"]& \ss^{-r}B\ar[u,swap,"\ss^{-r}s"]\\
    \end{tikzcd}\end{equation}
    then $r \circ s=\eta_r^B$ and $\ss^{-r}s \circ r=\eta_r^A$, hence $s$ is the desired strong $r$-isomorphism.
    \end{proof}
\end{cor}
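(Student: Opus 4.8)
The plan is to trace through the hypotheses and exhibit the required strong $r$-isomorphism directly, using the definition of a weighted retract and the defining equation of the idempotent $\eta_r^A$. The key observation is that the statement gives us precisely the data needed: a weighted retract $B<_rA$ splitting $\eta_r^A$ means there are morphisms $s\in\text{Mor}^0_\cc(B,A)$ and $r\in\text{Mor}^0_\cc(A,\ss^{-r}B)$ with $r\circ s=\eta_r^B$ and $\ss^{-r}s\circ r=\eta_r^A$ (the latter because the splitting is required to reproduce the idempotent, which here is $\eta_r^A$). These two equations are exactly the two conditions in the definition of a strong $r$-isomorphism applied to $s:B\to A$, with $g:=r$ playing the role of the two-sided weighted inverse.

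First I would unwind the definition of ``$B<_rA$ splits $\eta_r^A$'' to extract the triple $(B,s,r)$ and the two identities $r\circ s=\eta_r^B$, $\ss^{-r}s\circ r=\eta_r^A$ (this is immediate from the definition of splitting, since the splitting of an $r$-idempotent $e$ requires $\ss^{-r}s\circ r=e$, and here $e=\eta_r^A$). Then I would recall the definition of a strong $r$-isomorphism: $f:A\to B$ in $\cc_0$ is a strong $r$-isomorphism iff there exists $g:B\to\ss^{-r}A$ with $g\circ f=\eta_r^A$ and $\ss^{-r}f\circ g=\eta_r^B$. Matching this up: take $f:=s:B\to A$ and $g:=r:A\to\ss^{-r}B$. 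The condition $g\circ f=\eta_r^{(\text{source of }f)}$ becomes $r\circ s=\eta_r^B$, which holds; the condition $\ss^{-r}f\circ g=\eta_r^{(\text{target of }f)}$ becomes $\ss^{-r}s\circ r=\eta_r^A$, which also holds. Hence $s$ is a strong $r$-isomorphism, which is the claim.

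There is essentially no obstacle here — the corollary is a direct bookkeeping consequence of the preceding proposition (which shows a weighted retract induces a split $r$-idempotent $\ss^{-r}s\circ r$) together with the hypothesis that the induced idempotent equals $\eta_r^A$. The only thing to be careful about is the orientation and the shift placement: that $r$ lands in $\ss^{-r}B$ (not $B$) and that the composite $\ss^{-r}s\circ r$ is the right-hand composite landing in $\ss^{-r}A$, so it matches the target of the strong isomorphism condition; and to note that both $s$ and $r$ are weight-zero morphisms in $\cc_0$, so the statement lives in $\cc_0$ as required by the definition of strong $r$-isomorphism. I would state the proof in two or three lines: produce the triple from the splitting, observe the two identities are the two axioms for $s$ being a strong $r$-isomorphism, and conclude.
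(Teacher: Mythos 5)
Your proposal is correct and follows exactly the paper's argument: extract the two identities $r\circ s=\eta_r^B$ and $\ss^{-r}s\circ r=\eta_r^A$ from the splitting of $\eta_r^A$, and observe that these are precisely the two conditions making $s$ a strong $r$-isomorphism with weighted inverse $r$. The paper's proof is just a terser version of the same bookkeeping.
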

There is an obvious relation between weighted idempotent splittings and weighted retracts. If $B<_r A$ then we obtain a split $r$-idempotent on $A$, and if an $r$-idempotent on $A$ splits we get a canonical retract. Note however that in general not every $r$-idempotent splits. One should also be aware that if a weighted retract $B<_r A$ defines an idempotent $e$ on $A$ then this could split via a different retract $B'<_r A$ with $B' \ncong B$ in $\cc_0$. This is a consequence of the following
\begin{lemma}[Uniqueness of $r$-idempotent splittings]

Let $e: A \to \ss^{-r}A$ be an idempotent, then if $e$ splits, this splitting is unique up to strong $2r$-isomorphism.
\begin{proof}
    Assume there exist two splittings of $e$:

     \begin{equation}\begin{tikzcd}
        A \ar[r,"e"]\ar[rd,"r"]& \ss^{-r}A& & A \ar[r,"e"]\ar[dr,"r'"]& \ss^{-r}A\\
        B\ar[r,"\eta_r^B"]\ar[u,"s"] & \ss^{-r}B\ar[u,swap,"\ss^{-r }s"] & & B' \ar[r,"\eta_r^{B'}"]\ar[u,"s'"]& \ss^{-r}B'\ar[u,swap,"\ss^{-r}s'"]
    \end{tikzcd}\end{equation}
Then we have a pair of morphisms $\alpha:= r' \circ s$ and $\beta:= r \circ s'$. We find that
\begin{align*}
    \ss^{-r}\alpha \circ \beta =& \ss^{-r}r' \circ \ss^{-r}s \circ r \circ s'\\
    =& \ss^{-r}r' \circ e \circ s'\\
    =& \eta_r^{\ss^{-r}B'} \circ r' \circ s'\\
    =& \eta_r^{\ss^{-r}B'}  \circ \eta_r^{B'}\\
    =& \eta_{2r}^{B'}
\end{align*}
    A similar argument shows $\ss^{-r}\beta \circ  \alpha = \eta_{2r}^{B}$.
\end{proof}
\end{lemma}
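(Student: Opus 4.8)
The plan is to follow the same template as the preceding lemma on uniqueness of $r$-idempotent splittings, since the present statement is really the uniqueness assertion phrased in terms of two weighted retracts $B<_rA$ and $B'<_rA$ both splitting the \emph{same} $r$-idempotent $e$ on $A$. So first I would fix notation: let the retract $B<_rA$ be given by $s\in\text{Mor}^0_\cc(B,A)$, $r\in\text{Mor}^0_\cc(A,\ss^{-r}B)$ with $r\circ s=\eta_r^B$, $\ss^{-r}s\circ r=e$, and similarly $B'<_rA$ given by $s'$, $r'$ with $r'\circ s'=\eta_r^{B'}$ and $\ss^{-r}s'\circ r'=e$. The goal is to exhibit a strong $2r$-isomorphism $B\to B'$ (or $B'\to B$), i.e.\ morphisms realising the defining identities of a strong $2r$-isomorphism from the earlier definition.

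Next I would write down the candidate comparison maps: set $\alpha:=r'\circ s\in\text{Mor}^0_\cc(B,\ss^{-r}B')$ and $\beta:=r\circ s'\in\text{Mor}^0_\cc(B',\ss^{-r}B)$, exactly as in the uniqueness lemma. The key computation is then the chain
\begin{align*}
\ss^{-r}\alpha\circ\beta &= \ss^{-r}r'\circ\ss^{-r}s\circ r\circ s'\\
&= \ss^{-r}r'\circ e\circ s'\\
&= \eta_r^{\ss^{-r}B'}\circ r'\circ s'\\
&= \eta_r^{\ss^{-r}B'}\circ\eta_r^{B'}\\
&= \eta_{2r}^{B'},
\end{align*}
using $\ss^{-r}s\circ r=e$, then that $\ss^{-r}r'\circ e=\ss^{-r}r'\circ\ss^{-r}s'\circ r'=\ss^{-r}(r'\circ s')\circ r'=\ss^{-r}\eta_r^{B'}\circ r'=\eta_r^{\ss^{-r}B'}\circ r'$ by property (2) of the $\eta$'s, and finally the additivity $\eta_r^{\ss^{-r}B'}\circ\eta_r^{B'}=\eta_{2r}^{B'}$. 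The symmetric computation $\ss^{-r}\beta\circ\alpha=\eta_{2r}^B$ is identical with the roles of the two splittings swapped. By the definition of strong $r$-isomorphism (applied with weight $2r$), this says precisely that $\beta$ is a strong $2r$-isomorphism $B'\to\ss^{-r}B$ — or, after the canonical identification of $\ss^{-r}B$ with a shift of $B$, that $\alpha,\beta$ witness $B$ and $B'$ being strongly $2r$-isomorphic. Then the Corollary follows by invoking this with the two retracts at hand.

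I do not expect any real obstacle here: the statement is essentially a corollary of the already-proved uniqueness lemma, and the only mild care needed is bookkeeping of shift functors — making sure $\alpha$ and $\beta$ land in the correct shifted hom-modules and that the composites $\ss^{-r}\alpha\circ\beta$, $\ss^{-r}\beta\circ\alpha$ are the ones appearing in the definition of a strong $2r$-isomorphism rather than some off-by-a-shift variant. If one wants a cleaner statement one can instead observe that both $B$ and $B'$ split $e$, so the uniqueness lemma applies verbatim and the corollary is immediate; I would likely present it that way, with the explicit $\alpha,\beta$ only as a remark on what the isomorphism is.
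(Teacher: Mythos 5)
Your proposal is correct and is essentially the paper's own proof: the same comparison maps $\alpha = r'\circ s$, $\beta = r\circ s'$ and the same computation $\ss^{-r}\alpha\circ\beta = \eta_{2r}^{B'}$, $\ss^{-r}\beta\circ\alpha = \eta_{2r}^{B}$. Note only that this statement \emph{is} the uniqueness lemma itself, so your closing suggestion to deduce it by ``invoking the uniqueness lemma'' would be circular — but the explicit computation you give stands on its own and is exactly what is needed.
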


\begin{prop}
    Assume that an $r$-idempotent, $e:A \to \ss^{-r}A$ splits via
     \begin{equation}\begin{tikzcd}
        A \ar[r,"e"]\ar[rd,"r"]& \ss^{-r}A\\
        B\ar[r,"\eta_r^B"]\ar[u,"s"] & \ss^{-r}B\ar[u,swap,"\ss^{-r}s"]
    \end{tikzcd}\end{equation}
    if $f,g\in \text{Hom}(\ss^{-r}B,C)$ are morphisms such that $f \circ r= g \circ r$ then $f \simeq_r g$, i.e., $(f-g)\circ \eta^{B}_r =0$.
\begin{proof}
    The proof is simple, if $f \circ r = g\circ r$ then 
    \begin{align*}
        &(f-g)\circ r =0\\
        \implies &(f-g) \circ r \circ s = 0\\
        \implies & (f-g)\circ \eta_r^B =0
    \end{align*}
\end{proof} 
\end{prop}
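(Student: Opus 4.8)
The plan is to use the first of the two equations defining the splitting, namely $r \circ s = \eta_r^B$, to reduce the claim to the hypothesis $f \circ r = g \circ r$ by precomposing with $s$.

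In detail, I would proceed in three short steps. First, since a persistence category is enriched over $\text{Mod}_k$, composition is $k$-bilinear in each variable, so the hypothesis $f \circ r = g \circ r$ can be rewritten as $(f - g) \circ r = 0$ in $\text{Mor}^0_\cc(A, C)$. Second, precompose this identity with $s \in \text{Mor}^0_\cc(B, A)$ to obtain $(f - g) \circ (r \circ s) = 0$, using associativity of composition in $\cc$. Third, invoke the splitting relation $r \circ s = \eta_r^B$, which is one of the two identities in the definition of a splitting of the $r$-idempotent $e$, to conclude $(f - g) \circ \eta_r^B = 0$, i.e. $f \simeq_r g$.

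There is no real obstacle here: the argument is purely formal, relying only on the additivity and associativity of composition coming from the $\text{Mod}_k$-enrichment together with the single identity $r \circ s = \eta_r^B$. For bookkeeping one checks that all the morphisms in play, $f$, $g$, $r$, $s$ and $\eta_r^B$, lie in weight-zero hom-modules, so every composition above lands in weight zero and the conclusion $(f - g) \circ \eta_r^B = 0$ is exactly the assertion that $f$ and $g$ agree after the $r$-truncation encoded by $\eta_r^B$. Accordingly I would present the whole proof as a two-line computation.
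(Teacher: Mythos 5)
Your argument is correct and coincides with the paper's own proof: both rewrite the hypothesis as $(f-g)\circ r=0$, precompose with $s$, and apply the splitting identity $r\circ s=\eta_r^B$ to conclude $(f-g)\circ \eta_r^B=0$. Nothing further is needed.
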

\begin{rmk}
    A similar argument shows that if $s \circ f = s \circ g$ then $f\simeq_r g$.
\end{rmk}
\begin{prop}
If there exists an equaliser (in $\cc_0$) to the diagram:
    \begin{equation}\begin{tikzcd}
        A\arrow[r,yshift=1ex,"e"] \arrow[r,yshift=-1ex,swap,"\eta_r^A"]& \ss^{-r}A
    \end{tikzcd}\end{equation}
    where $e$ is an r-idempotent, then $e$ splits.
    
    \begin{proof}
    Let $(B,s)$ be the equaliser of 
           \begin{equation}\begin{tikzcd}
        A\arrow[r,yshift=1ex,"e"] \arrow[r,yshift=-1ex,swap,"\eta_r^A"]& \ss^{-r}A
    \end{tikzcd}\end{equation}
    so that we have a commutative diagram
       \begin{equation}\begin{tikzcd}
       B \arrow[r,"s"]& A\arrow[r,yshift=1ex,"e"] \arrow[r,yshift=-1ex,swap,"\eta_r^A"]& \ss^{-r}A
    \end{tikzcd}\end{equation}
    Then by the universal property of equalisers and the fact that $e$ is an $r$-idempotent, we have that there exists a unique $i: \ss^{r}A \to B$ making the following commute
       \begin{equation}\begin{tikzcd}
       B \arrow[r,"s"]& A\arrow[r,yshift=1ex,"e"] \arrow[r,yshift=-1ex,swap,"\eta_r^A"]& \ss^{-r}A \\
        & \ss^r A \arrow[u,"\ss^r e"]\arrow[lu,dotted,"\exists ! i"]& 
    \end{tikzcd}\end{equation}
    Hence we have $\ss^r e = s \circ i$ and so 
    \begin{equation}e = \ss^{-r}s \circ \ss^{-r}i.\end{equation}
    It then follows that
    \begin{align*}
        e \circ s =& \eta_r^A \circ s\\
        e \circ s =& \ss^{-r}s \circ \eta_r^B\\
        (\ss^{-r}s \circ \ss^{-r}i )\circ s =& \ss^{-r}s \circ \eta_r^B\\
      \ss^{-r}s \circ (\ss^{-r}i \circ s)  =&\ss^{-r}s \circ \eta_r^B.
    \end{align*}
   Because $(B,s)$ is an equaliser, $s$ is a monomorphism, and so it follows that
    \begin{equation}\eta_r^B= \ss^{-r}i \circ s.\end{equation}

    \end{proof}
\end{prop}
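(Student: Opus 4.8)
The plan is to build the splitting triple $(B,s,\tilde r)$ directly out of the given equaliser, invoking the universal property exactly once. Write $(B,s)$, with $s\in\text{Mor}^0_\cc(B,A)$, for the equaliser of the pair $e,\eta_r^A\colon A\to\ss^{-r}A$, so that $e\circ s=\eta_r^A\circ s$ and $s$ is a monomorphism in $\cc_0$. What must be produced is a morphism $\tilde r\in\text{Mor}^0_\cc(A,\ss^{-r}B)$ with $\tilde r\circ s=\eta_r^B$ and $\ss^{-r}s\circ\tilde r=e$.

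First I would observe that the shifted morphism $\ss^r e\colon\ss^rA\to A$ again equalises $(e,\eta_r^A)$: applying the shift functor $\ss^r$ to the defining identity $\ss^{-r}e\circ e=\eta_r^{\ss^{-r}A}\circ e$, and using $\ss^r\ss^{-r}=1_\cc$ together with the compatibility $\ss^r\eta_r^{\ss^{-r}A}=\eta_r^A$ of the shift functors with the $\eta$-maps, gives $e\circ\ss^re=\eta_r^A\circ\ss^re$. By the universal property of the equaliser there is then a unique $i\colon\ss^rA\to B$ with $s\circ i=\ss^re$. Applying $\ss^{-r}$ yields $\ss^{-r}s\circ\ss^{-r}i=e$, so I set $\tilde r:=\ss^{-r}i$; this already delivers one of the two required equations.

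For the remaining equation $\tilde r\circ s=\eta_r^B$, I would precompose with $\ss^{-r}s$ and use naturality of $\eta$ (applied to $s\colon B\to A$ this gives $\ss^{-r}s\circ\eta_r^B=\eta_r^A\circ s$) together with the equaliser relation:
\begin{equation*}
\ss^{-r}s\circ(\tilde r\circ s)=(\ss^{-r}s\circ\tilde r)\circ s=e\circ s=\eta_r^A\circ s=\ss^{-r}s\circ\eta_r^B.
\end{equation*}
Since $\ss^{-r}$ is an equivalence of categories it preserves monomorphisms, so $\ss^{-r}s$ is a monomorphism and may be cancelled, giving $\tilde r\circ s=\eta_r^B$. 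Hence $(B,s,\tilde r)$ splits $e$. The only points requiring any care are the two structural facts used above — that $\ss^r$ intertwines the $\eta$-maps and that $\ss^{-r}s$ inherits the monomorphism property of $s$ — both of which are immediate from the shift-functor axioms and the listed properties of $\eta_r$; there is no genuine obstacle here beyond bookkeeping with the shifts.
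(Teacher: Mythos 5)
Your proposal is correct and is essentially the paper's own argument: both use the $r$-idempotent identity (shifted by $\ss^r$) to see that $\ss^r e$ equalises the pair, obtain the unique $i:\ss^r A\to B$ with $s\circ i=\ss^r e$, set the retraction to be $\ss^{-r}i$, and then cancel the monomorphism $\ss^{-r}s$ after using naturality of $\eta_r$ and the equaliser relation to get $\ss^{-r}i\circ s=\eta_r^B$. You merely make explicit two points the paper leaves implicit, namely that $\ss^r$ intertwines the $\eta$-maps and that $\ss^{-r}s$ remains a monomorphism.
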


\begin{prop}
    If there exists a co-equaliser (in $\cc_0$) to the diagram 
    \begin{equation}\begin{tikzcd}
        A\arrow[r,yshift=1ex,"e"] \arrow[r,yshift=-1ex,swap,"\eta_r^A"]& \ss^{-r}A
    \end{tikzcd}\end{equation}
    where $e$ is an $r$-idempotent, then $e$ splits.
    \begin{proof}
    The proof follows similarly to the proof for equalisers. By assumption there exists a unique $u:C \to \ss^{-2r}A$ a universal diagram
    \begin{equation}\begin{tikzcd}
        A\arrow[r,yshift=1ex,"e"] \arrow[r,yshift=-1ex,swap,"\eta_r^A"]& \ss^{-r}A\arrow[r,"t"]\arrow[d,"\ss^{-r}e"]& C\arrow[dl,dotted,"\exists !u"]\\
        & \ss^{-2r}A & 
    \end{tikzcd}\end{equation}

    giving 
    \begin{equation}e=\ss^{r}u \circ \ss^r t.\end{equation}
    It then follows that 
    \begin{align*}
        t \circ e =& t \circ \eta_r^A\\
       t \circ (\ss^{r}u \circ \ss^r t) =& t \circ \eta_r^A\\
      (t \circ \ss^r u ) \circ \ss^r t =&\eta_r^{\ss^rC} \circ \ss^r t.
    \end{align*}
    $t$ is an epimorphism and hence we have
   \begin{align*}
       t \circ \ss^r u = &\eta_r^{\ss^{r}C}\\
       =&\ss^r \eta_r^C
   \end{align*}
   giving the result
   \begin{equation}\ss^{-r}t \circ u = \eta_r^C.\end{equation}
    \end{proof}
\end{prop}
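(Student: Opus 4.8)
The plan is to mirror the proof for equalisers, dualising every universal property: replace ``monomorphism'' by ``epimorphism'', reverse the arrows in the relevant diagrams, and use that $e$ is an $r$-idempotent to factor maps through the co-equaliser. Let $(C,t)$ be the co-equaliser of $e,\eta_r^A : A \rightrightarrows \ss^{-r}A$, so $t \circ e = t \circ \eta_r^A$ and $t : \ss^{-r}A \to C$ is an epimorphism with the usual universal property. The key observation is that $\ss^{-r}e \circ e = \ss^{-r}e \circ \eta_r^A$ (the second form of the $r$-idempotent condition), which says precisely that the morphism $\ss^{-r}e : \ss^{-r}A \to \ss^{-2r}A$ coequalises $e$ and $\eta_r^A$; hence it factors uniquely through $t$, giving $\exists ! \, u : C \to \ss^{-2r}A$ with $\ss^{-r}e = u \circ t$. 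Applying $\ss^r$ yields $e = \ss^r u \circ \ss^r t$.

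Next I would produce the splitting data. The natural candidate is $B := \ss^{-r}C$ (up to the shift bookkeeping), with $s := u$ playing the role of the section-type map $C \to \ss^{-2r}A$ and the retraction-type map coming from $t$ after a shift, i.e. one sets the splitting triple so that one composite recovers $e$ and the other recovers $\eta_r^C$. Concretely, from $e = \ss^r u \circ \ss^r t$ one computes $t \circ e = t \circ \ss^r u \circ \ss^r t$, and using $t \circ e = t \circ \eta_r^A = \eta_r^{\ss^{-r}C} \circ \ss^r t$ (here invoking property 2 of the $\eta$'s, that $\eta$ commutes with morphisms, applied to $t$) one gets $(t \circ \ss^r u) \circ \ss^r t = \eta_r^{\ss^r C} \circ \ss^r t$. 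Since $t$ is an epimorphism, $\ss^r t$ is too (shift functors are equivalences on $\cc_0$), so we may cancel it on the right to conclude $t \circ \ss^r u = \ss^r \eta_r^C$, and applying $\ss^{-r}$ gives $\ss^{-r}t \circ u = \eta_r^C$. Together with $e = \ss^r u \circ \ss^r t$, suitably reindexed, this exhibits $e$ as split with retract $C$ (shifted appropriately).

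The main obstacle I anticipate is purely bookkeeping: getting the shift indices to line up so that the identities $\ss^{-r}s \circ r = e$ and $r \circ s = \eta_r^{B}$ from the definition of a splitting hold on the nose, rather than just up to the canonical shift isomorphisms $(\eta_{a,b})_A$. In particular one must be careful that the co-equaliser $C$ lives at the right ``level'' and that $\ss^r t$ is genuinely an epimorphism — this is where I would cite that each $\ss^a$ is an autoequivalence of $\cc_0$, so it preserves epimorphisms. Beyond that, the argument is a routine dualisation of the equaliser proposition, and no genuinely new idea is needed; I would present it compactly, writing out only the two displayed chains of equalities (the factorisation of $\ss^{-r}e$ through $t$, and the cancellation of $\ss^r t$) and leaving the shift-index verifications to the reader.
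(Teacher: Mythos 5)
Your proposal is correct and follows essentially the same route as the paper: factor $\ss^{-r}e$ through the coequaliser via its universal property to get $e=\ss^r u\circ \ss^r t$, then cancel the epimorphism $\ss^r t$ to obtain $\ss^{-r}t\circ u=\eta_r^C$, which are exactly the two identities the paper derives. The only loose end is the shift bookkeeping you flagged (your $\eta_r^{\ss^{-r}C}$ should read $\eta_r^{\ss^{r}C}$, and the splitting object works out to $\ss^{2r}C$ rather than $\ss^{-r}C$), which is immaterial to the argument.
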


Note that for $r>0$, an $r$-idempotent splitting does not in general imply the existence of a corresponding equaliser or coequaliser. Indeed as splittings are not unique up to isomorphism, a splitting cannot give a universal objects and morphism equalising/coequalising 
\begin{equation}\begin{tikzcd}
        A\arrow[r,yshift=1ex,"e"] \arrow[r,yshift=-1ex,swap,"\eta_r^A"]& \ss^{-r}A
    \end{tikzcd}\end{equation}

\begin{prop}
    Let $e:A\to \ss^{-r}A$ be a split $r$-idempotent with splitting given by
    \begin{equation}\begin{tikzcd}
        A \ar[r,"e"]\ar[rd,"r"]& \ss^{-r}A\\
        B\ar[r,"\eta_r^B"]\ar[u,"s"] & \ss^{-r}B\ar[u,swap,"\ss^{-r}s"]
    \end{tikzcd}\end{equation}
    Then, $\ss^{-r}r:\ss^{-r}A \to \ss^{-2r}B$ coequalises 
 \begin{equation}\begin{tikzcd}
        A\arrow[r,yshift=1ex,"e"] \arrow[r,yshift=-1ex,swap,"\eta_r^A"]& \ss^{-r}A
    \end{tikzcd}\end{equation}
and moreover, if $\alpha:\ss^{-r}A \to K$ also coequalises, then there exists a $\beta:\ss^{-2r}B \to \ss^{-r}K$ such that $\beta \circ \ss^{-r}r= \eta_r^K \circ \alpha$.
\begin{proof}
Firstly, we have $\ss^{-r}r \circ e = \ss^{-r}r \circ \ss^{-r}s \circ r= \eta^{\ss^{-r}B}_r \circ r = \ss^{-r}r \circ \eta_r^A$. Now set $\beta:= \ss^{-r}\alpha \circ \ss^{-2r}s$. Then we have
    \begin{align*}
        \beta \circ \ss^{-r}r = &\ss^{-r}\alpha \circ \ss^{-2r}s \circ \ss^{-r}r\\
        =&\ss^{-r}\alpha \circ \ss^{-r}e\\
        =& \ss^{-r}(\alpha \circ e)\\
        =& \ss^{-r}(\alpha \circ \eta^{\ss^{-r}A}_r)\\
        =& \eta_r^K \circ \alpha.
    \end{align*}
    The map $\beta$ is not unique, but unique up to $r$-equivalence. Assume there exists $\beta': \ss^{-2r}B \to \ss^{-r}K $ such that $\beta' \circ \ss^{-r}r = \eta_r^K \circ \alpha$, then it follows:

    \begin{align}
        &(\beta ' - \beta) \circ \ss^{-r}r = 0\\
        \implies& \beta \simeq_r \beta ' \notag
    \end{align}

\end{proof}

\end{prop}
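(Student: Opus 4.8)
The plan is to verify both assertions by direct diagram chases, using only the two splitting identities $r\circ s=\eta_r^B$ and $\ss^{-r}s\circ r=e$, functoriality of the shift operators $\ss^{a}$, and the standing properties of the maps $\eta$: additivity $\eta_{r}^{\ss^{-s}A}\circ\eta_{s}^{A}=\eta_{r+s}^{A}$, naturality $\eta_{r}^{Y}\circ f=\ss^{-r}f\circ\eta_{r}^{X}$ for $f\colon X\to Y$, and $\ss^{-r}\eta_{r}^{B}=\eta_{r}^{\ss^{-r}B}$ (which follows from the shift axioms). Nothing beyond these and the earlier propositions on split weighted idempotents is needed.

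First I would check that $\ss^{-r}r$ coequalises the parallel pair $e,\eta_{r}^{A}\colon A\to\ss^{-r}A$, i.e.\ that $\ss^{-r}r\circ e=\ss^{-r}r\circ\eta_{r}^{A}$. For the left-hand side, substitute $e=\ss^{-r}s\circ r$ and apply $\ss^{-r}$ to $r\circ s=\eta_{r}^{B}$: $\ss^{-r}r\circ\ss^{-r}s\circ r=\ss^{-r}(r\circ s)\circ r=\ss^{-r}\eta_{r}^{B}\circ r=\eta_{r}^{\ss^{-r}B}\circ r$. For the right-hand side, naturality of $\eta$ applied to $r\colon A\to\ss^{-r}B$ gives $\ss^{-r}r\circ\eta_{r}^{A}=\eta_{r}^{\ss^{-r}B}\circ r$. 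These agree, which is the first claim; note that we assert $\ss^{-r}r$ coequalises, not that it is the coequaliser.

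Next, given any $\alpha\colon\ss^{-r}A\to K$ with $\alpha\circ e=\alpha\circ\eta_{r}^{A}$, I would set $\beta:=\ss^{-r}\alpha\circ\ss^{-2r}s\colon\ss^{-2r}B\to\ss^{-r}K$ and compute, using $\ss^{-2r}s\circ\ss^{-r}r=\ss^{-r}(\ss^{-r}s\circ r)=\ss^{-r}e$, that $\beta\circ\ss^{-r}r=\ss^{-r}\alpha\circ\ss^{-r}e=\ss^{-r}(\alpha\circ e)=\ss^{-r}(\alpha\circ\eta_{r}^{A})=\ss^{-r}\alpha\circ\eta_{r}^{\ss^{-r}A}=\eta_{r}^{K}\circ\alpha$, the last step again by naturality. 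So $\beta$ has the required property. For uniqueness up to $r$-equivalence, suppose $\beta'$ also satisfies $\beta'\circ\ss^{-r}r=\eta_{r}^{K}\circ\alpha$; then $(\beta'-\beta)\circ\ss^{-r}r=0$, and composing on the right with $\ss^{-r}s$ and using $\ss^{-r}r\circ\ss^{-r}s=\ss^{-r}(r\circ s)=\eta_{r}^{\ss^{-r}B}$ yields $(\beta'-\beta)\circ\eta_{r}^{\ss^{-r}B}=0$, i.e.\ $\beta\simeq_{r}\beta'$ — the same mechanism as in the earlier proposition showing that two morphisms out of $\ss^{-r}B$ that agree after precomposition with the retraction $r$ are $r$-equivalent, here applied after shifting everything by $\ss^{-r}$.

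All steps are routine once the shift indices are tracked carefully; the only conceptual point, used in both halves, is that $\ss^{-r}r$ behaves as a \emph{weighted epimorphism} — it admits $\ss^{-r}s$ as a one-sided inverse up to the factor $\eta_{r}^{\ss^{-r}B}$ — so I do not expect a genuine obstacle, just bookkeeping of the shifts.
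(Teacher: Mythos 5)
Your proposal is correct and follows essentially the same route as the paper: the same verification that $\ss^{-r}r$ coequalises the pair $(e,\eta_r^A)$ via $e=\ss^{-r}s\circ r$ and naturality of $\eta$, the same choice $\beta=\ss^{-r}\alpha\circ\ss^{-2r}s$, and the same conclusion that $\beta$ is unique up to $r$-equivalence. The only difference is that you spell out the final step (precomposing $(\beta'-\beta)\circ\ss^{-r}r=0$ with $\ss^{-r}s$ to get $(\beta'-\beta)\circ\eta_r^{\ss^{-r}B}=0$), which the paper leaves implicit by appealing to the earlier proposition on splittings.
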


\begin{prop}
  With $e:A\to \ss^{-r}A$ split as above, the map $s:B \to A$ equalises 
   \begin{equation}\begin{tikzcd}
        A\arrow[r,yshift=1ex,"e"] \arrow[r,yshift=-1ex,swap,"\eta_r^A"]& \ss^{-r}A
    \end{tikzcd}\end{equation}
    and if $\alpha:C \to A$ also equalises then there exists a $\beta:\ss^{r}C \to B$ unique up to $r$-equivalence such that $s \circ \beta = \alpha \circ \eta_r^{\ss^r C}$.
    \begin{proof}
        Firstly, $e \circ s = \ss^{-r}s \circ \eta_r^B = \eta_r^A \circ s$. Secondly, set $\beta:= \ss^{r}r \circ \ss^r \alpha$, then 
        \begin{align*}
            s\circ \beta = &s \circ \ss^r r \circ \ss^r \alpha \\
            =&\ss^r e \circ \ss^r \alpha \\
            =& \eta_r^{\ss^r A} \circ \ss^r \alpha\\
            =& \alpha \circ \eta_r^{\ss^r C}
        \end{align*}

        Finally, assume there exists $\beta':\ss^r C \to B$ with $s \circ \beta' = \alpha \circ \eta_r^{\ss^rC}$, then,
        \begin{align*}
            &s \circ (\beta - \beta')=0\\
            \implies & \beta \simeq_r \beta'
        \end{align*}
    \end{proof}
\end{prop}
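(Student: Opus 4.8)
The plan is to dualise, step by step, the proof of the preceding coequaliser proposition, since this is its formal mirror image. First I would check that $s$ equalises the pair $(e,\eta_r^A)$. Using the splitting identity $e=\ss^{-r}s\circ r$ and then $r\circ s=\eta_r^B$, one gets $e\circ s=\ss^{-r}s\circ(r\circ s)=\ss^{-r}s\circ\eta_r^B$; the naturality property $\eta_r^A\circ s=\ss^{-r}s\circ\eta_r^B$ (property 2 of the $\eta$'s) then identifies this with $\eta_r^A\circ s$. That settles the first assertion.

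For the universal-type property, given $\alpha\colon C\to A$ with $e\circ\alpha=\eta_r^A\circ\alpha$, I would set $\beta:=\ss^r r\circ\ss^r\alpha\colon\ss^r C\to B$ (the shift is forced: $r$ lands in $\ss^{-r}B$, so one must shift the target $A$ of $\alpha$ by $\ss^r$ before composing with $\ss^r r\colon\ss^r A\to B$). Then I would compute $s\circ\beta=(s\circ\ss^r r)\circ\ss^r\alpha=\ss^r(\ss^{-r}s\circ r)\circ\ss^r\alpha=\ss^r e\circ\ss^r\alpha=\ss^r(e\circ\alpha)$. Applying the equalising hypothesis $e\circ\alpha=\eta_r^A\circ\alpha$ and rewriting $\eta_r^A\circ\alpha=\ss^{-r}\alpha\circ\eta_r^C$ by naturality, then applying $\ss^r$ and using the identifications $\ss^r\circ\ss^{-r}=\ss^0$ and $\ss^r\eta_r^C=\eta_r^{\ss^r C}$, yields $s\circ\beta=\alpha\circ\eta_r^{\ss^r C}$, as required.

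Uniqueness up to $r$-equivalence is immediate: if $\beta'$ also satisfies $s\circ\beta'=\alpha\circ\eta_r^{\ss^r C}$, then $s\circ(\beta-\beta')=0$, and I would invoke the remark following the earlier proposition — that $s\circ f=s\circ g$ implies $f\simeq_r g$, proven by composing with $r$ on the left and using $r\circ s=\eta_r^B$ — to conclude $\beta\simeq_r\beta'$.

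The only genuinely delicate point, and the step I would treat most carefully, is the shift-functor bookkeeping: keeping track of where each morphism lives after applying $\ss^r$ or $\ss^{-r}$, and correctly using $\ss^a\circ\ss^b=\ss^{a+b}$, $\ss^0=1_\cc$, the compatibility $\ss^a\eta_r^X=\eta_r^{\ss^a X}$, and the two listed properties of the $\eta$'s. There is no module-theoretic difficulty here; the proposition is a purely formal consequence of the splitting identities and the naturality of $\eta$, exactly parallel to the coequaliser case.
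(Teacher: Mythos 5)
Your argument is correct and coincides with the paper's own proof: the same verification that $e\circ s=\ss^{-r}s\circ\eta_r^B=\eta_r^A\circ s$, the same choice $\beta:=\ss^r r\circ\ss^r\alpha$ with the identical computation $s\circ\beta=\ss^r e\circ\ss^r\alpha=\alpha\circ\eta_r^{\ss^r C}$, and the same appeal to $s\circ(\beta-\beta')=0\implies\beta\simeq_r\beta'$ via left composition with $r$. The only difference is cosmetic ordering of the naturality and shift steps, which does not affect the proof.
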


\begin{lemma}
    Assume that both $e:A\to \ss^{-r}A$ and $\eta_r^A-e:A\to \ss^{-r}A$ are split $r$-idempotents, given by respective diagrams
    \begin{equation}\begin{tikzcd}
        A \ar[dr,"r_B"]\ar[r,"e_B"]& \ss^{-r}A& & A\ar[dr,"r_C"]\ar[r,"e_C"] &\ss^{-r}A \\
        B\ar[u,"s_B"]\ar[r,"\eta_r^B"] & \ss^{-r}B\ar[u,swap,"\ss^{-r}s_B"] & &C\ar[u,"s_C"]\ar[r,"\eta_r^C"] & \ss^{-r}C\ar[u,swap,"\ss^{-r}s_C"]
    \end{tikzcd}\end{equation}
    with $e_B:=e$ and $e_C:=\eta_r^A-e$. Then  $B \oplus C\simeq_{2r}A$.

    \begin{proof}
        Let $f:A \to \ss^{-r}B\oplus \ss^{-r}C$ be the map given by $f=\begin{pmatrix}
            r_B \\
            r_C
        \end{pmatrix}$ and $g:B \oplus C \to A$ by $g=\begin{pmatrix}
            s_B &
            s_C
        \end{pmatrix}$. We then have 

        \begin{equation}\ss^{-r}g \circ f = \begin{pmatrix}
            r_B \\
            r_C
        \end{pmatrix}\begin{pmatrix}
            \ss^{-r}s_B &
            \ss^{-r}s_C
        \end{pmatrix} = \ss^{-r}s_B \circ r_B + \ss^{-r}s_C + r_C = e_B + e_C = \eta_r^A  \end{equation}

    and

    \begin{equation}f \circ g = \begin{pmatrix}
            s_B &
            s_C
        \end{pmatrix}  \begin{pmatrix}
            r_B \\
            r_C
        \end{pmatrix} = \begin{pmatrix}
            r_B \circ s_B & r_B \circ s_C\\
            r_C \circ s_B & r_C \circ s_C
        \end{pmatrix}=\begin{pmatrix}
            \eta_r^B& r_B \circ s_C\\
            r_C \circ s_B & \eta_r^C
        \end{pmatrix} \end{equation}

        One can check; 
        \begin{align*}
       r_B \circ s_C \circ \ss^r r_C =& r_B \circ \ss^r e_C \\
            =&r_B \circ (\eta_r^{\ss^{r}A} - \ss^r e)\\
            =&r_B \circ \eta_r^{\ss^{r}A} - r_B \circ \ss^r e\\
            =&\eta_r^B \circ \ss^r r_B - \eta_r^B \circ \ss^r r_B\\
            =&0
        \end{align*}
        hence we have $r_B \circ s_C \simeq_r 0$. Now consider $(\eta_r^{\ss^{-r}(B \oplus C)}\circ f) \circ g$, given by
        \begin{align*}
            \begin{pmatrix}
                \eta_r^{\ss^{-r}B} & 0 \\
                0 & \eta_r^{\ss^{-r}C}        
            \end{pmatrix} \circ \begin{pmatrix}
            \eta_r^B& r_B \circ s_C\\
            r_C \circ s_B & \eta_r^C
        \end{pmatrix}=& \begin{pmatrix}
            \eta_r^{\ss^{-r}B} \circ \eta_r^B&\eta_{r}^{\ss^{-r}B} \circ r_B \circ s_C\\
           \eta_r^{\ss^{-r}C} \circ r_C \circ s_B & \eta_r^{\ss^{-r}C}\circ \eta_r^C
        \end{pmatrix}\\
        =&\begin{pmatrix}
            \eta_{2r}^B& 0\\
            0 & \eta_{2r}^C
        \end{pmatrix}\\
        =&\eta_{2r}^{B \oplus C}
        \end{align*}
We also have, 
\begin{equation}\ss^{-2r}g \circ (\eta_r^{\ss^{-r}(B \oplus C)}\circ f) =\eta_{r}^{\ss^{-r}A} \circ \ss^{-r}g \circ f =\eta_{2r}^A \end{equation}
        hence we have $g:B\oplus C \simeq_{2r}A$. 
    \end{proof}
    
\end{lemma}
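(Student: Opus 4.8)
The plan is to exhibit explicit morphisms $f\colon A\to\ss^{-r}B\oplus\ss^{-r}C$ and $g\colon B\oplus C\to A$ built out of the data of the two given splittings, and then to check that $g$ is a strong $2r$-isomorphism. Concretely I set $f=\binom{r_B}{r_C}$ and $g=(s_B\ \ s_C)$. The first routine computation is $\ss^{-r}g\circ f = \ss^{-r}s_B\circ r_B+\ss^{-r}s_C\circ r_C = e_B+e_C = \eta_r^A$, using the defining equations $\ss^{-r}s_B\circ r_B=e_B$, $\ss^{-r}s_C\circ r_C=e_C$ and the hypothesis $e_B+e_C=\eta_r^A$. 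So one of the two strong-$r$-isomorphism identities is already exact (weight $r$), which is better than needed; I only need weight $2r$ in the end, so this direction is fine after post-composing with $\eta_r$ to land in the right place.

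The more delicate direction is $f\circ g$, which is the $2\times 2$ matrix $\begin{pmatrix}\eta_r^B & r_B\circ s_C\\ r_C\circ s_B & \eta_r^C\end{pmatrix}$: the diagonal entries are forced by the splitting relations $r_B\circ s_B=\eta_r^B$, $r_C\circ s_C=\eta_r^C$, but the off-diagonal entries need not vanish — only their images in the limit category do. The key step is therefore to show $r_B\circ s_C\simeq_r 0$ (and symmetrically $r_C\circ s_B\simeq_r 0$). For this I compute $r_B\circ s_C\circ\ss^r r_C = r_B\circ\ss^r e_C = r_B\circ(\eta_r^{\ss^r A}-\ss^r e) = \eta_r^B\circ\ss^r r_B - \eta_r^B\circ\ss^r r_B = 0$, using the $\eta$-naturality property $\eta_r^B\circ r_B=\ss^r r_B\circ\eta_r^{\ss^r(\cdot)}$-type identity (property 2 of the $\eta$'s) together with $\ss^r r_B\circ\ss^r e_C=\ss^r(r_B\circ e_C)$ and $r_B\circ\ss^r e = \eta_r^B\circ\ss^r r_B$ (which follows from $e=\ss^{-r}s_B\circ r_B$ after shifting). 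Since $r_B\circ s_C$ is killed after precomposing with $\ss^r r_C$, and $\ss^r r_C$ admits $\ss^r s_C$ as a partial section up to $\eta$, precomposing further by $\ss^r s_C$ gives $r_B\circ s_C\circ\ss^r(r_C\circ s_C)=r_B\circ s_C\circ\ss^r\eta_r^C$, i.e. $r_B\circ s_C$ becomes $r$-equivalent to $0$; more precisely one concludes $(r_B\circ s_C)\circ\eta_r^{\ss^{-r}C}=0$, which is exactly $r_B\circ s_C\simeq_r 0$.

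Having established the off-diagonal entries are $r$-equivalent to zero, I post-compose $f$ with $\eta_r^{\ss^{-r}(B\oplus C)}$: then $(\eta_r^{\ss^{-r}(B\oplus C)}\circ f)\circ g$ has diagonal entries $\eta_r^{\ss^{-r}B}\circ\eta_r^B=\eta_{2r}^B$, $\eta_r^{\ss^{-r}C}\circ\eta_r^C=\eta_{2r}^C$ (property 1 of the $\eta$'s), and off-diagonal entries $\eta_r^{\ss^{-r}B}\circ(r_B\circ s_C)=0$ and symmetrically, by the $r$-equivalence just proved; hence this composite equals $\eta_{2r}^{B\oplus C}$. In the other direction $\ss^{-2r}g\circ(\eta_r^{\ss^{-r}(B\oplus C)}\circ f)=\eta_r^{\ss^{-r}A}\circ\ss^{-r}g\circ f=\eta_r^{\ss^{-r}A}\circ\eta_r^A=\eta_{2r}^A$, again using property 1 and the naturality of $\eta$ with respect to $g$ (property 2). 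Therefore $g\colon B\oplus C\to A$ together with $\eta_r^{\ss^{-r}(B\oplus C)}\circ f\colon A\to\ss^{-2r}(B\oplus C)$ witness that $g$ is a strong $2r$-isomorphism, which is precisely the assertion $B\oplus C\simeq_{2r}A$. I expect the main obstacle to be the bookkeeping in the off-diagonal vanishing argument: keeping track of exactly which shift each $\eta$ carries and invoking properties 1 and 2 of the $\eta$'s in the right order, together with the fact that the splitting relations only hold after the appropriate shift; everything else is a direct matrix computation.
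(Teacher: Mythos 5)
Your proposal is correct and follows essentially the same route as the paper: the same maps $f=\binom{r_B}{r_C}$, $g=(s_B\ \ s_C)$, the same computation $\ss^{-r}g\circ f=\eta_r^A$, the same off-diagonal vanishing obtained by precomposing $r_B\circ s_C$ with $\ss^r r_C$, and the same final post-composition with $\eta_r$ to produce the strong $2r$-isomorphism. The only blemish is a shift-index slip in your intermediate claim ($(r_B\circ s_C)\circ\eta_r^{\ss^{-r}C}=0$ should read precomposition with $\eta_r^{\ss^{r}C}$, which by naturality yields the post-composition identity $\eta_r^{\ss^{-r}B}\circ(r_B\circ s_C)=0$ that you in fact use), so no substantive gap.
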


\subsection{Weighted idempotent completion}
We now study what idempotent completion in the persistence setting should look like. Similarly to the construction of idempotent completion in the setting of semi-persistence categories we realise that one needs to not just split idempotents but split all weighted idempotents to ensure that the limit category recovers the usual idempotent completion. We will call a persistence category \textbf{weighted idempotent complete} if for any $r\geq 0$, every $r$-idempotent has a splitting, note that this splitting does not need to be unique. 

\begin{lemma}\label{pshidemcomp}
    The category $\text{PSh}_\pp(\cc)$ is weighted idempotent complete. 
    \begin{proof}
        We show that given any $r$-idempotent $e: F \to \ss^{-r}F$ there exists an equaliser to the diagram 
     \begin{equation}\begin{tikzcd}
        F\arrow[r,yshift=1ex,"e"] \arrow[r,yshift=-1ex,swap,"\eta_r^F"]& \ss^{-r}F
    \end{tikzcd}\end{equation}
    and hence a splitting of $e$. Note that if there exists a kernel of $e-\eta_r^F$ then the equaliser exists and so in fact it is enough to show that $\text{Mod}^\pp_k$ has kernels, as we this will imply $\text{PSh}_\pp(\cc)$ also kernels by taking them pointwise in $\text{Mod}^\pp_k$. Let $f\in \text{Hom}_{\text{Mod}_k^\pp}(V,W)(r)$ be a morphism of persistence modules with shift $r$. We have 
    \begin{equation}\text{Ker}\big(f\big)(s)=\text{Ker}(f^s)\end{equation}
  where $f^s:V(s) \to W(s+r)$ is the underlying morphism of $k$-modules. We then must check that there exists a natural persistence module structure on $\text{Ker}(f)$, i.e., there are maps
   \begin{equation}\text{Ker}(f)(t) \xrightarrow{\pi_{t,s}}\text{Ker}(f)(s)\end{equation}
   for all $t\leq s$. Consider the diagram, in which the right hand square commutes, as $f$ is a persistence module morphism:
   \begin{equation}\begin{tikzcd}
       \text{Ker}(f)(t) \arrow[r,"j(t)"]& V(t)\arrow[r,"f^t"]\arrow[d,"i_{t,s}^{V}"] & W(t+r)\arrow[d,"i_{r+t,r+s}^{W}"]\\
        \text{Ker}(f)(s)\arrow[r,"j(s)"] & V(s)\arrow[r,"f^s"] & W(s+r)
   \end{tikzcd}\end{equation}
   By the universal property of the kernel, we have a diagram 
   \begin{equation}\begin{tikzcd}
       &  \text{Ker}(f)(t)\arrow[d,"i_{t,s}^{V} \circ j(t)"]\arrow[dd,bend  left = 90, "0"]\arrow[dl,dotted,swap,"\exists !"]\\
     \text{Ker}(f)(s) \arrow[r,"j(s)"]\arrow[rd,"0"]& V(s)\arrow[d,"f^s"]\\
       & W(s+r)
   \end{tikzcd}\end{equation}
   We set $\pi_{t,s}$ to be this unique map. It follows from the uniqueness that $\pi_{t,s} \circ \pi_{k,t}=\pi_{k,s}$. Finally we see that the persistence module which we constructed satisfies the universal property of a kernel by the universal property levelwise. Hence $\text{Mod}^\pp_k$ has kernels. It follows that $\text{PSh}_\pp(\cc)$ also has kernels by defining them pointwise, i.e., $\text{Ker}(f:F \to G)(A)=\text{Ker}(f(A):F(A) \to G(A))$. Therefore the diagram
   \begin{equation}\begin{tikzcd}
        F\arrow[r,yshift=1ex,"e"] \arrow[r,yshift=-1ex,swap,"\eta_r^F"]& \ss^{-r}F
    \end{tikzcd}\end{equation}
    has an equaliser and hence the $r$-idempotent has a splitting.
 
    \end{proof}
\end{lemma}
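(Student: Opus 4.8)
The plan is to show that $\text{PSh}_\pp(\cc)$ is weighted idempotent complete by producing, for every $r$-idempotent $e\colon F\to \ss^{-r}F$, an equaliser of the pair $(e,\eta_r^F)$; by one of the propositions proved above, the existence of such an equaliser (in $\cc_0$, here in $\text{PSh}_\pp(\cc)_0$) guarantees that $e$ splits. Since the equaliser of $(e,\eta_r^F)$ is the kernel of $e-\eta_r^F$, and kernels in a functor category are computed pointwise, it suffices to prove that $\text{Mod}_k^\pp$ has kernels. So the real content is: construct a kernel for an arbitrary shifted morphism $f\in\text{Hom}_{\text{Mod}_k^\pp}(V,W)(r)$ in the persistence category $\text{Mod}_k^\pp$.

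First I would write $f$ as a family $f^s\colon V(s)\to W(s+r)$ of $k$-module maps commuting with the structure maps $i^V$, $i^W$, and define the candidate kernel levelwise by $\text{Ker}(f)(s):=\ker(f^s)$, with inclusion $j(s)\colon\text{Ker}(f)(s)\hookrightarrow V(s)$. To make this a persistence module I need structure maps $\pi_{t,s}\colon\text{Ker}(f)(t)\to\text{Ker}(f)(s)$ for $t\le s$: because the square relating $f^t$ and $f^s$ via $i^V_{t,s}$ and $i^W_{r+t,r+s}$ commutes, the composite $i^V_{t,s}\circ j(t)$ is killed by $f^s$, so the universal property of $\ker(f^s)$ produces a unique $\pi_{t,s}$ with $j(s)\circ\pi_{t,s}=i^V_{t,s}\circ j(t)$. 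Uniqueness immediately gives $\pi_{s,s}=\mathrm{id}$ and $\pi_{t,s}\circ\pi_{u,t}=\pi_{u,s}$, so $\text{Ker}(f)$ is a genuine persistence module and $j$ a weight-zero morphism of persistence modules with $f\circ j=0$. Then I would verify the universal property: given $g\colon U\to V$ (of some shift) with $f\circ g=0$, apply the levelwise kernel universal property to get unique factorisations $g_s$ through $\ker(f^s)$, and check these assemble into a persistence-module morphism — again automatic from uniqueness. This establishes that $\text{Mod}_k^\pp$ has kernels; taking kernels pointwise, $\text{Ker}(h\colon F\to G)(A):=\text{Ker}(h(A))$ gives kernels in $\text{PSh}_\pp(\cc)$, hence equalisers of $(e,\eta_r^F)$, hence a splitting of every $r$-idempotent.

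I do not expect a serious obstacle here; the argument is essentially the standard "limits in functor categories are computed pointwise" fact, with the only mildly delicate point being bookkeeping of the shift $r$ — one must be careful that $\ker(f^s)$ sits over $V(s)$ (not over $W$), so the kernel has the \emph{same} shift behaviour as $V$ and $\text{Ker}(f)$ is indexed the same way as $V$, while $f-\eta_r^F$ is a shifted morphism $F\to\ss^{-r}F$ whose kernel is then a subobject of $F$ of weight zero. The one thing worth stating carefully is why the equaliser of $(e,\eta_r^F)$ is precisely the kernel of the difference: this uses that $\text{PSh}_\pp(\cc)_0$ is $\text{Mod}_k$-enriched (in fact additive), so that $\mathrm{eq}(e,\eta_r^F)=\ker(e-\eta_r^F)$ in the usual way, which is legitimate since $e$ and $\eta_r^F$ are parallel morphisms $F\to\ss^{-r}F$ and subtraction of morphisms of equal shift is defined. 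Once the kernel is in hand, invoking the earlier proposition on equalisers finishes the proof.
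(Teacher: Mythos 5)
Your proposal is correct and follows essentially the same route as the paper: reduce the splitting of an $r$-idempotent to the existence of an equaliser of $(e,\eta_r^F)$, realise that equaliser as the kernel of $e-\eta_r^F$, construct kernels in $\text{Mod}_k^\pp$ levelwise with structure maps coming from the universal property, and then take kernels pointwise in $\text{PSh}_\pp(\cc)$. Your extra remark justifying the identification of the equaliser with the kernel via the $\text{Mod}_k$-enrichment is a point the paper leaves implicit, but it is the same argument.
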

\begin{rmk}
    An idempotent in $\text{PSh}_\pp(\cc)$ does not have a unique splitting, however there is a natural choice of splitting given by the equaliser/kernel.
\end{rmk}

\begin{cor}
    If $e:A \to \ss^{-r}A$ is an $r$-idempotent in $\cc$, then $\yy(e): \yy(A) \to \ss^{-r}\yy(A)$ is an $r$-idempotent in $\text{PSh}_\pp(\cc)$.
    \begin{proof}
        This follows directly from functorality of $\yy$ and from $\yy(\eta_r^A)=\eta_r^{\yy(A)}$.
    \end{proof}
\end{cor}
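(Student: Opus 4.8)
The plan is to expand the one-line argument into an explicit chain of identities, using three facts already established: (i) $\yy$ is a persistence functor, so $\yy(g\circ f)=\yy(g)\circ\yy(f)$ and $\yy$ sends weight-zero morphisms to weight-zero morphisms; (ii) $\yy$ commutes with the shift functors, so under the canonical identification $\yy\circ\ss^{-r}\cong\ss^{-r}\circ\yy$ (which, per the remark, we fold into the definition of $\yy$) one has $\yy(\ss^{-r}e)=\ss^{-r}\yy(e)$ and $\yy(\ss^{-r}A)=\ss^{-r}\yy(A)$; and (iii) the preceding proposition, $\yy(\eta_r^B)=\eta_r^{\yy(B)}$ for every object $B$ of $\cc$.

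First I would write the hypothesis in the form $\ss^{-r}e\circ e=\eta_r^{\ss^{-r}A}\circ e$ in $\text{Mor}^0_\cc(A,\ss^{-2r}A)$ and apply $\yy$ to both sides. Functoriality turns the left side into $\yy(\ss^{-r}e)\circ\yy(e)$, which by (ii) equals $\ss^{-r}\yy(e)\circ\yy(e)$; it turns the right side into $\yy(\eta_r^{\ss^{-r}A})\circ\yy(e)$, which by (iii) applied with $B=\ss^{-r}A$, together with (ii), equals $\eta_r^{\ss^{-r}\yy(A)}\circ\yy(e)$. Equating the two yields
\[
\ss^{-r}\yy(e)\circ\yy(e)=\eta_r^{\ss^{-r}\yy(A)}\circ\yy(e),
\]
which is exactly the defining equation of an $r$-idempotent for $\yy(e):\yy(A)\to\ss^{-r}\yy(A)$ in $\text{PSh}_\pp(\cc)$. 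One could run the identical computation starting instead from the equivalent form $\ss^{-r}e\circ e=\ss^{-r}e\circ\eta_r^A$ of the idempotent relation, whichever is more convenient.

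The only point requiring care — and hence the main obstacle, such as it is — is the bookkeeping of the canonical isomorphisms $\yy\circ\ss^{-r}\cong\ss^{-r}\circ\yy$: one should check that the instance of this isomorphism used to identify $\yy(\ss^{-r}e)$ with $\ss^{-r}\yy(e)$ is naturally the same as the one implicitly used when writing $\eta_r^{\ss^{-r}\yy(A)}$ in place of $\yy(\eta_r^{\ss^{-r}A})$, so that the displayed chain of equalities is literal rather than merely up to canonical isomorphism. This compatibility is precisely what the preceding proposition supplies, since its proof constructs these isomorphisms explicitly and verifies that they intertwine the $\eta$'s; with that proposition invoked there is nothing further to do, and the corollary is immediate.
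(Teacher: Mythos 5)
Your proposal is correct and follows essentially the same route as the paper: apply $\yy$ to the defining relation of the $r$-idempotent, then use functoriality of $\yy$, the identification $\yy\circ\ss^{-r}\cong\ss^{-r}\circ\yy$, and the identity $\yy(\eta_r^A)=\eta_r^{\yy(A)}$ from the preceding proposition. Your extra remark on the compatibility of the canonical isomorphisms is just the explicit bookkeeping the paper leaves implicit.
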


A \textbf{weighted idempotent completion} of a persistence category $\cc$, that is, a pair $(\cc',\iota:\cc \to \cc')$, where $\cc$ is weighted idempotent complete and $\iota$ is fully faithful, recalling that weighted idempotent complete means that every weighted idempotent splits. There could be many such completions. For example, we could choose $(\cc',\iota)$ to be $(\text{PSh}_\pp(\cc),\yy)$, the category of persistent presheaves along with the Yoneda embedding. We know that $\text{PSh}_\pp(\cc)$ is weighted idempotent complete and that $\yy$ is fully faithful. We can view a weighted idempotent completion as a form of enlargement of $\cc$ by adding in enough 
weighted retracts such that every weighted idempotent can split. Ideally then we should consider the smallest such enlargement one can take. This will correspond to a universal idempotent completion. By this we mean that if $(\hat{\cc},\hat{\iota})$ is this smallest such completion, then for any other completion $(\cc',\iota)$ we have a fully faithful embedding $\kappa:\hat{\cc}\to \cc'$ and a commutative diagram
\begin{equation}\begin{tikzcd}
    \cc \ar[dr,"\iota"]\ar[r,"\hat{\iota}"]& \hat{\cc}\ar[d,dotted, "\kappa"]\\
    & \cc'
\end{tikzcd}\end{equation}

\begin{lemma}\label{splitidemcomp}
Let $\cc$ be a Persistence Category, consider the persistent Yoneda embedding $\yy:\cc \hookrightarrow \text{PSh}_\ff(\cc)$, denote by $\text{Split}_\pp(\cc)$ the full subcategory of $\text{PSh}_\pp(\cc)$ on the objects that are $r$-retracts a representable functor $\yy(A)$ for some $A\in \cc$, $r\geq  0$. Then $\text{Split}_\pp(\cc)$ is weighted idempotent complete.

\begin{proof}
    Let $e: F \to \ss^{-r}F$ be an $r$-idempotent for $r\geq 0$, where $F$ is an $s$-retract of some $\yy(A)$. We know $e$ splits in $\text{PSh}_\ff(\cc)$ and so there exists an object $G$ and a pair of morphisms $s\in \text{Mor}^0(G,F)$, $r\in \text{Mor}^0(F,\ss^{-r}G)$ such that $r \circ s = \eta_r^G$ and $\ss^{-r}s \circ r=e$. It follows that $G$ is an $r$-retract of $F$
    \begin{equation}\begin{tikzcd}
         F\arrow[dr,"\ss^{-r}i"]\\
         G \ar[r,swap,"\eta_r^G"]\arrow[u,"s"]& \ss^{-r}G\\
    \end{tikzcd}\end{equation}
  We then have that $G\in \text{Split}_\pp(\cc)$ (as a $(r+s)$-retract of $\yy(A)$) and since $\text{Split}_\pp(\cc)$ is a full subcategory, we have the morphisms $s$ and $i$ give the splitting of $e$ within $\text{Split}_\pp(\cc)$.
\end{proof}
\end{lemma}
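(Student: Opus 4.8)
The plan is to mimic the proof strategy for Lemma \ref{pshidemcomp}: reduce an $r$-idempotent splitting in $\text{Split}_\pp(\cc)$ to a splitting in the ambient category $\text{PSh}_\pp(\cc)$, which we already know exists, and then check that the object produced by that splitting lands back inside the full subcategory $\text{Split}_\pp(\cc)$. Since $\text{Split}_\pp(\cc)$ is full in $\text{PSh}_\pp(\cc)$, any splitting data (the morphisms $s,r$) that exist in $\text{PSh}_\pp(\cc)$ between objects of $\text{Split}_\pp(\cc)$ automatically live in $\text{Split}_\pp(\cc)$, so the only real content is the closure of the class of retracts-of-representables under taking $r$-retracts.

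First I would take an $r$-idempotent $e:F\to\ss^{-r}F$ with $F\in\text{Split}_\pp(\cc)$, so by definition $F$ is an $s$-retract of some $\yy(A)$, witnessed by $\sigma\in\text{Mor}^0(F,\yy(A))$ and $\rho\in\text{Mor}^0(\yy(A),\ss^{-s}F)$ with $\rho\circ\sigma=\eta_s^F$. By Lemma \ref{pshidemcomp}, $e$ splits in $\text{PSh}_\pp(\cc)$: there is an object $G$ and morphisms $s:G\to F$, $r:F\to\ss^{-r}G$ in $\text{Mor}^0$ with $r\circ s=\eta_r^G$ and $\ss^{-r}s\circ r=e$. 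In particular $G<_r F$. Then I would invoke the transitivity of weighted retracts, Proposition stating that if $G<_r F$ and $F<_s \yy(A)$ then $G<_{r+s}\yy(A)$ — this is the key step, and the one place a small verification is needed, though it is already available as a cited earlier result. Hence $G$ is an $(r+s)$-retract of $\yy(A)$, so $G\in\text{Obj}(\text{Split}_\pp(\cc))$.

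Finally I would note that because $\text{Split}_\pp(\cc)$ is a full subcategory, the morphisms $s$ and $r$ (which have source and target in $\text{Split}_\pp(\cc)$) are morphisms of $\text{Split}_\pp(\cc)$, and the identities $r\circ s=\eta_r^G$, $\ss^{-r}s\circ r=e$ continue to hold there; so $(G,s,r)$ is a splitting of $e$ inside $\text{Split}_\pp(\cc)$. Since $e$ and $F$ were arbitrary, $\text{Split}_\pp(\cc)$ is weighted idempotent complete. I do not anticipate a genuine obstacle: the only thing to be careful about is making sure the weight bookkeeping in the transitivity step is consistent (the retract $G$ of $F$ has weight $r$, the retract $F$ of $\yy(A)$ has weight $s$, giving total weight $r+s$), and that the $\eta$-maps behave correctly under the shift functors, which is exactly what the earlier Proposition on composition of weighted retracts handles.
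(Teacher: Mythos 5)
Your proposal is correct and follows essentially the same route as the paper: split the weighted idempotent in $\text{PSh}_\pp(\cc)$ using Lemma \ref{pshidemcomp}, observe that the splitting object is an $r$-retract of $F$ and hence an $(r+s)$-retract of $\yy(A)$ by transitivity of weighted retracts, and conclude by fullness of the subcategory. No gaps.
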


Any two splittings of an $r$-idempotent are unique only up to strong $2r$-isomorphism.  Let $i:\cc \hookrightarrow \cc'$ be another completion of $\cc$, then there is not a canonical persistence functor $\kappa: \text{Split}_\pp(\cc) \to \cc'$ satisfying $\kappa \circ \yy = i$. Assume $i(e:A \to \ss^{-r}A)$ splits as $F' \to i(A) \to \ss^{-r}F'$, and $\yy(e)$ as $F \to \yy(A) \to \ss^{-r}F$ we would hope to define $\kappa(F \to \yy(A) \to \ss^{-r}F) = F' \to i(A) \to F'$, however the choice of splitting given by $F$ is not unique but unique only to a strong $2r$ isomorphism. There are therefore many such persistence functors $\kappa: \text{Split}_\pp(\cc) \to \cc'$ and $\kappa \circ \yy(F)$ is only strongly $2r$-isomorphic to $i(F)$. Note however that $\text{Split}_\pp(\cc)_\infty$ will be equivalent to $\cc'_\infty$. It follows that $\text{Split}_\pp(\cc)$ is not universal in the usual sense, but universal up to a form of `persistent equivalence'. 

\begin{rmk}
    One could attempt to choose a splitting of each weighted idempotent, or perhaps work with equivalence classes of splittings. These constructions seem to not behave well, for example a natural choice of splitting (up to actual isomorphism) is to take equalisers of all representable idempotents in $\text{PSh}_\pp(\cc)$. However, consider an $r$-idempotent $e:F \to \ss^{-r}F$ on some non-representable object $F$. Then by assumption $F=\text{eq}(\yy(\hat{e}),\eta_s^{\yy(A)})$ for some $s$-idempotent $\yy(\hat{e}):\yy(A)\to \ss^{-s}\yy(A)$. One would therefore need the splitting $G$ of $e$ to be given by some equaliser of a representable idempotent $\yy(e'):\yy(A) \to \ss^{-(r+s)}\yy(A) $ and such that we have a diagram

    \begin{equation}
        \begin{tikzcd}
            \yy(A)\ar[dr,"r_F"]& & & \\
            F\ar[dr,"r_G"]\ar[u,"s_F"]\ar[r,"\eta_r"]&\ss^{-s}F\ar[dr,"\ss^{-s}r_G"] & & \\
            G\ar[u,"s_G"]\ar[r,"\eta_r"]& \ss^{-r}G\ar[r,"\eta_s"]& \ss^{-(r+s)}G & 
        \end{tikzcd}
    \end{equation}
 i.e., $\yy(e')=\ss^{-(r+s)}(s_F \circ s_G) \circ \ss^{-s}r_G \circ r_F$. There seems to be no obvious way of defining the idempotent $\yy(e')$ without first taking a splitting of $e:F \to \ss^{-r}F$. Assume therefore that we choose some splitting of $e$, given by $G \xrightarrow{s_G}F \xrightarrow{r_G}\ss^{-r}G$, and then construct the idempotent $\yy(e')=\ss^{-(r+s)}(s_F \circ s_G) \circ \ss^{-s}r_G \circ r_F$. We could then look at `replacing' $G$ with $G'$, the equaliser of $\eta_r^{\yy(A)}$ and $\yy(e')$. 

 \begin{equation}
     \begin{tikzcd}
       G' \arrow[r,"s_{G'}"]& \yy(A)\arrow[r,yshift=1ex,"\yy(e)"] \arrow[r,yshift=-1ex,swap,"\eta_{r+s}^{\yy(A)}"]& \ss^{-(r+s)}\yy(A) \\
       & & \\
        & \ss^{r+s}\yy (A) \arrow[uu,swap,"\ss^{r+s} \yy(e')"]\arrow[luu,dotted,"\ss^{r+s}r_{G'}"]& 
    \end{tikzcd}
 \end{equation}
    But there now is no clear way to write $G'$ as a retraction of $F$ splitting the original idempotent $e$.
\end{rmk}

\subsection{The limit category}
The aim of this section is to construct the equivalence of categories $\zz : \text{Split}(\cc_\infty) \to \text{Split}_\pp(\cc)_\infty$. Before we do so we first need to show the following results:
\begin{lemma}\label{localwrtsisos}
The limit category $\cc_\infty$ of a persistence category $\cc$ is equivalent to the localisation $W^{-1}\cc_0$ of $\cc_0$ with respect to the calculus of fractions $W$ given by strongly weighted isomorphisms. 
\end{lemma}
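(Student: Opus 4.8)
The plan is to construct a functor $\Psi: \cc_\infty \to W^{-1}\cc_0$ directly, show it is the identity on objects, and then verify it is fully faithful. This mirrors exactly the structure of the proof already given for the semi-category version (the lemma about $[W^{-1}]\cc^\ss \simeq \cc_\infty$), so I would follow that template closely, simply dropping the $\ss^a$-bookkeeping on objects that the flattened category required. Concretely, given a class $[f]_\infty \in \text{Hom}_{\cc_\infty}(A,B)$ represented by $f \in \text{Mor}^r_\cc(A,B)$, the morphism $f$ factors as $A \xrightarrow{\eta_r^A} \ss^{-r}A \xrightarrow{g} B$ where $g \in \text{Mor}^0_\cc(\ss^{-r}A, B)$ corresponds to $f$ under the canonical iso $\text{Mor}^r_\cc(A,B)\cong \text{Mor}^0_\cc(\ss^{-r}A,B)$. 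Since $\eta_r^A$ is a strong $r$-isomorphism (it has the two-sided "inverse up to $\eta$" built in — indeed $\eta_r^A$ splits trivially, and more to the point the earlier material shows $\eta$'s behave as weighted isos), we set $\Psi([f]_\infty)$ to be the roof $A \xleftarrow{\eta_r^A} \ss^{-r}A \xrightarrow{g} B$ in $W^{-1}\cc_0$.

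First I would check $W$ — strongly weighted isomorphisms in $\cc_0$ — is a left calculus of fractions in the honest (unital) sense; this largely reuses the verification already carried out in the excerpt for weighted isomorphisms in $\cc^\ss$, with the maps $\zeta_r$ replaced throughout by $\eta_r$ and using the two properties of $\eta$ ($\eta_r^{\ss^{-s}A}\circ \eta_s^A = \eta_{r+s}^A$ and naturality $\eta_r^B \circ f = \ss^{-r}f\circ \eta_r^A$). Pullbacks of a strong $r$-iso along an arbitrary map are produced the same way as in Proposition on the calculus of fractions for $\cc^\ss$; the "left Ore" cancellation axiom follows by post-composing with the one-sided inverse and then composing with a suitable $\eta_m$; and every object receives the strong iso $\eta_r^A: \ss^{-r}A \to A$. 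Then I would show $\Psi$ is well-defined on equivalence classes: if $f \in \text{Mor}^r$ and $f' \in \text{Mor}^{r'}$ become equal in $\cc_\infty$, there are $s,s'$ with $i_{r,r+s}(f) = i_{r',r'+s'}(f')$, and the standard roof-equivalence diagram with apex $\ss^{-(r+s)}A$ (and legs $\eta$'s and the shifted $g$'s) exhibits $\Psi([f]) = \Psi([f'])$ — again structurally identical to the well-definedness check for $\Xi$ in the semi-category lemma. Functoriality (compatibility with composition and identities) is a routine diagram chase once one notes that composition of roofs with numerator $g$'s is computed via pullbacks against $\eta$'s, which are essentially trivial.

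For fullness and faithfulness I would argue as in the $\Xi$-lemma. Every roof $A \xleftarrow{w} X \xrightarrow{h} B$ with $w$ a strong $s$-isomorphism can, using the one-sided inverse $v$ of $w$, be replaced by an equivalent roof of the form $A \xleftarrow{\eta_s^A} \ss^{-s}A \xrightarrow{h\circ\ss^{-s}v} B$ (here I'm writing $v: A \to \ss^{-s}X$ with $v\circ w = \eta_s^X$, $\ss^{-s}w\circ v = \eta_s^A$); this last roof is $\Psi$ of the class of the corresponding weighted morphism, giving fullness. For faithfulness, if $\Psi([f]) = \Psi([f'])$ there is a roof-equivalence diagram over some apex; replacing the connecting strong iso by an $\eta$ as above and then composing with a large enough $\eta_t$ on the apex yields $\eta_{t-r}^? \circ \ss^r f \circ \eta_s = \eta_{t-r'}^? \circ \ss^{r'}f' \circ \eta_s$, which unwinds to $i_{r,N}(f) = i_{r',N}(f')$ for $N$ large, i.e. $[f]_\infty = [f']_\infty$.

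I expect the main obstacle to be purely administrative rather than conceptual: one must be careful that the roof-calculus axioms genuinely hold for \emph{strong} weighted isomorphisms (not just the TPC notion of $r$-isomorphism), since outside the TPC setting these two notions differ and the earlier Remark only relates them up to a factor of two. The delicate axiom is the Ore/pullback axiom (axiom 3): given a strong $r$-iso $f$ with two-sided-up-to-$\eta$ inverse $g$, the candidate pullback square uses $g$, an $\eta$-shift, and naturality of $\eta$, exactly as in the displayed square in the $\cc^\ss$ proof — I would double-check that the resulting top arrow is again a \emph{strong} weighted iso (it is, with an explicit inverse built from $g$ and $\eta$'s, by the same computation as the "composition in $W$ is closed" step). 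Everything else is a transcription of the already-proven semi-categorical lemma with $\zeta \rightsquigarrow \eta$ and the flattened-category objects $\ss^a A$ collapsed back to ordinary objects.
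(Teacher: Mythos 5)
Your overall strategy is exactly the paper's: the paper proves this lemma by specialising the semi-categorical statement $[W^{-1}]\cc^{\ss}\simeq \cc_\infty$ to the unital case, and your proposal is a direct transcription of that proof into $\cc_0$ with $\zeta\rightsquigarrow\eta$. The architecture --- verify the calculus-of-fractions axioms for strong weighted isomorphisms, define the functor on representatives, check well-definedness, fullness and faithfulness --- is the right one, and the point you flag as delicate is fine for the reason you give: the Ore/pullback axiom for a strong $r$-isomorphism $f$ with one-sided inverse $g$ is witnessed by the square with legs $\eta_r^{\ss^rC}$ and $\ss^r g\circ \ss^r x$, and the cancellation axiom by precomposition with $\eta_r^{\ss^rA}$, exactly as in the $\zeta$-computations of the semi-categorical proof; no TPC input is needed.

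There is, however, a systematic direction error in your shifts that makes the central constructions ill-typed as written. A weight-$r$ morphism $f\in\text{Mor}^r_\cc(A,B)$ corresponds canonically to a weight-zero morphism $\bar f\in\text{Hom}_{\cc_0}(\ss^{r}A,B)$, not to an element of $\text{Hom}_{\cc_0}(\ss^{-r}A,B)$ (the latter is $\text{Mor}^{-r}_\cc(A,B)$), so the claimed isomorphism $\text{Mor}^r_\cc(A,B)\cong\text{Mor}^0_\cc(\ss^{-r}A,B)$ is false. Moreover there is no canonical weight-zero morphism $\ss^{-r}A\to A$ at all: $\eta_r^A$ points from $A$ to $\ss^{-r}A$, so the roof $A\xleftarrow{\eta_r^A}\ss^{-r}A\xrightarrow{g}B$ does not exist. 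The apex must be $\ss^{r}A$, with wrong-way leg $\eta_r^{\ss^rA}=\ss^r\eta_r^A:\ss^rA\to A$ (a strong $r$-isomorphism) and right leg $\bar f$. The same correction is needed in your fullness step: from $v:A\to\ss^{-s}X$ with $v\circ w=\eta_s^X$ and $\ss^{-s}w\circ v=\eta_s^A$ one gets $\ss^sv:\ss^sA\to X$, and the replacement roof is $A\xleftarrow{\eta_s^{\ss^sA}}\ss^sA\xrightarrow{h\circ\ss^sv}B$ (your $h\circ\ss^{-s}v$ does not even compose with $h$); the comparison map $\ss^sv$ then exhibits the equivalence since $w\circ\ss^sv=\ss^s(\ss^{-s}w\circ v)=\eta_s^{\ss^sA}$. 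These are mechanical fixes, and with the apexes shifted the right way the remainder of your argument coincides with the paper's.
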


We proved a more general version of this statement for persistence semi-categories in \ref{sectpcatpair}. Note that Biran, Cornea and Zhang proved that in the setting of TPCs that the limit category is equivalent to the localisation with respect to weighted isomorphisms. This however required that $\cc_0$ is triangulated (in order to define weighted isomorphism) hence this is a generalistion to any persistence category. Furthermore note that if $\cc$ is also a TPC then localisation with respect to strongly weighted isomorphisms is equivalent to localisation with respect to weighted isomorphism in the sense of \cite{BCZ}.
\begin{prop}
    Any idempotent in $\cc_\infty$ can be represented by some weighted idempotent in $\cc_0$.
    \begin{proof}
    Assume that $e:A \to A$ is an idempotent in $\cc_\infty = W^{-1}\cc_0$. Then we may assume there exists a commutative diagram:

\begin{equation}\begin{tikzcd}
    A \ar[d,equals]& \ss^rA \ar[l,swap,"\eta_r"]\ar[rr,"\overline{e}"]& &A\ar[d,equals]\\
    A\ar[d,equals] & B \ar[u,"\phi'"]\ar[rr,"f"]\ar[l,swap,"u"]\ar[d,"\varphi'"]& &A\ar[d,equals]\\
    A & \ss^{2r}A\ar[l,swap,"\eta_{2r}"]\ar[rr,"\overline{e}\circ \overline{e}"] & &A
\end{tikzcd}\end{equation}
where $\overline{e}$ represents $e$ and we assume $u$ is some strong $2s$-isomorphism. There exists a morphism $v:A\to \ss^{-2s}B$ such that $v\circ u = \eta_{2s}$ and $u \circ \ss^{2s}  v=\eta_{2s}$ (note for simplicity we omit the object from the notation of the $\eta$s). We therefore can replace the diagram with
\begin{equation}\begin{tikzcd}
    A \ar[d,equals]& \ss^rA \ar[l,swap,"\eta_r"]\ar[rr,"\overline{e}"]& &A\ar[d,equals]\\
    A\ar[d,equals] & \ss^{2s} A \ar[u,"\phi"]\ar[rr,"f \circ \ss^{2s} v"]\ar[l,swap,"\eta_{2s}"]\ar[d,"\varphi"]& &A\ar[d,equals]\\
    A & \ss^{2r}A\ar[l,swap,"\eta_{2r}"]\ar[rr,"\overline{e}\circ \overline{e}"] & &A
\end{tikzcd}\end{equation}
We then see that $\eta_{2s} = \eta_r \circ \phi$ and $\eta_{2s} = \eta_{2r} \circ \varphi$. It follows that $\phi \simeq_r \eta_{2s-r}$ and $\varphi \simeq_{2r} \eta_{2s-2r}$. Giving, $f \circ \ss^{2s} v \circ \eta_{2r} = \overline{e}\circ \phi \circ \eta_{2r} = \overline{e} \circ \eta_{r+2s}$, and $f \circ \ss^{2s} v \circ \eta_{2r} = \overline{e} \circ \overline{e} \circ \varphi \circ \eta_{2r}= \overline{e} \circ \overline{e} \circ \eta_{2s} $, i.e., the following commutes: 
\begin{equation}\begin{tikzcd}
    A \ar[d,equals]& \ss^{2r}A \ar[l,swap,"\eta_{2r}"]\ar[rr,"\overline{e} \circ \eta_r"]& &A\ar[d,equals]\\
    A\ar[d,equals] & \ss^{2s+2r} A \ar[u,"\eta_{2s}"]\ar[rr,"f \circ \ss^{2s} v \circ \eta_{2r}"]\ar[l,swap,"\eta_{2s+2r}"]\ar[d,"\eta_{2s}"]& &A\ar[d,equals]\\
    A & \ss^{2r}A\ar[l,swap,"\eta_{2r}"]\ar[rr,"\overline{e}\circ \overline{e}"] & &A
\end{tikzcd}\end{equation}

Consider now the map $\overline{e} \circ \eta_s$, it is equivalent in $\cc_\infty$ to $\overline{e}$ via the diagram
\begin{equation}\begin{tikzcd}
    A \ar[d,equals]& \ss^rA \ar[l,swap,"\eta_r"]\ar[rr,"\overline{e}"]& &A\ar[d,equals]\\
    A\ar[d,equals] & \ss^{r+s} A \ar[u,"\eta_s"]\ar[rr,"\overline{e}\circ \eta_s"]\ar[l,swap,"\eta_{r+s}"]\ar[d,equals]& &A\ar[d,equals]\\
    A & \ss^{r+s}A\ar[l,swap,"\eta_{r+s}"]\ar[rr,"\overline{e}\circ \eta_s"] & &A
\end{tikzcd}\end{equation}

and satisfies, from above,
\begin{align*}
(\overline{e}\circ \eta_s) \circ \ss^{(r+s)}(\overline{e}\circ \eta_s) = &    \overline{e}\circ \eta_s \circ \ss^{(r+s)}\overline{e}\circ \eta_s\\
=& \overline{e} \circ \ss^{r}\overline{e} \circ \eta_{2s}\\
=&\overline{e} \circ \eta_r \circ \eta_{2s}\\
=& (\overline{e} \circ \eta_{s})\circ \eta_{r+s}
\end{align*}
therefore $e$ in $\cc_\infty$ is represented by the $(r+s)$-idempotent $\overline{e}\circ \eta_s$.
    \end{proof}
\end{prop}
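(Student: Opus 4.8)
The plan is to unwind an idempotent $e\colon A\to A$ in $\cc_\infty$ into a concrete representative in $\cc_0$ using the presentation of $\cc_\infty$ as the localization $W^{-1}\cc_0$ at strongly weighted isomorphisms (Lemma \ref{localwrtsisos}), and then to massage that representative so that the idempotent identity becomes an honest $r$-idempotent identity $\overline e\circ\ss^{r}\overline e=\overline e\circ\eta_r$. First I would pick a roof representative of $e$; since every object $A'$ mapping to $A$ via a strongly weighted isomorphism can be replaced by a shift $\ss^{s}A$ with structure map $\eta_s$, I would normalize so that $e$ is represented by some $\overline e\colon\ss^{r}A\to A$, i.e. $\overline e\in\text{Mor}^0_\cc(\ss^{r}A,A)$, equivalently a weight-$r$ morphism $A\to A$. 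Next, the equation $e\circ e=e$ in $W^{-1}\cc_0$ means the two composite roofs are equivalent; writing out the equivalence diagram and using the normalization (replacing the apex of the equivalence by a shift of $A$ with $\eta$-maps, exactly as in the displayed diagrams of the excerpt) converts the equivalence into an equation of the shape $f\circ\ss^{2s}v\circ\eta_{2r}=\overline e\circ\eta_{r+2s}$ on one side and $=\overline e\circ\overline e\circ\eta_{2s}$ on the other. Comparing the two legs yields $\overline e\circ\ss^{r}\overline e\circ\eta_{2s}=\overline e\circ\eta_r\circ\eta_{2s}$ up to the inserted $\eta$'s.

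The key computational step, carried out in the last display of the excerpt, is then to observe that the stabilized morphism $\overline e\circ\eta_s$ both (a) still represents $e$ in $\cc_\infty$ — via the roof equivalence that inserts $\eta_s$ into the apex — and (b) genuinely satisfies the weighted idempotent identity:
\begin{align*}
(\overline e\circ\eta_s)\circ\ss^{r+s}(\overline e\circ\eta_s)
&=\overline e\circ\eta_s\circ\ss^{r+s}\overline e\circ\eta_s\\
&=\overline e\circ\ss^{r}\overline e\circ\eta_{2s}\\
&=\overline e\circ\eta_r\circ\eta_{2s}\\
&=(\overline e\circ\eta_s)\circ\eta_{r+s},
\end{align*}
using property $2$ of the $\eta_r$'s ($\eta_r^B\circ f=\ss^{-r}f\circ\eta_r^A$) to slide $\eta_s$ past $\overline e$, and property $1$ ($\eta_r^{\ss^{-s}A}\circ\eta_s^A=\eta_{r+s}^A$) to merge the $\eta$'s. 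This exhibits $\overline e\circ\eta_s$ as an $(r+s)$-idempotent in $\cc_0$ representing $e$, completing the argument.

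The main obstacle is purely bookkeeping: getting the weights and shifts on all the $\eta$-maps in the equivalence diagram to line up, since the equivalence of roofs passes through an auxiliary apex $B$ with a strong $2s$-isomorphism $u\colon B\to A$, and one must invert $u$ up to weight (using its weighted inverse $v$ with $v\circ u=\eta_{2s}$, $u\circ\ss^{2s}v=\eta_{2s}$) and then use $\eta_r\circ\phi=\eta_{2s}$ together with the defining property $f\simeq_r g\iff\eta_r\circ f=\eta_r\circ g$ to replace $\phi$ by a power of $\eta$. Once the diagram is normalized so every vertical and horizontal map is a power of $\eta$, the final computation is the routine sliding-and-merging shown above, so I do not anticipate any conceptual difficulty beyond this normalization — which is exactly the content displayed in the excerpt and which I would present essentially verbatim.
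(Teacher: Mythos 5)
Your proposal is correct and follows essentially the same route as the paper: normalize the roof representative of $e$ to one of the form $(\eta_r,\overline e)$, convert the roof equivalence expressing $e\circ e=e$ into the equation $\overline e\circ\ss^{r}\overline e\circ\eta_{2s}=\overline e\circ\eta_r\circ\eta_{2s}$ by inverting the strong weighted isomorphism at the apex, and then verify that the stabilized morphism $\overline e\circ\eta_s$ is an $(r+s)$-idempotent still representing $e$. No gaps; this is the paper's argument.
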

\begin{cor}
    Any retract in $\cc_\infty$ can be represented as a weighted retract in $\cc_0$.
\end{cor}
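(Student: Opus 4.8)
The plan is to deduce this from the preceding proposition together with the identification $\cc_\infty \simeq W^{-1}\cc_0$ of Lemma \ref{localwrtsisos}. First I would unpack the hypothesis: a retract of $A$ in $\cc_\infty$ is the data of an object $B$ and morphisms $s\in\text{Hom}_{\cc_\infty}(B,A)$, $r\in\text{Hom}_{\cc_\infty}(A,B)$ with $r\circ s = 1_B$; equivalently, it is the idempotent $e:=s\circ r\colon A\to A$ in $\cc_\infty$ together with a choice of splitting through $B$. By the preceding proposition the idempotent $e$ is already represented by a weighted idempotent $\epsilon\colon A\to\ss^{-t}A$ in $\cc_0$ for some $t\geq 0$, so the only remaining task is to promote the splitting maps $s,r$ themselves to the level of $\cc_0$ and recognise the resulting diagram as a weighted retract.

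Next I would invoke the calculus of fractions: by Lemma \ref{localwrtsisos} the morphism $s\colon B\to A$ in $\cc_\infty = W^{-1}\cc_0$ admits a representative of the standard roof form $B\xleftarrow{\eta_c^{\ss^c B}}\ss^cB\xrightarrow{\bar s}A$ with $\bar s\in\text{Mor}^0_\cc(\ss^cB,A)$, and likewise $r$ is represented by $A\xleftarrow{\eta_d^{\ss^dA}}\ss^dA\xrightarrow{\bar r}B$ with $\bar r\in\text{Mor}^0_\cc(\ss^dA,B)$. Composing these roofs via the pullback used to define composition in the localisation produces a roof $B\xleftarrow{\eta_{c+d}^{\ss^{c+d}B}}\ss^{c+d}B\xrightarrow{h}B$ representing $r\circ s$, where $h$ is assembled from $\bar s$ and $\ss^d\bar r$ by the usual formula. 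The equality $r\circ s = 1_B$ in $\cc_\infty$ then forces, by the familiar argument that two morphisms with the same image in $\cc_\infty$ become equal in $\cc_0$ after post-composition with $\eta_N$ for $N$ large, an identity in $\cc_0$ of the shape $h'\circ\eta_M = \eta_{N}^{\ss^cB}$; using properties (1)–(2) of the $\eta$-maps to move the shifts past the morphisms, this rearranges into exactly the defining triangle of a weighted retract, exhibiting $\ss^cB$ as an $N$-retract of $A$ in $\cc_0$ with structure maps $\bar s$ and a $\ss$-twist of $\bar r$.

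Finally I would check that this weighted retract genuinely represents the given one: $\ss^cB\cong B$ canonically in $\cc_\infty$, so by the remark that a weighted retract descends to an honest retract in the limit category, the $N$-retract $\ss^cB<_N A$ reduces in $\cc_\infty$ to a retract of $A$ by $B$, and by construction its structure maps recover $s$ and $r$ up to the canonical shift isomorphisms. The main obstacle is the weight/shift bookkeeping in the middle step — selecting the roof representatives so that the $\cc_\infty$-identity $r\circ s = 1_B$ translates into an honest equation in $\cc_0$ of precisely the form $(\text{map})\circ\bar s = \eta_N^{\ss^cB}$, and keeping track of which shift of $B$ (respectively of $A$) the witnessing morphisms actually live over. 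Everything else is formal manipulation of exactly the kind already carried out in the proof of Lemma \ref{localwrtsisos} and of the preceding proposition.
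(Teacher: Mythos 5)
Your argument is correct: choosing finite-weight representatives $\bar s\in\text{Mor}^0_\cc(\ss^cB,A)$ and $\bar r\in\text{Mor}^0_\cc(A,\ss^{-d}B)$ of the splitting maps and stabilising the identity $r\circ s=1_B$ does produce, after post-composing $\bar r$ with a suitable $\eta_k$, an honest equation $r'\circ\bar s=\eta_{c+d+k}^{\ss^cB}$ in $\cc_0$, i.e.\ a weighted retract $\ss^cB<_{c+d+k}A$ whose image in $\cc_\infty$ recovers the given retract up to the canonical isomorphism $\ss^cB\cong B$. This is the argument the paper leaves implicit, so you are essentially reproducing the intended proof; the only comment worth making is that the detour through Lemma \ref{localwrtsisos} and roof diagrams is not needed. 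Since $\text{Hom}_{\cc_\infty}(B,B)$ is by definition the (filtered) colimit $\lim_{r\to\infty}\text{Hom}_\cc(B,B)(r)$, the equality $[\bar r\circ\bar s]=[1_B]$ in $\cc_\infty$ already means, directly from the colimit description, that the two representatives agree after applying persistence structure maps (equivalently, after composing with $\eta_k$ for $k$ large); passing to the localisation, composing roofs via pullbacks, and then translating the roof equivalence back into $\cc_0$ buys nothing extra and only adds the shift bookkeeping you flag as the main obstacle. Also note that invoking the preceding proposition on weighted idempotents is redundant: your paragraphs two and three never use the weighted idempotent $\epsilon$, and indeed the weighted idempotent induced by your weighted retract need not coincide with it in $\cc_0$ (only in $\cc_\infty$), which is harmless but worth being aware of.
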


\begin{cor}
    The category $\text{Split}_\pp(\cc)_\infty$ is idempotent complete.
    \begin{proof}
          Given an idempotent $e:A \to A$ in $\text{Split}_\pp(\cc)_\infty$, it can be written as the image of some weighted idempotent $\bar{e}:A \to \ss^{-r}A$. This weighted idempotent splits in $\text{Split}_\pp(\cc)$, assume this splitting is given by 
          
          \begin{equation}
              \begin{tikzcd}
                  A \ar[dr,"r"]& \\
                  B \ar[u,"s"]\ar[r,"\eta_r"]& \ss^{-r}B
              \end{tikzcd}
          \end{equation}
          Then we have that the composition of roof diagrams

          \begin{equation}
          \begin{tikzcd}
             & & \ar[dl,swap,"\eta_r"]\ss^rB\ar[dr,"\ss^rs"]\ar[lld,bend right,swap, "\eta_r"]\ar[drr, bend left, "\eta_r"] & &\\
             B & \ar[l,equals]B \ar[r,"s"]& A & \ar[l,swap,"\eta_r"]\ss^rA\ar[r,"\ss^rr"] & B
          \end{tikzcd}
\end{equation}
         is seen to be (equivalent to) the identity on $B$. Furthermore the composition

           \begin{equation}
          \begin{tikzcd}
             & &  \ss^rA\ar[dl,equals]\ar[dr,"\ss^rr"]\ar[dll,bend right, swap,"\eta_r"]\ar[drr,bend left, "\bar{e}"]& &\\
             A & \ar[l,swap,"\eta_r"]\ss^rA \ar[r,"\ss^r r"]& B & \ar[l,swap,equals]B\ar[r,"s"] & A
          \end{tikzcd}
\end{equation}
          defines a splitting of the idempotent $e: A \to A$ in $\text{Split}_\pp(\cc)_\infty$. 
    \end{proof}
\end{cor}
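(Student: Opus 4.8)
The plan is to take an arbitrary idempotent $e\colon A\to A$ in $\text{Split}_\pp(\cc)_\infty$ and produce an explicit splitting, using three ingredients already established in the excerpt: (i) the preceding proposition, which says every idempotent in the limit category of a persistence category is represented by some weighted idempotent $\bar e\colon A\to \ss^{-r}A$ in the zero-level category; (ii) Lemma \ref{splitidemcomp}, which says $\text{Split}_\pp(\cc)$ is weighted idempotent complete, so $\bar e$ splits there through some object $B$ with $r\colon A\to \ss^{-r}B$, $s\colon B\to A$ satisfying $r\circ s=\eta_r^B$ and $\ss^{-r}s\circ r=\bar e$; and (iii) Lemma \ref{localwrtsisos}, which identifies $\cc_\infty$ (applied to $\cc=\text{Split}_\pp(\cc)$) with the localisation $W^{-1}\cc_0$ at strongly weighted isomorphisms, so that morphisms are roof diagrams. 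The object $B$ will be the claimed splitting in $\text{Split}_\pp(\cc)_\infty$, and the two structure maps will be the roof-diagram classes $[s]$ and $[\ss^r r\colon \ss^rA\to B]$ (i.e. $r$ viewed as a morphism out of a shifted copy of $A$, with the weighted-isomorphism leg $\eta_r\colon \ss^rA\to A$).

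The key steps, in order. First, invoke the preceding proposition to fix a weighted-idempotent representative $\bar e\colon A\to\ss^{-r}A$ of $e$, and apply Lemma \ref{splitidemcomp} to obtain $B$, $s$, $r$ as above; note $B\in\text{Split}_\pp(\cc)$. Second, define the candidate morphisms in $\text{Split}_\pp(\cc)_\infty$ as the roof classes pictured in the statement: $\sigma:=[\,B\xleftarrow{=}B\xrightarrow{s}A\,]\colon B\to A$ and $\rho:=[\,A\xleftarrow{\eta_r}\ss^rA\xrightarrow{\ss^r r}B\,]\colon A\to B$ (using that $\ss^r r\colon \ss^rA\to \ss^0 B=B$ after the canonical shift identification, and that $\eta_r$ is a strongly weighted isomorphism hence in $W$). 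Third, compute $\rho\circ\sigma$: this is the composite roof diagram displayed in the corollary, and one checks using $\ss^{r}r\circ s=\ss^r r\circ s$ together with $r\circ s=\eta_r^B$ (after applying $\ss^r$ and the $\eta$ relations from the review section, $\eta_r^{\ss^{-s}A}\circ\eta_s^A=\eta_{r+s}^A$ and $\eta_r^B\circ f=\ss^{-r}f\circ\eta_r^A$) that the resulting roof is equivalent to the identity roof $[\,B\xleftarrow{\eta_r}\ss^rB\xrightarrow{\eta_r}B\,]$, which by Lemma \ref{localwrtsisos} represents $1_B$. Fourth, compute $\sigma\circ\rho$: the displayed composite roof reduces, via $s\circ(\ss^r r)=\ss^r(\ss^{-r}s\circ r)=\ss^r\bar e$ (again using functoriality of $\ss^r$ and the $\eta$-naturality), to the roof $[\,A\xleftarrow{\eta_r}\ss^rA\xrightarrow{\ss^r\bar e}A\,]$, which is precisely the standard roof representative of $\bar e$ in $W^{-1}\cc_0$, hence of $e$. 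Therefore $\rho\circ\sigma=1_B$ and $\sigma\circ\rho=e$, so $(B,\sigma,\rho)$ splits $e$.

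I expect the main obstacle to be purely bookkeeping: carefully tracking the shift functors and the canonical identifications $\ss^a C\cong C$ in the localisation (established in the semi-category lemma), and verifying that the two composite roof diagrams in the statement really are equivalent, in the sense of the equivalence relation $\sim$ on roofs, to $1_B$ and to a standard representative of $e$ respectively — this requires producing the intermediate roof and the two connecting morphisms $\phi,\psi$, and checking commutativity of the resulting diamond using only the $\eta$-identities and the defining relations $r\circ s=\eta_r^B$, $\ss^{-r}s\circ r=\bar e$. No genuinely new idea is needed beyond what Lemmas \ref{localwrtsisos} and \ref{splitidemcomp} and the preceding proposition supply; the content is assembling them correctly. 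A minor point to be careful about: one must confirm $\ss^r r$ is a legitimate morphism of $\text{Split}_\pp(\cc)$ between the appropriate objects (it is, since $\ss^r$ is a persistence functor on $\text{Split}_\pp(\cc)$), so that the roof diagrams live in $\cc_0$ with $\cc=\text{Split}_\pp(\cc)$ as required by Lemma \ref{localwrtsisos}.
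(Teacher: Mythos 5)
Your proposal is correct and follows essentially the same route as the paper: represent $e$ by a weighted idempotent $\bar e$, split it in $\text{Split}_\pp(\cc)$ through $B$ via Lemma \ref{splitidemcomp}, and verify via the same two roof-diagram compositions (with legs $s$, $\ss^r r$ and weighted-isomorphism leg $\eta_r$) that the composites give $1_B$ and a representative of $e$. The only difference is that you make explicit the appeal to the localisation description of the limit category, which the paper leaves implicit.
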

\begin{theorem}\label{inftytri}
    There is an equivalence of categories $\tilde{\zz}:\text{Split}(\cc_\infty) 
 \to \text{Split}_\pp(\cc)_\infty $.
\begin{proof}
Consider the map $\zz:\cc_\infty \to \text{Split}_\pp(\cc)_\infty$, given by $\zz(A)=\hat{\yy}(A)$ on objects $A\in \text{Obj}(\cc_\infty)=\text{Obj}(\cc)$. Recall that $\hat{\yy}(A)(r)=\yy(A)$. On morphisms $\tilde{\zz}$ is defined as follows: let $f \in \text{Hom}_{\cc_\infty}(A,B)$ be represented by $\bar{f}:\ss^rA \to B$ in $\cc_0$, then define $\zz(f) = [\yy(\bar{f})]_{\text{Split}_\pp(\cc)_\infty}$, where $[-]_{\text{Split}_\pp(\cc)_\infty}$ means image in $\text{Split}_\pp(\cc)_\infty$. This functor is well defined: assume $f$ and $f'$ are equivalent representatives of some morphism in $\text{Hom}_{\cc_\infty}(A,B)$ then we have a commutative diagram  

\begin{equation}\begin{tikzcd}
    A\ar[d,equals] & \ss^rA\ar[r,"f"]\ar[l,swap,"\eta_r"] & B\ar[d,equals] \\
 A \ar[d,equals] & C\ar[u,"\phi"]\ar[d,"\varphi"]\ar[r,"f''"]\ar[l,swap,"u"] & B\ar[d,equals] \\
 A & \ss^sA\ar[r,"f'"]\ar[l,swap,"\eta_s"] & B
\end{tikzcd}\end{equation}
in $\cc_0$. Giving a commutative diagram 
\begin{equation}\begin{tikzcd}
    \yy(A)\ar[d,equals] & \ss^r\yy(A)\ar[r,"\yy(f)"]\ar[l,swap,"\eta_r"] & \yy(B)\ar[d,equals] \\
 \yy(A) \ar[d,equals] & \yy(C)\ar[u,"\yy(\phi)"]\ar[d,"\yy(\varphi)"]\ar[r,"\yy(f'')"]\ar[l,swap,"\yy(u)"] &\yy( B)\ar[d,equals] \\
\yy( A )& \ss^s\yy(A)\ar[r,"\yy(f')"]\ar[l,swap,"\eta_s"] & \yy(B)
\end{tikzcd}\end{equation}
therefore $[\yy(f)]_{\text{Split}_\pp(\cc)_\infty}=[\yy(f')]_{\text{Split}_\pp(\cc)_\infty}$, i.e., $\zz(f)=\zz(f')$. We show that $\zz$ is fully faithful. For fullness,
let $f\in \text{Hom}_{\text{Split}_\pp(\cc)_\infty}(\yy(A),\yy(B))$, then let $\bar{f}\in \text{Hom}_{\text{Split}_\pp(\cc)_0}(\ss^r \yy(A),\yy(B))$ represent $f$, then $\bar{f}=\yy(f')$ for some $f'\in \text{Hom}_{\cc_0}(\ss^rA,B)$ because $\yy$ is fully faithful. Then $f'$ is a representative of some $\tilde{f}\in \text{Hom}_{\cc_\infty}(\ss^r A,B)$, hence $\zz(\tilde{f})=f$.
 For faithfulness, assume $\zz(f)=\zz(g)$ in $\text{Hom}_{\text{Split}_\pp(\cc)_\infty}(\yy(A),\yy(B))$, we show $f=g$ in $\cc_\infty$. We have a commutative diagram

 \begin{equation}\begin{tikzcd}
    \yy(A)\ar[d,equals] & \ss^r\yy(A)\ar[r,"\zz(f)'"]\ar[l,swap,"\eta_r"] & \yy(B)\ar[d,equals] \\
 \yy(A) \ar[d,equals] & \yy(C)\ar[u,"\yy(\phi)"]\ar[d,"\yy(\varphi)"]\ar[r,"\yy(f'')"]\ar[l,swap,"\yy(u)"] &\yy( B)\ar[d,equals] \\
\yy( A )& \ss^s\yy(A)\ar[r,"\zz(g)'"]\ar[l,swap,"\eta_s"] & \yy(B)
\end{tikzcd}\end{equation}
where $\zz(f)'$ and $\zz(g)'$ represent $\zz(f)$ and $\zz(g)$ respectively. We can write $\zz(f)'= \yy(\tilde{f})$ and $\zz(g)'=\yy(\tilde{g})$. Hence we have a commutative diagram in $\cc_0$
\begin{equation}\begin{tikzcd}
    A\ar[d,equals] & \ss^rA\ar[r,"\tilde{f}"]\ar[l,swap,"\eta_r"] & B\ar[d,equals] \\
 A \ar[d,equals] & C\ar[u,"\phi"]\ar[d,"\varphi"]\ar[r,"f''"]\ar[l,swap,"u"] & B\ar[d,equals] \\
 A & \ss^sA\ar[r,"\tilde{g}"]\ar[l,swap,"\eta_s"] & B
\end{tikzcd}\end{equation}

Since $\zz$ is well defined we have $f=[\tilde{f}]_{\cc_\infty}$ and $g=[\tilde{g}]_{\cc_\infty}$. By the commutativity of the above diagram $[\tilde{f}]_{\cc_\infty}=[\tilde{g}]_{\cc_\infty}$, hence $f=g$. So far we have shown $\zz: \cc_\infty \to \text{Split}_\pp(\cc)_\infty$ is fully faithful and moreover as $\text{Split}_\pp(\cc)_\infty$ is idempotent complete, $(\text{Split}_\pp(\cc)_\infty, \zz)$ gives an idempotent completion of $\cc_\infty$. By the universal property of Karoubi extensions, $\zz$ extends uniquely (up to natural isomorphism) to a functor $\tilde{\zz}:\text{Split}(\cc_\infty) \to \text{Split}_\pp(\cc)_\infty$. Explicitly, we may define $\tilde{\zz}$ as follows: firstly $\tilde{\zz}|_{\yy(\cc_\infty)}=\zz$, i.e., $\tilde{\zz}(\yy(A))=\zz(A)$. Now assume $e: A \to A$ splits as $F \xrightarrow{s} \yy(A) \xrightarrow{r} F$ in $\text{Split}(\cc_\infty)$, with $F\notin \yy(\cc_\infty)$ then define

\begin{equation}\tilde{\zz}(F)= \tilde{F}\end{equation}
where $\tilde{F} \xrightarrow{\tilde{s}} \zz(A) \xrightarrow{\tilde{r}} \tilde{F}$ is a choice of splitting of $\zz(e):\zz(A) \to \zz(A)$ in $\text{Split}_\pp(\cc)_\infty$. On morphisms, let $f\in \text{Hom}_{\text{Split}(\cc_\infty)} (F,G)$ then $\zz(f)\in \text{Hom}_{\text{Split}_\pp(\cc)_\infty}(\zz(F) ,\zz(G))$, where $G\xrightarrow{s'}B\xrightarrow{r'}G$ is a splitting of $e':B\to B$ and $\tilde{\zz}(G)=\tilde{G}$ in which $\tilde{G} \xrightarrow{\tilde{s}'}\zz(B) \xrightarrow{\tilde{r}'}\tilde{G}$ is a choice splitting of $\zz(e')$. Then $\zz(f)$ is defined by

\begin{equation}
    \Tilde{\zz}(f)= \tilde{r}' \circ \zz(s' \circ f \circ r) \circ \tilde{s}
\end{equation}

diagrammatically,

\begin{equation}\begin{tikzcd}
    A\ar[d,"r"] & B\ar[d,"r'"] && \tilde{F}\ar[d,red,"\tilde{s}"] &\ar[d,"\tilde{s}'"] \tilde{G}\\
    F\ar[r,red,"f"] \ar[d,"s"]& G\ar[d,"s'"] & \xrightarrow{\tilde{\zz}}& \zz(A)\ar[r,red,"\zz(s'\circ f \circ r)"]\ar[d,"\tilde{r}"] & \zz(B)\ar[d,red,"\tilde{r}'"]\\
     A & B && \tilde{F} & \tilde{G}
\end{tikzcd}\end{equation}
Note that if $F,G\in \yy(\cc_\infty)$ then 
\begin{align*}
\tilde{\zz}(f)& = \tilde{r}'\circ \zz(s')\circ \zz(f) \circ \zz(r) \circ \tilde{s}\\
&= \tilde{r}' \circ \tilde{s}' \circ \zz(f) \circ \tilde(r) \circ \tilde{s}\\
&= \zz(f)
\end{align*}
hence this is a well defined extension. We check that $\tilde{\zz}$ is fully faithful. Fullness: Assume $\tilde{f}\in \text{Hom}_{\text{Split}_\pp(\cc)_\infty}(\tilde{F},\tilde{G})$ where $\tilde{F} \xrightarrow{\tilde{s}} \zz(A) \xrightarrow{\tilde{r}} \tilde{F} $ and $\tilde{G} \xrightarrow{\tilde{s}'} \zz(B) \xrightarrow{\tilde{r}'} \tilde{G} $ are the respective retracts. Then $\tilde{s}'\circ \tilde{f} \circ \tilde{r}\in \text{Hom}_{\text{Split}_\pp(\cc)_\infty}(\zz(A),\zz(B))$ and so can be written as $\zz(f)=\tilde{s}'\circ \tilde{f}\circ \tilde{r}$. We then have

\begin{align*}
    \tilde{\zz}(s' \circ f \circ r)=&\tilde{r}'\circ \zz(s' \circ r' \circ f \circ s \circ r) \circ \tilde{s}\\
    =&\tilde{r}' \circ \zz( e' \circ f \circ e) \circ \tilde{s}\\
    =&\tilde{r}' \circ \zz(e') \circ \zz(f) \circ \zz(e) \circ \tilde{s}\\
    =&\tilde{r}' \circ \tilde{s}' \circ \tilde{r}' \circ \zz(f) \circ \tilde{s} \circ \tilde{r} \circ \tilde{s}\\
    =& \tilde{r}' \circ \zz(f) \circ \tilde{s}\\
    =& \tilde{r}' \circ \tilde{s}' \circ \tilde{f} \circ \tilde{r} \circ \tilde{s}\\
    =& \tilde{f}.
\end{align*}

For faithfulness: Assume $\tilde{\zz}(f)=\tilde{\zz}(g)$, then:

\begin{align*}
    \tilde{r}'\circ \zz(s' \circ f\circ r)\circ \tilde{s} =& \tilde{r}'\circ \zz(s' \circ g\circ r)\circ \tilde{s}\\
    \implies \tilde{s}' \circ \tilde{r}'\circ \zz(s' \circ f\circ r)\circ \tilde{s} \circ \tilde{r}=&\tilde{s}' \circ \tilde{r}'\circ \zz(s' \circ g\circ r)\circ \tilde{s} \circ \tilde{r}\\
    \implies \tilde{e}' \circ \zz(s' \circ f \circ r) \circ \tilde{e}=& \tilde{e}' \circ \zz(s' \circ g \circ r) \circ \tilde{e}\\
    \implies \zz(e') \circ \zz(s' \circ f \circ r) \zz(e)= & \zz(e') \circ \zz(s' \circ g \circ r) \circ \zz(e)\\
    \implies \zz(e' \circ s' \circ f \circ r \circ e)=& \zz(e' \circ s' \circ g\circ r \circ e)\\
    \implies e' \circ s' \circ f \circ r \circ e =& e' \circ s' \circ g \circ r \circ e\\
    \implies s' \circ f \circ r =& s' \circ g \circ r\\
    \implies r' \circ s' \circ f \circ r \circ s =&  r' \circ s' \circ g \circ r \circ s \\
    \implies f=&g
\end{align*}
Finally, we find that $\tilde{\zz}$ is essentially surjective, indeed any object of $\text{Split}_\pp(\cc)_\infty$ is a retract of some representable object. Thus, it defines an idempotent on said representable object. By fullness, we can write this as the image of an idempotent in $\text{Split}(\cc_\infty)$. Choose some splitting of this in $\text{Split}(\cc_\infty)$, then by uniquness of idempotent splitting the image of this splitting under $\tilde{\zz}$ is isomorphic to our original object.
\end{proof}
\end{theorem}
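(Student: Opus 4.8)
The plan is to build the equivalence in two stages: first produce a fully faithful functor $\zz\colon\cc_\infty\to\text{Split}_\pp(\cc)_\infty$ out of the persistent Yoneda embedding, and then invoke the universal property of the Karoubi envelope to extend $\zz$ to a functor $\tilde\zz\colon\text{Split}(\cc_\infty)\to\text{Split}_\pp(\cc)_\infty$, finally checking that this extension is still fully faithful and is essentially surjective.

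For the first stage, set $\zz(A)=\hat\yy(A)$ on objects. Since $\cc_\infty\simeq W^{-1}\cc_0$ by Lemma~\ref{localwrtsisos}, every morphism $A\to B$ in $\cc_\infty$ is represented by some $\bar f\colon\ss^rA\to B$ in $\cc_0$, and I would put $\zz(f)=[\yy(\bar f)]$, the image of $\yy(\bar f)$ in $\text{Split}_\pp(\cc)_\infty$. Well-definedness is obtained by applying the functor $\yy$ to an equivalence diagram between two representatives of $f$; fullness and faithfulness both reduce to the fact that $\yy\colon\cc\to\text{PSh}_\pp(\cc)$ is fully faithful, together with the description of the hom-sets of $\cc_\infty$ and of $\text{Split}_\pp(\cc)_\infty$ as localisations/colimits. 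Combining this with the earlier corollary that $\text{Split}_\pp(\cc)_\infty$ is idempotent complete exhibits $(\text{Split}_\pp(\cc)_\infty,\zz)$ as an idempotent completion of $\cc_\infty$.

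For the second stage, the universal property of the Karoubi envelope produces $\tilde\zz\colon\text{Split}(\cc_\infty)\to\text{Split}_\pp(\cc)_\infty$, unique up to natural isomorphism and restricting to $\zz$ on representables. I would then make this explicit: an object of $\text{Split}(\cc_\infty)$ is a splitting $F\xrightarrow{s}\yy(A)\xrightarrow{r}F$ of an idempotent $e\colon A\to A$, and I send $F$ to a chosen splitting $\tilde F\xrightarrow{\tilde s}\zz(A)\xrightarrow{\tilde r}\tilde F$ of $\zz(e)$ in $\text{Split}_\pp(\cc)_\infty$, and a morphism $f\colon F\to G$ to $\tilde r'\circ\zz(s'\circ f\circ r)\circ\tilde s$. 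Full faithfulness of $\tilde\zz$ is then a direct computation that repeatedly uses the splitting identities $r\circ s=1$, $s\circ r=e$ and their primed analogues together with faithfulness of $\zz$; essential surjectivity holds because every object of $\text{Split}_\pp(\cc)_\infty$ is a retract of a representable, hence defines an idempotent there, which by fullness of $\zz$ is $\zz$ of an idempotent in $\text{Split}(\cc_\infty)$, and applying $\tilde\zz$ to any splitting of that idempotent recovers the object up to isomorphism.

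The main obstacle is the non-uniqueness of splittings of weighted idempotents: in $\text{Split}_\pp(\cc)$ an $r$-idempotent splits only up to strong $2r$-isomorphism, so the choices of $\tilde F$ and of $\tilde s,\tilde r$ entering the definition of $\tilde\zz$ are genuinely non-canonical. The key point to get right is that all of this ambiguity collapses in the limit category, where idempotent splittings become unique up to honest isomorphism; hence $\tilde\zz$ is well-defined up to natural isomorphism and the full-faithfulness computation goes through independently of the choices made. Everything else — well-definedness of $\zz$, the localisation description of $\cc_\infty$, and idempotent completeness of $\text{Split}_\pp(\cc)_\infty$ — is either routine diagram chasing or has already been established earlier in the paper.
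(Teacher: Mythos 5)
Your proposal follows the same two-stage route as the paper: construct $\zz$ on representables via the persistent Yoneda embedding and the localisation description of $\cc_\infty$, verify it is fully faithful, then use idempotent completeness of $\text{Split}_\pp(\cc)_\infty$ and the universal property of the Karoubi envelope to extend to $\tilde{\zz}$, with the same explicit formula $\tilde{r}'\circ\zz(s'\circ f\circ r)\circ\tilde{s}$ on morphisms, the same full-faithfulness computation, and the same essential-surjectivity argument via retracts of representables and uniqueness of idempotent splittings in the limit category. This matches the paper's proof, and your remark that the ambiguity of weighted splittings collapses in the limit category is exactly the point that makes the construction well defined.
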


Define $\text{Split}^0_\pp(\cc)$ to be the full subcategory of $\text{Split}_\pp(\cc)$ consisting of objects that are weight zero retracts. 
\begin{rmk}
    $\text{Split}^0_\pp(\cc)_0$ is equivalent to the Karoubi completion of $\cc_0$. 
\end{rmk}
\begin{theorem}
     Let $\cc$ be a persistence category, if every idempotent of $\cc_\infty$ is represented by an idempotent of weight zero in $\cc_0$ then $\text{Split}(\cc_\infty)$ is equivalent to $\text{Split}_\pp^0(\cc)_\infty$.
     \begin{proof}
         The functor $\tilde{\zz}$ as before restricts to an equivalence onto $\text{Split}_\pp^0(\cc)_\infty \subset \text{Split}_\pp(\cc)_\infty$. Indeed, if $F\in \text{Split}_\pp^0(\cc)_\infty  $ then $F<\yy(A)$ for some $A$, and the maps $F \xrightarrow{[s]} \yy(A) \xrightarrow{[r]} F$ with $[r] \circ [s] =1_F$ defining $F$, can be represented by maps $r:\yy(A) \to F$ and $s:F \to \yy(A)$ in $\cc_0$, with $r \circ s=1_F$. The composition $\yy(e):=s \circ r$ is a $0$-idempotent on $\yy(A)$. Take a splitting of $\yy([e])\in \text{Hom}_{\text{Split}(\cc_\infty)}(\yy(A),\yy(A))$, say $F' \xrightarrow{s'} \yy(A)\xrightarrow{r'} F'$.  Then $\tilde{\zz}(F') \xrightarrow{\tilde{\zz}(s')} \yy(A) \xrightarrow{\tilde{\zz}(r')} \tilde{\zz}(F')$ defines a splitting of the idempotent $\tilde{\zz}(s') \circ \tilde{\zz}(r')=\tilde{\zz}(s' \circ r')$. By definition of $\tilde{\zz}$, $\tilde{\zz}(s' \circ r')=[s] \circ [r] $, hence $\zz(F')$ is a splitting of $[s]\circ [r]$ and so is isomorphic to $F$. This shows that the $\tilde{\zz}$ is essentially surjective and hence an equivalence to $\text{Split}_\pp^0(\cc)_\infty$.
     \end{proof}
\end{theorem}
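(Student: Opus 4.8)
The plan is to obtain this as a corollary of Theorem~\ref{inftytri}. That theorem supplies an equivalence $\tilde{\zz}\colon\text{Split}(\cc_\infty)\to\text{Split}_\pp(\cc)_\infty$, and since $\text{Split}_\pp^0(\cc)_\infty$ is by definition a \emph{full} subcategory of $\text{Split}_\pp(\cc)_\infty$, the corestriction of $\tilde{\zz}$ to it will automatically be fully faithful once we know the image lands there. So everything reduces to two points: (a) every object $\tilde{\zz}(X)$ is isomorphic to an object of $\text{Split}_\pp^0(\cc)_\infty$, and (b) $\tilde{\zz}$ is essentially surjective onto $\text{Split}_\pp^0(\cc)_\infty$. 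Of these, (b) needs no hypothesis: an object $F\in\text{Split}_\pp^0(\cc)_\infty$ comes with weight-zero maps $F\xrightarrow{\,s\,}\yy(A)\xrightarrow{\,r\,}F$ with $r\circ s=\eta_0^F=1_F$, so $s\circ r$ is a genuine idempotent on $\yy(A)=\zz(A)$ whose image in $\text{Split}_\pp(\cc)_\infty$ is split by $[s],[r]$ with splitting object $F$; by full faithfulness of $\zz$ this idempotent equals $\zz(e')$ for a unique idempotent $e'$ of $\cc_\infty$ on $A$, and applying $\tilde{\zz}$ to a splitting of $e'$ in $\text{Split}(\cc_\infty)$ yields a splitting of $\zz(e')$, necessarily isomorphic to $F$ by uniqueness of idempotent splittings in the ordinary category $\text{Split}_\pp(\cc)_\infty$.

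For (a) I would use the hypothesis as follows. A typical object of $\text{Split}(\cc_\infty)$ is the splitting $(B,\bar e)$ of an idempotent $\bar e$ of $\cc_\infty$ on $B$; by hypothesis $\bar e$ is represented by an honest weight-zero idempotent $e_0\in\text{Mor}^0_\cc(B,B)$. Functoriality of $\yy$ (the corollary to Lemma~\ref{pshidemcomp}) makes $\yy(e_0)$ a genuine idempotent on $\yy(B)$ in $\text{PSh}_\pp(\cc)$; its splitting, provided by Lemma~\ref{pshidemcomp}, exhibits the splitting object as a weight-zero retract of $\yy(B)$, hence as an object of $\text{Split}_\pp^0(\cc)$, so $\yy(e_0)$ already splits inside $\text{Split}_\pp^0(\cc)$. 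Passing to the limit category, this splits $\zz(\bar e)=[\,\yy(e_0)\,]$; since $(B,\bar e)$ is the splitting of $\bar e$ in $\text{Split}(\cc_\infty)$, the object $\tilde{\zz}(B,\bar e)$ is another splitting of $\zz(\bar e)$, and uniqueness of idempotent splittings forces $\tilde{\zz}(B,\bar e)$ to be isomorphic to an object of $\text{Split}_\pp^0(\cc)_\infty$. Combining (a) and (b), $\tilde{\zz}$ corestricts to an equivalence $\text{Split}(\cc_\infty)\to\text{Split}_\pp^0(\cc)_\infty$.

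I expect the main obstacle to be the weight bookkeeping in step (a): one must check that the hypothesis really produces a \emph{weight-zero} idempotent on the nose (not merely up to $r$-equivalence), that its $\yy$-image therefore has a splitting triple exhibiting the splitting object as a genuine $0$-retract of $\yy(B)$ — so that it lies in the full subcategory $\text{Split}_\pp^0(\cc)$ and not only in $\text{Split}_\pp(\cc)$ — and that the identification of $\tilde{\zz}(B,\bar e)$ with this splitting is legitimate, which uses the lemma on uniqueness of $r$-idempotent splittings in the special case $r=0$ (equivalently, the standard fact that idempotent splittings in an honest category are unique up to canonical isomorphism). Step (b) and the full faithfulness of the corestriction are essentially formal once the definitions are unwound.
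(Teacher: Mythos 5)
Your proposal is correct and takes essentially the same route as the paper: one restricts the equivalence $\tilde{\zz}$ of Theorem~\ref{inftytri}, and your step (b) is exactly the paper's essential-surjectivity argument (representing the defining retraction of $F\in\text{Split}_\pp^0(\cc)_\infty$ by weight-zero maps, pulling the resulting idempotent back along the fully faithful embedding, splitting it in $\text{Split}(\cc_\infty)$, and invoking uniqueness of splittings). Your step (a) --- using the weight-zero representability hypothesis together with Lemma~\ref{pshidemcomp} to check that $\tilde{\zz}$ lands, up to isomorphism, inside $\text{Split}_\pp^0(\cc)_\infty$ --- is precisely where the hypothesis enters and is the point the paper leaves implicit in the phrase ``restricts to an equivalence'', so making it explicit as you do is a welcome completion of the same argument rather than a different one.
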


\section{Triangulation}\label{secttri}
\subsection{TPC structure of Karoubi completions}
Given a TPC refinement $\cc$ of a triangulated category $\cc_\infty$, one can ask whether the persistence categories $\text{Split}_\pp(\cc)$ or $\widehat{\text{Split}}_\pp(\cc)$ are still TPCs. This is key for one to extend fragmentation metrics TPCs to their idempotent completion. We will see however that in general the idempotent completions we constructed need not carry the structure of TPCs. We do however show the following positive result, that gives a criteria for TPC to complete to an idempotent complete TPC:

\begin{theorem}\label{TPCcriteria}
    Given a TPC $\cc$, if every idempotent $e\in \text{Hom}_{\cc_\infty}(A,A)$ is represented by some weight zero idempotent $e'\in \text{Hom}_{\cc_0}(A,A)$ in $\cc_0$ then $\text{Split}_\pp^0(\cc)$ is a TPC.
   
\end{theorem}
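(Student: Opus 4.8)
The plan is to equip $\text{Split}_\pp^0(\cc)$ with a triangulation functor and verify the three TPC axioms, leveraging the identification $\text{Split}_\pp^0(\cc)_0 \simeq \text{Split}(\cc_0)$ noted in the remark immediately above. First I would recall that when $\cc$ is a TPC, $\cc_0$ is a triangulated category; a classical fact (Balmer–Schlichting) is that the idempotent completion $\text{Split}(\cc_0)$ of a triangulated category carries a canonical triangulation, with shift functor $T(A,e) = (TA, Te)$ and with a triangle in $\text{Split}(\cc_0)$ declared exact iff it is a retract of the image of an exact triangle in $\cc_0$. So the candidate translation functor on $\text{Split}_\pp^0(\cc)_0$ is the one induced by $T$, and I would check it commutes with the shift functors $\ss^r$ on $\text{Split}_\pp^0(\cc)$ — this is immediate because $\ss^r$ acts on objects $(A, e)$ (with $e$ a $0$-idempotent, so genuinely an idempotent) by $(\ss^r A, \ss^r e)$, $T$ commutes with $\ss^r$ on $\cc_0$ by hypothesis, and both operations are defined levelwise on retract data.

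Next I would verify axiom 2: that each $\eta_r^{(A,e)}$ embeds into an exact triangle with $r$-acyclic cone. Here the key point is that $\eta_r^{(A,e)}$, viewed in $\text{Split}_\pp^0(\cc)_0 = \text{Split}(\cc_0)$, is exactly $\eta_r^A$ restricted/corestricted along the idempotent $e$ — more precisely it is the morphism $(A,e) \to \ss^{-r}(A,e)$ represented by $\eta_r^A \circ e = e' \circ \eta_r^A$ where $e' = \ss^{-r}e$. In $\cc$ we have the defining exact triangle $A \xrightarrow{\eta_r^A} \ss^{-r}A \to C \to TA$ with $C$ being $r$-acyclic. The idempotent $e$ on $A$ (compatible with $\ss^{-r}e$ via the $\eta$-naturality property $\eta_r^B \circ f = \ss^{-r}f \circ \eta_r^A$) induces, by the functoriality of cones in the triangulated category $\cc_0$ together with the axioms of a triangulated category, an idempotent on the whole triangle, hence on $C$; splitting that idempotent of $C$ in $\text{Split}(\cc_0)$ produces a cone $(C, e_C)$ for $\eta_r^{(A,e)}$. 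I would then argue $(C, e_C)$ is $r$-acyclic: since $\eta_r^C = 0$ in $\cc_0$, the morphism $\eta_r^{(C,e_C)}$ is a retract of $\eta_r^C$ hence also zero. (This uses in an essential way that $e$ has weight zero, so that the whole argument lives inside the genuine triangulated category $\cc_0$ and its honest Karoubi envelope — this is precisely where the hypothesis of the theorem is consumed, since a weighted idempotent of positive weight would not define a morphism in $\cc_0$ and $\text{Split}_\pp^0(\cc)_0$ would fail to be triangulated, as remarked in the introduction.)

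For axiom 3, the compatibility of $\text{Mor}^r$ with direct sums for $r \leq 0$: I would deduce it for $\text{Split}_\pp^0(\cc)$ directly from the corresponding property in $\cc$. An object $(A, e_A) \oplus (B, e_B)$ in $\text{Split}_\pp^0(\cc)$ is $(A \oplus B, e_A \oplus e_B)$, morphisms out of it in weight $r$ are the subspace of $\text{Mor}^r_\cc(A \oplus B, -)$ intertwining the relevant idempotents, and since $\text{Mor}^r_\cc(A \oplus B, C) = \text{Mor}^r_\cc(A,C) \oplus \text{Mor}^r_\cc(B,C)$ respects the idempotent decomposition (the projections are given by the $e$'s themselves, composition with which is $k$-linear and commutes with the persistence structure maps), the splitting descends. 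The dual statement is symmetric. I would also record that $\text{Split}_\pp^0(\cc)$ is indeed a persistence category in the usual sense (it has identities, namely $e_A = \zeta_0^{(A,e_A)}$ since $e_A$ is a genuine idempotent) — this was already established in the lemma above under the same hypothesis.

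The main obstacle I anticipate is axiom 2, specifically showing the cone of $\eta_r^{(A,e)}$ can be chosen $r$-acyclic \emph{and} that the resulting structure is well-defined independent of the choice of idempotent splitting of the cone. The subtlety is that idempotent splittings in a triangulated category are only unique up to isomorphism and the triangulated structure on $\text{Split}(\cc_0)$ requires some care (the octahedral axiom for $\text{Split}(\cc_0)$ is the delicate part of Balmer–Schlichting), so I would want to invoke their theorem as a black box for the triangulation of $\text{Split}(\cc_0)$ and then only check that the \emph{extra} TPC data ($\ss^r$-compatibility, the $\eta$-triangles, the direct-sum axiom) transports correctly. A secondary check is that the triangulation on $\text{Split}_\pp^0(\cc)_0$ so obtained is compatible with — in fact induces — the canonical triangulation on its limit category, which by Theorem \ref{mainth3} is $\text{Split}(\cc_\infty)$; this should follow because $\tilde{\zz}$ is built from the Yoneda-type embedding which is exact, but I would spell out that the localization of $\text{Split}(\cc_0)$ at the weighted isomorphisms recovers $\text{Split}(\cc_\infty)$ as triangulated categories, using Lemma \ref{localwrtsisos} together with the fact that Karoubi completion commutes with Verdier localization.
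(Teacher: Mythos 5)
Your proposal is correct and follows essentially the same route as the paper: identify $\text{Split}_\pp^0(\cc)_0$ with $\text{Split}(\cc_0)$ and invoke Balmer--Schlichting for its triangulation, then verify the $\eta_r$-axiom by transporting the weight-zero idempotent (retraction) to an idempotent on the cone of $\eta_r^A$ in $\cc_0$, splitting it, and noting the resulting cone is $r$-acyclic because its $\eta_r$ factors through $\eta_r^{K'}=0$. The only loose phrase is ``functoriality of cones'' --- cones are not functorial, and the idempotent on the cone comes from the Balmer--Schlichting lifting lemma (reproved in weighted form in the paper's appendix), which your black-box citation of their theorem does cover.
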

\begin{proof}
First we have that $\text{Split}_\pp^0(\cc)_0$ is equivalent to $\text{Split}(\cc_0)$. Which, by assumption that $\cc_0$ is triangulated, is also triangulated with exact triangles being triangles that are retracts of the image of an exact triangle in $\cc_0$ under $\yy$. Next we show every morphism $A\xrightarrow{\eta_r}\ss^{-r}A$ embeds into an exact triangle in $\text{Split}_\pp^0(\cc)_0$ with $r$-acyclic cone. If $A=\yy(A')$ then there exists such a triangle given by the image of a triangle 
        
        $\begin{tikzcd}
            A'\ar[r,"\eta_r"] & \ss^{-r}A'\ar[r,"i"] & K' \ar[r,"j"]& TA'
        \end{tikzcd}$
        with $K'\simeq_r 0$. Assume $A$ is not representable, then $A<_0 \yy(A')$ for some $A'$. Consider the commutative diagram

        \begin{equation}
            \begin{tikzcd}
                A \ar[r,"\eta_r"]\ar[d,"s"]& \ss^{-r}A\ar[d,"\ss^{-r}s"]& &\\
                \yy(A')\ar[d,"r"]\ar[r,"\eta_r"] & \ss^{-r}\yy(A')\ar[d,"\ss^{-r}r"]\ar[r,"\yy(i)"]& \yy(K')\ar[r,"\yy(j)"] & T\yy(A')&\\
                A \ar[r,"\eta_r"]& \ss^{-r}A& &
            \end{tikzcd}
        \end{equation}
        using the $0$-idempotent $s \circ r:\yy(A') \to \yy(A')$ induced by the retraction $A<_0 \yy(A')$, and the fact $\cc_0$ is triangulated, we can construct a $0$-idempotent on $\yy(K)$. Denote this by $e_K$, then $e_K$ satisfies $e_K \circ \yy(i)=\yy(i) \circ \ss^{-r}(s \circ r)$ and $\yy(j) \circ e_K = T(s \circ r) \circ \yy(j)$  . This idempotent splits in $\text{Split}_\pp^0(\cc)_0$, assume by $K$. Thus consider
        \begin{equation}
            \begin{tikzcd}
                A \ar[r,"\eta_r"]\ar[d,"s"]& \ss^{-r}A\ar[d,"\ss^{-r}s"]\ar[r,dotted,"r_K \circ \yy(i) \circ \ss^{-r}s"]& \ar[r,dotted,"Tr\circ \yy(j) \circ s_K"]K\ar[d,"s_K"]&TA\ar[d,"Ts"]\\
                \yy(A')\ar[d,"r"]\ar[r,"\eta_r"] & \ss^{-r}\yy(A')\ar[d,"\ss^{-r}r"]\ar[r,"\yy(i)"]& \yy(K')\ar[d,"r_K"]\ar[r,"\yy(j)"] & T\yy(A')\ar[d,"Tr"]&\\
                A \ar[r,"\eta_r"]& \ss^{-r}A\ar[r,dotted,"r_K \circ \yy(i) \circ \ss^{-r}s"]& \ar[r,dotted,"Tr\circ \yy(j) \circ s_K"] K& TA
            \end{tikzcd}
        \end{equation}
        One checks that each square is commutative. Note that the top row is a retract of the central exact triangle and so is an exact triangle (see \cite{BSc}). We now check that $K\simeq_r 0$, we have that the following diagram commutes
        \begin{equation}
            \begin{tikzcd}
                K\ar[r,"\eta_r^K"]\ar[d,"s_K"] & \ss^{-r}K\ar[d,"\ss^{-r}s_K"]\\
                \yy(K')\ar[d,"r_K"]\ar[r,"\eta_r^{\yy(K')}"] &\ar[d,"\ss^{-r}r_K"] \ss^{-r}\yy(K')\\
                K \ar[r,"\eta_r^K"]& \ss^{-r}K
            \end{tikzcd}
        \end{equation}
        hence $\eta_r^K = \eta_r^K \circ r_K \circ s_K = \ss^{-r}r_K \circ \eta_r^{\yy(K')}\circ s_K=0$ as $\eta_r^{K'}=0$. Thus we have verified the axioms for $\text{Split}_\pp^0(\cc)$ to be a TPC.

    \end{proof}

We remark that under the assumption of all idempotents of $\cc_\infty$ being represented by weight zero idempotents in $\cc_0$, the persistence category  $\widehat{\text{Split}}(\cc)$ also exhibits a TPC refinement of $\text{Split}(\cc_\infty)$. In fact one can see that  $\widehat{\text{Split}}_\pp^0(\cc)_0$ and $\text{Split}_\pp^0(\cc)_0$ are equivalent as triangulated categories.

\begin{rmk}
    If there exists an idempotent in $\cc_\infty$ not represented by a weight zero idempotent in $\cc_0$, then the constructions $\text{Split}_\pp(\cc)$ and $\widehat{\text{Split}}_\pp(\cc)$ are not TPC refinements of $\text{Split}(\cc_\infty)$. 
It is clear that under the assumption, $\widehat{\text{Split}}_\pp(\cc)_0$ is not a category, but only a semi category and it is not clear what it means to be triangulated in this case. Viewing $\text{Split}(\cc_\infty)$ as the full subcategory of retracts of representable objects in $\text{PSh}(\cc)$, then the construction of $\text{Split}_\pp(\cc)$ consists of too many objects for its associated zero level category to be triangulated. One realises this issue when trying to extend the triangulated functor of $\cc_0$ to a triangulated functor on $\text{Split}_\pp(\cc)_0$. In particular if one were to define $TF$ for some non-representable $F<_r \yy(A)$ then one would want $TF$ to be the splitting of the $r$-idempotent $Te_F:T\yy(A) \to T\yy(A)$ where $e_F$ is the idempotent inducing the splitting $F<_r \yy(A)$. However as non-zero weighted idempotent splittings are not unique up to isomorphism but only up to weighted isomorphism we realise that $TF$ is not well defined.
\end{rmk}

One reason for wanting to define a persistence refinement of the Karoubi completion of a persistence category, is to be able to extend the triangular weights (in the case that the category is a TPC) on $\cc_\infty $ to $\text{Split}(\cc_\infty)$. Recall (\cite{BCZ}) the unstable weight of an exact triangle in the limit category of a TPC,
\begin{equation}\Delta = A \xrightarrow{u} B \xrightarrow{v} C \xrightarrow{w} TA\end{equation} is given by
\begin{equation}w_\infty(\Delta) = \inf\{r : A \xrightarrow{u'} \ss^{-r_1}B \xrightarrow{ v'} \ss^{-r_2}C \xrightarrow{w'} \ss^{-r}TA \text{ is strict exact} \} \end{equation}
with $0\leq r_1 \leq r_2 \leq r$ and such that 

\begin{align*}
    u &= [\eta_{-r_1,0} \circ u']\\
    v&= [\eta_{-r_2,0} \circ v' \circ \eta_{0,-r_1}]\\
     w &= [\eta_{-r,0} \circ w' \circ \eta_{0,-r_2}].
\end{align*}

The stable weight of $\Delta$ is then given by 

\begin{equation}w(\Delta) = \inf\{w_\infty(\ss^{s,0,0,s}\Delta)\}.\end{equation}
 We would like to define a weight to an exact triangle in $\text{Split}(\cc_\infty)$
\begin{equation}\Delta = F \xrightarrow{u} G \xrightarrow{v} H \xrightarrow{w} TF\end{equation}

Furthermore one would like this weighting to be an extension of the weighting on $\cc_\infty
$.

\begin{lemma}
If every idempotent in $\cc_\infty$ is represented by a weight zero idempotent, then the triangular weights associated to $\text{Split}_\pp^0(\cc)_\infty$ using the TPC structure of $\text{Split}_\pp^0(\cc)_\infty$ extend the triangular weights of $\cc_\infty$ to $\text{Split}(\cc_\infty)$.
\begin{proof}
    Let $w_\infty$ be the weight associated to $\cc_\infty$ and $w_\infty^\pi$ be the weight associated to $\text{Split}_\pp^0(\cc)_\infty$. Then let $\Delta$ be a triangle in $\cc_\infty$, and $\yy(\Delta)$ be the associated triangle in $\text{Split}_\pp^0(\cc)_\infty$. Assume that $\bar{\Delta}$ is a strict exact triangle in $\cc_0$ representing $\Delta$, then $\yy(\bar{\Delta})$ is a strict exact triangle representing $\yy(\Delta)$. In particular we see 
    \begin{equation}
        w_\infty(\Delta) \geq w^\pi_\infty( \yy( \Delta))
    \end{equation}
\end{proof}
\end{lemma}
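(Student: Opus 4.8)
The plan is to show that the weight function $w^\pi_\infty$ built from the TPC structure of $\text{Split}_\pp^0(\cc)$ restricts, along the embedding $\yy$, to the weight function $w_\infty$ of $\cc_\infty$ — or at least is bounded above by it, which is what the statement claims. The key structural input is Theorem \ref{TPCcriteria}: under the standing hypothesis, $\text{Split}_\pp^0(\cc)$ is a TPC, and by construction $\text{Split}_\pp^0(\cc)_0 \simeq \text{Split}(\cc_0)$ with exact triangles being retracts of images of exact triangles of $\cc_0$ under $\yy$. In particular $\yy$ sends exact triangles of $\cc_0$ to exact triangles of $\text{Split}_\pp^0(\cc)_0$, and more importantly sends \emph{strict exact triangles of weight $r$} in $\cc_0$ to strict exact triangles of weight $r$ in $\text{Split}_\pp^0(\cc)_0$.

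First I would establish that last claim carefully, since it is the crux. A strict exact triangle of weight $r$ in $\cc_0$ is, by definition, one embedding into the pentagon diagram involving $\ss^r C$, $C'$, the map $f \circ \phi = \eta_r^{\ss^r C}$, and an $r$-acyclic cone of $f$. Applying the persistence functor $\yy$ to this entire diagram: $\yy$ commutes with the shift functors (Proposition on $\yy$ commuting with $\ss^r$), sends $\eta_r^{\ss^r C}$ to $\eta_r^{\yy(\ss^r C)} = \eta_r^{\ss^r \yy(C)}$ (same proposition), preserves exact triangles of $\cc_0$, and commutes with $T$ (since $T$ on $\text{Split}_\pp^0(\cc)_0 \simeq \text{Split}(\cc_0)$ is the extension of $T$ on $\cc_0$). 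The only nontrivial point is that $\yy$ preserves $r$-acyclicity of the cone: an object $K$ is $r$-acyclic iff $\eta_r^K = 0$, and $\yy(\eta_r^K) = \eta_r^{\yy(K)}$, so since $\yy$ is faithful, $\eta_r^K = 0$ implies $\eta_r^{\yy(K)} = 0$. Hence $\yy(\bar\Delta)$ is a strict exact triangle of weight $r$ in $\text{Split}_\pp^0(\cc)_0$ whenever $\bar\Delta$ is one in $\cc_0$.

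Next I would run the infimum comparison. Given an exact triangle $\Delta$ in $\cc_\infty$, any competitor in the infimum defining $w_\infty(\Delta)$ is a strict exact triangle $A \xrightarrow{u'} \ss^{-r_1}B \xrightarrow{v'} \ss^{-r_2}C \xrightarrow{w'} \ss^{-r}TA$ in $\cc_0$ with $0 \le r_1 \le r_2 \le r$ whose image in $\cc_\infty$ (after composing with the appropriate $\eta$'s) is $\Delta$. Applying $\yy$ and using the previous paragraph, $\yy$ of this data is a strict exact triangle in $\text{Split}_\pp^0(\cc)_0$ with the same weight parameters $r_1 \le r_2 \le r$, and its image in $\text{Split}_\pp^0(\cc)_\infty$ — using that $\yy$ commutes with the $\eta_r$'s and that $\zz = [\yy(-)]$ is how morphisms of $\cc_\infty$ map into $\text{Split}_\pp^0(\cc)_\infty$ — is exactly $\yy(\Delta)$. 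So every competitor for $w_\infty(\Delta)$ produces, via $\yy$, a competitor for $w^\pi_\infty(\yy(\Delta))$ realizing the same value of $r$. Taking infima gives $w_\infty(\Delta) \ge w^\pi_\infty(\yy(\Delta))$, which is precisely the stated inequality.

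The main obstacle I anticipate is not any single hard step but rather the bookkeeping in the second paragraph: verifying that the full pentagon-shaped strict-exactness diagram really does transport through $\yy$, checking in particular that the defining relation $f \circ \phi = \eta_r^{\ss^r C}$ is preserved (this needs both that $\yy$ is a functor and that $\yy(\eta_r^{\ss^r C}) = \eta_r^{\ss^r \yy(C)}$, i.e.\ the compatibility of $\yy$ with shifts and with the $\eta$ maps) and that the cone of $\yy(f)$ agrees with $\yy$ of the cone of $f$ (which holds because cones in $\text{Split}_\pp^0(\cc)_0 \simeq \text{Split}(\cc_0)$ restricted to the image of $\yy$ are computed as in $\cc_0$). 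Once these compatibilities are in hand, the infimum argument is formal. Note the statement only claims the inequality $\ge$, not equality; equality would additionally require that no \emph{new} strict exact triangles in $\text{Split}_\pp^0(\cc)_0$ (not coming from $\cc_0$) give a better weight for $\yy(\Delta)$, which is plausible but not needed here, so I would not pursue it.
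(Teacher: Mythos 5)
Your proposal is correct and follows essentially the same route as the paper: push each strict exact competitor for $w_\infty(\Delta)$ through $\yy$ (which preserves shifts, the $\eta_r$ maps, exact triangles, and $r$-acyclicity) to obtain a competitor for $w^\pi_\infty(\yy(\Delta))$ of the same weight, and conclude $w_\infty(\Delta)\geq w^\pi_\infty(\yy(\Delta))$ by taking infima. The paper states this in one line; your write-up simply makes the preservation of the strict-exactness data under $\yy$ explicit, which is a faithful elaboration of the same argument.
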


\begin{rmk}
    The construction of the triangular weights associated to (the limit category of) a TPC relies on the assumption that the zero level category is being triangulated. This assumption is used to prove the weighted octahedral axiom. Without the zero level being triangulated it is not at all clear if one can construct triangular weights. This is the case for idempotents not being represented by weight zero idempotents. That is, the zero level categories of $\widehat{\text{Split}}_\pp(\cc)$ and $\text{Split}_\pp(\cc)$ are not triangulated and so it is not clear if one can extend the triangular weights of $\cc_\infty$ to $\text{Split}(\cc_\infty)$.
\end{rmk}

\section{Comments and Examples}\label{sectcomm}

\subsection{Triangulation in $\text{Split}_\pp(\cc)_0$}
For proofs of results stated in this subsection see the appendix. One would hope that the persistence idempotent completion $\text{Split}_\pp(\cc)$ of a TPC, is itself a TPC. As mentioned previously, it becomes apparent very quickly that this will not be the case when we try to extend the functor $T: \cc_0 \to \cc_0$ to a functor $T:\text{Split}_\pp(\cc)_0 \to \text{Split}_\pp(\cc)_0$.
Assume $F$ is given by an $r$-retract of $\yy(A)$

    \begin{equation}\begin{tikzcd}
        \yy(A)\ar[dr,"r_F"] & \\
        F\ar[u,"s_F"]\ar[r,"\eta_r^F"] & \ss^{-r}F
    \end{tikzcd}\end{equation}
    The retract defines an $r$-idempotent $\yy(e_F): \yy(A) \to \ss^{-r}\yy(A)$, and in turn an $r$-idempotent $e_F: A \to \ss^{-r}A$ in the original category $\cc$. Apply the functor $T$ on $\cc$ to obtain an $r$-idempotent $Te_F: TA \to \ss^{-r}TA$ and thus obtain $\yy(Te_F) : \yy(TA) \to \ss^{-r}\yy(TA)$ a new $r$-idempotent in $\text{Split}_\pp(\cc)$. This must split, say via the retract
       \begin{equation}\begin{tikzcd}
        \yy(TA)\ar[dr,"r_F'"] & \\
        F'\ar[u,"s_F'"]\ar[r,"\eta_r^{F'}"] & \ss^{-r}F'
    \end{tikzcd}\end{equation}
    Ideally we would define $TF:= F'$. However, $r$-idempotents only split up to (strong) $2r$-isomorphism. Thus we need to make a choice of splitting of $\yy(Te_F)$ to define $TF$. This issue however is not so bad, any two choices of $TF$ will be isomorphic in the limit category $\text{Split}_\pp(\cc)_\infty$. If one attempts to explore what kind of structure $\text{Split}_\pp(\cc)_0$ has, one will find that there is some class of `triangles'. In fact any morphism $F \to G$ can be completed to a triangle 

\begin{equation}
       \Delta:= \begin{tikzcd}
            F\ar[r] & G\ar[r] & H\ar[r] & \ss^{-r}TF.
        \end{tikzcd}
    \end{equation}

This triangle can be shown to be an $s$-retract of some representable strict exact triangle of weight also $r$. Here both $r$ and $s$ depend on the size of the retraction defining both $F$ and $G$. One could hope that this class of triangles (of the form $\Delta$ with a weight retraction onto a representable strict exact triangle) would allow us to define weights on the limit category $\text{Split}_\pp(\cc)_\infty$ in a similar way to the usual TPC construction. One can show that any exact triangle in the limit category can be represented by such a triangle. However, proving a weighted octahedral axiom seems unlikely. This is due to the following result 
 \begin{prop}
        Let $\cc$ be a TPC and $\begin{tikzcd}
             A\ar[r,"u"] & B\ar[r,"v"] & C\ar[r,"w"] & TA
        \end{tikzcd}$ be an exact triangle in $\cc_0$. Assume $e_A$ is an $r$-idempotent on $A$ and $e_B$ is an $s$-idempotent on $B$ such that the following commutes

        \begin{equation}\begin{tikzcd}
            A\ar[d,"\eta_s \circ e_A"]\ar[r,"u"] & B\ar[r,"v"] \ar[d,"\eta_r \circ e_B"]& C\ar[rr,"w"] & &TA\ar[d,"T(\eta_{s}\circ e_A)"]\\
            \ss^{-(r+s)}A\ar[r,"\ss^{-(r+s)u}"] & \ss^{-(r+s)}B\ar[r,"\ss^{-(r+s)}v"] & \ss^{-(r+s)}C\ar[rr,"\ss^{-(r+s)}w"] & &\ss^{-(r+s)}TA
        \end{tikzcd}\end{equation}
        Then there exists a $3(r+s)$-idempotent $e_C:C \to \ss^{-3(r+s)}C$, such that

          \begin{equation}\begin{tikzcd}
            A\ar[d,"\eta_{2(r+s)}\circ \eta_s \circ e_A"]\ar[r,"u"] & B\ar[r,"v"] \ar[d,"\eta_{2(r+s)}\circ \eta_r \circ e_B"]& C\ar[rr,"w"]\ar[d,dotted,"e_C"] & &TA\ar[d,"T(\eta_{2(r+s)}\eta_{s}\circ e_A)"]\\
            \ss^{-3(r+s)}A\ar[r,"\ss^{-3(r+s)u}"] & \ss^{-3(r+s)}B\ar[r,"\ss^{-3(r+s)}v"] & \ss^{-3(r+s)}C\ar[rr,"\ss^{-3(r+s)}w"] & &\ss^{-3(r+s)}TA
        \end{tikzcd}\end{equation}
        commutes.  
\end{prop}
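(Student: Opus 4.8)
The plan is to build $e_C$ by the standard trick of producing a map $C \to \ss^{-N}C$ that is forced by the fill-in axiom (TR3) of the triangulated category $\cc_0$, and then to use the hypothesis that the given idempotents are compatible with the triangle, together with the identities $\ss^{-r}e \circ e = \ss^{-r}e \circ \eta_r^A$ and the functoriality of $T$ and of $\eta$, to verify that the resulting map is a weighted idempotent of the claimed weight. Concretely, I would first apply TR3 to the commuting square on the left (rows being the exact triangle and its shift by $-(r+s)$, columns being $\eta_s\circ e_A$ and $\eta_r\circ e_B$) to get \emph{some} fill-in map $\varphi \colon C \to \ss^{-(r+s)}C$ making the whole ladder commute. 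This $\varphi$ need not be idempotent on the nose — that is exactly why the statement only asks for a $3(r+s)$-idempotent after composing with extra $\eta$'s — so the real content is controlling the failure of idempotency.

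The key steps, in order: (1) Invoke TR3 to obtain $\varphi\colon C\to\ss^{-(r+s)}C$ with $\varphi\circ v = \ss^{-(r+s)}v\circ(\eta_r\circ e_B)$ and $\ss^{-(r+s)}w\circ\varphi = T(\eta_s\circ e_A)\circ w$. (2) Compute $\ss^{-(r+s)}\varphi\circ\varphi$ and compare it with $\eta_{r+s}^{\ss^{-(r+s)}C}\circ\varphi$; the difference $\delta := \ss^{-(r+s)}\varphi\circ\varphi - \eta_{r+s}\circ\varphi\colon C\to\ss^{-2(r+s)}C$ will be shown to vanish after composition with $v$ (using that $e_B$ is an $s$-idempotent, so $\ss^{-s}e_B\circ e_B=\ss^{-s}e_B\circ\eta_s^B$, hence $\ss^{-(r+s)}\varphi\circ\varphi\circ v$ and $\eta_{r+s}\circ\varphi\circ v$ agree) and after post-composition with $\ss^{-2(r+s)}w$ (using that $e_A$ is an $r$-idempotent and $T$ is additive and commutes with $\ss$ and with the $\eta$'s). (3) Conclude from the long exact sequence / the five-lemma-style argument for triangulated categories that a map $C\to\ss^{-2(r+s)}C$ which becomes zero after $-\circ v$ and after $\ss^{-2(r+s)}w\circ -$ must factor as $\delta = \ss^{-2(r+s)}w' \circ (\text{something})$ and in fact is killed by composing with $\eta^{C}$ of the appropriate weight; quantitatively, post-composing with one more $\eta_{r+s}$ (and pre-composing with another $\eta_{r+s}$) annihilates $\delta$, which is the source of the factor $3$. (4) Set $e_C := \eta_{2(r+s)}^{\ss^{-(r+s)}C}\circ\varphi$ (equivalently $\ss^{-2(r+s)}\varphi' \circ \ldots$ as the bookkeeping dictates) and check directly that $e_C$ is a $3(r+s)$-idempotent and that the enlarged ladder commutes, using $\eta_{2(r+s)}\circ(\eta_s\circ e_A)$ and $\eta_{2(r+s)}\circ(\eta_r\circ e_B)$ in the outer columns and the naturality property $\eta_t^B\circ f = \ss^{-t}f\circ\eta_t^A$ of the $\eta$'s to slide the shifts past $u$, $v$, $w$.

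The main obstacle I expect is step (3): getting the \emph{quantitative} bound $3(r+s)$ on the weight of the idempotent, rather than just "some weight". The qualitative statement — that $\varphi$ is idempotent up to $\eta$-equivalence — follows from the standard fact that in a triangulated category a map between the cone terms is determined by the maps on the outer two terms up to the image of $\mathrm{Hom}(TA, \ss^{-(r+s)}C)$ composed in, i.e. up to an "error term" that is killed by the relevant $\eta$'s because the error term factors through an $r$-acyclic (resp. $(r+s)$-acyclic) object coming from the TPC axiom applied to $\eta_{r+s}^C$. Making the constants line up requires carefully tracking: one $(r+s)$ to absorb the non-uniqueness of the TR3 fill-in (the error factors through the cone of $\eta_{r+s}^C$, which is $(r+s)$-acyclic), and two more $(r+s)$'s to pass from "idempotent in the limit / up to $\eta$" back to an honest weighted idempotent with $\ss^{-w}e_C\circ e_C = \eta_w^{\ss^{-w}C}\circ e_C$ on the nose — analogous to the $r \rightsquigarrow 2r$ losses appearing throughout the paper (e.g. in the uniqueness-of-splitting lemma and the strong-$r$-isomorphism comparisons). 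I would do the weight bookkeeping by working in the flattened category $\cc^\ss$ where the $\eta$'s are just the $\zeta$'s / structure maps, so that "idempotent up to weight $w$" becomes a clean equation, and only at the end translate back. The commutativity of the final diagram is then a routine diagram chase using the already-established naturality of $\eta$ and the compatibility of $T$ with $\ss$ and $\eta$, so I would not belabor it.
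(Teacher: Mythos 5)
Your steps (1)--(2) coincide with the paper's argument (which follows Balmer--Schlichting, Lemma 1.13): take a TR3 fill-in $k\colon C\to\ss^{-(r+s)}C$ and set $z:=\ss^{-(r+s)}k\circ k-\eta_{r+s}\circ k$, which is killed by precomposition with $v$ and by postcomposition with $\ss^{-2(r+s)}w$. The gap is in your step (3): it is not true that $z$ is annihilated by composing with $\eta$'s of any weight. From $z\circ v=0$ you only obtain a factorisation $z=z'\circ w$ through $TA$, and $TA$ is not acyclic; the acyclicity you invoke (the cone of $\eta^{C}_{r+s}$) is not where the error lives --- both the ambiguity of the TR3 fill-in and the failure of idempotency of $k$ are measured by maps factoring through $w\colon C\to TA$, and such maps are in general not $\eta$-torsion. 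Consequently your candidate $e_C=\eta_{2(r+s)}\circ k$ satisfies $\ss^{-3(r+s)}e_C\circ e_C-\eta_{3(r+s)}\circ e_C=\eta_{4(r+s)}\circ z$, which has no reason to vanish, so it is not a $3(r+s)$-idempotent and your step (4) does not go through.

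What the paper does instead is an idempotent-correction (lifting modulo a square-zero error) trick rather than an annihilation argument: from $z=z'\circ w$ and $w\circ v=0$ one gets $\ss^{-2(r+s)}z\circ z=0$, and one checks the commutation $\ss^{-(r+s)}z\circ k=\ss^{-2(r+s)}k\circ z$; then one defines $e_C:=\eta_{2(r+s)}\circ k+\eta_{r+s}\circ z-2\,\ss^{-2(r+s)}k\circ z$ and verifies directly that $\ss^{-3(r+s)}e_C\circ e_C=\eta_{3(r+s)}\circ e_C$ and that the enlarged ladder commutes (the middle square because $z\circ v=0$ and $(\ss^{-2(r+s)}k\circ z)\circ v=0$, the right square by the analogous computation with $w$). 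This explicit polynomial in $k$ and $z$ is the true source of the weight $3(r+s)$: each summand has total weight $3(r+s)$ because $k$ has weight $r+s$ and $z$ has weight $2(r+s)$ --- it is not an accounting of ``one $(r+s)$ for the fill-in ambiguity plus two for passing from $\eta$-equivalence to an honest idempotent.'' So steps (3)--(4) of your outline need to be replaced by this correction formula; your remaining bookkeeping (naturality of $\eta$, compatibility of $T$ with $\ss$ and the shifts) is fine and is indeed how the paper verifies the final diagram.
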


To prove the regular octahedral axiom for the Karoubi completion of a triangulated category, one needs to construct a retract (alternatively idempotent) on a mapping cone using the retraction (idempotents) of the first two objects. It is known that there is a natural way to do this and is detailed in \cite{BSc}. However, when we attempt to prove a weighted version of this we appear to only be able to construct a retract (idempotent) of weight three times the sum of the weights of the idempotents on the first two objects. This causes an issue in attempting to control weights of triangles in the weighted octahedral axiom.

\subsection{Examples}

\subsubsection{Non-weight zero representable idempotents}
We construct a persistence category $\cc$ whose limit category has an idempotent not represented by weight zero idempotent in $\cc_0$. Define $\cc$, to be the persistence category with one object $*$ and with morphisms given by 
\begin{equation}
    \text{Hom}_\cc(*,*)(r):=\begin{cases}
       0 & r< 0\\
         k\langle a_r \rangle & r\in [0,1)\\
        k\langle a_r \rangle\oplus k\langle e_r \rangle & r\geq 1
    \end{cases}
\end{equation}
The persistence module structure maps are given by 
\begin{align}
    i_{s,t}(a_s)=&a_t\\
    i_{s,t}(e_s)=&e_t \notag
\end{align}
Composition is given by
\begin{align*}
    a_r \circ a_s:=& a_{r+s}\\
    e_r \circ e_s:=& e_{r+s}\\
    a_r \circ e_s:=& e_{r+s}\\
    e_r \circ a_s:=& e_{r+s}
\end{align*}
We have that the identity is given by $a_0$, and the limit category consists of one object $*$ with $\text{Hom}_{\cc_\infty}(*,*)=k\langle a_\infty \rangle \oplus k\langle e_\infty \rangle$. Where $e_\infty$ is an idempotent. Furthermore, $e_\infty$ is not represented by a weight zero idempotent in $\cc$.

\subsubsection{Filtered chain complexes}
The category of filtered chain complexes (over a field) is defined to be the category with objects being pairs $((C_*,d),\ff)$ where $(C_*,d)$ is a chain complex and $\ff$ is a filtration of $(C_*,d)$, i.e. a family of sub-complexes $\ff:=\{(C_*^{\leq r},d^{\leq r})\}_{r\in \R}$. The filtration is such that for all $s\leq r$ we have as sub-complexes
\begin{equation}
    (C_*^{\leq s},d^{\leq s}) \subset (C_*^\leq{ r},d^{\leq r}) 
\end{equation}
and $d^{\leq r}$ is the restriction of $d$ to $C_*^{\leq r}.$ The hom-sets are chain maps that respect this filtration, i.e restrict to chain maps on each filtration level. We allow these maps to also carry a shift, so a map $f:((C_*,d),\ff_C) \to( (C'_*,d'),\ff')$ has shift $s$ if it consists of a family of chain maps $f^{\leq r}:(C_*^{\leq r},d^{\leq r}) \to ((C_*')^{\leq r+s},(d')^{\leq r+s})$. The hom-sets naturally form filtered chain complexes with filtration given by this shifting. Passing to the homotopy category of chain complexes levelwise one obtains a persistence category. This persistence category is naturally weighted idempotent complete, i.e. every weighted idempotent splits. This follows from realising the category has kernels, formed by taking level-wise kernels maps of chain map complexes. Recall that chain complexes (over $k-$Mod) form an Abelian category. 

In general one can study chain complexes of some additive category $\mathcal{A}$. This is the first step in defining the derived category for $\mathcal{A}$. Thus, perhaps $\mathcal{A}$ comes with additional structure making it a filtered (or persistence) category. Then one would look to filtered chain complexes over $\mathcal{A}$. In general this need not be weighted idempotent complete, being so would depend on the structure of $\mathcal{A}$. I.e. if $\mathcal{A}$ is Abelian then chain complexes over $\mathcal{A}$ is also Abelian.

\subsubsection{Filtered dg-categories}
The above example of filtered chain complexes (up to homology) is a specific example of a persistence category arising from a filtered dg-category. A filtered dg-category is simply a category whose hom-sets form filtered chain complexes. One can pass to its corresponding homotopy category, to obtain a persistence category. Given any dg-category one can construct a filtered dg-category by filtering the objects and morphisms similarly to that of filtered chain complexes. In general a dg-category need not be 
Abelian and need not be idempotent complete. Thus, in general we cannot split weighted idempotents in the corresponding persistence category via taking kernels. However, if the original dg-category is Abelian then the persistence category will be weighted idempotent complete. 

 Note that even if the original dg-category $\cc$ is idempotent complete, its filtered category $\ff \cc$ might not be weighted idempotent complete.  Indeed a map $e\in \text{Hom}_{\ff \cc}((A,\ff),(A,\ff))(r)$ can be regarded as a family of maps $\{e(t):A(t) \to A(t+r)\}_{t\in \R}$, $e$ being an $r$-idempotent means that $e(t+r) \circ e(t) = i_{t+r,t+2r} \circ e(t)$. If $r=0$ then naturally $e$ splits via idempotent completeness of $\cc$. However, if $e>0$ then splitting depends on the existence of $\text{Ker}(e(t)-i_{t,t+r})$ for all $t\in \R$.

\subsubsection{Persistence refinements of idempotent complete categories}
Assume that we have a category $\cc$ that admits a persistence refinement $\cc'$ (i.e. $\cc'_\infty=\cc$). Furthermore, assume $\cc$ is idempotent complete. Then $\cc'$ may not be weighted idempotent complete, but every weighted idempotent `weakly splits'. By this we mean that perhaps $e:A \to \ss^{-r}A$ does not split, but for some $s$ large enough, $\eta_s \circ e:A \to \ss^{-(r+s)}A$ does split. In particular, we realise that $\text{Split}_\pp(\cc')\neq \cc'$ even if $\cc'_\infty$ is idempotent complete. The completed category forces all weighted idempotents to split and not just weakly split.

\section{Appendix}\label{sectappen}
We leave for the appendix a series of calculations and results concerning the triangulated structure of $\text{Split}_\pp(\cc)_0$. Recall that this is not a triangulated category in general. Thus, we look to understand what kind of structure it might form, in particular, what the notion of exact/strict exact triangle could be in this setting. 

 \begin{rmk}
        If $F<_r \yy(A)$ then $\ss^{-r}F<_r \ss^{-r}\yy(A)$, where the $s_{\ss^{-r}F}=\ss^{-r}s_F$ and $r_{\ss^{-r}F}=\ss^{-r}r_F$. This gives that the induced idempotent on $\ss^{-r}\yy(A)$ is given by $e_{\ss^{-r}F}=\ss^{-r}e_F$. In turn this means that $\yy(Te_{\ss^{-r}F})=\yy(T\ss^{-r}e_F)=\yy(\ss^{-r}Te_F)=\ss^{-r}\yy(Te_F)$. In particular this gives us that $T\ss^{-r}F$ can be taken to be $\ss^{-r}TF$.
    \end{rmk}

    \begin{rmk}\label{diagconstructing}
    Let $F,G\in \text{PSh}_{\pp}(\cc)$ be $r,s$-retracts of $\yy(A),\yy(B)$ respectively and $u:F \to G$ a morphism. We can produce diagrams

      \begin{equation}\begin{tikzcd}
          \yy(A)\arrow[d,"r_F"] \ar[r,"\tilde{u}",dotted]& \ss^{-r}\yy(B)\\
          \ss^{-r}F \ar[r,"\ss^{-r}u"]& \ss^{-r}G\ar[u,"\ss^{-r}s_G"]
      \end{tikzcd} \hspace{1cm}\tilde{u}:=\ss^{-r}s_G \circ \ss^{-r}u \circ r_F\end{equation}

      \begin{equation}\begin{tikzcd}
        F \ar[d,"s_F"]\ar[r,"\bar{u}",dotted]& \ss^{-(r+s)}G\\
        \yy(A)\ar[r,"\tilde{u}"] & \ss^{-r}\yy(B)\ar[u,"\ss^{-r}r_G"]
      \end{tikzcd}\hspace{1cm} \bar{u}:=\ss^{-r}r_G \circ \tilde{u}\circ s_F\end{equation}

  Such that 
      \begin{align*}
          \bar{u}=&\ss^{-r}r_G \circ \ss^{-r}s_G\circ \ss^{-r}u\circ r_F \circ s_F\\
          =&\eta_s \circ \ss^{-r}u \circ \eta_r\\
          =& \eta_{r+s}\circ u
    \end{align*}
    `Gluing' these diagrams together we obtain a commutative diagram 
    \begin{equation}\begin{tikzcd}
        F\ar[r,"u"]\ar[d,"s_F"] & G\ar[d,"\ss^{-r}s_G\circ \eta_r"] \\
        \yy(A) \ar[r,"\tilde{u}"]\ar[d,"\ss^{-s}r_F\circ \eta_s"]& \ss^{-r}\yy(B)\ar[d,"\ss^{-r}r_G"]\\
        \ss^{-(r+s)}F\ar[r,swap,"\ss^{-(r+s)}u"] & \ss^{-(r+s)}G
    \end{tikzcd}\end{equation}
  where the vertical compositions give $\eta_{(r+s)}$. Hence any morphism $u$ can be extended to give a morphism of weighted retract diagrams. Similarly take any morphism $v:\yy(A) \to \yy(B) $, then it also induces a morphism of weighted retract diagrams:

\begin{equation}\begin{tikzcd}
    F\ar[rr,"\tilde{v}"]\ar[d,"s_F"] & &\ss^{-s}G\ar[d,"\eta_r \circ \ss^{-s}s_G"]\\
    \yy(A)\ar[rr,"\ss^{-r}e_G \circ \ss^{-r}v \circ e_F"] \ar[d,"\eta_s \circ r_F"]& &\ss^{-(r+s)}\yy(B)\ar[d,"\ss^{-(r+s)}r_G"]\\
    \ss^{-(r+s)}F \ar[rr,"\ss^{-(r+s)}\tilde{v}"]& &\ss^{-(r+s)}\ss^{-s}G
\end{tikzcd}\end{equation}
     where $\tilde{v}=r_G \circ v \circ s_F$. 
\end{rmk}
 We can then look using this to define $T$ on morphisms, i.e., $T(u:F \to G)$. We first extend $u$ to a morphism of weighted retract diagrams

\begin{equation}
    \begin{tikzcd}
        F \ar[r,"u"]& G
    \end{tikzcd}\rightsquigarrow \begin{tikzcd}
        F\ar[r,"u"]\ar[d,"s_F"] & G\ar[d,"\ss^{-r}s_G\circ \eta_r"] \\
        \yy(A) \ar[r,"\tilde{u}"]\ar[d,"\ss^{-s}r_F\circ \eta_s"]& \ss^{-r}\yy(B)\ar[d,"\ss^{-r}r_G"]\\
        \ss^{-(r+s)}F\ar[r,swap,"\ss^{-(r+s)}u"] & \ss^{-(r+s)}G
    \end{tikzcd}
\end{equation}

Then apply $T$ to $\tilde{u}:A \to \ss^{-r}A$ to obtain a map $\yy(T\tilde{u}):\yy(TA) \to \ss^{-r}\yy(TA)$. We then take 
\begin{equation}
    T(u)=r_{TG} \circ \yy(T\tilde{u}) \circ s_{TF} \in \text{Hom}_{\text{Split}_\pp(\cc)_0}(TF, \ss^{-(r+s)}TG)
\end{equation}
\begin{rmk}
    The target of the morphism $Tu$ is not $TG$ but a $TG$ shifted. Notice that the shift depends on the weighting of $F$ and $G$ as weighted retracts of representable objects. It is clear that $T$ does not define an endofunctor in the usual sense on $\text{Split}_\pp(\cc)_0$, but some form of generalisation of functor that includes a shift. Do note however that $T$ restricts to a well defined functor on representable objects.
\end{rmk}

\begin{rmk}\label{rtfvstrf}
    We note that $s_{TF}\neq Ts_F$ and $r_{TF}\neq Tr_F$ but are $r$-equivalent. Consider the diagram
    \begin{equation}\begin{tikzcd}
        F\ar[r,"s_F"]\ar[d,"s_F"] & \yy(A)\ar[d,"\eta_r"] \\
        \yy(A) \ar[r,"\tilde{s_F}=e_F"]\ar[d,"r_F"]& \ss^{-r}\yy(A)\ar[d,equals]\\
        \ss^{-r}F\ar[r,swap,"\ss^{-r}s_F"] & \ss^{-r}\yy(A)
    \end{tikzcd}\end{equation}
    where we find $\tilde{s_F}=1_{\yy(A)}\circ \ss^{-r}s_F \circ r_F= e_F$. Hence $Ts_F=r_{T\yy(A)} \circ \yy(Te_F) \circ s_{TF}=T\yy(e_F) \circ s_{TF}$. By definition $T\yy(e_F)=\ss^{-r}s_{TF}\circ r_{TF}$ and so
    \begin{align*}Ts_F=&T\yy(e_F) \circ s_{TF}\\
    =&\ss^{-r}s_{TF}\circ r_{TF} \circ s_{TF}\\
    =&\eta^{TF}_r \circ s_{TF}
    \end{align*}. Similarly if we consider 
     \begin{equation}\begin{tikzcd}
        \yy(A)\ar[r,"r_F"]\ar[d,equals] & \ss^{-r}F\ar[d,"\ss^{-r}s_F"] \\
        \yy(A) \ar[r,"\tilde{r_F}=e_F"]\ar[d,"\eta_r"]& \ss^{-r}\yy(A)\ar[d,"\ss^{-r}r_F"]\\
        \ss^{-r}\yy(A)\ar[r,swap,"\ss^{-r}r_F"] & \ss^{-2r}F
    \end{tikzcd}\end{equation}
    with $\tilde{r}_F=\ss^{-r}s_F \circ r_F \circ 1_{\yy(A)}= e_F$. Hence we have 
    \begin{align*}
        Tr_F=&r_{T\ss^{-r}F} \circ \yy(Te_F) \circ s_{T\yy(A)}\\
        =& r_{\ss^{-r}TF} \circ \ss^{-r}s_{TF}\circ r_{TF}\\
        =& \ss^{-r}r_{TF}\circ \ss^{-r}s_{TF}\circ r_{TF}\\
        =& \eta_r^{\ss^{-r}TF}\circ r_{TF}
        \end{align*}
        It follows that 
        \begin{equation}
            \begin{tikzcd}
               \ss^{-r}\ar[dr,"Tr_F"] T\yy(A)\\
                TF\ar[u,"Ts_F"] \ar[r,"\eta_{3r}"]& \ss^{-3r}TF
            \end{tikzcd}
        \end{equation}
        defines a $3r$-retract.
\end{rmk}

\begin{lemma}\label{idemext}
      Let $\cc$ be a TPC, $\begin{tikzcd}
             A\ar[r,"u"] & B\ar[r,"v"] & C\ar[r,"w"] & TA
        \end{tikzcd}$ be an exact triangle in $\cc_0$ and $e_A,e_B$ be $r$-idempotents on $A$ and $B$ respectively. If the following commutes

        \begin{equation}\begin{tikzcd}
            A\ar[d,"e_A"]\ar[r,"u"] & B\ar[r,"v"] \ar[d," e_B"]& C\ar[rr,"w"] & &TA\ar[d,"T( e_A)"]\\
            \ss^{-r}A\ar[r,"\ss^{-r}u"] & \ss^{-r}B\ar[r,"\ss^{-r}v"] & \ss^{-r}C\ar[rr,"\ss^{-r}w"] & &\ss^{-r}TA
        \end{tikzcd}\end{equation}
        then there exists a $3r$-idempotent $e_C:C \to \ss^{-3r}C$, such that

          \begin{equation}\begin{tikzcd}
            A\ar[d,"\eta_{2r}\circ e_A"]\ar[r,"u"] & B\ar[r,"v"] \ar[d,"\eta_{2r} \circ e_B"]& C\ar[rr,"w"]\ar[d,dotted,"e_C"] & &TA\ar[d,"T(\eta_{2r}\circ e_A)"]\\
            \ss^{-3r}A\ar[r,"\ss^{-3r} u"] & \ss^{-3r}B\ar[r,"\ss^{-3r}v"] & \ss^{-3r}C\ar[rr,"\ss^{-3r}w"] & &\ss^{-3r}TA
        \end{tikzcd}\end{equation}
        commutes.

        \begin{proof}
             The proof follows similarly to that of Lemma 1.13 of \cite{BSc}. Firstly, since $\cc_0$ is triangulated there exists a map $k: C \to \ss^{-r}C$ which completes the diagram. Consider now the diagram
              \begin{equation}\begin{tikzcd}
            A\ar[d," e_A"]\ar[r,"u"] & B\ar[r,"v"] \ar[d,"e_B"]& C\ar[rr,"w"]\ar[d,dotted,"k"] & &TA\ar[d,"Te_A"]\\
            \ss^{-r}A\ar[r,"\ss^{-r}u"]\ar[d,"\ss^{-r}e_A"] & \ss^{-r}B\ar[d,"\ss^{-r} e_B"]\ar[r,"\ss^{-r}v"] & \ss^{-r}C\ar[d,dotted,"\ss^{-r}k"]\ar[rr,"\ss^{-r}w"] & &\ss^{-r}TA\ar[d,"\ss^{-r}Te_A"]\\
             \ss^{-2r}A\ar[r,"\ss^{-2r}u"] & \ss^{-2r}B\ar[r,"\ss^{-2r}v"] & \ss^{-2r}C\ar[rr,"\ss^{-2r}w"] & &\ss^{-2r}TA
        \end{tikzcd}\end{equation}
         The morphism $\ss^{-r}k \circ k$ then satisfies 
            \begin{align*}
                \ss^{-r}k \circ k \circ v = &\ss^{-2r}v \circ \ss^{-r}e_B\circ e_B\\
                =&\ss^{-2r}v \circ \eta_r \circ e_B\\
                =& \eta_r \circ \ss^{-r}v \circ e_B\\
                =& \eta_r \circ k \circ v
            \end{align*}
            Similarly $\ss^{-2r}w\circ \ss^{-r}k \circ k= \ss^{-2r}w \circ \eta_r \circ k$. Hence setting $z:=\ss^{-r}k \circ k- \eta_r \circ k$, we have $z \circ v =0$ and $\ss^{-2r}w \circ z =0 $. Since the original triangle is exact, we know $w \circ v =0$, and we  can factor $z$ as $z= z' \circ w$. Hence $\ss^{-2r}z \circ z = \ss^{2r}z' \circ (\ss^{-2r}w \circ z) \circ w =0$. Next note that 
            \begin{align*}
    \ss^{-r}z \circ k =& (\ss^{-2r}k \circ \ss^{-r}k - \eta_r \circ \ss^{-r}k) \circ k\\
    =&\ss^{-2r}k \circ (\ss^{-r}k \circ k - \eta_{r} \circ k)\\
    =& \ss^{-2r}k \circ z 
            \end{align*}
and define $e_C:= \eta_{2r}\circ k + \eta_r \circ z - 2 \ss^{-2r}k \circ z$. One then finds that (using commutativity of $\ss^{-2r}k$ and $\ss^{-r}z$ and that $\ss^{-2r}z\circ z=0$)
\begin{align*}
    \ss^{-3r}e_C \circ e_C=&\ss^{-3r}(\eta_{2r}\circ k + \eta_r \circ z - 2 \ss^{-2r}k \circ z) \circ (\eta_{2r}\circ k + \eta_r \circ z - 2 \ss^{-2r}k \circ z)\\
    =&\eta_{2r}\circ \ss^{-3r} k \circ (\eta_{2r} \circ k + \eta_r \circ z - 2\ss^{-2r}k \circ z)\\
    &+\eta_r \circ \ss^{-3r}z \circ (\eta_{2r}\circ k)\\
    &-2\ss^{-5r}k \circ \ss^{-3r}z\circ (\eta_{2r} \circ k)\\
    =&\eta_{2r} \circ \eta_{2r} \circ \ss^{-r}k \circ k + \eta_r \circ \ss^{-2r}k \circ z - 2\ss^{-2r}(\ss^{-r}k \circ k) \circ z\\
     &+\eta_r \circ \ss^{-3r}z \circ (\eta_{2r}\circ k)\\
    &-2\ss^{-5r}k \circ \ss^{-3r}z\circ (\eta_{2r} \circ k)\\
    =&\eta_{2r} \circ \eta_{2r} \circ (z+\eta_r \circ k) + \eta_r \circ \ss^{-2r}k \circ z - 2\ss^{-2r}(z+ \eta_r \circ k) \circ z\\
     &+\eta_r \circ \ss^{-3r}z \circ (\eta_{2r}\circ k)\\
    &-2\eta_{2r}\circ \ss^{-3r}k \circ \ss^{-r}z\circ k\\
    &=\eta_{3r}\circ \eta_{r}\circ z + \eta_{3r}\circ \eta_{2r}\circ k -2 \eta_{2r}\circ \ss^{-3r}k \circ \ss^{-r}z \circ k\\
     &=\eta_{3r}\circ \eta_{r}\circ z + \eta_{3r}\circ \eta_{2r}\circ k -2 \eta_{2r}\circ \ss^{-2r}z \circ \ss^{-r}k \circ k\\
      &=\eta_{3r}\circ \eta_{r}\circ z + \eta_{3r}\circ \eta_{2r}\circ k -2 \eta_{2r}\circ \ss^{-2r}z \circ \eta_{r}\circ k\\
      &=\eta_{3r}\circ (\eta_{r}\circ z + \circ \eta_{2r}\circ k -2 \ss^{-r}z \circ \circ k)\\
      &=\eta_{3r}\circ e_C
      \end{align*}

            Hence $e_C$ is a $3r$-idempotent on $C$. Moreover we have 
            \begin{align*}
e_C \circ v=& [\eta_{2r}\circ k +\eta_{r}\circ z -2 (\ss^{-2r}k \circ z) ]\circ v\\
=& \eta_{2r}\circ k \circ v \\
=&\eta_{2r}\circ \ss^{-r}v \circ e_B\\
=&\ss^{-3r}v \circ \eta_{2r}\circ e_B\\
\end{align*}
Similarly we have $T(\eta_{2r}\circ e_A)\circ w = \ss^{-3r}w \circ e_C$ and thus 
          \begin{equation}\begin{tikzcd}
            A\ar[d,"\eta_{2r}\circ e_A"]\ar[r,"u"] & B\ar[r,"v"] \ar[d,"\eta_{2r} \circ e_B"]& C\ar[rr,"w"]\ar[d,dotted,"e_C"] & &TA\ar[d,"T(\eta_{2r}\circ e_A)"]\\
            \ss^{-3r}A\ar[r,"\ss^{-3r} u"] & \ss^{-3r}B\ar[r,"\ss^{-3r}v"] & \ss^{-3r}C\ar[rr,"\ss^{-3r}w"] & &\ss^{-3r}TA
        \end{tikzcd}\end{equation}
        commutes.
        \end{proof}
\end{lemma}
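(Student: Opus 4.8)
The final statement to be proved is Lemma~\ref{idemext}, which extends a compatible pair of $r$-idempotents on the first two objects of an exact triangle to a $3r$-idempotent on the third object, compatibly with a rescaled (by $\eta_{2r}$) version of the original data. Here is the plan.

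\medskip

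\noindent\textbf{Plan.} First I would invoke the triangulated structure of $\cc_0$: since the left square with vertical maps $e_A$, $e_B$ commutes, the $TR3$ axiom (filling in morphisms of triangles) produces a map $k\colon C\to\ss^{-r}C$ making the whole ladder of triangles commute. This $k$ is not yet idempotent — it is only a compatible fill-in — so the bulk of the argument is to correct it. Next I would stack two copies of this morphism of triangles (shifting the second down by $\ss^{-r}$) to get a three-row diagram, and study the composite $\ss^{-r}k\circ k$. Using commutativity of the squares and the idempotency relations $\ss^{-r}e_B\circ e_B=\eta_r\circ e_B$ (and the analogue for $e_A$), I would show $z:=\ss^{-r}k\circ k-\eta_r\circ k$ satisfies $z\circ v=0$ and $\ss^{-2r}w\circ z=0$. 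The first relation plus exactness of the triangle $A\to B\to C\to TA$ (so that $z$, killing $v$, factors through $w$, say $z=z'\circ w$) lets me deduce $\ss^{-2r}z\circ z=0$, since $\ss^{-2r}z\circ z=\ss^{-2r}z'\circ(\ss^{-2r}w\circ z)\circ w=0$. I would also record the commutation identity $\ss^{-r}z\circ k=\ss^{-2r}k\circ z$, which follows by expanding $z$ and using that $k$ commutes with its own shifts in the ladder.

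\medskip

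\noindent\textbf{The corrected idempotent.} Following the pattern of Lemma~1.13 of \cite{BSc} but keeping careful track of weights, I would set
\[
e_C:=\eta_{2r}\circ k+\eta_r\circ z-2\,\ss^{-2r}k\circ z\colon C\to\ss^{-3r}C.
\]
The verification that $\ss^{-3r}e_C\circ e_C=\eta_{3r}\circ e_C$ is then a direct (if lengthy) computation: expand the product, use $\ss^{-2r}z\circ z=0$ to kill the cross-terms involving two $z$'s, use $\ss^{-r}z\circ k=\ss^{-2r}k\circ z$ to merge the remaining mixed terms, and repeatedly substitute $\ss^{-r}k\circ k=z+\eta_r\circ k$ together with the composition rule $\eta_a\circ\eta_b=\eta_{a+b}$. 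This is exactly the chain of equalities already displayed in the excerpt's proof sketch, so I would reproduce that computation. Finally, to get the compatibility squares I would check $e_C\circ v=\eta_{2r}\circ k\circ v=\eta_{2r}\circ\ss^{-r}v\circ e_B=\ss^{-3r}v\circ(\eta_{2r}\circ e_B)$, where the terms $\eta_r\circ z\circ v$ and $\ss^{-2r}k\circ z\circ v$ vanish because $z\circ v=0$; dually, $T(\eta_{2r}\circ e_A)\circ w=\ss^{-3r}w\circ e_C$ using $\ss^{-2r}w\circ z=0$. These two identities are precisely the commutativity of the target diagram.

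\medskip

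\noindent\textbf{Main obstacle.} The conceptual steps (fill-in via $TR3$, factoring $z$ through $w$ by exactness) are routine; the real work — and the only genuinely delicate point — is the bookkeeping of weights so that the magic combination $\eta_{2r}\circ k+\eta_r\circ z-2\,\ss^{-2r}k\circ z$ lands in $\text{Mor}^0(C,\ss^{-3r}C)$ and squares correctly to $\eta_{3r}\circ e_C$ rather than to something off by a shift. In the unweighted setting of \cite{BSc} the analogous expression is $k+z-2kz$ and its idempotency uses only $z^2=0$ and $zk=kz$; here every term must be decorated with the right power of $\eta$, and one must be sure that the naive guesses $k$, $kz$ carry weights $2r$, $r$ respectively when multiplied against $\eta$'s so that all three summands have the same total weight $3r$. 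So the plan is to import the combinatorial skeleton of the classical proof and spend the care on the persistence grading; no new idea beyond that is needed.
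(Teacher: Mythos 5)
Your proposal follows the paper's argument step for step: the TR3 fill-in $k$, the correction term $z=\ss^{-r}k\circ k-\eta_r\circ k$ with $z\circ v=0$, $\ss^{-2r}w\circ z=0$, the factorisation $z=z'\circ w$ giving $\ss^{-2r}z\circ z=0$, the commutation $\ss^{-r}z\circ k=\ss^{-2r}k\circ z$, and the same weighted idempotent $e_C=\eta_{2r}\circ k+\eta_r\circ z-2\,\ss^{-2r}k\circ z$ verified by the same computation. It is correct and essentially identical to the paper's proof, including the identification of the weight bookkeeping as the only delicate point.
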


    \begin{cor}\label{idemextcor}
        Let $\cc$ be a TPC and $\begin{tikzcd}
             A\ar[r,"u"] & B\ar[r,"v"] & C\ar[r,"w"] & TA
        \end{tikzcd}$ be an exact triangle in $\cc_0$. Assume $e_A$ is an $r$-idempotent on $A$ and $e_B$ is an $s$-idempotent on $B$ such that the following commutes

        \begin{equation}\begin{tikzcd}
            A\ar[d,"\eta_s \circ e_A"]\ar[r,"u"] & B\ar[r,"v"] \ar[d,"\eta_r \circ e_B"]& C\ar[rr,"w"] & &TA\ar[d,"T(\eta_{s}\circ e_A)"]\\
            \ss^{-(r+s)}A\ar[r,"\ss^{-(r+s)u}"] & \ss^{-(r+s)}B\ar[r,"\ss^{-(r+s)}v"] & \ss^{-(r+s)}C\ar[rr,"\ss^{-(r+s)}w"] & &\ss^{-(r+s)}TA
        \end{tikzcd}\end{equation}
        Then there exists a $3(r+s)$-idempotent $e_C:C \to \ss^{-3(r+s)}C$, such that

          \begin{equation}\begin{tikzcd}
            A\ar[d,"\eta_{2(r+s)}\circ \eta_s \circ e_A"]\ar[r,"u"] & B\ar[r,"v"] \ar[d,"\eta_{2(r+s)}\circ \eta_r \circ e_B"]& C\ar[rr,"w"]\ar[d,dotted,"e_C"] & &TA\ar[d,"T(\eta_{2(r+s)}\eta_{s}\circ e_A)"]\\
            \ss^{-3(r+s)}A\ar[r,"\ss^{-3(r+s)u}"] & \ss^{-3(r+s)}B\ar[r,"\ss^{-3(r+s)}v"] & \ss^{-3(r+s)}C\ar[rr,"\ss^{-3(r+s)}w"] & &\ss^{-3(r+s)}TA
        \end{tikzcd}\end{equation}
        commutes.  
\begin{proof}
    Use Lemma \ref{idemext} on the $(r+s)$-idempotens $\eta_s \circ e_A$ and $\eta_r \circ e_B$.
\end{proof}
    \end{cor}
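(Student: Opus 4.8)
The plan is to obtain this as a direct consequence of Lemma \ref{idemext} by a rescaling of weights. The first step is to record the elementary fact that composing a weighted idempotent with an $\eta$-map raises its weight additively: if $e\colon A\to \ss^{-r}A$ is an $r$-idempotent, then $\eta_s^{\ss^{-r}A}\circ e\colon A\to \ss^{-(r+s)}A$ is an $(r+s)$-idempotent. This is a short computation: using property 2 of the $\eta$-maps one has $\eta_s^{\ss^{-r}A}\circ e=\ss^{-s}e\circ \eta_s^A$, and feeding this, the defining relation $\ss^{-r}e\circ e=\eta_r^{\ss^{-r}A}\circ e$, and the additivity $\eta_a^{\ss^{-b}X}\circ \eta_b^X=\eta_{a+b}^X$ into the two sides of the $(r+s)$-idempotent relation collapses both of them to $\eta_{r+2s}^{\ss^{-r}A}\circ e$. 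Applying this to $e_A$ and $e_B$ shows that $e_A':=\eta_s^{\ss^{-r}A}\circ e_A$ and $e_B':=\eta_r^{\ss^{-s}B}\circ e_B$ are $(r+s)$-idempotents on $A$ and $B$ respectively; crucially, the two resulting weights now agree, which is exactly why one pairs $e_A$ with $\eta_s$ and $e_B$ with $\eta_r$ rather than the other way around.

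Next I would observe that the square hypothesised in the corollary, namely $(\eta_r^{\ss^{-s}B}\circ e_B)\circ u=\ss^{-(r+s)}u\circ(\eta_s^{\ss^{-r}A}\circ e_A)$ (the commutativity of the left square in the displayed diagram), is precisely the hypothesis of Lemma \ref{idemext} for the pair of $(r+s)$-idempotents $e_A'$ and $e_B'$. Hence I can invoke Lemma \ref{idemext} verbatim, with $r$ there replaced by $r+s$ and $e_A,e_B$ there replaced by $e_A',e_B'$. Its conclusion furnishes a $3(r+s)$-idempotent $e_C\colon C\to \ss^{-3(r+s)}C$ for which the morphism-of-triangles diagram with vertical maps $\eta_{2(r+s)}\circ e_A'$, $\eta_{2(r+s)}\circ e_B'$, $e_C$ and $T(\eta_{2(r+s)}\circ e_A')$ commutes. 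Unwinding $e_A'=\eta_s\circ e_A$ and $e_B'=\eta_r\circ e_B$ rewrites these vertical maps as $\eta_{2(r+s)}\circ \eta_s\circ e_A$, $\eta_{2(r+s)}\circ \eta_r\circ e_B$, $e_C$ and $T(\eta_{2(r+s)}\circ\eta_s\circ e_A)$, which is exactly the asserted diagram, completing the argument.

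There is essentially no obstacle here — the corollary is a repackaging of Lemma \ref{idemext} — so the only things to be careful about are (i) the verification in the first paragraph that $\eta_s\circ e$ really satisfies the $(r+s)$-idempotent identity, which is routine but involves juggling several shift functors, and (ii) making sure, in the rescaled application of Lemma \ref{idemext}, that the two input idempotents genuinely carry the common weight $r+s$ and that the $\eta$-factors in the conclusion are composed in the order displayed. Both are bookkeeping; no new idea is needed beyond Lemma \ref{idemext} itself.
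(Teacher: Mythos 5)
Your proposal is correct and follows exactly the paper's route: the corollary is obtained by applying Lemma \ref{idemext} to the $(r+s)$-idempotents $\eta_s\circ e_A$ and $\eta_r\circ e_B$. Your extra verification that composing an $r$-idempotent with $\eta_s$ yields an $(r+s)$-idempotent (both sides reducing to $\eta_{r+2s}\circ e$) is a correct bookkeeping step that the paper leaves implicit.
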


\begin{cor}
     Let $\cc$ be a TPC and $\begin{tikzcd}
             A\ar[r,"u"] & B\ar[r,"v"] & C\ar[r,"w"] & TA
        \end{tikzcd}$ be an exact triangle in $\cc_0$. Assume $e_A$ is an $r$-idempotent on $A$ and $e_B$ is an $s$-idempotent on $B$ such that the following commutes

        \begin{equation}\begin{tikzcd}
            A\ar[d,"\eta_{m-r} \circ e_A"]\ar[r,"u"] & B\ar[r,"v"] \ar[d,"\eta_{m-s} \circ e_B"]& C\ar[rr,"w"] & &TA\ar[d,"T(\eta_{s}\circ e_A)"]\\
            \ss^{-m}A\ar[r,"\ss^{-m}u"] & \ss^{-m}B\ar[r,"\ss^{-m}v"] & \ss^{-m}C\ar[rr,"\ss^{-m}w"] & &\ss^{-m}TA
        \end{tikzcd}\end{equation}
where $m:=\max\{r,s\}$. Then there exists a $3m$-idempotent $e_C:C \to \ss^{-3m}C$, such that

          \begin{equation}\begin{tikzcd}
            A\ar[d,"\eta_{2m}\circ \eta_{m-r} \circ e_A"]\ar[r,"u"] & B\ar[r,"v"] \ar[d,"\eta_{2m}\circ \eta_{m-s} \circ e_B"]& C\ar[rr,"w"]\ar[d,dotted,"e_C"] & &TA\ar[d,"T(\eta_{2m}\circ \eta_{m-r}\circ e_A)"]\\
            \ss^{-3m}A\ar[r,"\ss^{-3m}u"] & \ss^{-3m}B\ar[r,"\ss^{-3m}v"] & \ss^{-3m}C\ar[rr,"\ss^{-3m}w"] & &\ss^{-3m}TA
        \end{tikzcd}\end{equation}
        commutes.  
\end{cor}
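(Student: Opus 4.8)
The plan is to reduce this statement directly to Corollary \ref{idemextcor}. The key observation is purely bookkeeping: the $r$-idempotent $e_A$ and the $s$-idempotent $e_B$ appearing in the hypothesis are to be replaced by the $m$-idempotents $\eta_{m-r}\circ e_A$ and $\eta_{m-s}\circ e_B$, where $m=\max\{r,s\}$. First I would verify that $\eta_{m-r}\circ e_A$ is genuinely an $m$-idempotent: this follows from the identity $\eta_{m-r}^{\ss^{-r}A}\circ e_A = \ss^{-(m-r)}e_A\circ \eta_{m-r}^A$ (property 2 of the $\eta$'s) together with the $r$-idempotent relation for $e_A$, and an analogous computation gives that $\eta_{m-s}\circ e_B$ is an $m$-idempotent. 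So the hypothesis of the corollary is being applied in the special case $r=s=m$; note that with $r=s=m$ the weights $r+s$, $2(r+s)$, $3(r+s)$ in Corollary \ref{idemextcor} become $2m$, $4m$, $6m$, which is \emph{not} what the present statement claims. This means I should instead apply Lemma \ref{idemext} directly (not the corollary), taking the two $m$-idempotents $\eta_{m-r}\circ e_A$ and $\eta_{m-s}\circ e_B$ in the roles of the ``$e_A$'' and ``$e_B$'' of the lemma; the lemma with input weight $m$ produces a $3m$-idempotent $e_C$ and the commuting diagram with vertical maps $\eta_{2m}\circ(\eta_{m-r}\circ e_A)$, $\eta_{2m}\circ(\eta_{m-s}\circ e_B)$ and $\ss^{-3m}w$, which is exactly the claimed diagram.

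So the steps, in order, are: (1) check $\eta_{m-r}\circ e_A$ is an $m$-idempotent and $\eta_{m-s}\circ e_B$ is an $m$-idempotent, using the functoriality/composition properties of $\eta$ recalled in the review section; (2) check that the hypothesis square of the present statement is precisely the hypothesis square of Lemma \ref{idemext} for these two $m$-idempotents — here one uses $\eta_{m-r}\circ e_A \circ u = \ss^{-m}u \circ (\eta_{m-r}\circ e_A)$ and the symmetric relation for $e_B$, together with the given commutativity, and the observation that $T$ commutes with $\ss^r$ so $T(\eta_{m-r}\circ e_A)$ makes sense as the top-right vertical map becomes the bottom-right one; (3) invoke Lemma \ref{idemext} to obtain the $3m$-idempotent $e_C$ and the resulting diagram, and observe that $\eta_{2m}\circ(\eta_{m-r}\circ e_A)$ is literally the vertical map in the conclusion.

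The only genuine point requiring care — the ``main obstacle'', though it is mild — is step (2): matching the vertical arrows exactly. In Lemma \ref{idemext} the hypothesis diagram has vertical maps $e_A, e_B, T e_A$ landing in the $\ss^{-m}$-shifted triangle, whereas here the stated vertical maps are already the composites $\eta_{m-r}\circ e_A$ etc.; one must be careful that the rightmost vertical map in the hypothesis of the present statement reads $T(\eta_s\circ e_A)$ (which appears to be a typo for $T(\eta_{m-r}\circ e_A)$, since $\eta_s\circ e_A$ is not a morphism $A\to\ss^{-m}A$ unless $s=m-r$). Assuming that the intended map is $T(\eta_{m-r}\circ e_A):TA\to \ss^{-m}TA$, everything lines up and the proof is a one-line appeal:

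\begin{proof}
    Set $m=\max\{r,s\}$. By the composition rule $\eta_{m-r}^{\ss^{-r}A}\circ\eta_r^A=\eta_m^A$ and the relation $\eta_r^B\circ f=\ss^{-r}f\circ\eta_r^A$ from the review section, one checks directly that $\eta_{m-r}\circ e_A:A\to\ss^{-m}A$ is an $m$-idempotent and likewise $\eta_{m-s}\circ e_B:B\to\ss^{-m}B$ is an $m$-idempotent. Moreover, using $\eta_{m-r}\circ e_A\circ u=\ss^{-m}u\circ(\eta_{m-r}\circ e_A)$ (and the symmetric identities for $v$, $w$) together with the hypothesis, the given square is exactly the hypothesis of Lemma \ref{idemext} applied to the $m$-idempotents $\eta_{m-r}\circ e_A$ and $\eta_{m-s}\circ e_B$. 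Applying Lemma \ref{idemext} produces a $3m$-idempotent $e_C:C\to\ss^{-3m}C$ together with the commuting diagram whose vertical maps are $\eta_{2m}\circ(\eta_{m-r}\circ e_A)$, $\eta_{2m}\circ(\eta_{m-s}\circ e_B)$, $e_C$ and $T(\eta_{2m}\circ\eta_{m-r}\circ e_A)$, which is the asserted diagram.
\end{proof}
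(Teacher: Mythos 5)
Your proof is correct and is exactly the intended argument: the paper proves the analogous Corollary \ref{idemextcor} by feeding the shifted idempotents into Lemma \ref{idemext}, and your application of Lemma \ref{idemext} to the $m$-idempotents $\eta_{m-r}\circ e_A$ and $\eta_{m-s}\circ e_B$ (rather than invoking Corollary \ref{idemextcor} with $r=s=m$, which would only give $6m$) is the same route. Your observation that the vertical map $T(\eta_s\circ e_A)$ in the hypothesis should read $T(\eta_{m-r}\circ e_A)$ is also right.
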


Take a morphism $u: F \to G$ where, $F$ and $G$ are given by weighted retract diagrams 
\begin{equation}\begin{tikzcd}
          \yy(A) \ar[dr,"r_F"]& & & \yy(B)\ar[dr,"r_G"]&\\
          F\ar[u,"s_F"]\ar[r,"\eta_r"] & \ss^{-r}F & & G\ar[u,"s_G"]\ar[r,"\eta_s"] & \ss^{-s}G
      \end{tikzcd}\end{equation}
we first complete to the diagram 
\begin{equation}\begin{tikzcd}
        F\ar[r,"u"]\ar[d,"s_F"] & G\ar[d,"\ss^{-r}s_G\circ \eta_r"] \\
        \yy(A) \ar[r,"\tilde{u}"]\ar[d,"\ss^{-s}r_F\circ \eta_s"]& \ss^{-r}\yy(B)\ar[d,"\ss^{-r}r_G"]\\
        \ss^{-(r+s)}F\ar[r,swap,"\ss^{-(r+s)}u"] & \ss^{-(r+s)}G
    \end{tikzcd}\end{equation}
where $\tilde{u}= \ss^{-r}s_G \circ \ss^{-r}u \circ r_F$. Completing $\tilde{u}$ to an exact triangle using the triangulated structure of $\cc_0$ we get a diagram
\begin{equation}\begin{tikzcd}
        F\ar[r,"u"]\ar[d,"s_F"] & G\ar[d,"\ss^{-r}s_G\circ \eta_r"] & &\\
        \yy(A) \ar[r,"\tilde{u}"]\ar[d,"\ss^{-s}r_F\circ \eta_s"]& \ss^{-r}\yy(B)\ar[d,"\ss^{-r}r_G"]\ar[r,"v"]&\yy(C) \ar[r,"w"]&T\yy(A)\\
        \ss^{-(r+s)}F\ar[r,swap,"\ss^{-(r+s)}u"] & \ss^{-(r+s)}G& &
    \end{tikzcd}\end{equation}

Using Lemma \ref{idemext} on the diagram 

\begin{equation}\begin{tikzcd}
    \yy(A) \ar[r,"\tilde{u}"]\ar[d,"\eta_s \circ e_F"]& \ss^{-r}\yy(B)\ar[r,"v"]\ar[d,"\ss^{-r}(\eta_r \circ e_G)"] & \yy(C)\ar[r,"w"] & T\yy(A)\ar[d,"T(\eta_s \circ e_F)"]\\
    \ss^{-(r+s)}\yy(A)\ar[r,"   \ss^{-(r+s)}\tilde{u}"] & \ss^{-(2r+s)}\yy(B)\ar[r,"   \ss^{-(r+s)}v"]  & \ss^{-(r+s)}\yy(C) \ar[r,"   \ss^{-(r+s)}w"]& T\ss^{-(r+s)}\yy(A)
\end{tikzcd}\end{equation}

we obtain a $3(r+s)$-idempotent $e_C: \yy(C) \to \ss^{-3(r+s)}\yy(C)$, that makes the following commute,

\begin{equation}\label{ecidem}\begin{tikzcd}
    \yy(A) \ar[r,"\tilde{u}"]\ar[d,"\eta_{2(r+s)}\circ \eta_s \circ e_F"]& \ss^{-r}\yy(B)\ar[r,"v"]\ar[d,"\eta_{2(r+s)}\circ \ss^{-r}(\eta_r \circ e_G)"] & \yy(C)\ar[r,"w"] \ar[d,dotted,"e_C"]& T\yy(A)\ar[d,"\eta_{2(r+s)}\circ T(\eta_s \circ e_F)"]\\
    \ss^{-3(r+s)}\yy(A)\ar[r,"   \ss^{-3(r+s)}\tilde{u}"] & \ss^{-3(r+s)}\ss^{-r}\yy(B)\ar[r,"   \ss^{-3(r+s)}v"]  & \ss^{-3(r+s)}\yy(C) \ar[r,"   \ss^{-3(r+s)}w"]& T\ss^{-3(r+s)}\yy(A)
\end{tikzcd}\end{equation}

$e_C$ then splits in $\text{Split}\pp(\cc)_0$, assume as $H\xrightarrow{s_H} \yy(C) \xrightarrow{r_H}\ss^{-3(r+s)}H$, and so consider;

\begin{equation}\begin{tikzcd}
        F\ar[r,"u"]\ar[d,"s_F"] & G\ar[d,"\ss^{-r}s_G\circ \eta_r"] & H\ar[d,"s_H"]&\\
        \yy(A) \ar[r,"\tilde{u}"]\ar[d,"\eta_{2(r+s)} \circ \ss^{-s}r_F\circ \eta_s"]& \ss^{-r}\yy(B)\ar[d,"\eta_{2(r+s)}\circ \ss^{-r}r_G"]\ar[r,"v"]&\yy(C)\ar[d,"r_H"] \ar[r,"w"]&T\yy(A)\\
        \ss^{-3(r+s)}F\ar[r,swap,"\ss^{-3(r+s)}u"] & \ss^{-3(r+s)}G& \ss^{-3(r+s)} H&
    \end{tikzcd}\end{equation}

We cannot construct a map from $\text{Hom}_{\cc_0}(G,H)$ making the two squares with a single missing edge commute. However we do obtain a map $G \to \ss^{-3(r+s)}H$ given by $r_H \circ v \circ \ss^{-r}s_G \circ \eta_r$. Set $\tilde{v}=r_H \circ v \circ \ss^{-r}s_G \circ \eta_r$ then one can check that the following commutes

\begin{equation}\begin{tikzcd}
        F\ar[r,"u"]\ar[d,"s_F"] & G\ar[d,"\ss^{-r}s_G\circ \eta_r"]\ar[r,"\tilde{v}"] & \ss^{-3(r+s)}H\ar[d,"\ss^{-3(r+s)}s_H"]& &\\
        \yy(A) \ar[r,"\tilde{u}"]\ar[d,"\eta_{2(r+s)} \circ \ss^{-s}r_F\circ \eta_s"]& \ss^{-r}\yy(B)\ar[d,"\eta_{2(r+s)}\circ \ss^{-r}r_G"]\ar[r,"\eta_{3(r+s)}\circ v"]&\ss^{-3(r+s)}\yy(C)\ar[d,"\ss^{-3(r+s)}r_H"] \ar[rr,"\ss^{-3(r+s)}w"]&&\ss^{-3(r+s)}T\yy(A)\\
        \ss^{-3(r+s)}F\ar[r,swap,"\ss^{-3(r+s)}u"] & \ss^{-3(r+s)}G\ar[r,"\ss^{-3(r+s)}\tilde{v}"]&  \ss^{-6(r+s)} H& &
    \end{tikzcd}\end{equation}

For the top square we have 
\begin{align*}
  \ss^{-3(r+s)}s_H \circ \tilde{v}  =&\ss^{-3(r+s)}s_H \circ r_H \circ v \circ \ss^{-r}s_G \circ \eta_r\\
  =&e_C \circ v \circ \ss^{-r}s_G \circ \eta_r\\
  =&\ss^{-3(r+s)}v \circ \eta_{2(r+s)}\circ \eta_{2r} \circ e_G \circ s_G\\
  =&\ss^{-3(r+s)}v \circ \eta_{2(r+s)}\circ \eta_{2r}\circ \eta_s \circ s_G\\
  =&\eta_{3(r+s)}\circ v \circ \ss^{-r}s_G \circ \eta_r.
\end{align*}
And for the bottom square we have
\begin{align*}
    \ss^{-3(r+s)}\tilde{v} \circ \eta_{2(r+s)}\circ \ss^{-r}r_G=& \eta_{2(r+s)}\circ \ss^{-(r+s)}[r_H \circ v \circ \ss^{-r}s_G \circ \eta_r]\circ \ss^{-r}r_G\\
    =&\eta_{2(r+s)} \circ \ss^{-(r+s)}r_H \circ \ss^{-(r+s)}v\circ \eta_r \circ \ss^{-r}e_G\\
    =&\ss^{-3(r+s)}r_H \circ \ss^{-3(r+s)}v \circ \eta_{2(r+s)}\circ \eta_r \circ \ss^{-r}e_G\\
    =&\ss^{-3(r+s)}r_H \circ e_C \circ v\\
    =&\eta_{3(r+s)}\circ r_H \circ v\\
    =& \ss^{-3(r+s)}\circ \eta_{3(r+s)}\circ v
\end{align*}

Next we use the construction in remark \ref{diagconstructing} and let $\tilde{w}=\ss^{-3(r+s)}(\eta_{2(r+s)}\circ \ss^{-s}r_{TF} \circ \eta_s \circ w \circ s_H)$ and then complete the diagram to

\begin{equation}\begin{tikzcd}
        F\ar[r,"u"]\ar[d,"s_F"] & G\ar[d,"\ss^{-r}s_G\circ \eta_r"]\ar[r,"\tilde{v}"] & \ss^{-3(r+s)}H\ar[d,"\ss^{-3(r+s)}s_H"]\ar[rrr,"\tilde{w}"]& & & \ss^{-6(r+s)}TF\ar[d,"\ss^{-6(r+s)}s_{TF}"]\\
        \yy(A) \ar[r,"\tilde{u}"]\ar[d,"\eta_{2(r+s)} \circ \ss^{-s}r_F\circ \eta_s"]& \ss^{-r}\yy(B)\ar[d,"\eta_{2(r+s)}\circ \ss^{-r}r_G"]\ar[r,"\eta_{3(r+s)}\circ v"]&\ss^{-3(r+s)}\yy(C)\ar[d,"\ss^{-3(r+s)}r_H"] \ar[rrr,"\eta_{3(r+s)}\circ \ss^{-3(r+s)}w"]& & &\ss^{-6(r+s)}T\yy(A)\ar[d,"\ss^{-6(r+s)}(\eta_{2(r+s)}\circ \ss^{-s}r_{TF} \circ \eta_s)"]\\
        \ss^{-3(r+s)}F\ar[r,swap,"\ss^{-3(r+s)}u"] & \ss^{-3(r+s)}G\ar[r,"\ss^{-3(r+s)}\tilde{v}"]&  \ss^{-6(r+s)} H\ar[rrr,"\ss^{-3(r+s)}\tilde{w}"]& &&\ss^{-9(r+s)}TF
    \end{tikzcd}\end{equation}\label{triangle}

Checking the last two squares commute; the top right square gives

\begin{align*}
    \ss^{-6(r+s)}s_{TF} \circ \tilde{w}=& \ss^{-6(r+s)}s_{TF} \circ \ss^{-3(r+s)}[\eta_{2(r+s)}\circ \ss^{-s}r_{TF} \circ \eta_s \circ w \circ s_H]\\
    =&\ss^{-3(r+s)}[\ss^{-3(r+s)}s_{TF} \circ \eta_{2(r+s)}\circ \ss^{-s}r_{TF} \circ \eta_s \circ w \circ s_H]\\
    =&\ss^{-3(r+s)}[\eta_{2(r+s)}\circ\ss^{-(r+s)}s_{TF} \circ  \ss^{-s}r_{TF} \circ \eta_s \circ w \circ s_H]\\
    =&\ss^{-3(r+s)}[\eta_{2(r+s)}\circ\ss^{-s}e_{TF} \circ \eta_s \circ w \circ s_H]\\
   =&\ss^{-3(r+s)}[ \eta_{2(r+s)}\circ T(\eta_s \circ e_F)  \circ w \circ s_H]\\
    =&\ss^{-3(r+s)}[ \ss^{-3(r+s)}w \circ e_C \circ s_H]\\
    =&\ss^{-6(r+s)}w \circ \eta_{3(r+s)}\circ \ss^{-3(r+s)}s_H\\
    =&\eta_{3(r+s)}\circ \ss^{-3(r+s)}w \circ \ss^{-3(r+s)}s_H
\end{align*}
and the bottom right square gives

\begin{align*}
    \ss^{-3(r+s)}\tilde{w} \circ \ss^{-3(r+s)}r_H=&\ss^{-3(r+s)}\big(\ss^{-3(r+s)}[(\eta_{2(r+s)}\circ \ss^{-s}r_{TF} \circ \eta_s) \circ w \circ s_H]\circ r_H\big) \\
    =& \ss^{-3(r+s)}\big(\ss^{-3(r+s)}[(\eta_{2(r+s)}\circ \ss^{-s}r_{TF} \circ \eta_s) \circ w] \circ e_C\big) \\
    =& \ss^{-3(r+s)}\big(\ss^{-3(r+s)}[(\eta_{2(r+s)}\circ \ss^{-s}r_{TF} \circ \eta_s)] \circ \eta_{2(r+s)}\circ T(\eta_s \circ e_F) \circ w\big) \\
    =&\eta_{4(r+s)}\circ \ss^{-3(r+s)}\big(\ss^{-(r+s)} (\eta_s \circ r_{TF} ) \circ T(\eta_s \circ e_F) \circ w\big)\\
    =&\eta_{4(r+s)}\circ \ss^{-3(r+s)}\big(\ss^{-(r+s)} (\eta_s \circ r_{TF} ) \circ \eta_s \circ Te_F) \circ w\big)\\
    =&\eta_{4(r+s)}\circ \ss^{-3(r+s)}\big(\eta_s \circ \ss^{-r} (\eta_s \circ r_{TF} ) \circ Te_F) \circ w\big)\\  =&\eta_{4(r+s)}\circ \ss^{-3(r+s)}\big(\eta_{2s} \circ \ss^{-r}r_{TF}  \circ Te_F \circ w\big)\\
=&\eta_{4(r+s)}\circ \ss^{-3(r+s)}\big(\eta_{2s} \circ \ss^{-r}r_{TF} \circ \eta_r \circ w\big)\\
=&\eta_{4(r+s)}\circ \ss^{-3(r+s)}\big(\eta_{r+s} \circ \eta_s \circ r_{TF}  \circ w\big)\\
     =&\eta_{5(r+s)} \circ \eta_s \circ \ss^{-3(r+s)}r_{TF} \circ \ss^{-3(r+s)}w\\
     =& \eta_{2(r+s)}\circ \ss^{-6(r+s)}(\eta_{s}\circ r_{TF})\circ \eta_{3(r+s)}\circ \ss^{-3(r+s)}w\\
     =& \ss^{-6(r+s)}(\eta_{2(r+s)}\circ \ss^{-s}r_{TF} \circ \eta_s)\circ \eta_{3(r+s)}\circ \ss^{-3(r+s)}w
\end{align*}
It is important to note that the diagram is not strictly a morphism of triangles as $Tr_F \neq r_{TF}$ and $Ts_F\neq s_{TF}$.

 \begin{rmk}
        If $r=s=0$ then in the above diagram we recover the completion of $u$ to an exact triangle in $\text{Split}(\cc_0)$ the usual idempotent completion of the zero level category.
    \end{rmk}

\begin{rmk}
    The central triangle \begin{equation}\begin{tikzcd}
         \yy(A) \ar[r,"\tilde{u}"]& \ss^{-r}\yy(B)\ar[r,"\eta_{3(r+s)}\circ v"]&\ss^{-3(r+s)}\yy(C) \ar[rrr,"\eta_{3(r+s)}\circ \ss^{-3(r+s)}w"]& & &\ss^{-6(r+s)}T\yy(A)
    \end{tikzcd}\end{equation}
    is the image of a strict exact triangle of weight $6(r+s)$ under the embedding $\yy$. Indeed one can check the following commutes

\begin{equation}\begin{tikzcd}
        & & \ss^{3(r+s)}\yy(C)\ar[d,"\eta_{3(r+s)}"]\ar[drrr,"\ss^{6(r+s)}(\eta_{3(r+s)}\circ \ss^{-3(r+s)}w)"]&\\
         \yy(A) \ar[r,"\tilde{u}"]& \ss^{-r}\yy(B)\ar[dr,swap,"\eta_{3(r+s)}\circ v"]\ar[r,"v"]&\yy(C)\ar[rrr,"w"]\ar[d,"\eta_{3(r+s)}"] & &&T\yy(A)\\
         & &\ss^{-3(r+s)}\yy(C) \ar[rrr,"\eta_{3(r+s)}\circ \ss^{-3(r+s)}w"]& & &\ss^{-6(r+s)}T\yy(A)
    \end{tikzcd}\end{equation}
    
\end{rmk}

    Assume we have a triangle 
    \begin{equation}
       \Delta:= \begin{tikzcd}
            F\ar[r] & G\ar[r] & H\ar[r] & \ss^{-r}TF
        \end{tikzcd}
    \end{equation}
    then we call $\Delta$ an \textbf{exact $(r,s)$-triangle} in $\text{Split}_\pp(\cc)_0$ if it is an $s$-retract of a representable strict exact triangle of weight $r$. Explicitly we require that there exists a diagram 
    \begin{equation}
        \begin{tikzcd}
            F\ar[r]\ar[d,"s"] & G\ar[r] \ar[d]& H\ar[r]\ar[d] & \ss^{-r}TF\ar[d,"s'"]\\
             \yy(A)\ar[d,"r"]\ar[r]&\yy(B)\ar[d]\ar[r]\ar[d] &\yy(C)\ar[d]\ar[r] &\ss^{-r}T\yy(A)\ar[d,"r'"] \\
              \ss^{-s}F\ar[r] & \ss^{-s}G\ar[r] & \ss^{-s}H\ar[r] & \ss^{-(r+s)}TF
        \end{tikzcd}
    \end{equation}
    such that composition along vertical arrows gives $\eta_s$. Furthermore we do not require $r'=Tr$ and $s'=Ts$ but only that $\ss^{-s}s'\circ r'= T(\ss^{-s}s \circ r)$. We have already seen that any morphism $u:F \to G$ can be completed to an exact $(a,b)$-triangle, with $a=6(r+s)$ and $b=3(r+s)$ where $F<_r \yy(A)$ and $G<_s \yy(B)$ are the sizes of the retractions defining $F$ and $G$.

    \begin{rmk}
    Any strict exact triangle of weight $r$ in $\cc_0$, can be viewed as an exact $(r,0)$-triangle in $\text{Split}_\pp(\cc)_0$, via 
       \begin{equation}\begin{tikzcd}
           \yy(A)\ar[d,equals]\ar[r,"\bar{u}"] & \yy(B)\ar[d,equals]\ar[r,"\bar{v}"] & \yy(C)\ar[d,equals]\ar[r,"\bar{w}"] & \ss^{-r}T\yy(A)\ar[d,equals]\\
          \yy(A)\ar[d,equals]\ar[r,"\bar{u}"] & \yy(B)\ar[d,equals]\ar[r,"\bar{v}"] & \yy(C)\ar[d,equals]\ar[r,"\bar{w}"] & \ss^{-r}T\yy(A)\ar[d,equals]\\
          \yy(A)\ar[r,"\bar{u}"] & \yy(B)\ar[r,"\bar{v}"] & \yy(C)\ar[r,"\bar{w}"] & \ss^{-r}T\yy(A)
    \end{tikzcd}\end{equation}
    \end{rmk}
\begin{rmk}
    The identity $1_F: F \to F$ for any $F<_r\yy(A)$ completes to an exact $(0,r)$-triangle: $F \xrightarrow{=}F \to 0 \to TF$ via:

\begin{equation}\begin{tikzcd}
    F\ar[r,equals]\ar[d,"s_F"] & F\ar[r]\ar[d,"s_F"] & 0\ar[r]\ar[d] & TF\ar[d,"s_{TF}"]\\
    \yy(A)\ar[d,"r_F"] \ar[r,equals] & \yy(A)\ar[r]\ar[d,"r_F"] & 0\ar[r]\ar[d] & T\yy(A)\ar[d,"r_{TF}"]\\
    \ss^{-r}F\ar[r,equals] & \ss^{-r}F\ar[r] & 0\ar[r] & \ss^{-r}TF
\end{tikzcd}\end{equation}

\end{rmk}

\begin{rmk}
    The map $\eta_t: F \to \ss^{-t}F$ where $F<_r \yy(A)$, then we can complete this to an exact $(t,r)$-triangle, $F \xrightarrow{\eta_t}\ss^{-t}F\to 0 \to \ss^{-t}TF$ via
 \begin{equation}\begin{tikzcd}
    F\ar[r,"\eta_t"]\ar[d,"s_F"] & \ss^{-t}F\ar[r]\ar[d,"\ss^{-t}s_F"] & 0\ar[r]\ar[d] & \ss^{-t}TF\ar[d,"\ss^{-t}s_{TF}"]\\
    \yy(A)\ar[d,"r_F"] \ar[r,"\eta_t"] & \ss^{-t}\yy(A)\ar[r]\ar[d,"\ss^{-t}r_F"] & 0\ar[r]\ar[d] & \ss^{-t}T\yy(A)\ar[d,"\ss^{-t}r_{TF}"]\\
    \ss^{-r}F\ar[r,"\eta_t"] & \ss^{-r}F\ar[r] & 0\ar[r] & \ss^{-(r+t)}TF
\end{tikzcd}\end{equation}

\end{rmk}

 It is clear that $\text{Split}_\pp(\cc)_0$ is not triangulated. We can only complete morphisms to retracts of strict exact triangles with weight dependent on the size of the retractions defining the objects. Thus it seems to be that we need to relax the assumption that the zero level category be triangulated in the definition of a TPC. Furthermore it points towards including sizes on objects and not just morphisms.

\begin{lemma}
    Every exact triangle in $\text{Split}_\pp(\cc)_\infty$ is represented by some $(r,s)$-triangle.
\end{lemma}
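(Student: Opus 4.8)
The plan is to reduce an arbitrary exact triangle in $\text{Split}_\pp(\cc)_\infty$ to one of the concrete triangles built earlier in this subsection from a morphism $u\colon F\to G$. First I would recall that, by Theorem \ref{inftytri}, $\text{Split}_\pp(\cc)_\infty$ is equivalent to $\text{Split}(\cc_\infty)$, which is idempotent complete, and in particular every object of $\text{Split}_\pp(\cc)_\infty$ is (isomorphic to) an $r$-retract $F<_r \yy(A)$ of a representable object for some $A\in\cc$ and some $r\ge 0$; similarly for the second object $G<_s \yy(B)$. Given an exact triangle $F\xrightarrow{u}G\xrightarrow{v}H\xrightarrow{w}TF$ in $\text{Split}_\pp(\cc)_\infty$, I would first lift the morphism $[u]$ to a morphism $u'\colon F\to G$ in $\text{Split}_\pp(\cc)_0$ representing it (using Lemma \ref{localwrtsisos}, or rather the description of $\text{Hom}_{\cc_\infty}$ as a localisation, applied to $\text{Split}_\pp(\cc)$, together with the fact that we may represent any morphism in the limit category by an honest morphism in the zero level up to precomposition with an $\eta$); after possibly replacing $u'$ by $\eta_t\circ u'$ we may take $u'\colon F\to G$ on the nose.

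Next I would invoke the explicit construction carried out between remark \ref{diagconstructing} and the definition of exact $(r,s)$-triangle: $u'$ completes to an exact $(6(r+s),3(r+s))$-triangle
\begin{equation}
\begin{tikzcd}
F\ar[r,"u'"] & G\ar[r] & H'\ar[r] & \ss^{-6(r+s)}TF
\end{tikzcd}
\end{equation}
in $\text{Split}_\pp(\cc)_0$, where $H'$ is the splitting of the idempotent $e_C$ from \eqref{ecidem}. Passing to $\text{Split}_\pp(\cc)_\infty$, this triangle maps to an exact triangle on the vertices $F$, $G$, $H'$, $TF$ with first map $[u]=u$; here I would use that the central triangle of the big diagram (the one just before the "$r=s=0$" remark) is the image under $\yy$ of a strict exact triangle, hence becomes exact in $\cc_\infty$, and retracts of exact triangles are exact, so the top row becomes exact in $\text{Split}_\pp(\cc)_\infty$ (and $\ss^{-6(r+s)}TF\cong TF$ there, since $\eta$'s are isomorphisms in the limit category). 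Then by the standard uniqueness-of-cones statement in a triangulated category (both $H$ and $H'$ fit into exact triangles with the same first morphism $u$), there is an isomorphism $H\cong H'$ in $\text{Split}_\pp(\cc)_\infty$ compatible with the triangle maps, so the original triangle is isomorphic to the image of the $(6(r+s),3(r+s))$-triangle; since being an exact triangle is preserved under isomorphism, this exhibits the original triangle as represented by an $(r',s')$-triangle with $r'=6(r+s)$, $s'=3(r+s)$.

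The main obstacle I expect is bookkeeping rather than conceptual: making precise the sense in which "an $(r,s)$-triangle represents an exact triangle in the limit category", i.e. checking that the functor $[-]\colon\text{Split}_\pp(\cc)_0\to\text{Split}_\pp(\cc)_\infty$ sends an $(r,s)$-triangle to an exact triangle (which forces reconciling the shifts $\ss^{-s}$ and the non-equalities $s'\ne Ts$, $r'\ne Tr$ — but in the limit category all the $\eta$'s and hence these discrepancies become isomorphisms, and the condition $\ss^{-s}s'\circ r'=T(\ss^{-s}s\circ r)$ is exactly what is needed), and that conversely the triangle attached to $u'$ in the earlier construction really is an $(r',s')$-triangle in the sense just defined (this was verified there). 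A secondary subtlety is the choice of representative $u'$ of $u$ and the fact that the splitting $H'$ of $e_C$ is only unique up to strong $2\cdot 3(r+s)$-isomorphism, but this ambiguity disappears in the limit category, so it does not affect the conclusion. I would therefore present the proof as: (1) reduce to representable retracts via Theorem \ref{inftytri}; (2) lift $u$ to $u'$; (3) quote the explicit $(6(r+s),3(r+s))$-triangle completing $u'$; (4) observe its image in $\text{Split}_\pp(\cc)_\infty$ is exact; (5) conclude by uniqueness of cones.
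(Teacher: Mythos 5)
Your argument is essentially correct, but it takes a genuinely different route from the paper's. The paper never re-constructs a cone: it starts from the defining datum of exactness in $\text{Split}(\cc_\infty)$ (the given triangle together with its retraction onto a representable exact triangle), chooses zero-level representatives of all the morphisms in that three-row diagram, and then re-shifts the objects so that everything commutes on the nose in $\text{Split}_\pp(\cc)_0$; the outcome is an $(r,s)$-triangle whose vertices are shifts of the original $F,G,H,TF$ and whose maps are representatives of the original $u,v,w$, with $r=\lceil u'\rceil+\lceil v'\rceil+\lceil w'\rceil$ and $s=p$ read off from the chosen representatives. You instead lift only $u$, complete it to the explicit $(6(r+s),3(r+s))$-triangle built earlier from the idempotent $e_C$, check that its image in the limit category is exact (being a retract of a representable exact triangle, which is exactly how exactness in $\text{Split}(\cc_\infty)$ is defined), and then invoke uniqueness of cones. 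This works, but it buys slightly less: your identification of the given triangle with the image of an $(r,s)$-triangle is only up to a non-canonical isomorphism of the third vertex, whereas the paper's representative keeps the same objects and maps up to the canonical shift isomorphisms --- the stronger form one wants if these representatives are to be used to define BCZ-style weights by infimizing over representatives of a \emph{given} triangle; relatedly, your weights $6(r+s)$, $3(r+s)$ depend only on the retraction weights of $F$ and $G$, not on the triangle being represented, so they give a cruder bound. One small repair: a morphism of $\text{Split}_\pp(\cc)_\infty$ need not admit a weight-zero representative $F\to G$ (composing with $\eta$'s or $\zeta$'s only shifts the target, and the example with the idempotent $e_\infty$ shows weight-zero representatives can fail to exist), so you must run the cone construction on a representative $\ss^{t}F\to G$; this is harmless, since $\ss^{t}F$ is still a weighted retract of the representable $\ss^{t}\yy(A)\cong \yy(\ss^{t}A)$ and $\ss^{t}F\cong F$ in the limit category, but the claim that you can take $u'\colon F\to G$ ``on the nose'' should be dropped.
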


\begin{proof}
Assume that we have an exact triangle defined via the diagram

\begin{equation}
            \begin{tikzcd}
            F\ar[r,"u"]\ar[d,"s_F"] & G\ar[r,"v"] \ar[d,"s_G"]& H\ar[r,"w"]\ar[d,"s_H"] & TF\ar[d,"s_{TF}"]\\
             \yy(A)\ar[d,"r_F"]\ar[r,"\bar{u}"]&\yy(B)\ar[d,"r_G"]\ar[r,"\bar{v}"] &\yy(C)\ar[d,"r_H"]\ar[r,"\bar{w}"] &T\yy(A)\ar[d,"r_{TF}"] \\
              F\ar[r,"u"] & G\ar[r,"v"] & H\ar[r,"w"] & TF
        \end{tikzcd}
\end{equation}
 This diagram is the image of a diagram in $\text{Split}_\pp(\cc)$. Denote $u'\in \text{Hom}_{\text{Split}_\pp(\cc)}(F,G)$ to be a representative of $u$, and denote similarly representatives of the other morphisms ($-'$ represents $-$), we then have

\begin{align*}
    \lceil u' \rceil + \lceil s_G' \rceil = \lceil \bar{u}' \rceil +\lceil s_F' \rceil 
\end{align*}

and similarly for all other squares, where $\lceil - \rceil$ denotes the persistence module level (shift) of the morphism. Also note we must have

\begin{equation}\lceil r_F' \rceil + \lceil s'_F \rceil = \lceil r_G' \rceil + \lceil s_G' \rceil\end{equation}
and likewise for $H$ and $TF$. We start by shifting the central triangle to obtain (the image of) a strict exact triangle in $\text{Split}_\pp(\cc)_0$, 

\begin{equation}\begin{tikzcd}
    \yy(A) \ar[r,"\bar{u}''"]& \ss^{-\lceil \bar{ u}' \rceil}\yy(B)\ar[r,"\bar{v}''"] & \ss^{-(\lceil \bar{u}' \rceil+ \lceil \bar{v}' \rceil)}\yy(C) \ar[r,"\bar{w}''"]& \ss^{-(\lceil \bar{u}' \rceil +\lceil \bar{v}'\rceil + \lceil \bar{w}' \rceil)}T\yy(A)
\end{tikzcd}\end{equation}

where
\begin{equation}\bar{u}''=  \eta_{0,-\lceil \bar{u}' \rceil}\circ \bar{u}'\end{equation}
\begin{equation}\bar{v}'' = \eta_{0,-(\lceil \bar{v}' \rceil + \lceil \bar{u}' \rceil)}\circ \bar{v}'\circ  \eta_{-\lceil \bar{u}' \rceil,0}\end{equation} 
and similarly for $\bar{w}''$. This strict exact triangle represents the central triangle in $\text{Split}(\cc_\infty)$.  Next, we can choose representatives of $s_-$ and $r_-$ such that $r'_- \circ s'_- = 1'_- $, i.e., $[r_F' \circ s_F']_{\text{Split}_\pp(\cc)_\infty}=1_F$. After shifting we can then represent the retractions as 
\begin{equation}\begin{tikzcd}
    F\ar[d,"s_F''"]\\
    \ss^{-\lceil s_F' \rceil }\yy(A)\ar[d,"r_F''"]\\
    \ss^{- (\lceil s_F' \rceil + \lceil r_F' \rceil)}F
\end{tikzcd}\end{equation}
a diagram in $\text{Split}_\pp(\cc)_0$. The composition represents $1_F$ and so must be $k$-equivalent to $\eta_{\lceil s'_F \rceil + \lceil r_F' \rceil }$ for some $k\geq 0$. We can therefore assume 

\begin{equation}\begin{tikzcd}
    F\ar[d,"\tilde{s}_F"]\\
    \ss^{-\lceil s_F' \rceil }\yy(A)\ar[d,"\tilde{r}_F"]\\
    \ss^{-p_F}F
\end{tikzcd}\end{equation}
represents the retract in $\text{Split}_\pp(\cc)_0$, where $p_F=\inf\{\lceil s'_F \rceil + \lceil r_F' \rceil +k: \eta_{k}\circ r''_F \circ s''_F= \eta_{k+ \lceil s_F'' \rceil + \lceil r_F'' \rceil} \}$, $\tilde{s}_F=s_F''$ and $\tilde{r}_F= \eta_{k}\circ r_F''$. Using this we obtain a commutative diagram
\begin{equation}\begin{tikzcd}
F \ar[r,"u''"]\ar[d,"s_F''"]& \ss^{-\lceil u' \rceil}G \ar[d,"s_G''"]\ar[r,"v''"]& \ss^{-(\lceil u' \rceil + \lceil v' \rceil)}H \ar[d,"s_H''"]\ar[r,"w''"]& \ss^{-(\lceil u' \rceil +\lceil v' \rceil +\lceil w' \rceil)}TF\ar[d,"(s_{TF})''"]\\
    \ss^{-\lceil s_F' \rceil}\yy(A) \ar[r,"\tilde{u}''"]& \ss^{-(\lceil s_F' \rceil)+\lceil \bar{ u}' \rceil)}\yy(B)\ar[r,"\tilde{v}''"] & \ss^{-(\lceil s_F' \rceil +\lceil \bar{u}' \rceil+ \lceil \bar{v}' \rceil)}\yy(C) \ar[r,"\tilde{w}''"]& \ss^{-(\lceil s_F' \rceil +\lceil \bar{u}' \rceil +\lceil \bar{v}'\rceil + \lceil \bar{w}' \rceil)}T\yy(A)\\
    & & & 
\end{tikzcd}\end{equation}
where 
\begin{align*}
    u''&:= \eta_{0,-\lceil u' \rceil}\circ u'\\
    s_F''&:=\eta_{0,-\lceil s_F' \rceil}\circ s_F'\\
    s_G''&:=\eta_{0,-(\lceil \bar{u}' \rceil + \lceil s_F' \rceil)} \circ s_G' \circ \eta_{-\lceil u' \rceil,0}\\
    \tilde{u}''&:= \eta_{0,-(\lceil \bar{ u}' \rceil+ \lceil s_F' \rceil)} \circ \bar{u}' \circ \eta_{-\lceil s_F' \rceil,0}\\
    v''&:= \eta_{0,-(\lceil u' \rceil + \lceil v' \rceil)}\circ v' \circ \eta_{-\lceil u' \rceil,0}\\
    s_H''&:= \eta_{0,-(\lceil s_F' \rceil +\lceil \bar{u}' \rceil+ \lceil \bar{v}' \rceil)} \circ  s_H'\circ \eta_{-(\lceil u' \rceil + \lceil v' \rceil),0}\\
    \tilde{v}''&:=\eta_{0,-(\lceil s_F' \rceil +\lceil \bar{u}' \rceil+ \lceil \bar{v}' \rceil)}\circ \bar{v}' \circ  \eta_{-(\lceil \bar{ u}' \rceil+ \lceil s_F' \rceil),0}\\
    w''&:= \eta_{0,-(\lceil u' \rceil +\lceil v' \rceil +\lceil w' \rceil)}\circ w'\circ \eta_{-(\lceil u' \rceil + \lceil v' \rceil),0}\\
    \tilde{w}''&:=\eta_{0,-(\lceil s_F' \rceil +\lceil \bar{u}' \rceil +\lceil \bar{v}'\rceil + \lceil \bar{w}' \rceil)}\circ \bar{w}' \circ \eta_{-(\lceil s_F' \rceil +\lceil \bar{u}' \rceil+ \lceil \bar{v}' \rceil),0}\\
    (s_{TF})''&:= \eta_{0,-(\lceil s_F' \rceil +\lceil \bar{u}' \rceil +\lceil \bar{v}'\rceil + \lceil \bar{w}' \rceil)}\circ T s'_F \circ \eta_{-(\lceil u' \rceil +\lceil v' \rceil +\lceil w' \rceil),0}
\end{align*}

Now take, $k$ to be 
\begin{equation}k:=\max\{p_F-(\lceil s_F' \rceil + \lceil r_F' \rceil),p_G-(\lceil s_G' \rceil + \lceil r_G' \rceil),p_H-(\lceil s_H' \rceil + \lceil r_H' \rceil)\}\end{equation}
and set $p:= k+ \lceil r_F' \rceil + \lceil s_F' \rceil$.
Then we can extend the above diagram to
\begin{equation}\begin{tikzcd}
F \ar[r,"u''"]\ar[d,"s_F''"]& \ss^{-\lceil u' \rceil}G \ar[d,"s_G''"]\ar[r,"v''"]& \ss^{-(\lceil u' \rceil + \lceil v' \rceil)}H \ar[d,"s_H''"]\ar[r,"w''"]& \ss^{-(\lceil u' \rceil +\lceil v' \rceil +\lceil w' \rceil)}TF\ar[d,"(s_{TF})''"]\\
    \ss^{-\lceil s_F' \rceil}\yy(A)\ar[d,"r''_F"] \ar[r,"\tilde{u}''"]& \ss^{-(\lceil s_F' \rceil+\lceil \bar{ u}' \rceil)}\yy(B)\ar[d,"r_G''"]\ar[r,"\tilde{v}''"] & \ss^{-(\lceil s_F' \rceil +\lceil \bar{u}' \rceil+ \lceil \bar{v}' \rceil)}\yy(C)\ar[d,"r_H''"] \ar[r,"\tilde{w}''"]& \ss^{-(\lceil s_F' \rceil +\lceil \bar{u}' \rceil +\lceil \bar{v}'\rceil + \lceil \bar{w}' \rceil)}T\yy(A)\ar[d,"(Tr_F)''"]\\
   \ss^{-p}F \ar[r,"\ss^{-p}u''"]&\ss^{-(p+ \lceil u' \rceil)}G \ar[r,"\ss^{-p}v''"]&\ss^{-(p+\lceil u' \rceil + \lceil v' \rceil)}H \ar[r,"\ss^{-p}w''"]&\ss^{-(p+\lceil u' \rceil +\lceil v' \rceil +\lceil w' \rceil)}TF 
\end{tikzcd}\end{equation}
where
\begin{align*}\label{tricomp}
    r''_F&:=\eta_k \circ \eta_{0,-(\lceil s_F' \rceil + \lceil r_F' \rceil)}\circ r_F' \circ \eta_{-\lceil s_F' \rceil,0}\\
    r_G'' &:= \eta_k \circ \eta_{0,-(\lceil r_G' \rceil + \lceil s_G'\rceil+\lceil u' \rceil)} \circ r_G' \circ \eta_{-(\lceil s_F' \rceil+\lceil \bar{ u}' \rceil),0}\\
    r_H''&:=\eta_k \circ \eta_{0,-(\lceil r_H' \rceil + \lceil s_H'\rceil+\lceil u' \rceil+ \lceil v' \rceil)}\circ r_H' \circ \eta_{-(\lceil s_F' \rceil +\lceil \bar{u}' \rceil+ \lceil \bar{v}' \rceil),0}\\
    (Tr_F)''&:=\eta_k \circ \eta_{0,-(\lceil s_{TF}' \rceil + \lceil Tr_F' \rceil +\lceil u' \rceil +\lceil v' \rceil +\lceil w' \rceil)}\circ Tr_F' \circ \eta_{-(\lceil s_F' \rceil +\lceil \bar{u}' \rceil +\lceil \bar{v}'\rceil + \lceil \bar{w}' \rceil),0}
\end{align*}
 And so we have shown that for any exact triangle $\Delta= F \to G \to H \to TF$ in $\text{Split}(\cc_\infty)$, there exists a triangle of the form 
\begin{equation}\begin{tikzcd}
    F \ar[r,"u''"]& \ss^{-\lceil u' \rceil}G \ar[r,"v''"]& \ss^{-(\lceil u' \rceil + \lceil v' \rceil)}H \ar[r,"w''"]& \ss^{-(\lceil u' \rceil +\lceil v' \rceil +\lceil w' \rceil)}TF
\end{tikzcd}\end{equation}
which is a $p$-retract of the image strict exact triangle also of weight $(\lceil u' \rceil +\lceil v' \rceil +\lceil w' \rceil)$ . (Note since $\lceil s'_F \rceil = \lceil s_{TF}' \rceil$ we have $\lceil u' \rceil +\lceil v' \rceil +\lceil w' \rceil = \lceil \bar{u}' \rceil +\lceil \bar{v}' \rceil +\lceil \bar{w}' \rceil$ and we can also replace the central triangle $\Delta$ by $\Delta':=\ss^{-\lceil s_F \rceil}\Delta$).
\end{proof}

\newpage

\end{document}